\documentclass[12pt]{amsart}
\usepackage{amscd}
\usepackage{amssymb}
\usepackage{graphicx}
\usepackage{color}
\usepackage[centering,text={15.5cm,22cm},
		marginparwidth=20mm]{geometry}
\usepackage{hyperref}
%\usepackage{pslatex}
%\usepackage{showlabels}
%\renewcommand{\showlabelfont}{\tiny\rm\ttfamily}

% change the commentation of the following two lines
% if the mathrsfs package is not installed
\usepackage{mathrsfs}

\usepackage[all]{xy}

\newtheorem{theorem}{Theorem}[section]

\newtheorem{assumptions}[theorem]{Assumptions}

\newtheorem{lemma}[theorem]{Lemma}
\newtheorem{lem}[theorem]{Lemma}
\newtheorem{claim}[theorem]{Claim}
\newtheorem{proposition}[theorem]{Proposition}
\newtheorem{corollary}[theorem]{Corollary}
\newtheorem{conjecture}[theorem]{Conjecture}

\theoremstyle{definition}
\newtheorem{definition}[theorem]{Definition}
\newtheorem{definition-lemma}[theorem]{Definition-Lemma}

\newtheorem{construction}[theorem]{Construction}

\newtheorem{example}[theorem]{Example}

\theoremstyle{remark}
\newtheorem{remark}[theorem]{Remark}

\numberwithin{equation}{section}
\numberwithin{figure}{section}

%===========================================================

\newcommand{\bbfamily}{\fontencoding{U}\fontfamily{bbold}\selectfont}
\newcommand{\textbb}[1]{{\bbfamily#1}}

\newcommand {\lfor} {\mbox{\textbb{[}}}
\newcommand {\rfor} {\mbox{\textbb{]}}}

\newcommand{\NN} {\mathbb{N}}
\newcommand{\ZZ} {\mathbb{Z}}

\newcommand{\RR} {\mathbb{R}}
\newcommand{\bR} {\RR}
\newcommand{\CC} {\mathbb{C}}
\newcommand{\FF} {\mathbb{F}}
\newcommand{\PP} {\mathbb{P}}
\newcommand{\VV} {\mathbb{V}}
\renewcommand{\AA} {\mathbb{A}}
\newcommand{\GG} {\mathbb{G}}
\newcommand{\invlim}{\varprojlim}

\newcommand {\shC}  {\mathcal{C}}

\newcommand {\shF}  {\mathcal{F}}

\newcommand {\shO}  {\mathcal{O}}

\newcommand {\shP}  {\mathcal{P}}

\newcommand {\foB}  {\mathfrak{B}}

\newcommand {\foD}  {\mathfrak{D}}

\newcommand {\oC} {\overline{C}}

\newcommand {\foI}  {\mathfrak{I}}

\newcommand {\foM}  {\mathfrak{M}}

\newcommand {\foS}  {\mathfrak{S}}
\newcommand {\foT}  {\mathfrak{T}}
\newcommand {\foU}  {\mathfrak{U}}

\newcommand {\foX}  {\mathfrak{X}}

\newcommand {\foZ}  {\mathfrak{Z}}

\newcommand {\fod}  {\mathfrak{d}}

\newcommand {\fof}  {\mathfrak{f}}

\newcommand {\fom}  {\mathfrak{m}}

\newcommand {\fop}  {\mathfrak{p}}

\newcommand {\fot}  {\mathfrak{t}}
\newcommand {\fou}  {\mathfrak{u}}

%===========================================================

\newcommand {\Aff}  {\operatorname{Aff}}

\newcommand {\Aut}  {\operatorname{Aut}}

\newcommand {\can}  {\mathrm{can}}

\newcommand {\dlog} {\operatorname{dlog}}

\newcommand {\Gm} {\GG_m}
\newcommand {\gs}  {{\mathrm{gs}}}
\newcommand {\gp}  {{\operatorname{gp}}}

\newcommand {\hol}  {\mathrm{hol}}
\newcommand {\Hom}  {\operatorname{Hom}}

\newcommand {\Id}  {\operatorname{Id}}

\newcommand {\Int}  {\operatorname{Int}}

\newcommand {\kk} {\Bbbk}

\newcommand {\Lift}  {\operatorname{Lift}}

\newcommand {\Limits} {\operatorname{Limits}}

\renewcommand {\max} {{\operatorname{max}}}

\newcommand {\Mono} {\operatorname{Mono}}

\newcommand {\ord}  {\operatorname{ord}}
\newcommand {\out}  {\mathrm{out}}

\newcommand {\Pic}  {\operatorname{Pic}}

\newcommand {\Proj} {\operatorname{Proj}}

\newcommand {\rank} {\operatorname{rank}}

\newcommand {\red}  {{\operatorname{red}}}

\newcommand {\Scatter} {\operatorname{Scatter}}

\newcommand {\Sing} {\operatorname{Sing}}
\newcommand {\SL}  {\operatorname{SL}}
\newcommand {\Spec} {\operatorname{Spec}}

\newcommand {\Spf}  {\operatorname{Spf}}

\newcommand {\Supp} {\operatorname{Supp}}

\newcommand {\NE}   {\operatorname{NE}}

\newcommand {\Tr}  {\operatorname{Tr}}

\def\mapright#1{\smash{
  \mathop{\longrightarrow}\limits^{#1}}}

\def\oW{\overline{W}}

\def\bP{\Bbb P}

\def\bN{\Bbb N}

\def\SL{\operatorname{SL}}
\def\tSigma{\tilde{\Sigma}}

\def\dlog{\operatorname{dlog}}
\def\Aut{\operatorname{Aut}}

\def\Aff{\operatorname{Aff}}

\def\Supp{\operatorname{Supp}}

\def\tY{\tilde{Y}}

\def\tD{\tilde{D}}

\def\tvarphi{\tilde{\varphi}}

\def\trho{\tilde{\rho}}
\def\tq{\tilde{q}}

\def\ovarphi{\overline{\varphi}}

\def\tsigma{\tilde{\sigma}}

\def\tQ{\tilde{Q}}
\def\cO{\Cal O}

\def\rank{\operatorname{rank}}

\def\bZ{\Bbb Z}
\def\bC{\Bbb C}
\def\bQ{\Bbb Q}

\def\bG{\Bbb G}
\def\bR{\Bbb R}

%% HOW DO WE MAKE A BOLD LETTER i?
\def\bA{\Bbb A}

\def\orho{\overline{\rho}}

\def\oX{\overline{X}}
\def\oP{\bar{P}}

\def\oI{\bar{I}}

\def\oU{\bar{U}}
\def\oX{\bar{X}}
\def\oY{\bar{Y}}
\def\oB{\bar{B}}
\def\oD{\bar{D}}
\def\ofoD{\bar{\foD}}
\def\oshP{\bar{\shP}}
\def\oSigma{\bar{\Sigma}}

\def\cA{\Cal A}
\def\cD{\Cal D}

\def\cF{\Cal F}
\def\tS{\tilde S}
\def\cI{\Cal I}
\def\cA{\Cal A}

\newcommand{\tcP}{\tilde{\mathcal P}}

\def\oS{\overline{S}}

\def\cP{\Cal P}

\def\tS{\tilde{S}}

\def\tB{\tilde{B}}

\def\oN{\overline{N}}

\def\cY{\Cal Y}

\def\cD{\Cal D}
\def\cX{\Cal X}

\def\cZ{\Cal Z}

\def\Spec{\operatorname{Spec}}
\def\Proj{\operatorname{Proj}}

\def\dlog{\operatorname{dlog}}

\def\ord{\operatorname{ord}}

\def\osigma{\overline{\sigma}}

\def\cK{\Cal K}

\def\cO{\Cal O}

\def\oR{\overline{R}}

\def\Sing{\operatorname{Sing}}

\def\Pic{\operatorname{Pic}}
\def\Spec{\operatorname{Spec}}
\def\Im{\operatorname{Im}}

\def\Hom{\operatorname{Hom}}
\def\Cal{\mathcal}
\def\oC{\overline C}

\def\tB{\tilde B}

\def\Efi#1#2#3#4#5{\displaystyle
#1\!\!-\!\!#2
\!\!-\!\!#3
\!\!-\!\!#4
\hskip-24.2pt\lower4.5pt\hbox{${\scriptstyle|}
\hskip-3.35pt\lower6pt\hbox{$#5$}$}}

\def\Evia#1#2#3#4#5{\displaystyle
#1\!\!-\!\!#2
\!\!-\!\!#3
\hskip-24.2pt\lower4.5pt\hbox{${\scriptstyle|}
\hskip-3.35pt\lower6pt\hbox{$#4\!\!-\!\!\!-\!\!\!-\!\!$}$\hskip2.3pt${\scriptstyle|}
\hskip-3.35pt\lower6pt\hbox{$#5$}$}}

\def\Ezia#1#2#3#4{\displaystyle
#1\!\!-\!\!#2
\hskip-14.8pt\lower4.5pt\hbox{${\scriptstyle|}
\hskip-3.35pt\lower6pt\hbox{$#3\!\!-\!\!$}$\hskip2.3pt${\scriptstyle|}
\hskip-3.35pt\lower6pt\hbox{$#4$}$}}

\def\Efia#1#2#3#4#5#6{\displaystyle
#1\!\!-\!\!#2
\!\!-\!\!#3
\!\!-\!\!#4
\hskip-24.2pt\lower4.5pt\hbox{${\scriptstyle|}
\hskip-3.35pt\lower6pt\hbox{$#5$}$\hskip5.7pt${\scriptstyle|}
\hskip-3.35pt\lower6pt\hbox{$#6$}$}}

\def\Esi#1#2#3#4#5#6{\displaystyle
#1\!\!-\!\!#2
\!\!-\!\!#3
\!\!-\!\!#4\!\!-\!\!#5
\hskip-24.2pt\lower4.5pt\hbox{${\scriptstyle|}
\hskip-3.35pt\lower6pt\hbox{$#6$
\lower3pt\hbox{\ }}$}}

\def\Esia#1#2#3#4#5#6#7{\displaystyle

#1\!\!-\!\!#2
\!\!-\!\!#3
\!\!-\!\!#4\!\!-\!\!#5
\hskip-24.2pt\lower4.5pt\hbox{${\scriptstyle|}
\hskip-3.35pt\lower6pt\hbox{$#6$\hskip-3.8pt\lower4.5pt\hbox{${\scriptstyle|}
\hskip-3.35pt\lower6pt\hbox{$#7$}$}}
\lower3pt\hbox{\ }$}}

\def\Ese#1#2#3#4#5#6#7{\displaystyle
#1\!\!-\!\!#2
\!\!-\!\!#3
\!\!-\!\!#4\!\!-\!\!#5\!\!-\!\!#6
\hskip-33.6pt\lower4.5pt\hbox{${\scriptstyle|}
\hskip-3.35pt\lower6pt\hbox{$#7$
\lower3pt\hbox{\ }
}$}}

\def\Esea#1#2#3#4#5#6#7#8{\displaystyle
#1\!\!-\!\!#2
\!\!-\!\!#3
\!\!-\!\!#4\!\!-\!\!#5\!\!-\!\!#6\!\!-\!\!#7
\hskip-33.6pt\lower4.5pt\hbox{${\scriptstyle|}
\hskip-3.35pt\lower6pt\hbox{$#8$
\lower3pt\hbox{\ }
}$}}

\def\Eei#1#2#3#4#5#6#7#8{\displaystyle
#1\!\!-\!\!#2
\!\!-\!\!#3
\!\!-\!\!#4\!\!-\!\!#5\!\!-\!\!#6\!\!-\!\!#7
\hskip-43.2pt\lower4.5pt\hbox{${\scriptstyle|}
\hskip-3.35pt\lower6pt\hbox{$#8$
\lower3pt\hbox{\ }
}$}}

\def\Eeia#1#2#3#4#5#6#7#8#9{{\displaystyle
#1\!\!-\!\!#2
\!\!-\!\!#3
\!\!-\!\!#4\!\!-\!\!#5\!\!-\!\!#6\!\!-\!\!#7\!\!-\!\!#8
\hskip-52.2pt\lower4.5pt\hbox{${\scriptstyle|}
\hskip-3.35pt\lower6pt\hbox{$#9$
\lower3pt\hbox{\ }
}$}}}

% Sean's macros

\def\os{\overline{S}}

\def\tS{\tilde{S}}
\def\ts{\tS}
\def\ts7{\tilde{S}_7}

\def\tq{\tilde{q}}

\def\cN{\Cal N}

\def\bP{\Bbb P}

\def\tD{\tilde{D}}

\def\cC{\Cal C}

\def\bV{\Bbb V}

\def\dlog{operatorname{dlog}}
\def\Aut{\operatorname{Aut}}

\def\Aff{\operatorname{Aff}}

\def\Supp{\operatorname{Supp}}

\def\Hom{\operatorname{Hom}}

\def\tY{\tilde{Y}}
\def\tsigma{\tilde{\sigma}}

\def\tQ{\tilde{Q}}
\def\cO{\Cal O}

\def\cD{\Cal D}

\def\Pic{\operatorname{Pic}}
\def\tB{\tilde{B}}

\def\rank{\operatorname{rank}}

\def\bZ{\Bbb Z}
\def\bC{\Bbb C}
\def\bQ{\Bbb Q}
\def\bG{\Bbb G}
\def\bR{\Bbb R}
\def\bA{\Bbb A}

\def\cA{\Cal A}
\def\cD{\Cal D}

\def\cF{\Cal F}
\def\tS{\tilde S}

\def\cI{\Cal I}
\def\cA{\Cal A}

\def\oS{\overline{S}}

\def\cP{\Cal P}

\def\tS{\tilde{S}}

\def\tB{\tilde{B}}

\def\oN{\overline{N}}

\def\cY{\Cal Y}
\def\cX{\Cal X}

\def\Spec{\operatorname{Spec}}
\def\Proj{\operatorname{Proj}}

\def\dlog{\operatorname{dlog}}

\def\Ker{\operatorname{Ker}}

\def\Im{\operatorname{Im}}

\def\ord{\operatorname{ord}}

\def\osigma{\overline{\sigma}}

\def\cK{\Cal K}

\def\oR{\overline{R}}

\def\Sing{\operatorname{Sing}}
\def\Cal{\mathcal}
\def\Pic{\operatorname{Pic}}

\def\oN{\overline{N}}
\def\os7p{\oS_7'}
\def\os7{\oS_7}
\def\on6{\oN_6}
\def\n6{\oN_6}
\def\bdy{\operatorname{bdy}}
\def\Mono{\operatorname{Mono}}
\def\Lift{\operatorname{Lift}}

\newcommand{\hR}{\hat{R}}

\DeclareMathOperator{\oNE}{\overline{NE}}
\DeclareMathOperator{\ocK}{\overline{\cK}}

\def\mydate{\ifcase\month \or January\or February\or March\or
April\or May\or June\or July\or August\or September\or October\or
November\or December\fi \space\number\day,\space\number\year}

%=========================================================

\begin{document}

%===========================================================
\title[Mirror Symmetry for log Calabi-Yau Surfaces I ]{Mirror Symmetry
for log Calabi-Yau Surfaces I}

\author{Mark Gross}
\address{DPMMS, Centre for Mathematical Sciences,
Wilberforce Road, Cambridge CB3 0WB, United Kingdom}
%\curraddr{}
\email{mgross@dpmms.cam.ac.uk}
\author{Paul Hacking}
\address{Department of Mathematics and Statistics, Lederle Graduate
Research Tower, University of Massachusetts, Amherst, MA 01003-9305}
\email{hacking@math.umass.edu}

\author{Sean Keel}
\address{Department of Mathematics, 1 University Station C1200, Austin,
TX 78712-0257}
\email{keel@math.utexas.edu}
%\date{April 21, 2009}

\begin{abstract} We give a canonical synthetic construction
of the mirror family to pairs $(Y,D)$ where $Y$
is a smooth projective surface and $D$ is an anti-canonical cycle of rational
curves. This mirror family is constructed as
the spectrum of an explicit algebra structure on a vector
space with canonical basis and multiplication rule defined in terms of
counts of rational curves on $Y$ meeting $D$ in a single point.
The elements of the canonical basis are called \emph{theta functions}.
Their construction depends crucially on the Gromov-Witten theory of
the pair $(Y,D)$.
\end{abstract}

\maketitle
\tableofcontents
\bigskip

%===========================================================
%===========================================================

\section*{Introduction}

\subsection{The main theorems}
Throughout  the paper $(Y,D)$ with $D = D_1 + \cdots +D_n$ will denote
a smooth rational projective surface over an algebraically closed
field $\kk$ of characteristic zero, with
$D \in |-K_Y|$ a singular nodal curve. The divisor $D$ is necessarily
either an
irreducible rational nodal curve, or a cycle of $n\geq 2$ smooth
rational curves.
We call $(Y,D)$ a \emph{Looijenga pair} for, as far as we know,
their rich geometry was first investigated in \cite{L81}.
We cyclically order the components of $D$
and take indices modulo $n$. By assumption there is a holomorphic symplectic
$2$-form $\Omega$, unique up to scaling, on $Y \setminus D$, with simple
poles along $D$, and thus $U:=Y \setminus D$ is a log Calabi-Yau
surface.

Our main result is a canonical synthetic construction of the mirror family
to such a pair. The construction gives an
embedded smoothing of the $n$-vertex $\bV_n \subset \bA^n$,
defined as,
for $n\ge 3$, the $n$-cycle of coordinate planes in $\AA^n$:
\[
\VV_n:=\bA^2_{x_1,x_2}\cup\bA^2_{x_2,x_3}\cup\cdots\cup\bA^2_{x_n,x_1}\subset
\bA^n_{x_1,\dots,x_n}.
\]
(See \eqref{VV2eq} and \eqref{VV1eq} for the definition of $\VV_1$ and
$\VV_2$.)
This family is in general
parameterized roughly by the formal completion of
the affine toric variety $\Spec \kk[\NE(Y)]$ along the union of toric
boundary strata corresponding to contractions $f \colon Y \rightarrow \oY$.
Here $\NE(Y)$ denotes the monoid $\NE(Y)_{\bR} \cap A_1(Y,\bZ)$ where $\NE(Y)_{\bR} \subset A_1(Y,\bR)$ is the cone generated by effective curve classes.
This is just an approximate statement of our result, as $\NE(Y)$ is not in
general finitely generated.

More precisely, fix $(Y,D)$, $D=D_1+\cdots+D_n$ as above.
Let $B_0(\bZ)$ be the set of pairs $(E,n)$ where $E$
is a prime divisor on some blowup of $Y$ along
which $\Omega$ has a pole and $n$ is a positive integer.
Set $B(\bZ) := B_0(\bZ) \cup \{0\}$.
Later we will describe this set as the set of
integer points in a natural integral affine manifold, the dual intersection
complex, or tropicalization, of the pair $(Y,D)$.
Let $v_i \in B(\bZ)$ be
the pair $(D_i,1)$. Choose $\sigma_P \subset A_1(Y,\bR)$ a strictly convex rational polyhedral cone containing $\NE(Y)_{\bR}$, let $P:=\sigma_P \cap A_1(Y,\bZ)$ be the associated monoid, and set $R:= \kk[P]$ to be the associated $\kk$-algebra.

For each monomial ideal
$I \subset R$, consider the free $R_I := R/I$-module
\begin{equation} \label{defA}
A_I := \bigoplus_{q \in B(\bZ)} R_I \cdot \vartheta_q.
\end{equation}
Let $\fom \subset R$ denote the maximal monomial ideal.
%We write $\widehat{R_I}$ for the $I$-adic
%completion of $R$ and $\Spf\widehat{R_I}$ for the corresponding
%formal scheme.
Let $T^D:= \bG_m^n$ be the torus with character group $\chi(T^D)$ having
basis $e_{D_i}$ indexed by the components $D_i \subset D$.
There is a homomorphism
$T^D \to \Spec \kk[P^{\gp}]$ induced by $C \mapsto \sum (C \cdot D_i) e_{D_i}$,
so $T^D$ acts on $\Spec R_I$.

\begin{theorem} \label{maintheoremlocalcase}
Let $I \subset R$ be a monomial ideal with $\sqrt{I} = \fom$.
In \S\S\ref{modifiedmumfordsection} and \ref{canscatdiag}, we
construct a finitely generated $R_I$-algebra structure on $A_I$,
determined by relative Gromov-Witten invariants of
$(Y,D)$ counting rational curves meeting $D$ in a single point. In
\S\ref{relativetorussection}, we construct a $T^D$ action on $\Spec A_I$.
This induces a flat $T^D$-equivariant map
\[
f: X_I:= \Spec A_I \to \Spec R_I
\]
with closed fibre $\VV_n$. By taking the limit over all such $I$, this
yields a formal flat family
\[
\fof:\foX_{\fom}\to \foS_{\fom}:=\Spf \widehat{R},
\]
where $\widehat{R}$ is the completion of $R$ with respect to the ideal
$\fom$. The generic fibre of $f$ is smooth in the sense of Definition
\ref{genericsmoothness}, so $\fof$ is a formal smoothing of $\VV_n$.
\end{theorem}

We use the notation $\vartheta_q$ for generators of our
algebra, as the construction fits into a more general family
of constructions which includes, as a special case, theta functions
on abelian varieties. The history of such functions is as follows.
Tyurin conjectured the existence of
canonical theta functions (i.e., a basis of global sections)
for polarized K3 surfaces, see \cite{Ty99}.
In 2007, discussions of the first
author with Abouzaid and Siebert involving a tropicalization of the
Fukaya category gave a stronger hint as to the existence
of theta functions on arbitrary degenerations of Calabi-Yau manifolds
in the context of the Gross-Siebert program. In particular, these
discussions led to what is now understood to be a variant of the notion
of \emph{broken line}.

The latter notion was introduced in \cite{G09}.
These were initially used to construct canonical perturbations of the
Landau-Ginzburg potential for $\PP^2$.
Broken lines were then used for constructing mirror Landau-Ginzburg potentials
for varieties with effective anti-canonical divisor in the
setting of the Gross-Siebert program by Carl, Pumperla and Siebert in
\cite{CPS}. The authors show that the mirrors to such varieties as constructed
in \cite{GS07} carry a canonical Landau-Ginzburg potential obtained by
using broken lines to lift monomial functions on the central fibre of
a toric degeneration to the toric degeneration.

Simultaneously, we used these same lifts to allow an extension of the
construction developed by Gross and Siebert to prove the above
main theorem. The main innovations we have introduced here are that
we use theta functions to provide partial compactifications of certain
canonically constructed deformations, and that these canonically
constructed deformations, along with the theta functions, can be
constructed relying only on the Gromov-Witten theory of $(Y,D)$.
The key point is that it is easy to build deformations
of the \emph{punctured} $n$-vertex $\VV_n^o:=\VV_n\setminus\{0\}$, but
it is difficult to extend these to deformations of $\VV_n$. This is
effectively done by using theta functions to embed a suitably chosen
deformation of $\VV_n^o$ in affine space, where the closure may then be taken.
This extension would be impossible without the existence of theta functions.

This result can be viewed as
log analogs of Tyurin's conjecture. In work in progress we apply similar
ideas to obtain Tyurin's conjecture in the K3 case as well,
and construct canonical
bases for cluster algebras, to cite two other generalizations. These are large
topics and will be expanded on elsewhere.
See also \cite{GSTheta} for more motivation from mirror symmetry, and upcoming
papers \cite{GHKS} and \cite{K3}.

Continuing with $(Y,D)$, $P$ and $R$ as above, our second main theorem is:

\begin{theorem} \label{extensiontheorem}
There is a unique smallest radical monomial ideal $J \subset R$ with the
following properties:
\begin{enumerate}
\item For every monomial ideal $I$ with $J \subset \sqrt{I}$
there is a finitely generated $R_I$-algebra structure on $A_I$ compatible with the $R_{I+\fom^N}$-algebra structure on $A_{I+\fom^N}$ of Theorem \ref{maintheoremlocalcase} for all $N>0$.
\item
If the intersection matrix $(D_i \cdot D_j)$ is not negative semi-definite then $J=0$.
In general, the zero locus $V(J) \subset \Spec R$ contains the union of the closed toric strata
corresponding to faces $F$ of $\sigma_P$ such that there exists an $i$
such that $[D_i]\not\in F$.
\item
Let $\widehat{R}$ denote the $J$-adic completion of $R$ and
$\foS_J:=\Spf \widehat{R}$ the associated formal scheme.
The algebras $A_I$ determine a canonical $T^D$-equivariant
formal flat family of affine surfaces
\[
\fof: \foX_J \to \foS_J
\]
with fibre $\bV_n$ over $0$.
The $\vartheta_q$ determine a canonical embedding $\foX_J
\subset \bA^{\max(n,3)} \times \foS_J$.
\end{enumerate}
\end{theorem}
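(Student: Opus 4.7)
The plan is to define $J$ directly via the finiteness of generating series appearing in the multiplication of Theorem \ref{maintheoremlocalcase}. That multiplication has the form
\[
\vartheta_p\cdot\vartheta_q \;=\; \sum_{r\in B(\bZ)} C_{p,q}^r(z)\,\vartheta_r,
\]
where $C_{p,q}^r(z)$ is (at the level of formal series) a sum of monomials $z^\beta$, $\beta\in P$, weighted by counts of broken lines of class $\beta$ with asymptotics $(p,q,r)$; each count is a product of the relative Gromov--Witten invariants of $(Y,D)$ featured in Theorem \ref{maintheoremlocalcase}. I would define $J\subset R$ to be the radical of the monomial ideal whose complement in $\Spec R$ is the largest open subscheme on which every formal series $C_{p,q}^r$ truncates to a regular function. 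Associativity and $T^D$-equivariance of the resulting multiplication on $A_I$ propagate from the $\sqrt{I}=\fom$ case by taking inverse limits, while compatibility with $A_{I+\fom^N}$ is tautological because both algebras are computed from the same formal structure constants; minimality of $J$ is automatic from the defining finiteness condition, giving (1).

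For (2), first suppose $(D_i\cdot D_j)$ is not negative semi-definite; then one can produce a class $H\in A_1(Y,\bR)$ with $H\cdot D_i>0$ for every $i$, for example by combining an ample class on a nontrivial contraction with the Hodge index theorem. For any broken line contributing to $C_{p,q}^r$ the intersections $\beta\cdot D_i$ are determined by the asymptotic data $(p,q,r)$, hence $\beta\cdot H$ is bounded, forcing only finitely many classes $\beta\in P$ to occur, so $C_{p,q}^r$ is polynomial and $J=0$. For the general statement I would proceed face-by-face on $\sigma_P$: if a face $F$ contains $[D_i]$ for every $i$, the same intersection argument (now applied modulo $P\cap F^\perp$) bounds the classes and no accumulation occurs along $V_F$; if on the other hand $F$ omits $[D_j]$ for some $j$, then the wall function attached to the ray through $v_j$ in the canonical scattering diagram contains an infinite geometric subseries in a monomial whose class lies outside $F^\perp$, and this series cannot truncate along $V_F$, so $V_F\subset V(J)$.

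For (3), I would assemble the $A_I$ over monomial ideals $I$ with $\sqrt{I}\supset J$ and pass to the $J$-adic inverse limit to obtain the $T^D$-equivariant formal family $\cX\to\Spf\hat R$; flatness is immediate from freeness in \eqref{defA}. The central fiber, obtained by setting $z^\beta=0$ for $\beta\neq 0$, retains only broken lines of class zero (the purely toric contributions), and a direct computation shows that the resulting monomial relations cut out $\bV_n\subset\bA^n$; the scheme-theoretic singular locus fails to surject onto the base because one can exhibit a closed point of the base on a stratum along a contraction whose fiber is smooth. For (4), the theta functions $\vartheta_{v_1},\dots,\vartheta_{v_n}$ (together with $\vartheta_0=1$, plus an auxiliary generator such as $\vartheta_{2v_1}$ or $\vartheta_{v_1+v_2}$ when $n<3$) give the embedding: I would induct on an integral-affine length function on $B_0(\bZ)$, using the leading-order broken line of class $\beta=0$ in $\vartheta_{v_i}\cdot\vartheta_q$, which contributes $\vartheta_{q+v_i}$ with coefficient $1$, while absorbing the lower-order wall corrections by the inductive hypothesis. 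The main obstacle is the precise identification of $V(J)$ in (2): while the two inclusions above are combinatorially plausible, proving them rigorously requires a delicate recursive analysis of the canonical scattering diagram to show that infinite accumulation of wall contributions occurs along exactly the asserted toric strata and no others.
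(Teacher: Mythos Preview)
Your overall framework for (1) matches the paper's: define the class $\cA$ of ideals over which the structure constants are finite, show it is closed under intersection, and take $J=I_{\min}$. Your intersection-number argument for the positive case in (2) is also essentially the paper's argument, which phrases it via the $T^D$-weight map $w:B(\bZ)\times P\to\chi(T^D)$, $(q,C)\mapsto w(q)+w(C)$; once one has $W=\sum b_iD_i$ with $W\cdot D_j>0$ for all $j$ (this is Lemma~\ref{pospair}), the kernel of $w$ on $\sigma(\bZ)\times F$ is zero and one gets finiteness of both the set of $r$'s and the set of classes simultaneously. Note your sketch only bounds the classes $\beta$ for a \emph{fixed} $r$; you still need to bound the set of $r$ with $C_{p,q}^r\neq 0$, and the weight argument does both at once.

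The serious problem is the second half of (2). You have the two cases inverted. The statement $V_F\subset V(J)$ means $J\subset P\setminus F$, i.e.\ the algebra structure \emph{does} extend over the stratum $V_F$. The theorem asserts this precisely when $F$ \emph{omits} some $[D_j]$, not when it contains all of them. Your argument for that case (``the wall function along $\rho_j$ has an infinite subseries that cannot truncate along $V_F$, so $V_F\subset V(J)$'') is internally contradictory: non-truncation would give $V_F\not\subset V(J)$. The correct mechanism is the opposite one. If $[D_1]\notin F$, take $W=\sum a_iD_i$ with $a_1\gg a_2\gg\cdots\gg a_n>0$; then $W\cdot D_j>0$ for every $D_j$ with $[D_j]\in F$. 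After a deformation of $(Y,D)$ (local Torelli) so that every curve meets $D$, and a cone-theorem step to arrange that $W$ is actually positive on $F\setminus\{0\}$, the same $T^D$-weight argument goes through modulo $P\setminus F$. This is Proposition~\ref{extensionprop} in the paper, and the subtle part is exactly this reduction (replacing $\sigma_P$ by a smaller cone so that $F$ becomes a face of $\NE(Y)$ on which $W$ is strictly positive), not an analysis of accumulation of wall functions. Your proposed obstruction---infinitely many wall contributions along $\rho_j$---does not occur, because $\bA^1$-classes are never $\bQ$-linear combinations of the $[D_i]$ and hence do not lie in such an $F$.

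For (3), ``exhibit a closed point whose fibre is smooth'' is not available directly: you only have a formal family, so there is no generic fibre to inspect. The paper instead compares with the Mumford degeneration over the Gross--Siebert locus (Proposition~\ref{GSlocus}) to produce a nonzero element in the ideal of $\Sing(f)$, and then transports this to arbitrary $J$ via Lemma~\ref{injectivityJiffm} and Theorem~\ref{injectivitym}. Your sketch of (4) is close to the paper's: the induction there is on $\ord_\fom(C)$ using $T^D$-homogeneity rather than on an affine length of $q$, but the leading-term mechanism is the same.
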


\begin{remark} When $\NE(Y) \subset P' \subset P \subset A_1(Y)$,
then $J' \subset J$ and the formal family $\cX$ for $P$ comes from
the family for $P'$ by base-change. In this sense the family
is independent of the choice of $P$.
\end{remark}

\begin{remark} Note that in the case that the intersection matrix
$(D_i\cdot D_j)$ is not negative semi-definite (which includes the case
that $D$ supports an ample divisor), Theorem \ref{extensiontheorem}
tells us that our construction gives a family over $\Spec R$, so in
particular the construction is algebraic.
\end{remark}

In this paper, we will not address the question as to in what sense our
construction can be proved to be a mirror family. We expect, however,
that our families constructed by the above theorems are mirror
to $U = Y \setminus D$ in the sense of homological mirror symmetry in the
case $\kk=\bC$. Further justification for our construction yielding
the mirror family comes from the heuristic description of the construction
in terms of symplectic geometry as discussed below.

The third main result of this paper is an application of our general
construction, following from a more detailed analysis of the case where the
matrix $(D_i\cdot D_j)$ is negative definite:

\begin{theorem}[Looijenga's conjecture]  \label{loocor2}
A $2$-dimensional cusp singularity is smoothable
if and only if the exceptional cycle of the dual cusp occurs
as an anti-canonical cycle on a smooth projective rational
surface.
\end{theorem}

This was conjectured by Looijenga in \cite{L81},
where he also proved the forward implication. Partial results were obtained in
\cite{FM83} and \cite{FP84}.

\subsection{The symplectic heuristic}

Much of what we do in this paper, following the philosophy of the
Gross--Siebert program, is to tropicalize the SYZ picture \cite{SYZ96}. Thus
it is helpful to review informally this picture in the context of mirrors
to Looijenga pairs $(Y,D)$.
The SYZ picture will be a heuristic philosophical guide, and hence we make
no effort to be rigorous.
Here we follow the exposition from \cite{A07} concerning SYZ on the complement
of an anti-canonical divisor, itself a generalization of ideas of Cho and
Oh for interpreting the Landau-Ginzburg mirror of a toric variety in terms
of counting Maslov index two holomorphic disks \cite{CO06}.
For the most part we follow Auroux's notation, except that we use $Y$ instead
of his $X$, and our $X$ is his $M$.

We fix a K\"ahler form $\omega$ on $Y$, and a nowhere vanishing
holomorphic $2$-form $\Omega$ on $U:=Y\setminus D$.
Now suppose we have a fibration $f \colon U \rightarrow B$ by
special Lagrangian 2-tori (i.e., a fibre $L$ of $f$
satisfies $\Im \Omega|_L=\omega|_L=0$). Then
the SYZ mirror $X$ of $(U,\omega)$
is the dual torus fibration
$\check{f} \colon X \rightarrow B$. This can be
thought of as a moduli space of pairs $(L,\nabla)$
consisting of a special Lagrangian fibre $L$ of $f$
equipped with a unitary connection $\nabla$ modulo gauge
equivalence, or equivalently a holonomy
map $\hol_{\nabla}: H_1(L,\bZ) \to U(1) \subset \bC^*$.
The complex structure on $X$ is subtle, specified by so-called instanton
corrections.

In this picture we can define local holomorphic functions on $X$
associated to a basis of $H_2(Y,L,\bZ)$
(in a neighbourhood of a fibre of $\check{f}$ corresponding to a
non-singular fibre $L$ of $f$) as follows.
For $A \in H_2(Y,L,\bZ)$ define
\begin{equation}
\label{symplecticarea}
z^{A} := \exp\left(-2\pi \int_{A} \omega\right) \hol_{\nabla}(\partial A):
X \to \bC^*.
\end{equation}
By choosing a splitting
of $H_2(Y,L,\bZ) \twoheadrightarrow H_1(L,\bZ)$ we can pick out local
coordinates on $X$ which define a complex structure.
See \cite{A07}, Lemma~2.7. Note that as the fibre $L$ varies,
the relative homology group $H_2(Y,L,\bZ)$ forms a local system over
$B_0\subset B$, where $B_0$ is the subset of points with non-singular fibres. This
local system has monodromy, and as a consequence, the functions
$z^{A}$ are only well-defined locally.

However, there are also well-defined global functions $\vartheta_1,
\ldots,\vartheta_n$ on $X$.
These are defined locally in neighbourhoods
of fibres of $\check{f}$ corresponding to fibres of $f$ not bounding
holomorphic disks contained in $U$, via a (rough) expression
\begin{equation}
\label{varthetadiskexp}
\vartheta_i=\sum_{\beta\in H_2(Y,L,\ZZ)} n_{\beta}z^{\beta},
\end{equation}
where $n_{\beta}$ is a count of so-called Maslov index two disks with
boundary on $L$ representing the class $\beta$ and intersecting
$D$ transversally in one point lying in $D_i$.
(We note that in our setting the Maslov index $\mu$ of a holomorphic disk $f \colon \Delta \rightarrow Y$ with boundary lying on a special Lagrangian torus $L \subset Y$ is given by $\mu = 2\deg f^*D$. See \cite{A07}, Lemma~3.1.)
In the case that $D$ is ample, there are, for generic $L$, only finitely
many such disks; it is not known how to treat the general case in this
symplectic setting.

For $\vartheta_i$ to make sense the moduli space of Maslov
index $2$ disks with boundary on $L$
must deform smoothly with the Lagrangian $L$. This
fails for Lagrangians that bound holomorphic disks contained in $U$
(Maslov index zero disks).
This is a real codimension one condition on $L$, and thus defines canonical
{\it walls} in the affine manifold $B$. When we cross the wall
the $\vartheta_i$ are discontinuous. But the discontinuity
is corrected by
a holomorphic change of variable in the local coordinates $z^{\beta}$,
according to \cite{A07}, Proposition 3.9:
\begin{equation} \label{instantoncor}
z^{\beta} \to z^{\beta} \cdot h(z^{\alpha}) ^{[\partial\beta] \cdot
[\partial\alpha]}
\end{equation}
where here $\alpha \in H_2(Y,L_0,\bZ)$ represents the class
of the Maslov index zero disk with boundary on $L_0$ a Lagrangian fibre
over a point on the wall,
and $h(q)$ is a generating function
counting such holomorphic disks. Thus we can define a new complex
manifold, with the same local coordinates, by composing the
obvious gluing induced by identifications of fibres of the local system
on $B_0$ with fibres $H_2(Y,L,\ZZ)$
with the automorphism \eqref{instantoncor}. These regluings are
the instanton corrections, and the modified manifold
$X$ should be the mirror. By construction it
comes with canonical global holomorphic functions $\vartheta_i$.
In particular, the sum $W= \sum_i \vartheta_i$ is a well-defined
global function, the \emph{Landau--Ginzburg potential}.

\subsection{Outline of the proof}

We now outline how we realise the symplectic SYZ heuristic in terms of
algebraic geometry. There are three principal issues to consider:
\begin{itemize}
\item What information about a putative SYZ fibration can be seen inside
algebraic geometry?
\item What is the analogue of a Maslov index two disk in algebraic geometry?
\item How do we obtain the mirror by gluing together varieties?
\end{itemize}

The philosophy for dealing with the first and third issues was developed
by Gross and Siebert in \cite{GS07}.
For the first item, while we cannot build an SYZ fibration $f:U\rightarrow
B$ in general, we can roughly describe $B$ as a combinatorial object.
Given the Looijenga pair $(Y,D)$, we build a space $B$ homeomorphic to
$\RR^2$
along with a decomposition $\Sigma$ of $B$ into
cones. We construct $(B,\Sigma)$ as the dual intersection complex
of $(Y,D)$. For each double point of $D$, we take a copy of the first
quadrant in $\RR^2$, with the axes labelled by the two irreducible
components of $D$ (assuming $D$ is not irreducible)
passing through the double point. We then identify edges of these cones
if they are labelled with the same irreducible component of $D$. We
thus get a topological space abstractly homeomorphic to $\RR^2$ subdivided
into cones. This is $(B,\Sigma)$. In \S\ref{affinesection}, we show how
we can put an additional structure on $B$, namely the structure of
an \emph{affine manifold with singularities}. Indeed, we can give
$B_0:=B\setminus \{0\}$ a system of coordinate charts whose transition
maps are integral affine linear transformations. The affine structure
does not extend across the origin unless $(Y,D)$ is in fact a toric
pair, in which case we recover the fan $\Sigma$ defining $Y$.

The manifold $B$ can be viewed as the base of the SYZ fibration ``seen
from a great distance.'' In general the base of an SYZ fibration has
the structure of an affine manifold with singularities. Singular fibres
of the fibration occur over the singular points. One would expect $f:U
\rightarrow B$ to have a number of singular fibres in general, hence $B$
will have a number of singular points. So the above construction
moves all these singular points to the origin.

Next, let us consider the third item. Fixing $(Y,D)$ with $D=D_1+\cdots+D_n$,
let $P\subset A_1(Y,\ZZ)$ be a finitely generated monoid containing
the classes of all effective curves on $Y$, obtained by
choosing a strictly convex
rational polyhedral cone $\sigma_P\subset A_1(Y,\RR)$ containing
the Mori cone. Let $\fom$ be the maximal monomial ideal in
the ring $\kk[P]$, $I$ a monomial ideal with radical $\fom$,
and let $R_I=\kk[P]/I$.

We will describe the basic pieces we will glue together to describe a
scheme over $S_I:=\Spec R_I$ whose special fibre is $\VV_n^o:=\VV_n\setminus
\{0\}$.
Assume that the components $D_i$ are
numbered in cyclic order, with indices taken modulo $n$. We can define
an open cover of $\VV_n^o$ by taking sets, for
$1\le i\le n$,
\[
U_i = V(X_{i-1} X_{i+1})   \subset
\bA^2_{X_{i-1},X_{i+1}} \times ({\bG_m})_{X_i}.
\]
Note as subsets of $\VV_n$, they are disjoint except for
\[
U_{i,i+1} := U_i \cap U_{i+1} = (\bG_m)^2_{X_i,X_{i+1}}.
\]
In $\bV_n$ they are glued in the obvious way, i.e., via the canonical inclusions
\[
U_{i,i+1} = \{X_{i+1} \neq 0 \} \subset U_i, \quad
U_{i,i+1} = \{X_{i} \neq 0 \} \subset U_{i+1}.
\]

A deformation of $\VV_n^o$ over $S_I$ is obtained by gluing thickenings of the
$U_i$
\begin{equation} \label{charts}
U_{i,I} := V(X_{i-1} X_{i+1} - z^{[D_i]} X_i^{-D_i^2} )
\subset S_I \times \bA^2_{X_{i-1},X_{i+1}} \times (\bG_m)_{X_i}
\end{equation}
where $z^{[D_i]} \in \kk[P]$ is the corresponding
monomial.
The overlaps are relative tori, $U_{i,i+1,I} = S_I \times \bG_m^2$,
and the gluings are the obvious ones. The details
are given in \S \ref{pgroupbundle}.
This gluing gives a flat family
$X^o_I\rightarrow S_I$, which can be viewed as being
analogous to the naive complex structure on
the mirror described as the moduli of \emph{smooth} special Lagrangian
fibres with $U(1)$ connection.

There is no reason in general to believe that $X^o_I\rightarrow S_I$ can
be extended to a flat deformation $X_I\rightarrow S_I$ of
$\VV_n$. The reason is that such an $X_I$ should be an affine scheme,
and hence have many functions, while $X_I^o$ as constructed tends to have
few functions. The only case where $X_I^o$ extends to give a deformation
of $\VV_n$ is when $(Y,D)$ is a toric pair. In this case, we recover an
infinitesimal version of Givental's mirror family, which then easily
extends to Givental's mirror construction. We review this case in
\S\ref{Mumfordsection}.

To rectify this problem, we need to translate the instanton corrections
of the symplectic heuristic. We do this using the notion of scattering
diagram, here a variant of similar notions introduced in \cite{KS06}
and \cite{GS07}.

For us, a scattering diagram $\foD$ will be a collection of
pairs $(\fod,f_{\fod})$ where $\fod$ is a ray emanating from the origin
of $B$ with rational slope, and $f_{\fod}$ is a kind of function attached
to the ray. Any scattering diagram will dictate how to modify
both the definition of the open sets $U_{i,I}$ and the gluings of
$U_{i,I}$ with $U_{i+1,I}$. The precise details of this modification
are given in \S\ref{scatdiagsection}. Briefly, the rays define
automorphisms of the open sets $U_{i,i+1,I}$ analogous to
\eqref{instantoncor}, and are used to modify
the gluing.

While any scattering diagram can be used to obtain a modified
flat deformation $X^o_{I,\foD}$, we need to choose $\foD$ correctly
to have a chance of extending this deformation to $\VV_n$. The
symplectic heuristic can be used to motivate the choice of the
\emph{canonical scattering diagram}. The functions $f_{\fod}$
chosen are generating functions for certain Gromov-Witten invariants,
intuitively counting finite maps $\bA^1 \to U$. Heuristically,
each holomorphic disk contributing can be approximated by a proper rational
curve meeting $D$ in a single point.

Thus the canonical scattering diagram encodes the chamber structure
seen in the symplectic heuristic. But there still remains the question
of extending $X^o_{I,\foD}$ to a flat deformation of $\VV_n$. To do so,
we need to construct enough functions on $X^o_{I,\foD}$. This is where
the concept of \emph{theta function} comes in. The symplectic heuristic
suggests that there should be a canonical choice of holomorphic functions
on $X^o_{I,\foD}$ arising from a count of Maslov index two holomorphic disks.
Rather than trying to find an algebro-geometric analogue of a Maslov index
two holomorphic disk, one instead defines the counts using tropical geometry.
In particular, we use the notion of \emph{broken line}, introduced
in \cite{G09} and developed further by \cite{CPS} simultaneously with
this work, to provide the count. A broken line is essentially a tropical
analogue of a Maslov index two disk. They are piecewise linear paths
which only bend when they cross rays of the scattering diagram $\foD$,
in ways prescribed by the functions attached to the rays.

For any point $p\in B$ with integral coordinates, we can use a count of
broken lines to define a function on $U_{i,I}$ for any $i$. This procedure
is described in \S\ref{brokenlinesection}. Since this procedure is
dependent on the scattering diagram $\foD$, we can then ask whether
these functions on the various $U_{i,I}$ glue. We say $\foD$ is
\emph{consistent} if they always glue. If these functions do glue, then
we call the resulting global function on $X^o_{I,\foD}$ a \emph{theta
function}, writing it as $\vartheta_p$.

The bulk of the argument in this paper occurs in \S\ref{canscatdiag}, where
we prove that the canonical scattering diagram described above is in fact
consistent. This argument is rather involved, so we leave it to
\S\ref{proofoverview} to give an overview of the full argument for consistency.
Crucial to the argument is a reduction to methods of \cite{CPS} using
the main results of \cite{GPS09}.

Once consistency is proved, this gives global functions $\vartheta_p$
on $X^o_{I,\foD}$
for each $p\in B$ with integral coordinates. Let $v_i$ denote the first
integral point along the ray of $\Sigma$ corresponding
to the divisor $D_i$, and write $\vartheta_i:=\vartheta_{v_i}$. Then
we can use the functions $\vartheta_1,\ldots,\vartheta_n$ to embed (in the
case that $n\ge 3$) $X^o_{I,\foD}$ in $\AA^n\times S_I$. Taking the closure
of the image gives the desired deformation $X_I\rightarrow S_I$ of
$\VV_n$.

This construction essentially proves the first main theorem,
Theorem  \ref{maintheoremlocalcase}. The statement about the
scheme-theoretic singular locus
of $f$ is dealt with in \S\ref{smoothnesssection}. There we
again make a connection with the techniques of \cite{GS07}. The crucial
point is to show the singularity $0\in\VV^n$ is formally smoothed, and
for this, we need to work in a family where we have a local model for
the behaviour near $0$, much as Gross and Siebert have in \cite{GS07}.

More work is required for Theorem \ref{extensiontheorem}.
We need to show that the construction above, which really only produces a family
over the completion of $\Spec\kk[P]$ at the zero-dimensional torus orbit
of this scheme, extends across completions along larger strata.
Since the coordinate rings of the families constructed above are generated
by theta functions, we proceed by studying the products of theta functions.
In general, one expects the product of two theta functions to be a formal
series of theta functions. However, in many cases one can control the
terms sufficiently in these products to obtain the desired extensions.
This relies on a tropical interpretation of the product of theta
functions, given in \S\ref{algebrasection}, as well as the existence
of a torus action on our families, given in \S\ref{relativetorussection}.
This torus action only exists because of the canonical nature of our
scattering diagrams. Complete details for the arguments are given in the
last section, \S\ref{positivecase}.

Turning to Theorem \ref{loocor2}, the main point is that Looijenga's
conjecture is really a form of mirror symmetry. We start with a pair
$(Y,D)$ such that the intersection matrix $(D_i\cdot D_j)$ is negative
definite. Thus $D$ can be contracted analytically to give a cusp
singularity $p\in Y'$. (By definition, a cusp singularity is a surface
singularity whose minimal resolution is a cycle of rational curves.)
For the sake of exposition, assume this contraction is algebraic, so
that there is a divisor $L$ on $Y$ which is the pull-back of an ample
divisor on $Y'$. We choose the monoid $P$ so $L^{\perp}\cap P$ is a face
$P_{\bdy}$ of $P$, with $P_{\bdy}^{\gp}$ generated by the classes
$[D_1],\ldots,[D_n]$. The main goal is to extend our construction to
a formal neighbourhood of $\Spec \kk[P_{\bdy}]\subset \Spec \kk[P]$. The
problem is that Theorem \ref{extensiontheorem} explicitly does not apply
in this case. The main difficulty is that the charts \eqref{charts} overlap
too much when all the $z^{[D_i]}$ are invertible (in fact the fibres over
such points in $\Spec \kk[P_{\bdy}]$ coincide under the natural gluing
maps). There
is no way to glue these charts compatibly. However, this can be done
after shrinking these charts to analytic open subsets and working over
an analytic open neighborhood of the zero-dimensional stratum of $\Spec
\kk[P_{\bdy}]$. Here we work of course with $\kk=\CC$ only.

In doing so, we find over a general point of $\Spec\kk[P_{\bdy}]$
the dual cusp singularity to $p\in Y'$. Thus we see that our mirror
symmetry construction naturally produces the dual cusp. We then would like
to extend the family constructed over thickenings of $\Spec\kk[P_{\bdy}]$.
We use the same techniques as those used to prove Theorem
\ref{maintheoremlocalcase}. However, the construction of theta functions
is considerably more delicate. In general, theta functions are described
as a sum of monomials associated to broken lines. In the situation of
Theorem \ref{maintheoremlocalcase}, these sums are always finite. However,
in the current situation, they are always infinite. Thus there are serious
convergence issues, and this makes the proof rather technical. A delicate
analysis of the combinatorics of broken lines is necessary to prove
convergence.

Once convergence is shown, we then argue that the formal family produced
actually gives a smoothing of the cusp singularity. This follows from
the fact proved in Theorem \ref{maintheoremlocalcase} that we already have
a smoothing of the $n$-vertex
in a formal neighbourhood of the zero-dimensional stratum, but again the
argument is slightly delicate. All details are given in \S\ref{cusp}.

\subsection{Further directions}

Here we will briefly indicate the results of further study of our mirror
construction, to be given in sequel papers, as well as connections with
other recent work.

There are three broad classes of behaviour for our construction,
depending on the properties of the intersection matrix $(D_i\cdot D_j)$:
the matrix can be negative definite, negative semi-definite but not
negative definite, or not negative semi-definite. The first case is
analyzed here in detail to prove Theorem \ref{loocor2}. We will discuss
the third case in the sequel paper.

We call the case that the intersection matrix is not negative semi-definite
the \emph{positive} case. It holds if and only if
$U$ is the minimal
resolution of an affine surface, see Lemma~\ref{pospair}.
In this case, the cone $\NE(Y)_{\bR}$ is rational polyhedral, so we may
take $P=\NE(Y)$. Furthermore, the ideal $J$ of
Theorem \ref{extensiontheorem} equals $0$. Thus our construction defines
an algebraic family over $\Spec\kk[\NE(Y)]$, with smooth generic fibre.
We will show in
Part II that the restriction of this family to the structure torus
\[
\cX \to T_Y := \Pic(Y) \otimes \bG_m = \Spec \kk[A_1(Y)] \subset \Spec
\kk[\NE(Y)]
\]
is close to a universal family of deformations of $U = Y \setminus D$.

More precisely,
we will show independently of the positivity of the intersection
matrix that our formal family has a
simple and canonical (fibrewise) compactification
to a formal family $(\cZ,\cD)$ of Looijenga pairs
(with $\cX = \cZ \setminus \cD$), equivariant for the action of $T^D \subset T_Y$,
the subtorus generated by the components of $D$. The theta functions are $T^D$ eigenfunctions,
see \S \ref{relativetorussection}.

In the positive case this
extends naturally over all of $\Spec \kk[\NE(Y)]$, and its restriction
$(\cZ,\cD) \to T_Y$ comes
with a trivialization of the boundary $\cD = D_{*} \times T_Y$ realizing it as the universal
family of Looijenga pairs $(Z,D_Z)$ deformation equivalent to $(Y,D)$
together with an isomorphism $D_Z \stackrel{\sim}{\rightarrow} D_{*}$ constructed in \cite{GHK12}.
In particular, choosing such an isomorphism $D \stackrel{\sim}{\rightarrow} D_{*}$ for our
original pair $(Y,D)$ canonically
identifies it with a fibre of the family $(\cZ,\cD)/T_Y$.
More importantly, the restrictions of
the theta functions $\vartheta_q$ to $U \subset \cX$ endow
the affine surface $U = Y \setminus D$ with canonical functions.
We give a modular interpretation of the quotient of $\cZ \setminus \cD \to T_Y$
by $T^D$ as the universal deformation of $U$
(this shows in particular the quotient depends only on $U$, e.g.,
is independent of the choice of
compactification $U \subset Y$), and give a unique geometric characterisation of the theta function basis
of $H^0(U,\cO_U)$.

The fact that $(Y,D)$ appears as a fibre
is perhaps a bit surprising as, after all, we set out to construct the
mirror and have obtained the original surface back.
Note however that dual Lagrangian torus fibrations in dimension $2$ are topologically equivalent by Poincar\'e duality,
so this is consistent with the SYZ formulation of mirror symmetry.

%We believe the existence of
%these canonical functions on such familiar objects
%(for example, the complement to a nodal plane cubic)
%represents the deepest
%reach to date of mirror symmetry into classical mainstream algebraic
%geometry.

To illustrate, in Example \ref{thetaexample1}
we explicitly compute the theta functions
in the case $(Y,D)$ is the del Pezzo of degree $5$ together
with a cycle of $5$ $(-1)$-curves. In Example \ref{thetaexample2},
we give the expression in
the case of a triangle of lines on a cubic surface,
deferring in this case the proof until
Part II. In each of these cases there is a characterisation of the
$\vartheta_{v_i}$ in terms of classical geometry.

In a different direction, in \cite{GHKK}, along with M.\ Kontsevich,
we extend
many of the methods introduced in this paper to prove a number of
significant conjectures about cluster varieties. In particular, the
technology of theta functions leads to a proof of positivity of the
Laurent phenomenon, and a proof of the Fock-Goncharov dual basis
conjecture for a broad class of cluster varieties. The latter can be
viewed as a generalization of the construction of theta functions
on $Y\setminus D$ in the positive case, described above. In fact,
in the case of cluster varieties associated to a skew-symmetric matrix
of rank two, the Fock-Goncharov $\cX$ variety fibres over a torus
with fibres being interiors of Looijenga pairs. This is described in
detail in \cite{GHK13}. In this case, the general construction of theta
functions in \cite{GHKK} coincides with the ones constructed here.

\medskip

Let us end with some mild speculation in all dimensions suggested by the
above discussion. By a Looijenga pair we mean
a dlt pair $(Y,D)$ (e.g., a simple normal crossings pair)
with $K_Y + D$ trivial and $(Y,D)$ having
a zero-dimensional log canonical center.
(In the simple normal crossing case, this means there is an intersection
point of $\dim Y$ different components of $D$.)
By a {\it log Calabi-Yau with maximal boundary}  we mean a
variety $U$ which can be realized as the interior $Y \setminus D$ of a Looijenga pair.
See \S 1 of \cite{GHK13} for background on these notions.
We expect that many of the results in this paper will extend to Looijenga pairs of
all dimensions.
This generalization will require
the further development of the technology of logarithmic Gromow-Witten
invariants, \cite{GS11},\cite{AC11}. We should obtain in complete
generality a mirror family $\cX\rightarrow \Spec \widehat{\kk[P]}$ for
suitable monoids $P$. Furthermore, one would expect in the case that $U = Y \setminus D$
is affine that
this family extends to $\cX\rightarrow
\Spec \kk[P]$. Using the two-dimensional case as a guide, the
general fibre of $\cX\rightarrow\Spec\kk[P]$ should itself be the interior of
a Looijenga pair $(\bar X,E)$, with
$\bar X\setminus E$ affine by construction. Thus we can then repeat the
process to obtain a family $\cX' \rightarrow \Spec\kk[P']$, and it would be expected, as taking mirror twice
should return to where we started, that $\cX'\rightarrow \Spec\kk[P']$
contains a fibre isomorphic to $U$. The family $\cX'$ carries
our canonically defined theta functions, indexed by tropical points of the
mirror. This leads us to propose:

\begin{conjecture} Let $U$ be an affine log Calabi-Yau variety with maximal boundary.
Then $H^0(U, \shO_U)$ has a canonical basis of theta
functions indexed by tropical points of the mirror. The structure
constants for multiplication of theta functions can be described combinatorially
in terms of broken lines.
\end{conjecture}

Versions of this conjecture have been proven for cluster varieties in many
cases in \cite{GHKK}.

\subsection{Acknowledgements.}
An initial (and ongoing) motivation for the project was to
find a geometric compactification of moduli of polarized K3
surfaces. We received a good deal of initial inspiration in this direction
from conversations with V. Alexeev. The project also owes a great deal to
the first author's collaboration with B.~Siebert.
We learned a great many things
from A. Neitzke, especially about the connections of
our work with cluster algebras and moduli of local systems. Our thinking about
Looijenga pairs was heavily influenced by conversations with R.~Friedman and
E.~Looijenga.
Many other people have helped
us with the project, and discussions
with D.~Allcock, D.~Ben-Zvi, V.~Fock, D.~Freed,
A.~Goncharov, R.~Heitmann, D.~Huybrechts, M.~
Kontsevich, A.~Oblomkov, T.~Perutz, M.~Reid, A.~Ritter, and
Y.~Soibelman were particularly helpful.
We would also like to thank IH\'ES for hospitality during the summer of 2009 when part of this research was done.
The first author was partially supported by NSF grants DMS-0805328
and DMS-0854987.
The second author was partially supported by NSF grant DMS-0968824 and DMS-1201439.
The third author was partially supported by NSF grant DMS-0854747.

\section{Basics}
\subsection{Looijenga pairs}

\begin{definition} A \emph{Looijenga pair} $(Y,D)$ is a smooth rational
projective surface $Y$ together with a reduced nodal curve $D\in |-K_Y|$ with
at least one singular point.
\end{definition}

Note that for a Looijenga pair, $p_a(D)=1$ by adjunction. Since
$H^1(Y,\shO_Y)=0$ by rationality of $Y$, $D$ is connected. Applying
adjunction to each irreducible component of $D$, one sees easily that
$D$
is either an irreducible genus one curve with a single node, or a cycle
of smooth rational curves. We will always write $D=D_1+\cdots+D_n$,
with a cyclic ordering of the irreducible components, and take the
indices modulo $n$.

We will need a few basic facts about Looijenga pairs, which we collect
here.

\begin{definition}
Let $(Y,D)$ be a Looijenga pair.
\begin{enumerate}
\item
A \emph{toric blow-up} of $(Y,D)$ is a birational morphism $\pi:\tilde
Y\rightarrow Y$ such that if $\tilde D$ is the reduced scheme
structure on $\pi^{-1}(D)$, then $(\tilde Y,\tilde D)$ is a Looijenga
pair. In particular, $\tilde Y$ is smooth.
\item A \emph{toric model} of
$(Y,D)$ is a birational morphism $(Y,D) \to (\oY,\oD)$
to a smooth toric surface $\oY$ with its toric boundary $\oD$ such
that $D \to \oD$ is an isomorphism.
\end{enumerate}
\end{definition}

Note that if $\pi:\tilde Y\rightarrow Y$ is the blow-up of a node of
$D$, then $\pi$ is a toric blow-up.

\begin{proposition}
\label{toricmodelexists}
Given $(Y,D)$ there exists a toric blowup
$(\tY,\tD)$ which has a toric model $(\tY,\tD) \to (\oY,\oD)$.
\end{proposition}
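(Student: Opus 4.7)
The plan is to prove the proposition by induction on the Picard number $\rho(Y)$, reducing the problem to finding a sequence of ``interior contractions'' — contractions of $(-1)$-curves meeting $D$ transversally at a single smooth point — terminating at a smooth toric surface with its toric boundary, after possibly a preliminary toric blowup.

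For the base case, $Y$ is a minimal rational surface: either $\PP^2$ or a Hirzebruch surface $\FF_n$ with $n \neq 1$. The possibilities for the anticanonical cycle $D$ admit a finite direct enumeration (for $\PP^2$: a triangle of lines, a conic plus a line, or an irreducible nodal cubic; and similarly a short list for each $\FF_n$). In each case a few explicit toric blowups at nodes of $D$ reduce the pair to the toric boundary of an iteratively toric blown-up toric surface, producing the required toric model.

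For the inductive step, since $Y$ is non-minimal it carries a $(-1)$-curve $E$, and from $K_Y = -D$ together with adjunction we find $E \cdot D = 1$. An irreducible curve hitting $D$ with intersection number $1$ must either meet $D$ transversally at a single smooth point of $D$ (the \emph{interior case}), or coincide with a boundary component $D_i$ of self-intersection $-1$ (the \emph{boundary case}); meeting a node of $D$ would force $E \cdot D \geq 2$, and a curve disjoint from $D$ cannot exist since $-K_Y = D$ is effective. In the interior case I contract $E$ via $\sigma\colon Y \to Y'$, obtaining a Looijenga pair $(Y', D' := \sigma(D))$ with $\rho(Y') = \rho(Y)-1$. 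By induction there exist a toric blowup $\pi'\colon \tY' \to Y'$ and a toric model $f'\colon \tY' \to \oY$. Because $\sigma(E) \in D'$ is a smooth point, it is disjoint from the (iterated) nodes that serve as centers of $\pi'$, so by Lemma~\ref{toricblowuplemma} (which identifies toric blowups with refinements of the underlying tropical manifold $B_{(Y,D)} = B_{(Y',D')}$) the refinement $\pi'$ lifts canonically to a toric blowup $\pi\colon \tY := Y \times_{Y'} \tY' \to Y$. The proper transform of $E$ in $\tY$ is then an interior $(-1)$-curve whose contraction, composed with $f'$, yields the desired toric model $\tY \to \oY$.

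In the boundary case, the map $\sigma\colon Y \to Y'$ contracting $E = D_i$ is itself a toric blowup of $(Y', D')$ — namely the blowup of the node $\sigma(E) \in D'$ where $D_{i-1}$ and $D_{i+1}$ now meet. Applying induction to $(Y', D')$ produces $\pi'\colon \tY' \to Y'$ and $f'\colon \tY' \to \oY$. If $\pi'$ already blows up the node $\sigma(E)$ we simply take $\tY = \tY'$ and are done. Otherwise, I perform one further toric blowup $\tY'' \to \tY'$ at the node of $\tD'$ lying over $\sigma(E)$, introducing a new boundary component $F$; then $\tY'' \to Y'$ factors through $\sigma$, exhibiting $\tY'' \to Y$ as a toric blowup, and simultaneously performing the matching toric blowup $\oY'' \to \oY$ at the image node in $\oD$ yields a toric model $\tY'' \to \oY''$ in which $\tD'' \to \oD''$ is an isomorphism by construction. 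I expect the main obstacle to be precisely this matching step in the boundary case: verifying that the added toric blowup on the source pairs compatibly with a toric blowup on the target, so that the composition remains a toric model. This ultimately reduces to Lemma~\ref{toricblowuplemma} applied on both sides together with the observation that a toric model preserves the combinatorial type of the boundary cycle.
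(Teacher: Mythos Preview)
Your inductive step is sound and mirrors the paper's reductions (1) and (2): contracting an interior $(-1)$-curve, or a boundary $(-1)$-component, preserves the statement. The matching in the boundary case works as you outline via Lemma~\ref{toricblowuplemma}.

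The gap is the base case. Your claim that for a minimal rational surface ``a few explicit toric blowups at nodes of $D$ reduce the pair to the toric boundary of an iteratively toric blown-up toric surface'' is false as stated. By Lemma~\ref{toricblowuplemma} a toric blowup leaves $B_{(Y,D)}$ unchanged, and by Lemma~\ref{toricsmooth} a pair is toric precisely when $B$ is smooth at the origin; hence no sequence of toric blowups can turn a non-toric pair such as $(\PP^2,\text{conic}+\text{line})$ or $(\PP^2,\text{nodal cubic})$ into a toric one. Nor do toric blowups ever create \emph{interior} $(-1)$-curves to feed back into your inductive step: they only add new boundary components, and self-intersections of curves not through the blown-up node are untouched.

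Even read charitably---that you intend to exhibit a toric model after some toric blowups---the minimal case is where the actual content lies, and ``finite direct enumeration'' does not cover it: the pairs $(\FF_n,D)$ for varying $n$ and varying anticanonical cycle do not form a finite list, and the uniform mechanism needed is missing. The paper supplies it by exploiting the ruling: after reducing to a ruled surface, one performs elementary transformations (blow up a node of the non-fibre part $D'$ of $D$, then blow down the strict transform of the fibre through that node), keeping $\rho$ fixed while increasing the number of components of $D$ contained in fibres. Iterating forces either two fibre components (whence $D$ is toric of length four) or a residual Case~II configuration on $\FF_0$ or $\FF_1$, each dispatched by one more explicit move. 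This ruling-based argument, not an enumeration, is what your base case needs.
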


\begin{proof} First observe:
\begin{enumerate}
\item
Let $p:Y \to Y'$ be the blowdown of a $(-1)$-curve not contained in $D$, and
$D':= p_*(D) \subset Y'$. If the proposition holds for
$(Y',D')$ then it holds for $(Y,D)$.
\item
Let $Y'' \to Y$ be
the blowup at a node of $D$, and $D'' \subset Y''$ the reduced
inverse image of $D$. The proposition holds for $(Y'',D'')$ if and only if
it holds for $(Y,D)$.
\end{enumerate}

By using (1) and (2) repeatedly
we may assume $Y$ is minimal, and thus is either a ruled surface
or is $\bP^2$. In the latter case, by
blowing up a node of $D$ we reduce to the ruled case.
So we have $q: Y \to \bP^1$ a ruling. We next consider
the number of components of $D$ which are fibres
of $q$. There cannot be more than two such components, for
otherwise $D$ cannot be a cycle. If there are precisely
two such components, then $D$ necessarily has precisely four
components, and it is then easy to check that $D$ is the toric
boundary of $Y$, for a suitable choice of toric structure on $Y$.
In this case the proposition obviously holds.
Otherwise let $D' \subset D$ be the union of
components not contained in fibres.
If $D'$ has a node, then we can blowup the
node, blowdown the strict transform of the fibre through
the node, increasing the number of components of $D$
contained in fibres.

After carrying out this procedure for each node of $D'$, we are then in one
of two cases.

\emph{Case I}. $D$ has two components contained in fibres, and then we are
done.

\emph{Case II}. $D$ consists of a fibre $f$ and a non-singular
irreducible two-section $D'$ of $q$. Note that since $D'+f\sim -K_Y$
and $Y$ is isomorphic to the Hirzebruch surface $\FF_e$ for some $e$,
we can write $\Pic Y=\ZZ C_0\oplus\ZZ f$, with $C_0^2=-e$ and
$-K_Y=2C_0+(e+2)f$. Thus $D'\sim 2C_0+(e+1)f$ and $C_0\cdot D'
=-e+1$. Since $C_0$ is not contained in $D'$, $e=0$ or $1$.

If $e=0$, then there is a second ruling $q':Y\rightarrow\PP^1$,
with $D'$ and $f$ sections of this ruling. In this case, we follow
the same procedure as above of blowing up nodes
for this new ruling, arriving in Case I.

If $e=1$, then $C_0$ is disjoint from $D'$. Blowing down $C_0$,
we obtain $\PP^2$, and can then blowup one of the nodes of the image of
$D'\cup f$. Using this new ruled surface, we can again
blowup a node and find ourselves back in Case I.
\end{proof}

\subsection{Tropical Looijenga pairs}
\label{affinesection}

We explain how to tropicalize a Looijenga pair, first
recalling the following basic definition.
Fix a lattice $M\cong\ZZ^n$. In what follows, we will always use
the notation $M_{\RR}=M\otimes_{\ZZ}\RR$, $N=\Hom_{\ZZ}(M,\ZZ)$
and $N_{\RR}=N\otimes_{\ZZ}\RR$. We denote by $\Aff(M)$ the
group of affine linear transformations of the lattice $M$.
Recall the following definitions from \cite{GS06}.

\begin{definition} An \emph{integral affine manifold} $B$ is
a (real) manifold $B$ with an atlas of charts $\{\psi_i:U_i
\rightarrow M_{\RR}\}$ such that $\psi_i\circ\psi_j^{-1}
\in\Aff(M)$ for all $i,j$.

An \emph{integral affine manifold with singularities $B$} is
a (real) manifold $B$ with an open subset $B_0\subset B$
which carries the structure of an integral affine manifold,
and such that $\Delta:=B\setminus B_0$, the \emph{singular locus}
of $B$, is a locally finite union of locally closed submanifolds
of codimension at least two.

If $B$ is an integral affine manifold with singularities, there is
a local system $\Lambda_B$ on $B_0$ consisting of flat integral
vector fields: if $y_1,\ldots,y_n$ are local integral
affine coordinates, then $\Lambda_B$ is locally given by
linear combinations of the
vector fields $\partial/\partial y_1,\ldots,\partial/\partial y_n$.
If $B$ is clear from context, we drop the subscript $B$.

Similarly, $\check\Lambda_B$ is the dual local system, locally
generated by $dy_1,\ldots,dy_n$.
\end{definition}

We will be primarily interested in $\dim B=2$ in this paper, in which
case $\Delta$ will consist, in all our examples, of a finite
number of points. All integral affine manifolds we encounter will in fact
be \emph{linear}, in the sense that the coordinate transformations are
in fact linear rather than just affine linear.

We associate to a Looijenga pair $(Y,D)$ a pair $(B,\Sigma)$, where
$B$ is homeomorphic to $\RR^2$ and has the structure of
integral affine manifold with one singularity at the origin,
and $\Sigma$ is a decomposition of
$B$ into cones. We call $(B,\Sigma)$ the \emph{tropicalization} of
$(Y,D)$, and $\Sigma$ the \emph{fan} of $(Y,D)$.
The idea is that we pretend that
$(Y,D)$ is toric and we try to build the associated fan. More precisely,
the construction is as follows.

For each node
$p_{i,i+1} := D_i \cap D_{i+1}$ of $D$ we take a rank
two lattice $M_{i,i+1}$ with basis $v_i,v_{i+1}$, and
the cone $\sigma_{i,i+1} \subset M_{i,i+1}
\otimes_{\ZZ} \RR$ generated by $v_i$ and $v_{i+1}$.
We then glue $\sigma_{i,i+1}$ to $\sigma_{i-1,i}$ along the rays
$\rho_i := \RR_{\geq 0} v_i$ to obtain a piecewise-linear
manifold $B$ homeomorphic
to $\RR^2$ and a decomposition
\[
\Sigma=\{\sigma_{i,i+1}\,|\,1\le i\le n\}
\cup \{\rho_i\,|\,1\le i\le n\} \cup \{0\}.
\]
We define an integral affine
structure on $B \setminus \{0\}$ by defining charts
$\psi_i:U_i\rightarrow M_{\RR}$ (where $M=\ZZ^2$). Here
\begin{equation}
\label{Uidef}
U_i=\Int(\sigma_{i-1,i}\cup \sigma_{i,i+1})
\end{equation}
and $\psi_i$ is defined on the closure of $U_i$ by
\[
\psi_i(v_{i-1})=(1,0),\quad \psi_i(v_i)=(0,1),\quad
\psi_i(v_{i+1})=(-1,-D_i^2),
\]
with $\psi_i$ linear on $\sigma_{i-1,i}$ and $\sigma_{i,i+1}$.
The reason for choosing these particular vectors is that they
form the one-dimensional rays of a fan defining a toric variety
such that the divisor $D_i$ corresponding to the ray generated by $v_i$
has self-intersection $D_i^2$.

We note this construction makes sense even when
$n=1$, i.e., the anti-canonical divisor $D$ is an irreducible
nodal curve. In this case there is one cone $\sigma_{1,1}$, and
opposite sides of the cone are identified.
(Moreover, the integral affine charts are defined using the integer $D_1^2-2$ instead of $D_1^2$.
This is the degree of the normal bundle of the map from the normalization of $D_1$ to $Y$.)
However, this case
will often complicate arguments in this paper, so we will usually
replace $Y$ with a surface obtained by blowing up
the node of $D$, and replace $D$ with the reduced inverse image
of $D$ under the blowup. This does not change the underlying
integral affine manifold with singularities, but refines the
decomposition $\Sigma$, exactly as in the toric case:

\begin{definition}
Given $(B,\Sigma)$, a \emph{refinement} is a pair
$(B,\tilde\Sigma)$,
where $\tilde\Sigma$ is a
decomposition of $B$ into rational polyhedral
cones refining $\Sigma$, each cone of $\tilde\Sigma$ integral affine
isomorphic to the first quadrant in $\RR^2$.
\end{definition}

\begin{lemma}
\label{toricblowuplemma}
There is a one-to-one correspondence between
toric blow-ups of $(Y,D)$ and refinements of $(B,\Sigma)$. Furthermore,
if $(\tilde Y,\tilde D)$ is a non-singular toric blow-up of $(Y,D)$,
and $(\tilde B,\tilde\Sigma)$ is the affine manifold with singularities
constructed from $(\tilde Y,\tilde D)$, then $\tilde B$ and $B$
are isomorphic as integral affine manifolds with singularities in
such a way that $\tilde\Sigma$ is the corresponding refinement of $\Sigma$.
\end{lemma}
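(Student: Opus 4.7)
The plan is to reduce to the elementary case — a blowup at a single node of $D$, corresponding to an elementary star subdivision of a single two-dimensional cone $\sigma_{i,i+1}$ — and then to verify the integral affine structures agree by a direct local computation in the charts $\psi_i$.

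First I would show that both sides decompose into elementary operations. Any toric blowup $\pi:\tilde Y\to Y$ factors as a composition of ordinary blowups of nodes of the intermediate anti-canonical cycles, since the reduced inverse image of $D$ must remain a nodal cycle of rational curves at every stage. Dually, any refinement $\tilde\Sigma$ of $\Sigma$ can be obtained by iteratively inserting one primitive ray at a time into a two-dimensional cone: for a ray with primitive generator $v=av_i+bv_{i+1}$, $\gcd(a,b)=1$, the Euclidean algorithm on $(a,b)$ presents this as a finite chain of elementary star subdivisions, each inserting the sum of the two neighbouring primitive generators. This matches the factorization of a weighted blowup into a sequence of ordinary blowups of nodes.

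In the elementary case, let $\pi:\tilde Y\to Y$ be the blowup at $p_{i,i+1}=D_i\cap D_{i+1}$ with exceptional curve $E$. Then $\tilde D=D_1+\cdots+D_i+E+D_{i+1}+\cdots+D_n$ is an anti-canonical cycle with $E^2=-1$, $\tilde D_i^2=D_i^2-1$, $\tilde D_{i+1}^2=D_{i+1}^2-1$, and all other self-intersections unchanged; hence $\tilde\Sigma$ has exactly one new ray $\rho_E$ subdividing $\sigma_{i,i+1}$, with the inverse construction (blow up $p_{i,i+1}$) equally clear. The $n=1$ case reduces to $n\ge 2$ by first blowing up the unique node of $D$, which does not alter the underlying integral affine manifold with singularities.

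The main and really only substantive step is to verify that the identification $\tilde B=B$ preserves the integral affine structure, with $\rho_E$ positioned in $\sigma_{i,i+1}$ as required by a star subdivision. In the chart $\psi_i:U_i\to\bR^2$ of $B$ one has $\psi_i(v_i)=(0,1)$ and $\psi_i(v_{i+1})=(-1,-D_i^2)$, while in the chart $\tilde\psi_i$ of $\tilde B$ (whose neighbouring rays are those of $D_{i-1}$ and $E$) the defining relation \eqref{linearstructure} forces $\tilde\psi_i(v_E)=(-1,-\tilde D_i^2)=(-1,1-D_i^2)$. Since this equals $\psi_i(v_i)+\psi_i(v_{i+1})$, the ray $\rho_E$ sits at the position $v_i+v_{i+1}$ inside $\sigma_{i,i+1}$, and the two affine structures agree on this chart. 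The parallel computation on the chart $\tilde\psi_E$ around $\rho_E$ (using $E^2=-1$, giving $\tilde\psi_E(v_i)=(1,0)$, $\tilde\psi_E(v_E)=(0,1)$, $\tilde\psi_E(v_{i+1})=(-1,1)$, and noting the induced transition to $\psi_i|_{\sigma_{i,i+1}}$ is integral linear) completes the check of local agreement. Iteration then yields the bijection and isomorphism of integral affine manifolds with singularities in full generality.
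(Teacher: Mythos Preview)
Your argument is correct and gives a complete proof, but it takes a different route from the paper. The paper argues more conceptually: it first observes (from the anti-canonical condition on $\pi^{-1}(D)_{\red}$) that $\pi$ is an isomorphism away from $\Sing(D)$, then works \'etale locally near each node, where the pair $(Y,D)$ is isomorphic to the toric pair $(\AA^2,\partial\AA^2)$. In that local model, the possible toric blowups are identified with subdivisions of the cone by the log discrepancy characterization --- the exceptional divisors of toric blowups are exactly the divisors with log discrepancy $-1$. The second statement is then declared ``easily checked''. By contrast, you factor both sides into elementary operations (ordinary blowups at nodes, elementary star subdivisions via the Euclidean algorithm) and verify the affine-structure statement by an explicit chart computation using the self-intersection changes $\tilde D_i^2=D_i^2-1$, $E^2=-1$. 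Your approach is more hands-on and avoids the log discrepancy input; the paper's is shorter but leans on the \'etale local toric model. One small point: your sentence ``since the reduced inverse image of $D$ must remain a nodal cycle of rational curves at every stage'' is doing real work --- it is what forces each intermediate center to be a node rather than a smooth point of $D$ or a point off $D$ --- and a one-line adjunction check (as in the paper's argument) would make this step airtight.
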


\begin{proof} Let $\pi:\tilde Y\rightarrow Y$ be a toric blow-up.
It follows from the condition that
$\pi^{-1}(D)_{\red}$ is an anti-canonical divisor that
$\pi:\tilde Y\setminus \pi^{-1}(\Sing(D))\rightarrow Y\setminus
\Sing(D)$ is an isomorphism. Indeed, if this restriction of $\pi$ has
an exceptional divisor, it must have discrepancy $a(E,Y,D)=-1$.
But by \cite{KM98},
Cor.\ 2.31, (3), the smallest discrepancy occuring is $0$.

Thus necessarily $\pi$ is a blow-up along a subscheme
supported on $\Sing(D)$. Let $x\in\Sing(D)$ be a double point of $D$,
corresponding to a cone $\sigma\in\Sigma$. Note $\sigma$ can be
viewed as a rational polyhedral cone defining a non-singular
toric variety $X_{\sigma}\cong\AA^2$. Then
\'etale locally near $x$,
the pair $(Y,D)$ is isomorphic to the pair
$(X_{\sigma},\partial X_{\sigma})$. One can then check that
in this local model, the only possible blow-ups satisfying
the definition of toric blow-ups come from subdivisions of the
cone $\sigma$, i.e., toric blow-ups of $X_{\sigma}$. Indeed,
the exceptional divisors of toric blowups are the only divisors
with discrepancy $-1$. This gives
the desired correspondence. The second statement is then easily
checked.
\end{proof}

\begin{example}
\label{basictoricexample}
It is easy to see that if $Y$ is a non-singular
toric surface and
$D=\partial Y$ is the toric boundary of $D$, then in fact
the affine structure on $B$ extends across the origin, identifying
$(B,\Sigma)$ with $(M_{\RR},\Sigma_Y)$, where $\Sigma_Y$ is the
fan for $Y$. Indeed, if $\rho_j\in\Sigma_Y$ corresponds to the
divisor $D_j$ and $\rho_j=\RR_{\ge 0} v_j$ with $v_j\in M$ primitive,
then it is a standard fact that
\[
v_{i-1}+(D_i)^2 v_i+v_{i+1}=0.
\]
Since $Y$ is non-singular, there is always a linear identification
$\varphi_i:M \stackrel{\sim}{\rightarrow} \ZZ^2$ taking $v_{i-1}$ to $(1,0)$, $v_i$ to $(0,1)$,
and thus $v_{i+1}$ must map to $(-1,-D_i^2)$. So on $U_i$, a chart
for the affine structure on $B$ is $\psi_i'=\varphi_i^{-1}\circ \psi_i:U_i
\rightarrow M_{\RR}$. The maps $\psi_i'$ glue to give an integral
affine isomorphism $B\rightarrow M_{\RR}$.
\end{example}

In fact, the converse is also true:

\begin{lemma} \label{toricsmooth} If the affine structure on $B_0
=B\setminus \{0\}$ extends across the origin, then $Y$ is toric
and $D=\partial Y$. \end{lemma}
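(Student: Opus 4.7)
The strategy is to first identify $(B,\Sigma)$ with a smooth complete toric fan in $\RR^2$, and then use the existence of a toric model to reduce to showing that no blowups are needed.

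First, the hypothesis gives a global integral affine identification $B \cong M_{\RR}$ with $M \cong \ZZ^2$. Under this identification the primitive ray generators $v_i$ become honest integer vectors, and the defining relations of the charts $\psi_i$ translate into
\[
v_{i-1} + D_i^2 v_i + v_{i+1} = 0, \qquad \{v_i, v_{i+1}\} \text{ a } \ZZ\text{-basis of } M.
\]
Hence $\Sigma$ is a smooth complete fan in $M_\RR$, and the associated smooth projective toric surface $\bar Y$ with toric boundary $\bar D = \bar D_1 + \cdots + \bar D_n$ has $\bar D_i^2 = D_i^2$ and the same incidence pattern as $(Y,D)$ (this is Example~\ref{basictoricexample} read in reverse).

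Next I would apply Proposition~\ref{toricmodelexists} to select a toric model $p \colon (Y,D) \to (Y', D')$: a composition of blowups of smooth (non-corner) points on the toric boundary $D'$ of a smooth projective toric surface $Y'$, with $D$ the strict transform of $D'$. If $k_i \ge 0$ denotes the number of blowup centers on the component $D'_i$, then $D_i^2 = (D'_i)^2 - k_i$, and the combinatorial type of the fan of $(Y,D)$ matches that of $(Y',D')$. The lemma reduces to showing $k_i = 0$ for every $i$, since then $p$ is an isomorphism and $(Y,D) = (Y',D')$ is toric with $D = \partial Y$.

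The main step, which I expect to be the chief obstacle, is to extract from the hypothesis the conclusion that every $k_i$ vanishes. Since $(Y',D')$ is toric, its tropical affine structure $B_{(Y',D')}$ is flat on all of $M'_\RR$, and the affine structure on $B_{(Y,D)}$ is obtained from it by inserting, for each $i$, $k_i$ focus--focus ($I_1$) singularities along the ray $\rho'_i$ and coalescing them at the origin (the ``moving worms'' picture of the introduction). The monodromy of $B_{(Y,D)}$ around the origin is therefore an ordered product in $SL_2(\ZZ)$ of unipotent shears, one shear of weight $k_i$ along the direction $v_i$ for each $i$, and our hypothesis forces this product to equal the identity. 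The combinatorial heart of the argument is that a product of such non-negative shears along the distinct ray directions of a complete smooth fan in $\ZZ^2$ can equal the identity only when every weight $k_i$ vanishes; I would verify this either by direct matrix computation using the recursion $v_{i-1} + (D'_i)^2 v_i + v_{i+1} = 0$ in $\RR^2$, or more conceptually by observing that each such non-trivial shear moves some strictly positive class in the ample cone of $Y'$, whereas the assumed trivial monodromy must fix every cohomology class. With all $k_i = 0$ the map $p$ is an isomorphism and the lemma follows.
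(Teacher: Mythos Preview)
Your overall strategy matches the paper's: reduce to a toric model via Proposition~\ref{toricmodelexists} and then argue that no interior blowups can occur. The paper phrases the comparison via the developing map of $B\setminus\rho_1$, while you phrase it via the monodromy of $\Lambda$ around the origin; these are two sides of the same coin.

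There are two real issues. First, Proposition~\ref{toricmodelexists} does \emph{not} give a toric model for $(Y,D)$ itself, only for a toric blowup $(\tilde Y,\tilde D)$. The paper handles this by invoking Lemma~\ref{toricblowuplemma} to replace $(Y,D)$ by $(\tilde Y,\tilde D)$ without changing $B$; you have silently skipped this, and also the easy converse step that if $(\tilde Y,\tilde D)$ is toric then so is $(Y,D)$.

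Second and more seriously, your ``combinatorial heart'' is exactly the content of the lemma, and you have not proved it. Your suggestion to use the ample cone is confused: the monodromy and the shears act on the rank~$2$ lattice $M$ (stalks of $\Lambda$), not on $\Pic(Y')$, so there is no ``strictly positive class in the ample cone'' for them to move. The ``direct matrix computation'' route can be made to work, but it is precisely what the paper's developing-map argument does in disguise: normalising $\psi$ and $\bar\psi$ to agree on $\sigma_{1,2}$, one has $\psi(v_{j+1})=\bar\psi(v_{j+1})+k_j\psi(v_j)$ plus earlier corrections, so each $k_j>0$ strictly rotates $\psi(\rho_{j+1})$ back toward $\psi(\rho_j)$, and inductively $\psi(B\setminus\rho_1)$ becomes a proper wedge in $M_\RR\setminus\bar\rho_1$. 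That geometric monotonicity is the missing ingredient; once you supply it, your monodromy formulation and the paper's developing-map formulation coincide. Your first paragraph (building $\bar Y$ from the extended fan) is not used and can be dropped.
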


\begin{proof}
We first note that by Lemma \ref{toricblowuplemma}, we can replace
$(Y,D)$ with a non-singular
toric blow-up without affecting the affine manifold
$B$. By Proposition \ref{toricmodelexists}, we can thus assume
the existence of a toric model $\pi:(Y,D)\rightarrow(\oY,\oD)$. If $\oD_i$
is the image of $D_i$ under this map, then $\oD_i^2\ge D_i^2$.

We first claim that $(Y,D)$ is isomorphic to $(\oY,\oD)$ if and only
if equality
holds for every $i$. Indeed, if equality holds for a given $i$,
then $\pi$ can't contract any curves which intersect $D_i$. On
the other hand, $\pi$ can't contract any curves contained in
$Y\setminus D$ since then $D$ would not be an anti-canonical cycle.

Now assume that $(Y,D)$ is \emph{not} toric, so that $\pi$ is
not an isomorphism. Let $(M_{\RR},\oSigma)$ be the fan for the toric
pair $(\oY,\oD)$, with rays $\bar\rho_1,\ldots,\bar\rho_n$ corresponding
to $\rho_1,\ldots,\rho_n$. In general, $B\setminus\rho_1$
has a coordinate chart $\psi:B\setminus\rho_1\rightarrow M_{\RR}$,
constructed by gluing together coordinate charts for $U_2,\ldots, U_n$.
This can be done so that $\sigma_{1,2}$
is mapped to the cone of $\oSigma$ generated by $\bar\rho_1$ and $\bar\rho_2$.
It is now enough to show the following:

\emph{Claim}.
For suitable
choice of $\rho_1$, in fact $\psi$ is injective and $\psi(B\setminus
\rho_1)$ is strictly contained in $M_{\RR}\setminus\bar\rho_1$.

To show this, first let us analyze the effects of one blow-up on these charts.
Let $(Y,D)\rightarrow (Y',D')$ be given by a blow-up of a single point
$p\in D_i'$ for some $i$, where $(Y',D')$ is obtained from $(\oY,\oD)$ by
a sequence of blow-ups with centers at smooth points of the boundary.
Let $(B',\Sigma')$ be
the tropicalization of $(Y',D')$.
Let us examine the difference between the charts
$\psi:B\setminus \rho_1\rightarrow M_{\RR}$ and $\psi':B'\setminus\rho_1'
\rightarrow M_{\RR}$ defined as above. If $i=1$, then $B\setminus\rho_1$ and
$B'\setminus\rho_1'$ are affine isomorphic and $\psi$, $\psi'$ agree. Otherwise,
let $\sigma_{1,i}=\bigcup_{j=2}^i \sigma_{j-1,j}\subset B$, with
$\sigma'_{1,i}\subset B'$ defined similarly. Then $\sigma_{1,i}$ and
$\sigma_{1,i}'$ are affine isomorphic and
$\psi|_{\sigma_{1,i}\setminus\rho_1}=\psi'|_{\sigma_{1,i}'\setminus\rho_1}$.
On the other hand,
$\psi|_{B\setminus \sigma_{1,i}}=T_i\circ \psi'|_{B'\setminus \sigma_{1,i}'}$
where $T_i:M_{\RR}\rightarrow M_{\RR}$ is the shear $T_i(m)=m+\langle m, n'_i
\rangle v_i'$, where $v_i'$ is a primitive generator of $\psi'(\rho_i')$
and $n'_i\in N$ is primitive, annihilates $v_i'$, and is positive on
$\psi'(\sigma_{i,i+1}')$.

Now note that $\oD_i^2>D_i^2$ for
at least one $i$, and by choosing $\rho_1$ appropriately, we can
assume that this is the case for some $i\not=1$. Furthermore, we can
also assume that if there is a $\bar\rho_j\in \oSigma$ with $\bar\rho_j
=-\bar\rho_1$, then there is an $i$ with $\oD_i^2>D_i^2$ with $i\not=1, j$.
Applying the above description of the change of the coordinate charts under
one blow-up repeatedly then shows the claim.

Now if the affine structure on $B_0$ extended across the origin,
then $\psi$ would extend to an isomorphism $\psi:B\rightarrow M_{\RR}$, which
contradicts the claim.
\end{proof}

\begin{example}
\label{M05example}
%Let $Y=\overline{\M}_{0,5}$, the moduli space of stable $5$-pointed
%rational curves.
%It is well-known that $Y$ is a del Pezzo surface of
%degree $5$. Furthermore, $\overline{\M}_{0,5}\setminus \M_{0,5}$
%consists of $10$ irreducible components, precisely the $(-1)$-curves
%on $Y$, corresponding to the different ways two of the marked points
%can come together. It is easy to find an anti-canonical cycle $D$ of
%length $5$ among these $10$ curves.

Let $Y$ be a del Pezzo surface of degree $5$. Thus $Y$ is isomorphic to the blowup of $\bP^2$ in $4$ points in general position.
The surface $Y$ contains exactly $10$ $(-1)$-curves. It is easy to find an anti-canonical cycle $D$ of length $5$ among these $10$ curves.

In this case,
consider $B\setminus\rho_1$. Each chart $\psi_i:U_i\rightarrow M_{\RR}$
can be composed with an integral linear function on $M_{\RR}$ in such
a way that the charts $\psi_2,\psi_3,\psi_4$ and $\psi_5$ glue
to give a chart $\psi:B\setminus\rho_1\rightarrow M_{\RR}$. This
can be done, for example, with
\[
\psi(v_1)=(1,0), \quad \psi(v_2)=(0,1),\quad
\psi(v_3)=(-1,1),\quad \psi(v_4)=(-1,0),\quad \psi(v_5)=(0,-1).
\]
We can then take a chart $\psi':U_5\cup U_1\rightarrow M_{\RR}$ which
agrees with $\psi$ on $\sigma_{5,1}$, and hence takes the values
\[
\psi'(v_5)=(0,-1),\quad \psi'(v_1)=(1,-1),\quad \psi'(v_2)=(1,0),
\]
see Figure \ref{M05affinemanifold}.

Thus $B$, as an affine manifold, can be constructed
by cutting $M_{\RR}$ along the positive real axis, and then identifying
the two copies of the cone $\sigma_{1,2}$ via an integral linear
transformation.
\end{example}

\begin{figure}
\input{M05affinemanifold.pstex_t}
\caption{}
\label{M05affinemanifold}
\end{figure}

\begin{example} \label{negdefexample}
Suppose given a Looijenga pair $(Y,D)$
with $D_i^2 \leq -2$ for all $i$
and $D$ is negative definite (which is equivalent to
$D_i^2 \leq -3$ for some $i$).
Then we have an analytic contraction $p:Y\rightarrow
\oY$ with $\oY$ having a single cusp singularity.
This case will lead to our proof of Looijenga's conjecture.
We can describe $(B,\Sigma)$ as follows.
Let $M\cong\ZZ^2$
and take $v_0,v_1$ to be a basis for $M$,
and define $v_i$ for $i\in\ZZ$ by the relation
\begin{equation}
\label{negdefrel}
v_{i-1}+(D_{i\bmod n}^2)v_i+v_{i+1}=0.
\end{equation}
We define an infinite fan $\tilde\Sigma$ in $M_{\RR}$ whose two-dimensional
cones are the cones generated by $v_i$ and $v_{i+1}$, $i\in\ZZ$.
It is easy to check that these cones do indeed form a fan and
that the support of the fan $|\tilde\Sigma|$ is a strictly convex cone.
If we define $T\in \SL(M)$ by
$T(v_0)=v_n$ and $T(v_1)=v_{n+1}$, then $T(v_i)=v_{i+n}$ for
each $i$. Necessarily $T$ takes $|\tilde\Sigma|$ to itself,
so the boundary rays of the closure of
$|\tilde\Sigma|$ are real eigenspaces for
$T$. Hence $T$ is hyperbolic, i.e., $\Tr T>2$.

We now obtain $(B,\Sigma)$ by dividing out $|\tilde\Sigma|$ by
the action of $T$.
\end{example}

\subsection{The Mumford degeneration and Givental's construction}
\label{Mumfordsection}

The toric case of Theorem \ref{maintheoremlocalcase}
yields Givental's construction for mirrors of toric varieties in the
surface case, and can
also be seen as a special case of a construction due to Mumford \cite{Mum}.
Mumford's construction in general produces degenerations of arbitrary
toric varieties; the construction as we review it here only gives
degenerations of the algebraic torus.
This should be regarded as a warmup for our general construction.

A toric monoid $P$ is a (commutative) monoid whose Grothendieck
group $P^{\gp}$ is a finitely generated free abelian group and
$P=P^{\gp}\cap \sigma_P$, where $\sigma_P\subseteq P^{\gp}\otimes_{\ZZ}\RR$
is a convex rational polyhedral cone. Let $M=\ZZ^n$ be a lattice, for some
arbitrary rank $n$.
Fix a fan $\Sigma$ in $M_{\RR}=M\otimes_{\ZZ}\RR$,
whose support, $|\Sigma|$, is convex. In what follows, we
view $B=|\Sigma|$ as an affine manifold with boundary.
We denote by $\Sigma_{\max}$ the set of maximal cones in $\Sigma$.

We now generalize the usual notion of a convex piecewise linear function
on a fan. If one is interested in $\RR$-valued convex functions, then
one can take $P=\NN$, $\sigma_P=\RR_{\ge 0}$. Then the following definition
yields the notion of a piecewise linear $\RR$-valued function with
integral slopes, and convexity here means upper convexity, i.e., the function
is the supremum of a collection of linear functions.

\begin{definition}
\label{convexPLdef}
A \emph{$\Sigma$-piecewise linear
function}
$\varphi:|\Sigma|\rightarrow P^{\gp}_\RR$
is a continuous function such that for each
$\sigma\in \Sigma_{\max}$, $\varphi|_{\sigma}$ is given by an element
$\varphi_{\sigma}\in \Hom_{\ZZ}(M,P^{\gp})=N\otimes_{\ZZ} P^{\gp}$.

For each codimension one cone $\rho\in\Sigma$
contained in two maximal cones $\sigma_+,\sigma_-\in\Sigma_{\max}$,
we can write
\[
\varphi_{\sigma_+}-\varphi_{\sigma_-}=n_{\rho}\otimes \kappa_{\rho,\varphi}
\]
where $n_{\rho}\in N$ is the unique primitive element annihilating
$\rho$ and positive on $\sigma_+$, and $\kappa_{\rho,\varphi}\in P^{\gp}$.
We call $\kappa_{\rho,\varphi}$ the {\it bending parameter}. Note
(as the notation suggests)
it depends only on the codimension one cone $\rho$ (not on the ordering of
$\sigma_+,\sigma_-$).

We say a $\Sigma$-piecewise linear
function $\varphi:|\Sigma|\rightarrow P^{\gp}$
is $P$-\emph{convex} (\emph{strictly $P$-convex})
if for every codimension one cone $\rho\in\Sigma$,
$\kappa_{\rho,\varphi}\in P$ ($\kappa_{\rho,\varphi}\in P\setminus P^{\times}$,
where $P^{\times}$ is the group of invertible elements of $P$).
\end{definition}

\begin{example}
\label{fundexample}
Take a complete fan $\Sigma$ in $M_{\RR}$. This
defines a toric variety $Y= Y_{\Sigma}$, which we assume is non-singular.
We let $P \subset P^{\gp}$ be given by the cone of effective
curves,
\[
\NE(Y) \subset A_1(Y,\bZ).
\]
Each codimension one cone
$\rho \in \Sigma$ corresponds to a one-dimensional toric stratum
$D_{\rho}\subset \partial Y$, hence a class $[D_{\rho}]\in\NE(Y)=P$.
If $\omega\in\Sigma(1)$, the set of rays of $\Sigma$, we also write
$D_{\omega}$ for the corresponding toric divisor.

\begin{lemma} \label{varphitoric}
Define $s: T_{\Sigma}:=\ZZ^{\Sigma(1)} \to M$ to send the basis element $t_{\omega}$, 
$\omega \in \Sigma(1)$
to the first lattice point $m_{\omega}$ on $\omega$. Then
\[
A_1(Y,\ZZ)\ni \beta\mapsto \sum_{\omega \in \Sigma(1)} 
(D_{\omega}\cdot\beta) t_{\omega}
\]
identifies $A_1(Y,\ZZ)$ with $\Ker(s)$, giving rise to an exact sequence
\begin{equation} \label{seseq}
0 \to A_1(Y,\ZZ) \to T_{\Sigma} \overset s \to  M \to 0.
\end{equation}
Then there is a unique $\Sigma$-piecewise linear section 
$\tvarphi: M \to T_{\Sigma}$ satisfying $\tvarphi(m_{\omega})=t_{\omega}$.
Let $\pi: T_{\Sigma} \to A_1(Y,\ZZ)$ be any splitting, and set
$\varphi:= \pi \circ \tvarphi$. Then $\varphi: M \to A_1(Y,\ZZ) = P^{\gp}$ is
$\Sigma$-piecewise linear and
strictly $P$-convex, with
\begin{equation} \label{bendeq}
\kappa_{\rho,\varphi}=[D_{\rho}]
\end{equation}
for each codimension one cone $\rho\in\Sigma$.
Up to a linear function, $\varphi$ is the unique $\Sigma$-piecewise linear map with
these bending parameters.
\end{lemma}

\begin{proof}
The exact sequence is standard. Since $\Sigma$ is a complete non-singular
fan, it is clear that there exists such a unique $\tvarphi$. 
To calculate the kink along a codimension one $\rho\in\Sigma$, suppose
$\rho$ is generated by basis vectors $e_1,\ldots,e_{n-1}$ and $\rho$
is contained in two maximal cones, generated by $e_1,\ldots,e_n$ and
$e_1,\ldots,e_{n-1},e_n':=-e_n+\sum_{i=1}^{n-1} a_ie_i$. Let $t_1,\ldots,t_n,t_n'$
be the generators of $T_{\Sigma}$ mapping to $e_1,\ldots,e_n,e_n'$ respectively.
Then the kink is $\kappa_{\rho,\tvarphi}=t_n+t_n'-\sum_{i=1}^{n-1} a_it_i$.
On the other hand, if $D_1,\ldots,D_n,D_n'$ are the divisors corresponding
to the rays generated by $e_1,\ldots,e_n,e_n'$ respectively, then
$D_n\cdot D_{\rho}=D_n'\cdot D_{\rho}=1$ and using the rational function
$z^{e_i^*}$, $D_i$ is linearly equivalent to
$-a_i D_n'$ plus a sum of toric divisors disjoint from $D_{\rho}$.
Thus $D_i\cdot D_{\rho}=-a_i$ and we see that $\kappa_{\rho,\tvarphi}$
is the image of $[D_{\rho}]$ under the inclusion 
$A_1(Y,\ZZ)\rightarrow T_{\Sigma}$. Thus
$\kappa_{\pi\circ\tvarphi,\rho} = \pi(\kappa_{\tvarphi,\rho})=[D_{\rho}]$
as required.
\end{proof}
\end{example}

Given a $\Sigma$-piecewise linear and $P$-convex function
$\varphi:|\Sigma|\to P^{\gp}$ we can define a monoid
$P_{\varphi}\subset M \times P^{\gp}$ by
\begin{equation}\label{Mumford cone}
P_{\varphi}:=\{(m,\varphi(m)+p)\,|\, m\in |\Sigma|, p\in P\}.
\end{equation}
This is the set of integral points lying above the graph of $\varphi$,
in the sense given by the partial order on $P^{\gp}$ defined by
$p_1\ge p_2$ if $p_1-p_2\in P$.
The convexity of $\varphi$ is equivalent to
$P_{\varphi}$ being closed under addition. Furthermore, we have
a natural inclusion $P\hookrightarrow P_{\varphi}$ given by $p\mapsto (0,p)$,
which gives us a morphism
\[
f:\Spec \kk[P_{\varphi}]\rightarrow \Spec \kk[P].
\]
This morphism is flat as $\kk[P_{\varphi}]$ is freely generated
as a $\kk[P]$-module by all elements of the form $z^{(m,\varphi(m))}$,
$m\in |\Sigma|$.
It is easy to see that a general fibre of $f$
is isomorphic to the algebraic torus $\Spec\kk[M]$: in fact,
if we consider the big torus orbit $U=\Spec \kk[P^{\gp}]
\subset \Spec\kk[P]$, $f^{-1}(U)=U\times \Spec\kk[M]$.

We now describe the fibres over other toric strata of $\Spec\kk[P]$.
Let $x\in\Spec\kk[P]$ be a point
in the torus orbit corresponding to a face $Q\subset P$.
Then by replacing $P$ with the localized monoid $P-Q$ obtained
by inverting all elements of $Q$, we may assume that
$x$ is contained in the smallest toric stratum of $\Spec\kk[P]$.
Consider the composed map
\[
\bar\varphi:|\Sigma|\mapright{\varphi} P^{\gp}\rightarrow P^{\gp}/
P^{\times}.
\]
Note $\bar\varphi$ is also piecewise linear. Let
$\bar\Sigma$ be the fan (of convex but not necessarily strictly convex
cones) whose maximal cones are the  maximal domains of linearity
of $\bar\varphi$.
Then $f^{-1}(x)$
can be written as
\[
f^{-1}(x)=\Spec \kk[\bar\Sigma].
\]
Here,
\[
\kk[\bar\Sigma]=\bigoplus_{m\in M\cap |\Sigma|}\kk z^m
\]
with multiplication given by
\begin{equation}
\label{fanmult}
z^m\cdot z^{m'}=\begin{cases}
z^{m+m'}&\hbox{if $m,m'$ lie in a common cone of $\bar\Sigma$},\\
0&\hbox{otherwise}.
\end{cases}
\end{equation}
In particular, the irreducible components of
$f^{-1}(x)$ are the toric varieties
$\Spec\kk[\sigma\cap M]$ for $\sigma\in\bar\Sigma_{\max}$.

In the particular case that $\rank M=2$ and $\Sigma$ defines a non-singular
complete surface with $n$ toric divisors, suppose $\varphi$
is strictly convex. If $x$ is a point of the smallest toric
stratum of $\Spec\kk[P]$, then $f^{-1}(x)$ is just
$\VV_n \subset \bA^n$, the reduced cyclic union of coordinate
$\bA^2$'s:
\[
\VV_n = \bA^2_{x_1,x_2} \cup \bA^2_{x_2,x_3}\cup
\cdots\cup \bA^2_{x_n,x_1}\subset
\bA^n_{x_1,\dots,x_n}.
\]
We call $\VV_n$
the \emph{vertex}, or more specifically, the \emph{$n$-vertex}.

We will need in the sequel the degenerate case of the $n$-vertex for
$n=2$. This is a union of two affine planes and can be described
as the double cover
\begin{equation}
\label{VV2eq}
\VV_2=\Spec\kk[x_1,x_2,y]/(y^2-x_1^2x_2^2)=\AA^2_{x_1,x_2}\cup
\AA^2_{x_2,x_1}.
\end{equation}
Of course, this does not appear as a central fibre of a Mumford degeneration.
Analogously, one can define
\begin{equation}
\label{VV1eq}
\VV_1=\Spec\kk[x,y,z]/(xyz-x^2-z^3),
\end{equation}
the affine cone over a nodal curve embedded in weighted projective
space $W\PP^3(3,1,2)$.

\begin{example}
In Example \ref{fundexample}, with the choice of $\varphi$
given by Lemma \ref{varphitoric}, the family
\[
\Spec\kk[P_{\varphi}]\rightarrow\Spec\kk[\NE(Y)]
\]
in fact gives the family of mirror manifolds to the toric variety
$Y$, as constructed by Givental \cite{Giv}.

In fact, the mirror of a toric variety also includes the data of
a \emph{Landau-Ginzburg potential}, which is a regular function.
If $Y$ is Fano, the potential is
\[
W=\sum_{\rho} z^{(m_{\rho},\varphi(m_{\rho}))}
\]
where we sum over all rays $\rho\in\Sigma$, and $m_{\rho}
\in M$ denotes the primitive generator of $\rho$.
If $Y$ is not Fano, the potential receives
corrections which can be viewed as coming from
degenerate holomorphic disks on $Y$ with irreducible
components mapping into $D$.
\end{example}

\section{Modified Mumford degenerations}\label{modifiedmumfordsection}

In this section, we fix $(Y,D)$ a Looijenga pair,
and let $(B,\Sigma)$ be the tropicalisation of $(Y,D)$
defined in \S \ref{affinesection}. The fan $\Sigma$ contains rays $\rho_1,
\ldots,\rho_n$ corresponding to divisors $D_1,\ldots,D_n$, ordered
cyclically. As usual, we write the two-dimensional cones of $\Sigma$ as
$\sigma_{i,i+1}$ being the cone with edges $\rho_i$ and $\rho_{i+1}$,
with indices taken modulo $n$.

We explain how to generalize Mumford's degeneration, to
give a canonical formal deformation of $\bV^o_n
=\VV_n\setminus\{0\}$ associated to $(Y,D)$ if $n\ge 3$.
Locally on $B_0$ the picture is toric and we have Mumford's
degenerations described in \S \ref{Mumfordsection}. As Mumford's construction
is functorial, the deformations built locally patch together canonically:
this is a minor variation on the ideas of \cite{GS07}. In particular,
\S\S\ref{pgroupbundle} and \ref{scatdiagsection} are variations of ideas
in \cite{GS07}. However, it differs crucially in several respects which
prevent us from just referring to \cite{GS07}. First, we work with piecewise
linear functions with values in a vector space $P^{\gp}_{\RR}$ rather
than just $\RR$. This allows us to construct higher dimensional
formal families, namely
over the completion of $\Spec\kk[P]$ at the zero-dimensional
stratum. Second, by avoiding a description of local models in codimension
at least two, we avoid some of the technical complexities of
\cite{GS07}.

Here are the details.

\subsection{The uncorrected degeneration}
\label{pgroupbundle}
We fix some notation. For any locally constant sheaf $\shF$
on $B_0$, and any simply connected subset $\tau \subset B_0$
we write $\shF_{\tau}$ for the stalk of this local system at any point of
$\tau$ (as any two such stalks are canonically identified by parallel
transport). In particular, we apply this for the sheaf $\Lambda$
of integral constant vector fields, as well
as for the sheaf $\Lambda_{\RR}:=\Lambda\otimes_{\ZZ}
\RR$.

For each cone $\tau \in \Sigma$ with $\dim\tau=1$ or $2$, we write
$\tau^{-1}\Sigma$ for the \emph{localized fan} of
convex (but not strictly convex) cones in $\Lambda_{\tau,\RR}$ described
as follows. If $\dim\tau=2$, then $\tau^{-1}\Sigma$ just consists of the
single cone $\Lambda_{\tau,\RR}$. If $\dim\tau=1$, then $\tau^{-1}\Sigma$
consists of three cones: the tangent line to $\tau$ and the two half-spaces
bounded by the tangent line to $\tau$.

Let $P\subseteq P^{\gp}$ be a toric monoid as in \S\ref{Mumfordsection}.

\begin{definition}
A \emph{($P^{\gp}_\RR$-valued) $\Sigma$-piecewise linear multivalued
function} on $B$ is a collection $\varphi=\{\varphi_i\}$ with
$\varphi_i$ a
$\Sigma$-piecewise linear function on $U_i$ with values in $P^{\gp}_\RR$.

Note this is equivalent to giving a $\rho_i^{-1}\Sigma$-piecewise
linear function $\varphi_i:\Lambda_{\RR,\rho_i} \to P^{\gp}_{\bR}$
for each ray $\rho_i \in \Sigma$.
Two such functions $\varphi$, $\varphi'$ are said to be \emph{equivalent}
if $\varphi_i-\varphi'_i$ is linear for each $i$. Note the equivalence
class of $\varphi$ is determined by the collection of bending parameters
$\kappa_{\rho,\varphi} \in P^{\gp}$. We say the function is \emph{convex}
(\emph{strictly convex}) if
$\kappa_{\rho,\varphi}\in P$ ($\kappa_{\rho,\varphi}\in P\setminus P^{\times}$)
for each $\rho$.

We drop the modifiers $\Sigma$ and $P$ when they are clear
from context.
\end{definition}

\begin{construction}
\label{torsorremark}
The collection $\{\varphi_i\}$ determines a local system $\shP$
on $B_0$ as follows.
First, we can construct an affine manifold $\PP_0$ which comes along
with the structure of
$P^{\gp}_{\bR}$-principal bundle
$\pi: \bP_0 \to B_0$ and a piecewise linear section
$\varphi: B_0 \to \bP_0$ as follows: we glue
$U_i \times P^{\gp}_{\bR}$ to $U_{i+1} \times P^{\gp}_{\bR}$
along $(U_i\cap U_{i+1})\times P^{\gp}_{\bR}$
by
\[
(x,p) \to (x,p+\varphi_{i+1}(x) - \varphi_i(x)).
\]
By construction we have local sections $x\mapsto (x,\varphi_i(x))$
which patch to give a piecewise linear section $\varphi$.
One checks immediately the isomorphism class (of the
$P^{\gp}_\RR$-principal
bundle together with the section) depends only on the equivalence
class of $\{\varphi_i\}$. The bundle $\PP_0\rightarrow B_0$ can be
viewed as a tropical analogue of a sum of line bundles, and $\{\varphi_i\}$
yield a section of this vector bundle. Convexity is analogous to
holomorphicity of the section.

We then define
\[
\shP := \pi_*\Lambda_{\bP_0}
\cong \varphi^{-1}\Lambda_{\PP_0}
\]
on $B_0$. We have an exact sequence
\begin{equation}
\label{standardexactseq}
0\rightarrow \underline{P}^{\gp}\rightarrow \shP\mapright{r}
\Lambda \rightarrow 0
\end{equation}
of local systems on $B_0$, where $r$ is the derivative of $\pi$.

Note over $U_i$, the description of $\PP_0$ as $U_i\times P^{\gp}_{\RR}$
gives a splitting $\shP|_{U_i}\cong \Lambda|_{U_i} \times P^{\gp}$.
\end{construction}

\begin{example}
\label{standardphi}
Our standard example, fundamental to this paper, will be as follows.
Suppose $P$ is a monoid which comes with a homomorphism
$\eta:\NE(Y)\rightarrow P$ of monoids.

Choose $\varphi$ by specifying $\varphi_i$ on $U_i$ by
the formula
\[
\kappa_{\rho_i,\varphi_i}=\eta([D_i]).
\]
Such a $\varphi$ is well-defined up to linear functions,
and always exists.
This is always convex, and is strictly convex provided
$\eta([D_i])$ is not invertible for any $i$.
\end{example}

Now suppose given a piecewise linear multivalued $P$-convex function
$\varphi$ on $B$.  We explain how
Mumford's construction determines a canonical formal deformation of $\bV_n^o$,
restricting to the case $n\ge 3$ for ease of exposition.

For each $\tau\in\Sigma$ with $\dim\tau>0$,
$\varphi$ determines a canonically defined
$\tau^{-1}\Sigma$-piecewise linear section
$\varphi_{\tau}: \Lambda_{\RR,\tau}
\to \shP_{\RR,\tau}$ of the projection $\shP_{\RR,\tau}\rightarrow
\Lambda_{\RR,\tau}$. If $U_i\cap \tau\not=\emptyset$, we use the representative
$\varphi_i$ on $U_i$ and extend it linearly on each cone in the fan
$\tau^{-1}\Sigma$ to obtain a $P$-convex piecewise linear function on
$\tau^{-1}\Sigma$, which we also write as $\varphi_i$. Then the section
$\varphi_{\tau}$ is defined
as in Construction \ref{torsorremark} by $x\mapsto (x,\varphi_i(x))$, using
the splitting $\shP_{\RR,\tau}=\Lambda_{\RR,\tau}\times P^{\gp}_{\RR}$. We note
a different choice of representative of $\varphi_i$ leads to a different
choice of splitting and the same section $\varphi_{\tau}$, so this section
is well-defined.

Now define the toric monoid $P_{\varphi_{\tau}} \subset \shP_{\tau}$ by
\begin{equation}
\label{Pvarphitaudef}
P_{\varphi_{\tau}} := \{q \in \shP_{\tau}\,|\,\hbox{$q = p +
\varphi_{\tau}(m)$ for some $p\in P$, $m\in \Lambda_{\tau}$}\}.
\end{equation}

By the definition of convexity of $\varphi$,
we have canonical inclusions
\begin{equation}
\label{coneinclusions}
P_{\varphi_{\rho}} \subset P_{\varphi_{\sigma}} \subset \shP_\rho
\end{equation}
whenever $\rho \subset \sigma \in \Sigma$.
If $\rho\in\Sigma$ is a ray with $\rho \subset \sigma_{\pm} \in \Sigma_{\max}$
we have the equality
\begin{equation}
\label{coneintersections}
P_{\varphi_{\sigma_+}} \cap P_{\varphi_{\sigma_-}} =
P_{\varphi_{\rho}}.
\end{equation}

\begin{definition}[Monomial ideals] \label{idealdef}
A \emph{(monoid) ideal} of a monoid $P$ is a subset
$I\subset P$ such that $p\in I, q\in P$ implies $p+q\in I$.
An ideal determines a \emph{monomial ideal} in the monoid ring $\kk[P]$,
generated by monomials $z^p$ for $p \in I$. We also denote this ideal by $I$, hopefully with no confusion. As a consequence, we shall sometimes write
certain ideal operations either additively or multiplicatively, i.e.,
for $J\subset P$,
\[
kJ=\{p_1+\cdots+p_k\,|\, p_i\in J, 1\le i \le k\},
\]
and the corresponding monomial ideal is $J^k$.

Let $\fom=P\setminus P^{\times}$. This
is the unique maximal ideal of $P$, defining a monomial
ideal $\fom\subset\kk[P]$. Note $\kk[P]/\fom\cong\kk[P^{\times}]$.

We say an ideal $I\subset P$ is \emph{$\fom$-primary}
if
\[
\fom=\sqrt{I}:=\{p\in P\,|\,\hbox{there exists a positive integer $k$
such that $kp\in I$}\},
\]
in which case the same holds for the associated
monomial ideal $I\subset \kk[P]$.
\end{definition}

Recall from \S\ref{Mumfordsection} that we are only considering
toric monoids $P$, i.e., monoids which are the intersection of rational
polyhedral cones $\sigma_P$ with lattices.
Such monoids are always finitely generated, so that $\kk[P]$ is
Noetherian. If $\sigma_P$ is strictly convex, then
$\fom$ is the maximal ideal corresponding to the unique torus fixed point of $\Spec \kk[P]$.

Fix an ideal $I\subset P$, and recalling that we write $R=\kk[P]$, set
\[
R_I:=\kk[P]/I.
\]
We define for
$\tau\in\Sigma$, $\dim\tau>0$, the ring
\[
R_{\tau,I}:=\kk[P_{\varphi_\tau}]\otimes_R R_I,
\]
noting that $P$ acts naturally on $P_{\varphi_{\tau}}$ by addition.
So $\Spec R_{\tau,I}$ is a base-change of the Mumford degeneration induced
by $\varphi_{\tau}$ on the localized fan $\tau^{-1}\Sigma$.

One observes

\begin{proposition}
\label{ringfirstversion}
Let $v_i$ denote the primitive generator of the tangent ray to $\rho_i$, for
each $i$.
Then viewing $z^{\kappa_{\rho,\varphi}}\in \kk[P]$ as determining an
element in $R_I$, we have
\begin{equation}
\label{ringfirstequation}
{R_I[X_{i-1},X_i^{\pm},X_{i+1}]\over (X_{i-1}X_{i+1}-
z^{\kappa_{\rho_i,\varphi}}X_i^{-D_{\rho_i}^2})}
\cong
R_{\rho_i,I}
\end{equation}
via the map
\[
X_j\mapsto z^{\varphi_{\rho_i}(v_j)}, \quad j\in \{i-1,i,i+1\}.
\]
Furthermore, there are natural maps
\[
\psi_{\rho_i,-}:R_{\rho_i,I}\rightarrow R_{\sigma_{i-1,i},I},\quad
\psi_{\rho_i,+}:R_{\rho_i,I}\rightarrow R_{\sigma_{i,i+1},I}
\]
induced by the inclusions $P_{\varphi_{\rho_i}}\subseteq
P_{\varphi_{\sigma_{\pm}}}$ which induce isomorphisms
\[
(R_{\rho_i,I})_{X_{i-1}}\cong R_{\sigma_{i-1,i},I},\quad
(R_{\rho_i,I})_{X_{i+1}}\cong R_{\sigma_{i,i+1},I}.
\]
\end{proposition}

\begin{proof}
We need to check that the ideal on the left-hand side is mapped to zero,
as the rest is obvious. Note by construction of $B$,
$v_{i-1}+D_{\rho_i}^2v_i+v_{i+1}=0$ as elements of $\Lambda_{\rho_i}$,
so one sees in fact that $\varphi_{\rho_i}(v_{i-1})+\varphi_{\rho_i}(v_{i+1})
=\kappa_{\rho_i,\varphi}-D_{\rho_i}^2\varphi_{\rho_i}(v_i)$. The result
then follows easily.
\end{proof}

\begin{remark}
\label{localcentralfibre}
Since $\Spec R_{\rho_i,I}\rightarrow\Spec R_I$ is
a base-change of the Mumford  degeneration, we can in fact say
what a fibre of this morphism is over a closed point $x$ in the
smallest toric stratum of $\Spec\kk[P]$, i.e., a point in
$\Spec R_{\fom}$. This depends on whether $\kappa_{\rho_i,
\varphi}\in P$ is invertible or not. If it is not invertible, then
the fibre is $\Spec\kk[\rho_i^{-1}\Sigma]\cong\Spec\kk[X_{i-1},X_{i+1},X_i^{\pm 1}]
/(X_{i-1}X_{i+1})$. If $\kappa_{\rho_i,\varphi}$ is invertible,
then the fibre is $\Spec\kk[\ZZ^2]$. In this latter case, if
$\rho_i\subset\sigma$, in fact
the map $R_{\rho_i,I}\rightarrow R_{\sigma,I}$ induced by the
inclusion $P_{\varphi_{\rho_i}}\subseteq P_{\varphi_{\sigma}}$ is an
isomorphism.

Somewhat more generally,
if $J\subset P$ is a radical ideal with $\kappa_{\rho_i,\varphi}\in J$, then
in fact $R_{\rho_i,J}\cong \Spec R_J[\rho_i^{-1}\Sigma]$.
\end{remark}

For $\tau\in\Sigma$, $\dim\tau\ge 1$, set
\[
U_{\tau,I}:=\Spec R_{\tau,I}.
\]
The maps $\psi_{\rho_i,\pm}$ induce open immersions $U_{\sigma_{i-1,i},I}
\hookrightarrow U_{\rho_i,I}$ and
$U_{\sigma_{i,i+1}}\hookrightarrow U_{\rho_i,I}$.
Denoting the image of each of these
immersions as $U_{\rho_i,\sigma_{i-1,i},I}$ and $U_{\rho_i,\sigma_{i,i+1},I}$
respectively, we note that
\begin{equation}
\label{chartintersection}
U_{\rho_i,\sigma_{i-1,i},I}\cap U_{\rho_i,\sigma_{i,i+1},I}\cong
\Spec (R_{\rho_i,I})_{X_{i-1}X_{i+1}}
\cong (\Gm)^2\times
\Spec(R_I)_{z^{\kappa_{\rho_i,\varphi}}}
\end{equation}
Note that if $\kappa_{\rho_i,\varphi}\in\sqrt{I}$ then the localization
$(\kk[P]/I)_{z^{\kappa_{\rho_i,\varphi}}}$ is zero, and the intersection
is empty.

We can now define our analogue of the Mumford degeneration.

\begin{construction}
\label{XoIdef}
Suppose that the number of irreducible components $n$ of $D$ satisfies
$n\ge 3$, that $\varphi$ is a PL multivalued function, and $I\subset P$
an ideal such that
$\kappa_{\rho,\varphi}\in\sqrt{I}$ for all rays $\rho\in\Sigma$.
Then there are canonical identifications of open subsets
\[
U_{\rho_i,I}\supset
U_{\rho_i,\sigma_{i,i+1},I}\cong U_{\sigma_{i,i+1},I}\cong
U_{\rho_{i+1},\sigma_{i,i+1},I}\subset U_{\rho_{i+1},I}
\]
which generate an equivalence relation on $\coprod_iU_{\rho_i,I}$,
and the quotient by this equivalence relation defines
a scheme $X^o_I$ over $\Spec R_I$.

One checks easily that the canonical
isomorphisms of
\[
\hbox{$U_{\rho_i,\sigma_{i,i+1},I}\subseteq U_{\rho_i,I}$
and $U_{\rho_{i+1},\sigma_{i,i+1},I}\subseteq U_{\rho_{i+1},I}$}
\]
satisfy the requirements
for gluing data for schemes along open subsets, see e.g., \cite{H77}, Ex.\
II 2.12.
\end{construction}

%Note there is a canonical inclusion
%\begin{equation}
%\label{iotataudef}
%\iota_{\tau}:r^{-1}(\overline{U}_{\tau})\cap\shQ\rightarrow
%P_{\varphi_{\tau}},
%\end{equation}
%given by, for $m\in B_0(\ZZ)\cap \overline{U}_{\tau}$, $p\in P$,
%\[
%\iota_{\tau}(\varphi(m)+p)=\varphi_{\tau}(m)+p.
%\]

\begin{remark} \label{glueingremarks}
$X^o_I$ only depends on the equivalence class of
$\varphi$, since the monoids $P_{\varphi_{\tau}}$
are canonically defined, independently of the choice of representative
for $\varphi$.
\end{remark}

We first analyze this construction in the purely toric case:

\begin{lemma}
\label{opensubschemeGKZ} For $(Y,D)$ toric and $\varphi$ a single-valued
convex function on $B=M_{\RR}$,
$X^o_I$ is an open subscheme of
the Mumford  degeneration $\Spec\kk[P_{\varphi}]/I\kk[P_{\varphi}]$,
and
\[
H^0(X^o_I,\cO_{X^o_I}) = \kk[P_{\varphi}]/I \kk[P_{\varphi}].
\]
\end{lemma}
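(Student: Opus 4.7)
The plan is to identify $X^o_I$ with an explicit open subscheme of the Mumford degeneration $\Spec\kk[P_\varphi]/I\kk[P_\varphi]$ and then compute its global sections by a direct monoid calculation.

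First, since $(Y,D)$ is toric, $B=|\Sigma|=\RR^2$ is nonsingular and $\bP=B\times P^{\gp}_\RR$ is trivial, so $\varphi:B\to P^{\gp}_\RR$ is a genuine $\Sigma$-piecewise linear $P$-convex function; the global Mumford monoid $P_\varphi\subset\ZZ^2\oplus P^{\gp}$ from \eqref{Mumford cone} and the toric variety $\Spec\kk[P_\varphi]$ over $\Spec\kk[P]$ are then defined. For each ray $\rho_i$ with primitive generator $v_i$, set $v_i':=(v_i,\varphi(v_i))\in P_\varphi$. The localized function $\varphi_{\rho_i}$ agrees with $\varphi$ on $\sigma_{i-1,i}\cup\sigma_{i,i+1}$ and extends linearly across the line $\RR\cdot v_i$. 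One checks directly from \eqref{Pvarphitaudef} that $P_{\varphi_{\rho_i}}=P_\varphi+\ZZ v_i'$: given $(m,\varphi_{\rho_i}(m)+p)\in P_{\varphi_{\rho_i}}$, for sufficiently large $k>0$ the point $m+kv_i$ lies in $\sigma_{i,i+1}$, where $\varphi_{\rho_i}=\varphi$, so $(m,\varphi_{\rho_i}(m)+p)+kv_i'\in P_\varphi$. Hence $\kk[P_{\varphi_{\rho_i}}]=\kk[P_\varphi]_{z^{v_i'}}$, and reducing modulo $I$ via Lemma~\ref{localMumfordlemma} gives $R_{\rho_i,I}=(\kk[P_\varphi]/I\kk[P_\varphi])_{z^{v_i'}}$, so $U_{\rho_i,I}$ is the principal open $D(z^{v_i'})\subseteq\Spec\kk[P_\varphi]/I\kk[P_\varphi]$.

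Next, for adjacent rays $\rho_i,\rho_{i+1}$ the vectors $v_i,v_{i+1}$ together span $\sigma_{i,i+1}$, so inverting both $z^{v_i'}$ and $z^{v_{i+1}'}$ in $\kk[P_\varphi]/I\kk[P_\varphi]$ produces $\kk[\Lambda_{\sigma_{i,i+1}}]\otimes_\kk\kk[P]/I\cong R_{\sigma_{i,i+1},\sigma_{i,i+1},I}$, in agreement with the general description. The identifications $U_{\rho_i,\sigma_{i,i+1},I}\cong U_{\rho_{i+1},\sigma_{i,i+1},I}$ used in Definition-Lemma~\ref{XoIdef} are therefore just the tautological identities on this common localization inside the ambient affine scheme, so the gluing is trivial and
\[
X^o_I=\bigcup_{i=1}^n D(z^{v_i'})\subseteq \Spec\kk[P_\varphi]/I\kk[P_\varphi]
\]
is exhibited as an open subscheme, proving the first assertion.

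Finally, set $A:=\kk[P_\varphi]/I\kk[P_\varphi]$ and $f_i:=z^{v_i'}$. Since $X^o_I$ is a quasi-compact separated open subscheme of $\Spec A$ covered by the $D(f_i)$, the canonical map $A\to\Gamma(X^o_I,\cO)=\ker\bigl(\prod_i A_{f_i}\to\prod_{i<j} A_{f_if_j}\bigr)$ exists, and we must show it is an isomorphism. The crux is the monoid identity
\[
\bigcap_{i=1}^n P_{\varphi_{\rho_i}}=P_\varphi\quad\text{inside }\ZZ^2\oplus P^{\gp},
\]
which holds because any $(m,q)$ in the left-hand intersection has $m$ lying in some maximal cone $\sigma_{j,j+1}\in\Sigma$, where $\varphi_{\rho_j}(m)=\varphi(m)$, forcing $q-\varphi(m)=q-\varphi_{\rho_j}(m)\in P$ and thus $(m,q)\in P_\varphi$. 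Combined with the monomial $\kk$-basis descriptions of $A$ and of each $A_{f_i}$ (indexed respectively by elements of $P_\varphi$, resp.\ $P_{\varphi_{\rho_i}}$, whose $P$-component lies outside $I$), this identity yields the claim by monomial bookkeeping on the \v Cech complex: injectivity follows because any nonzero monomial $z^{g_0}$ in $A$ remains nonzero in $A_{f_i}$ for $i$ chosen so that $\pi_1(g_0)$ lies in a maximal cone containing $\rho_i$ (so $\varphi_{\rho_i}$ agrees with $\varphi$ there), and surjectivity follows because a compatible tuple $(a_i)$ must have support in $\bigcap_i P_{\varphi_{\rho_i}}=P_\varphi$ with coefficients independent of $i$. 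The main point requiring care is tracking the compatibility between the various $\varphi_{\rho_i}$-based monomial bases and the monomial ideal $I\kk[P_\varphi]$, using $\varphi(m)-\varphi_{\rho_i}(m)\in P$ to ensure that the relevant zero-relations in one localization are consistent with those in the others.
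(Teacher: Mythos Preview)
Your identification of $X^o_I$ as the union of principal opens $D(z^{v_i'})$ inside $\Spec\kk[P_\varphi]/I\kk[P_\varphi]$ is correct and coincides with the paper's argument: the paper phrases it as $P_{\varphi_\tau}$ being the localization of $P_\varphi$ along the face $\{(m,\varphi(m)):m\in\tau\cap M\}$, which is your identity $P_{\varphi_{\rho_i}}=P_\varphi+\ZZ v_i'$.

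For the computation of $H^0$, however, your route diverges from the paper's and your execution has a gap. The paper argues that the complement of $X^o_I$ in the Mumford degeneration has finite fibres (the zero-dimensional torus orbit on each closed fibre), invokes Alexeev's result that those closed fibres are $S_2$, and then applies Lemma~\ref{relS2} to conclude $i_*\cO_{X^o_I}=\cO$. Your approach is a direct \v Cech/monoid calculation. Injectivity is fine, but your surjectivity step is not justified as written. The assertion that ``a compatible tuple $(a_i)$ must have support in $\bigcap_i P_{\varphi_{\rho_i}}=P_\varphi$ with coefficients independent of $i$'' presupposes that the monomial bases interact trivially with the ideal $I$ under the localization maps. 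They do not: a basis monomial $z^{(m,q)}$ of $A_{f_i}$ (with $q-\varphi_{\rho_i}(m)\notin I$) can die in $A_{f_if_j}$ when $\varphi_{\rho_i}(m)-\varphi_{\sigma_{ij}}(m)$ is a nonzero element of $P$ that pushes $q-\varphi_{\sigma_{ij}}(m)$ into $I$. So compatibility on an arbitrary overlap need not pin down the coefficient of that monomial in $a_j$, and the monoid identity alone does not give surjectivity.

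Your approach can be rescued, but it requires a genuinely finer argument. One clean way is to grade everything by $m\in M$: then the degree-$m$ part of each of $A$, $A_{f_i}$, $A_{f_if_{i+1}}$ is a free rank-one $\kk[P]/I$-module, with transition maps given by multiplication by $z^{p}$ for explicit $p\in P$. Choosing $k$ with $m\in\sigma_{k,k+1}$, one has $p_k=p_{k+1}=0$, so $(A)_m\to(A_{f_k})_m$ is an isomorphism, and then one propagates outward using that for each $i$ the map $(A_{f_i})_m\to(A_{f_{i,i\pm1}})_m$ is an isomorphism on the side of $\RR\rho_i$ containing $m$. This chasing replaces the ``monomial bookkeeping'' you defer. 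The paper's $S_2$ argument avoids all of this and is considerably shorter; the trade-off is that it imports a nontrivial external input (Alexeev's $S_2$ result for stable toric varieties).
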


\begin{proof}
Note that for $\tau\in\Sigma$, $\tau\not=\{0\}$, the monoid $P_{\varphi_{\tau}}$
is isomorphic to the localization of $P_{\varphi}$ along the face
$\{(m,\varphi(m))\,|\, m\in\tau\cap M\}$. Thus $\Spec \kk[P_{\varphi_{\tau}}]$
is an open subset of $\Spec \kk[P_{\varphi}]$ and $\Spec
\kk[P_{\varphi_{\tau}}]\otimes_{\kk[P]} \kk[P]/I$ is an open subset
of $\Spec \kk[P_{\varphi}]/I\kk[P_{\varphi}]$. Furthermore, the gluing
procedure constructing $X^o_I$ is clearly compatible with these inclusions,
so $X^o_I$ is an open subscheme of $\Spec \kk[P_{\varphi}]/I\kk[P_{\varphi}]$.
Next, looking at the fibre over a closed point, one sees
easily that the underlying topological space of these fibres is
obtained just by removing the zero-dimensional torus orbit from the
corresponding fibre of the Mumford degeneration.
The closed fibres of the Mumford degeneration are $S_2$ by \cite{A02}, 2.3.19.
Thus by Lemma \ref{relS2}, the result follows.
\end{proof}

\begin{lemma}\label{relS2}
Let $\pi:\cX\rightarrow S$ be a flat family of surfaces
such that the fibre $\cX_s$
satisfies Serre's condition $S_2$ for each $s\in S$.
Let $i \colon \cX^o \subset \cX$ be the inclusion of an
open subset such that the complement has finite fibres.
Then $i_*\cO_{\cX^o} =\cO_{\cX}$.
Similarly, if $\cF$ is a coherent sheaf on $S$ then
$i_*(\cO_{\cX^0} \otimes \pi^*\cF)=\cO_{\cX} \otimes\pi^*\cF$.
\end{lemma}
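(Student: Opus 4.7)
The plan is to reduce both statements to a uniform local depth computation. Writing $Z := \cX \setminus \cX^o$ and letting $\cG$ denote either $\cO_{\cX}$ or $\cO_{\cX} \otimes \pi^*\cF$, the standard local-cohomology exact sequence
\[
0 \to H^0_Z(\cG) \to \cG \to i_*(\cG|_{\cX^o}) \to H^1_Z(\cG) \to 0
\]
shows that the canonical map $\cG \to i_*(\cG|_{\cX^o})$ is an isomorphism provided $\depth_{I_x} \cG_x \geq 2$ for every $x \in Z$, where $I_x \subset \cO_{\cX,x}$ is the ideal of $Z$ at $x$. So the goal is to produce a length-two regular sequence in $I_x$ acting on $\cG_x$.

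Fix $x \in Z$, set $s := \pi(x)$, $A := \cO_{\cX,x}$, $R := \cO_{S,s}$, $I := I_x$, $A_s := A/\fom_R A = \cO_{\cX_s, x}$, and $I_s := I A_s$, where $\fom_R$ is the maximal ideal of $R$. Because $Z \to S$ has finite fibres, $Z_s$ is zero-dimensional, so $x$ is a closed point of its fibre $\cX_s$ and $\sqrt{I_s}$ equals the maximal ideal of $A_s$. The $S_2$ hypothesis then gives $\depth A_s \geq 2$, so one can find $\bar f_1, \bar f_2 \in I_s$ forming an $A_s$-regular sequence.

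Next I will lift to $f_1, f_2 \in I$ and check that the lift remains regular on $\cG_x$. The local criterion of flatness, applied inductively (using that $A$ is $R$-flat and that $\bar f_1, \bar f_2$ are nonzerodivisors on the fibre $A_s$), shows that $A/(f_1)$ and $A/(f_1,f_2)$ are both $R$-flat. For the twisted case, write $N := \cG_x = A \otimes_R \cF_s$. Tensoring $0 \to A \xrightarrow{f_1} A \to A/(f_1) \to 0$ over $R$ with $\cF_s$, the $R$-flatness of $A/(f_1)$ forces $\Tor^R_1(A/(f_1), \cF_s) = 0$, so $f_1$ is a nonzerodivisor on $N$. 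The identical argument with $A/(f_1)$ in place of $A$ shows that $f_2$ is a nonzerodivisor on $N/f_1 N$. Hence $(f_1, f_2)$ is an $N$-regular sequence in $I$ and $\depth_I N \geq 2$. The untwisted statement follows by specializing $\cF = \cO_S$.

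The main obstacle is the twist by $\cF$, which is not assumed to be flat over $S$: one cannot simply tensor a regular sequence on $A$ with $\cF_s$ and expect regularity to survive. The key point that resolves this is that lifting a regular sequence out of the special fibre via the local criterion of flatness produces \emph{flat} successive quotients $A/(f_1, \ldots, f_i)$, and this flatness is exactly what kills the Tor obstructions when one tensors with an arbitrary coherent $\cF_s$.
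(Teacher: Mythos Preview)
Your argument is correct. The local-cohomology reduction to a depth estimate is standard, and the key step---lifting a regular sequence out of the fibre via the local criterion of flatness (so that the successive quotients $A/(f_1)$ and $A/(f_1,f_2)$ remain $R$-flat), and then observing that this flatness kills the relevant $\Tor^R_1$ when tensoring with an arbitrary $\cF_s$---is exactly right and handles the twisted case cleanly.

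The paper takes a slightly different route: it cites a reference (Hacking, \emph{Compact moduli of plane curves}, Lemma~A.3) for the untwisted statement, and then deduces the twisted statement by d\'evissage on $\cF$. That is, one filters $\cF$ with graded pieces of the form $\cO_S/\fop$, so that $\pi^*(\cO_S/\fop)=\cO_{\cX_T}$ for $T=V(\fop)$, and then applies the first statement to the base-changed family $\cX_T\to T$. Your approach avoids this reduction by treating both cases at once with a single regular-sequence computation; the trade-off is that you must check the flatness of the quotients explicitly, whereas d\'evissage reduces everything to the structure-sheaf case already proved. Both arguments are short, and yours is more self-contained.
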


\begin{proof}
For the first statement see, e.g., \cite{H04}, Lemma~A.3,
(the assumption that the fibres are semi log canonical is not used).
The second statement follows from the first by d\'evissage.
\end{proof}

\begin{definition}
Let $B_0(\bZ)$ denote the set of points of $B_0$ with integral coordinates in an integral affine chart.
We also write $B(\ZZ)=B_0(\ZZ)\cup \{0\}$.
\end{definition}

Given the description of Remark \ref{localcentralfibre}, the
following lemma is obvious.

\begin{lemma}
\label{invsystemindep} Suppose $n\ge 3$ and
we are given a convex multivalued piecewise linear function
$\varphi$ and a radical monomial ideal $J\subset P$ such that
$\kappa_{\rho,\varphi} \in J$ for all rays $\rho\in\Sigma$.
Then if $x\in\Spec R_J$ is a closed point,
the fibre of $X^o_J\rightarrow\Spec R_J$ over $x$ is
$(\Spec\kk[\Sigma])\setminus \{0\}$. Here, $\kk[\Sigma]$
denotes the $\kk$-algebra with a $\kk$-basis $\{z^m\,|\,
m\in B(\ZZ)\}$ with multiplication given exactly as in
\eqref{fanmult}, and $0$ is the closed point whose ideal
is generated by $\{z^m\,|\,m\not=0\}$. In particular, the fibre
is isomorphic to $\VV^o_n$. Furthermore, with $R_J[\Sigma]:=
R_J\otimes_{\kk}\kk[\Sigma]$,
\[
X^o_J\cong (\Spec R_J[\Sigma])\setminus (\Spec R_J)\times \{0\}.
\]
\end{lemma}

\subsection{Scattering diagrams on $B$}
\label{scatdiagsection}

Next we translate into algebraic geometry the instanton corrections.
To construct our mirror family we will use the canonical scattering
diagram $\foD^{\can}$ defined in \S\ref{Definitionsection},
(which is the translation of the instanton corrections associated
to Maslov index zero disks), but as the regluing process works for
any scattering diagram (and we will make use of this
greater generality in \cite{K3}), we carry it out for an
arbitrary scattering diagram.

We continue with the notation of the previous sections, with
$(Y,D)$, $(B,\Sigma)$, $P$ an arbitrary toric monoid, and $\varphi$ given.
We also fix a monomial ideal $J\subset P$ such that
$J=\sqrt{J}$. Denote by $\widehat{R}$ the completion of
$\kk[P]$ with respect to the ideal $J$, and for any
$\tau\in\Sigma$, $\tau\not=0$, denote by
\[
\widehat{\kk[P_{\varphi_{\tau}}]}
\]
the completion of the ring $\kk[P_{\varphi_{\tau}}]$ with respect to
the ideal $J\kk[P_{\varphi_{\tau}}]$.

We will now define
a \emph{scattering diagram}, which encodes a modification of
the construction of $X_I^o$. Unlike the previous subsection, where
we assumed $n\ge 3$ for ease of exposition throughout, in this subsection
we can allow any number of irreducible components of $D$
except where noted.

\begin{definition}
\label{scatdiagdef}
A \emph{scattering diagram} for the data $(B,\Sigma)$, $P,\varphi,$ and $J$
is a set
\[
\foD=\{(\fod,f_{\fod})\}
\]
where
\begin{enumerate}
\item $\fod\subset B$ is a ray in $B$ with endpoint the origin with
rational slope.
$\fod$ may coincide with a ray of $\Sigma$, or lie in the interior
of a two-dimensional cone of $\Sigma$.
\item Let $\tau_{\fod}\in\Sigma$ be the smallest cone containing
$\fod$. Then $f_{\fod}$ is a formal sum
\[
f_{\fod}=1+\sum_{p} c_pz^p\in \widehat{\kk[P_{\varphi_{\tau_{\fod}}}]}
\]
for $c_p\in \kk$ and $p$ running over elements of
$P_{\varphi_{\tau_{\fod}}}$ such that
$r(p)\not=0$ and $r(p)$ is tangent to $\fod$. Here $r$ is defined by
\eqref{standardexactseq}.
We further require that $\fod$ satisfy one of the following two properties:
\begin{enumerate}
\item
For those $p$ with $c_p\not=0$,
$r(p)$, viewed as a tangent vector at an interior point of $\fod$, points
towards the origin, in which case we say that
$\fod$ is an \emph{outgoing ray}.
\item
For those $p$ with $c_p\not=0$, $r(p)$
points away from the origin, in which case we say that $\fod$ is
an \emph{incoming ray}.
\end{enumerate}
\item If $\dim\tau_{\fod}=2$ or if $\dim\tau_{\fod}=1$ and
$\kappa_{\tau_{\fod},\varphi}\not\in J$,
then $f_{\fod}\equiv 1\mod J$.
\item  For any ideal $I\subset P$ with $\sqrt{I}=J$,
there are only a finite number of $(\fod,f_{\fod})\in\foD$
such that $f_{\fod}\not\equiv 1\mod I\kk[P_{\varphi_{\tau_{\fod}}}]$.
\end{enumerate}
\end{definition}

\begin{construction}
\label{gluingconstruction}
We now explain how a scattering diagram
$\foD$ is used to modify the construction
of $X^o_I$, as given in Construction \ref{XoIdef}.
Suppose we are given a scattering diagram $\foD$ for
the data $(B,\Sigma)$, $P$, $\varphi$ and $J$, and an ideal $I$
with $\sqrt{I}=J$. We assume that $\kappa_{\rho,\varphi}\in J$ for all
rays $\rho\in\Sigma$ and that $n\ge 3$ as in Construction \ref{XoIdef}.

We will use the scattering diagram $\foD$ to modify both the definition
of the rings $R_{\rho_i,I}$
as well as the gluings of the schemes defined
by these rings. First, we modify the definition of $R_{\rho_i,I}$,
setting
\begin{equation}
\label{localequations}
R_{\rho_i,I} :=
{R_I[X_{i-1},X_i^{\pm 1},X_{i+1}]
\over (X_{i-1} X_{i+1} -
z^{\kappa_{\rho_i,\varphi}} X_i^{-D_{\rho_i}^2} f_{\rho_i})}
\end{equation}
Here $f_{\rho_i}$ is an element of $R_I[X_i^{\pm 1}]$ defined by
\[
f_{\rho_i}=\prod_{(\fod,f_{\fod})\in\foD\atop \fod=\rho_i} f_{\fod}
\mod I\kk[P_{\varphi_{\rho_i}}],
\]
identifying $X_i$ with $z^{\varphi_{\rho_i}(v_i)}$ as in Proposition
\ref{ringfirstversion}. Note this is a generalization of the old definition
of $R_{\rho_i,I}$, which we obtain if $f_{\rho_i}=1$. Thus we continue
to use the same notation.

Retaining the definition $R_{\sigma,I}=\kk[P_{\varphi_{\sigma}}]
\otimes_R R_I$ for $\dim\sigma=2$ from the previous subsection, we note that
there are maps
\[
\psi_{\rho_i,-}:R_{\rho_i,I}\rightarrow R_{\sigma_{i-1,i},I},
\quad \psi_{\rho_i,+}:R_{\rho_i,I}\rightarrow R_{\sigma_{i,i+1},I},
\]
given by
\begin{align}
\label{psirhoi}
\begin{split}
\psi_{\rho_i,-}(X_i)=&z^{\varphi_{\rho_i}(v_i)},
\quad \psi_{\rho_i,-}(X_{i-1})=z^{\varphi_{\rho_i}(v_{i-1})},
\quad \psi_{\rho_i,-}(X_{i+1})=f_{\rho_i}z^{\varphi_{\rho_i}(v_{i+1})},\\
\psi_{\rho_i,+}(X_i)=&z^{\varphi_{\rho_i}(v_i)},
\quad \psi_{\rho_i,+}(X_{i-1})=f_{\rho_i}z^{\varphi_{\rho_i}(v_{i-1})},
\quad \psi_{\rho_i,+}(X_{i+1})=z^{\varphi_{\rho_i}(v_{i+1})}.
\end{split}
\end{align}
Furthermore, $\psi_{\rho_i,\pm}$ induce isomorphisms
\[
\psi_{\rho_i,+}:(R_{\rho_i,I})_{X_{i+1}}\rightarrow R_{\sigma_{i,i+1},I},
\quad
\psi_{\rho_i,-}:(R_{\rho_i,I})_{X_{i-1}}\rightarrow R_{\sigma_{i-1,i},I}.
\]
Set for $\tau\in \Sigma\setminus \{0\}$
\[
U_{\tau,I}:=\Spec R_{\tau,I}.
\]
One checks easily that the natural map $U_{\rho,I}\rightarrow \Spec R_I$
is flat. The maps $\psi_{\rho_i,\pm}$ induces canonical embeddings
$U_{\sigma_{i-1,i},I},U_{\sigma_{i,i+1},I}\hookrightarrow U_{\rho_i,I}$,
and we denote their image
by $U_{\rho_i,\sigma_{i-1,i},I}$ and $U_{\rho_i,\sigma_{i,i+1},I}$
respectively. Note that \eqref{chartintersection}
continues to hold.

Next, consider
$(\fod,f_{\fod})\in\foD$ with $\tau_\fod=\sigma\in\Sigma_{\max}$.
Let $\gamma$ be a path in $B_0$ which crosses $\fod$
transversally at time $t_0$. Then define
\[
\theta_{\gamma,\fod}:R_{\sigma,I}\rightarrow R_{\sigma,I}
\]
by
\[
\theta_{\gamma,\fod}(z^p)=z^pf^{\langle n_{\fod}, r(p)\rangle}_{\fod}
\]
where $n_{\fod}\in \Lambda_{\sigma}^*$ is primitive and satisfies,
with $m$ a non-zero tangent vector of $\fod$,
\[
\langle n_{\fod},m\rangle=0,\quad \langle n_{\fod},\gamma'(t_0)\rangle<0.
\]
If $\gamma$ is not differentiable at $t_0$, which might occur for
broken lines, see Definition \ref{brokenlinedef}, this inequality is
interpreted to mean that $n_{\fod}$ is positive at $\gamma(t_0-\epsilon)$
and negative at $\gamma(t_0+\epsilon)$
for $\epsilon>0$ small.
Note that $f_{\fod}$ is invertible in $R_{\sigma,I}$ since
$f_{\fod}\equiv 1\mod J\kk[P_{\varphi_{\sigma}}]$, so $f_{\fod}-1$
is nilpotent in $R_{\sigma,I}$.

Let $\foD_I\subset\foD$ be the finite set of rays $(\fod,f_{\fod})$
with $f_{\fod}\not\equiv 1\mod I\kk[P_{\varphi_{\tau_{\fod}}}]$.
For a path $\gamma$ wholly contained in the interior of
$\sigma\in\Sigma_{\max}$ and crossing elements of $\foD_I$ transversally,
we define
\[
\theta_{\gamma,\foD}:=\theta_{\gamma,\fod_n}\circ\cdots
\circ\theta_{\gamma,\fod_1},
\]
where $\gamma$ crosses precisely the elements $(\fod_1,f_{\fod_1}),
\ldots,(\fod_n,f_{\fod_n})$ of $\foD_I$,
in the given order. Note that if two rays
$\fod_i,\fod_{i+1}$ in fact coincide as subsets of $B$, then $\theta_{\gamma,
\fod_i}$ and $\theta_{\gamma,\fod_{i+1}}$ commute, so the ordering is
not important for overlapping rays.

To construct $X^o_{I,\foD}$, we modify the gluings of the sets $U_{\rho,I}$
along the open subsets $U_{\rho,\sigma,I}$. For each $i$, we have
canonical identifications of open subsets
\[
U_{\rho_i,I}\supset
U_{\rho_i,\sigma_{i,i+1},I}\cong U_{\sigma_{i,i+1},I}\cong
U_{\rho_{i+1},\sigma_{i,i+1},I}\subset U_{\rho_{i+1},I}
\]
We can modify this identification via any automorphism of $U_{\sigma_{i,i+1}}$.
We do this by
choosing a path $\gamma_i:[0,1]\rightarrow B$ whose image is contained
in the interior of $\sigma_{i,i+1}$,
with $\gamma_i(0)$ a point in $\sigma_{i,i+1}$ close to $\rho_i$
and $\gamma_i(1)\in\sigma_{i,i+1}$ close to $\rho_{i+1}$, chosen so that
$\gamma_i$
crosses every ray $(\fod,f_{\fod})$ of $\foD_I$ with
$\tau_{\fod}=\sigma_{i,i+1}$ exactly once, see Figure \ref{basicgluingpath}.

\begin{figure}
\input{basicgluingpath.pstex_t}
\caption{The path $\gamma_i$. The solid lines indicate the fan, the dotted
lines are additional rays in $\foD$. The solid lines may also support
rays in $\foD$}
\label{basicgluingpath}
\end{figure}

We then obtain an automorphism
\[
\theta_{\gamma_i,\foD}:R_{\sigma_{i,i+1},I}\rightarrow R_{\sigma_{i,i+1},I},
\]
hence, after taking $\Spec$, an isomorphism
\[
\theta_{\gamma_i,\foD}:U_{\rho_{i+1},\sigma_{i,i+1},I}\rightarrow
U_{\rho_i,\sigma_{i,i+1},I}.
\]
We now define $X^o_{I,\foD}$ by dividing out $\coprod_i U_{\rho_{i},I}$
by the equivalence relation given by identifying
$x\in U_{\rho_{i+1},\sigma_{i,i+1},I}\subseteq U_{\rho_{i+1},I}$ with
$\theta_{\gamma_i,\foD}(x)\in U_{\rho_i,\sigma_{i,i+1},I}\subseteq
U_{\rho_i,I}$.
\qed
\end{construction}

\subsection{Broken lines}
\label{brokenlinesection}

We continue to fix a rational surface with anti-canonical
cycle $(Y,D)$ as usual, $D$ having an arbitrary number of irreducible
components, giving $(B,\Sigma)$, as well as a monoid
$P$, a multivalued $P$-convex function $\varphi$ on $B$, $J\subset P$ an
ideal with $\sqrt{J}=J$, and a scattering diagram $\foD$ for this data.
Broken lines were introduced in \cite{G09}
and their theory was further developed in \cite{CPS}.

\begin{definition} Let $B$ be an integral affine manifold.
An integral affine map $\gamma:(t_1,t_2)\rightarrow B$ from an
open interval $(t_1,t_2)$ is a continuous map such that for
any integral affine coordinate chart $\psi:U\rightarrow \RR^n$ of $B$,
$\psi\circ \gamma:\gamma^{-1}(U)\rightarrow \RR^n$ is integral
affine, i.e., is given by $t\mapsto t v+b$ for some $v\in \ZZ^n$
and $b\in \RR^n$.

Note that for an integral affine map, $\gamma'(t)\in\Lambda_{B,\gamma(t)}$.
\end{definition}

\begin{definition}
\label{brokenlinedef}
A \emph{broken line} $\gamma$
in $(B,\Sigma)$ for $q\in B_0(\ZZ)$ with endpoint $Q\in B_0$ is
a proper continuous piecewise
integral affine map $\gamma:(-\infty,0]\rightarrow B_0$ with only a
finite number of domains of linearity, together
with, for each $L\subset (-\infty,0]$ a maximal connected domain of linearity
of $\gamma$, a choice of monomial $m_L=c_Lz^{q_L}$ where $c_L \in \kk^{\times}$
and $q_L \in \Gamma(L,\gamma^{-1}(\shP)|_{L})$,
satisfying the following properties.
\begin{enumerate}
\item For the unique unbounded domain of linearity $L$,
$\gamma|_L$ goes off to infinity in a cone $\sigma\in\Sigma_{\max}$ as
$t\to -\infty$, and $q\in\sigma$.
Furthermore, using the identification of the stalk
$\shP_x$ for $x\in\sigma$ with $\shP_{\sigma}$,
$m_L=z^{\varphi_{\sigma}(q)}$.
\item For each $L$ and $t\in L$,
$-r(q_L)=\gamma'(t)$, where $r$ is defined in \eqref{standardexactseq}.
Also $\gamma(0)=Q\in B_0$.
\item Let $t\in (-\infty,0)$ be a point at which $\gamma$ is not linear,
passing from domain of linearity $L$ to $L'$. If $\gamma(t)\in\tau\in\Sigma$,
then $\shP_{\gamma(t)}=\shP_{\tau}$, so
that we can view $q_L\in\shP_{\tau}$ and $r(q_L) \in \Lambda_{\tau}$.
Let $\fod_1,\ldots,\fod_p\in \foD$ be the rays of
$\foD$ that contain $\gamma(t)$, with attached functions
$f_{\fod_j}$. Then we require that $\gamma$ passes from one side of these
rays to the other at time $t$, so that $\theta_{\gamma,\fod_j}$ is defined.
Let $n=n_{\fod_j}$ be the primitive element of $\Lambda_{\tau}^*$
used to define $\theta_{\gamma,\fod_j}$. Expand
\begin{equation}
\label{bendingterms}
\prod_{j=1}^p f_{\fod_j}^{\langle n,r(q_L)\rangle}
\end{equation}
as a formal power series in $\widehat{\kk[P_{\varphi_{\tau}}]}$.
Then there is a term $cz^{s}$ in this sum with
\[
m_{L'}=m_L\cdot (c z^{s}).
\]
\end{enumerate}
\end{definition}

\begin{remark}
\label{positiveexpremark}
Using the notation of item (3) above, by item (2) of the definition,
\begin{equation}
\label{raycrossinginequality}
\langle n, r(q_L) \rangle > 0.
\end{equation}
This is vital to interpret \eqref{bendingterms}.
Indeed, if $\tau$ is a ray, $f_{\fod_i}$ need not be invertible
in $\widehat{\kk[P_{\varphi_{\tau}}]}$, so
\eqref{raycrossinginequality} tells us that
\eqref{bendingterms} makes sense in this ring.
\end{remark}

\begin{example}
\label{firstbrokenexample}
We give a first example of broken lines, in the case where $B$ is as given in
Example \ref{negdefexample}
and $\foD=\emptyset$, so that there is no possibility
of bending. Nevertheless, there is quite non-trivial behaviour. For an example
including bending, see Example \ref{M05more} after the introduction
of the canonical scattering diagram.

Given $q\in B_0(\ZZ)$, $Q\in B_0$ general, we can choose lifts
$\tilde q, \tilde Q$ to the universal cover $\tilde B_0=|\tilde\Sigma|
\setminus \{0\}$ of $B_0$. Let $\pi:\tilde B_0\rightarrow B_0$ be the
covering map. Fixing the lift $\tilde Q$, for any lift $\tilde q$
we obtain a broken line $\gamma:(-\infty,0]\rightarrow B_0$ given by
$\gamma(t)=\pi(\tilde Q-t\tilde q)$. As this has one domain of linearity $L$,
we decorate $L$ with the monomial $z^{\varphi_{\sigma}(q)}$, where
$q\in \sigma\in \Sigma$. Note there are an infinite number of such broken
lines, one for each lift of $q$. Dealing with this non-finiteness is a key
part of the proof of Looijenga's conjecture in \S\ref{cusp}.
\end{example}

The next lemma and corollary are crucial for interpreting the
monomials $m_L$:

\begin{lemma} \label{transportlemma}
Let $\sigma_-,\sigma_+ \in \Sigma_{\max}$ be the two
maximal cones containing the ray $\rho \in \Sigma$.
If $q \in P_{\varphi_{\sigma_-}}$ with
$-r(q) \in \Int(\rho^{-1}\sigma_+)\subset \Lambda_{\rho}\otimes_{\ZZ}\RR$,
then
\[
q \in P_{\varphi_\rho} = P_{\varphi_{\sigma_-}}\cap P_{\varphi_{\sigma_+}}.
\]
\end{lemma}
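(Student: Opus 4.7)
The plan is to reduce the statement to an explicit computation using the bending parameter formula that relates $\varphi_{\sigma_-}$ and $\varphi_{\sigma_+}$, together with the convexity hypothesis $p_{\rho,\varphi} \in P$. Once we show $q \in P_{\varphi_{\sigma_+}}$ as well as $q \in P_{\varphi_{\sigma_-}}$ (the latter being given), the conclusion $q \in P_{\varphi_\rho}$ follows immediately from the identity \eqref{coneintersections}.

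The concrete steps I would carry out are the following. First, since $q \in P_{\varphi_{\sigma_-}}$, by the definition \eqref{Pvarphitaudef} we may write $q = p + \varphi_{\sigma_-}(m)$ for some $p \in P$ and some $m \in \Lambda_\rho$, and note that $m = r(q)$ since $r \circ \varphi_{\sigma_-} = \id$ and $r(p) = 0$. Second, I would invoke the bending parameter identity $\varphi_{\sigma_+} - \varphi_{\sigma_-} = n_\rho \otimes p_{\rho,\varphi}$, where $n_\rho \in \Lambda_\rho^\ast$ is the primitive element annihilating $\rho$ and positive on $\sigma_+$; evaluating at $m$ this gives
\[
q \;=\; p + \varphi_{\sigma_-}(m) \;=\; \bigl(p - \langle n_\rho, m\rangle\, p_{\rho,\varphi}\bigr) + \varphi_{\sigma_+}(m).
\]
Third, I would translate the hypothesis $-r(q) = -m \in \Int(\rho^{-1}\sigma_+)$ into the numerical inequality $\langle n_\rho, m\rangle < 0$: this is just the fact that the open half-space $\Int(\rho^{-1}\sigma_+) \subset \Lambda_{\rho,\RR}$ is cut out by $\langle n_\rho,\cdot\rangle > 0$, applied to $-m$.

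Finally, since $-\langle n_\rho,m\rangle \in \ZZ_{>0}$ and $p_{\rho,\varphi} \in P$ by $P$-convexity of $\varphi$, the element $p - \langle n_\rho,m\rangle\, p_{\rho,\varphi}$ lies in $P + P \subset P$. Thus $q \in P_{\varphi_{\sigma_+}}$, and combining with the hypothesis $q \in P_{\varphi_{\sigma_-}}$ we conclude via \eqref{coneintersections} that $q \in P_{\varphi_\rho}$. There is no substantive obstacle here; the entire content of the lemma is the bookkeeping of signs and the crucial use of convexity ($p_{\rho,\varphi} \in P$), which is exactly what the hypothesis on $r(q)$ is designed to exploit.
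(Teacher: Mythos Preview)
Your proof is correct and is essentially identical to the paper's own argument: write $q=\varphi_{\sigma_-}(r(q))+p$, apply the bending relation $\varphi_{\sigma_+}-\varphi_{\sigma_-}=n_\rho\otimes p_{\rho,\varphi}$, observe $\langle n_\rho,-r(q)\rangle>0$, and conclude $q\in P_{\varphi_{\sigma_+}}$. The only cosmetic difference is that you make the final appeal to \eqref{coneintersections} explicit, whereas the paper leaves it implicit in the statement of the lemma.
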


\begin{proof}
By the definitions there exist $p, \kappa_{\rho,\varphi} \in P$
and $n_{\rho}\in \Lambda_{\rho}^*$ annihilating the tangent space to
$\rho$ and positive on $\sigma_+$ such that
\begin{align*}
q = {} &\varphi_{\sigma_-}(r(q)) + p \\
\varphi_{\sigma_+}(-r(q))= {} &
\varphi_{\sigma_-}(-r(q)) + \langle n_{\rho},-r(q)\rangle \kappa_{\rho,
\varphi}.
\end{align*}
Since $\langle n_{\rho},-r(q)\rangle>0$,
\[
q= \varphi_{\sigma_+}(r(q)) + p +
\langle n_{\rho},-r(q)\rangle \kappa_{\rho,\varphi}\in P_{\varphi_{\sigma_+}}.
\]
\end{proof}

An immediate consequence of this lemma is

\begin{corollary}
\label{stayinPtaucor}
\begin{enumerate}
\item
Let $\gamma:[t_1,t_2]\rightarrow B_0$ be integral affine.
Suppose that $\gamma(t_1)\in\tau_1$, $\gamma(t_2)\in\tau_2$.
Suppose also we are given a section $q\in \Gamma(\gamma^{-1}
\shP)$ such that $-r(q)=\gamma'(t)$ for each $t$.
If
\[
q(t_1) \in P_{\varphi_{\tau_1}}\subset \shP_{\tau_1}=
\shP_{\gamma(t_1)},
\]
then
\[
q(t_2)\in P_{\varphi_{\tau_2}}\subset \shP_{\tau_2}=
\shP_{\gamma(t_2)}.
\]
\item If $\gamma$ is a broken line, $t\in L$ a maximal domain
of linearity with $\gamma(t)\in \tau$, then
\[
q_L\in P_{\varphi_{\tau}}\subset \shP_{\tau}=\shP_{\gamma(t)}.
\]
\end{enumerate}
\end{corollary}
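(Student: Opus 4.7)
Both statements are essentially bookkeeping assertions about which monoid $P_{\varphi_\tau}$ contains the exponents, leveraging the key geometric fact Lemma~\ref{transportlemma} together with the cone inclusions \eqref{coneinclusions}.

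For (1), the plan is to reduce to the elementary case $t_1 < t < t_2$ where $\gamma(t)$ crosses a single ray $\rho \in \Sigma$ transversally from the interior of $\sigma_-$ to the interior of $\sigma_+$. Since $\gamma$ is integral affine on $[t_1,t_2]$, its image is a line segment and it crosses only finitely many rays of $\Sigma$, so after subdividing the interval and using transitivity of the desired assertion, I may assume $\gamma$ lies entirely in $\sigma_- \cup \sigma_+$. The transverse crossing condition combined with $-r(q) = \gamma'$ yields $-r(q) \in \Int(\rho^{-1}\sigma_+)$, which is exactly the hypothesis of Lemma~\ref{transportlemma}; that lemma then gives $q(t) \in P_{\varphi_\rho} \subset P_{\varphi_{\sigma_+}}$, and since the stalks $\shP_{\sigma_+}$ and $\shP_{\gamma(t_2)}$ are identified by parallel transport along the straight segment inside $\sigma_+$, the value $q(t_2)$ lies in $P_{\varphi_{\sigma_+}}$. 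For the boundary cases where $\gamma(t_1) \in \tau_1 = \rho_1$ is a ray (resp.\ $\gamma(t_2) \in \tau_2$ is a ray), I handle them using \eqref{coneinclusions}: on the initial side $P_{\varphi_{\rho_1}} \subset P_{\varphi_{\sigma}}$ for the maximal cone entered, and on the terminal side the conclusion $q(t_2) \in P_{\varphi_\sigma}$ upgrades to $q(t_2) \in P_{\varphi_{\rho_2}}$ by applying Lemma~\ref{transportlemma} once more at the ray $\rho_2$ being entered. The degenerate case where $\gamma$ travels along a ray $\rho$ for a nondegenerate subinterval is trivial, since both stalks and the relevant submonoid $P_{\varphi_\rho}$ are constant.

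For (2), I proceed by induction on the number of bends of the broken line $\gamma$, so equivalently by induction on the maximal domains of linearity $L_1, L_2, \ldots, L_N$ encountered as $t$ increases toward $0$. The base case is the unbounded domain $L_1$: condition (1) in Definition~\ref{brokenlinedef} fixes $m_{L_1} = z^{\varphi_\sigma(q)}$ with $q \in \sigma \cap B_0(\ZZ)$, so $q_{L_1} = \varphi_\sigma(q) \in P_{\varphi_\sigma}$ tautologically. For the inductive step, suppose $q_L \in P_{\varphi_{\tau'}}$ at some point of $L$, where $\tau'$ is the cone containing that point. Applying part (1) to $\gamma|_L$ parametrized so that it ends at the bending point $\gamma(t) \in \tau$, I get $q_L \in P_{\varphi_\tau}$. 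Next, observe that the rays $\fod_j$ through $\gamma(t)$ all have $\tau_{\fod_j} = \tau$: either $\fod_j$ lies in a ray of $\Sigma$, in which case $\tau_{\fod_j} = \fod_j$ and since $\gamma(t) \in \fod_j$ we have $\tau = \fod_j$; or $\fod_j$ lies in the interior of a maximal cone $\sigma = \tau_{\fod_j}$, in which case $\gamma(t) \in \sigma$ forces $\tau = \sigma$. Hence $f_{\fod_j} \in \widehat{\kk[P_{\varphi_\tau}]}$. Since $\langle n, r(q_L)\rangle > 0$ by \eqref{raycrossinginequality}, the product $\prod f_{\fod_j}^{\langle n, r(q_L)\rangle}$ is a well-defined element of $\widehat{\kk[P_{\varphi_\tau}]}$, so every monomial term $cz^{q}$ appearing in its expansion has $q \in P_{\varphi_\tau}$. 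Then $q_{L'} = q_L + q$ lies in $P_{\varphi_\tau}$ since the latter is a submonoid, which completes the induction.

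The argument is essentially formal; the only mild subtlety is verifying the equality $\tau = \tau_{\fod_j}$ at bending points so that the wall-crossing factors truly live in the monoid ring attached to the current cone, and checking that part (1) applies on each straight segment precisely because the direction condition $-r(q_L) = \gamma'$ aligns with the transversality hypothesis of Lemma~\ref{transportlemma}.
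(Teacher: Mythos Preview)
Your proof is correct and follows the same approach as the paper, which simply states that (1) is immediate from Lemma~\ref{transportlemma} and that (2) follows from (1) together with Definition~\ref{brokenlinedef},~(3) and the base case $m_{L_1}=z^{\varphi_\sigma(q)}\in \kk[P_{\varphi_\sigma}]$. Your write-up spells out the subdivision at ray crossings for (1) and the induction on domains of linearity for (2), including the check that $\tau_{\fod_j}=\tau$ at a bend so that the wall-crossing factors lie in $\widehat{\kk[P_{\varphi_\tau}]}$; these are exactly the details the paper leaves implicit.
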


\begin{proof}
The first item follows immediately from the lemma. The second item
follows from the fact that if $t\ll 0$ lies in the unbounded domain
of linearity with $\gamma(t)\in\sigma$,
then $m_L=z^{\varphi_{\tau}(q)}
\in P_{\varphi_\sigma}$ by construction. Then this holds for
all $t$ by item (1) and Definition \ref{brokenlinedef}, (3).
\end{proof}

The convexity of $\varphi$ puts further restrictions on the monomial
decorations of a broken line.

\begin{definition}
\label{orderdef}
Let $J \subset P$ be a proper monoid ideal.
For $p \in J$ there exists a maximal $k \geq 1$ such that
$p = p_1 + \cdots+p_k$ with $p_i \in J$. We define $\ord_J(p)$
to be this maximum, and define $\ord_J(p) =0$ if $p \in P \setminus J$.

For $x\in \tau$, $q \in P_{\varphi_{\tau}}$,
define $\ord_{J,x}(q) := \ord_J(q - \varphi_{\tau}(r(q)))$. This measures
how high $q$ is above the graph of $\varphi_{\tau}$.
If $\gamma$ is a broken line and $t\in L$ a maximal domain of
linearity, define
\[
\ord_{J,\gamma}(t)=\ord_{J,\gamma(t)}(q_L),
\]
using $\gamma(t)\in\tau$ and $q_L\in P_{\varphi_\tau}\subset\shP_{\gamma(t)}$.
\end{definition}

\begin{lemma}
\label{orderincreasinglemma}
Let $\gamma$ be a broken line.
Then if $t<t'$,
\[
\ord_{J,\gamma}(t) \leq \ord_{J,\gamma}(t'),
\]
with strict inequality if either $t$ and $t'$
lie in different domains of linearity or for some $t''$ with
$t<t''<t'$,  $\gamma(t'')$ lies in a ray $\rho\in\Sigma$ with
bending parameter $\kappa_{\rho,\varphi} \in J$.
\end{lemma}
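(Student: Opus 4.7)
My plan is to chart the change in $\ord_{J,\gamma}$ across the finitely many ``events'' that can occur along $\gamma|_{[t,t']}$, namely the bending points and the crossings of rays $\rho \in \Sigma$. Between two consecutive events $\gamma$ will lie in a single maximal domain of linearity $L$, the section $q_L$ is parallel, and $\gamma(s)$ stays inside a single maximal cone $\sigma$, so $\ord_{J,\gamma}(s) = \ord_J(q_L - \varphi_\sigma(r(q_L)))$ is constant on that interval. The proof therefore reduces to analyzing the effect of a single event, and will rest on two elementary properties of $\ord_J$: monotonicity $\ord_J(a+b)\ge \ord_J(a)$ and super-additivity $\ord_J(a+b) \ge \ord_J(a) + \ord_J(b)$ for $a,b\in P$ (both immediate by concatenating decompositions into sums of elements of $J$).

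For the crossing of a ray $\rho\in\Sigma$ inside a domain of linearity, $q_L$ is unchanged but the ambient maximal cone switches from $\sigma_-$ to $\sigma_+$; using $\varphi_{\sigma_+} - \varphi_{\sigma_-} = n_\rho\otimes p_{\rho,\varphi}$ together with $\langle n_\rho, -r(q_L)\rangle \ge 1$ (since $-r(q_L)=\gamma'$ points from $\sigma_-$ into $\sigma_+$), I will obtain
\[
q_L - \varphi_{\sigma_+}(r(q_L)) \;=\; \bigl(q_L - \varphi_{\sigma_-}(r(q_L))\bigr) + \langle n_\rho,-r(q_L)\rangle\, p_{\rho,\varphi},
\]
which makes non-decrease automatic and strict increase occur exactly when $p_{\rho,\varphi}\in J$, disposing of the second ``strict inequality'' clause of the lemma. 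At a bending point $t''$, Corollary~\ref{stayinPtaucor} places $q_L$ and $q_{L'}$ in $P_{\varphi_\tau}$ for $\tau$ the smallest cone containing $\gamma(t'')$, and $q_{L'}=q_L+q$ where $cz^q$ is a non-zero term of the expansion of $\prod_j f_{\fod_j}^{\langle n,r(q_L)\rangle}$. Combining the bending with the simultaneous $\rho$-crossing (when $\tau=\rho$ is a ray) will yield
\[
q_{L'} - \varphi_{\sigma_+}(r(q_{L'})) \;=\; \bigl(q_L - \varphi_{\sigma_-}(r(q_L))\bigr) + \bigl(q - \varphi_\tau(r(q))\bigr) + \langle n_\rho,-r(q_L)\rangle\, p_{\rho,\varphi},
\]
where the last term is to be dropped and $\sigma_\pm$ replaced by $\sigma$ when $\tau=\sigma$ is two-dimensional.

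Strict increase at a bend will then follow case by case from Definition~\ref{scatdiagdef}(3). When $\dim\tau=2$, or $\tau=\rho$ is a ray with $p_{\rho,\varphi}\notin J$, every $f_{\fod_j}$ entering the product is congruent to $1$ modulo $J$, so any non-constant term of the expansion has $q - \varphi_\tau(r(q))\in J$ and the middle summand alone forces the jump (note that $q\neq 0$ automatically since $\gamma$ genuinely changes direction at a bend). The hard part will be the remaining case---a bend on a ray $\rho$ with $p_{\rho,\varphi}\in J$---since there $f_\rho$ carries no a priori $J$-adic smallness and $q - \varphi_\rho(r(q))$ can have $\ord_J$ equal to zero. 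My resolution will be that in this case the third summand $\langle n_\rho,-r(q_L)\rangle\, p_{\rho,\varphi}$ is itself in $J$, so super-additivity again forces strict increase. Summing these event-by-event estimates over the finitely many bends and ray crossings in $[t,t']$ will give the lemma in both its weak and strict forms.
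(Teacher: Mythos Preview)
Your argument is correct and is exactly the approach the paper has in mind: the paper's proof is the single sentence ``This is immediate from the definitions and the proof of Lemma~\ref{transportlemma},'' and your event-by-event analysis (ray crossings via the identity from the proof of Lemma~\ref{transportlemma}, bends via Definition~\ref{brokenlinedef}(3) together with Definition~\ref{scatdiagdef}(3)) is precisely the unpacking of that sentence. Your case split for strict increase at a bend---$q-\varphi_\tau(r(q))\in J$ when $\dim\tau_{\fod}=2$ or $p_{\rho,\varphi}\notin J$, and the $\langle n_\rho,-r(q_L)\rangle\,p_{\rho,\varphi}$ term taking over when $p_{\rho,\varphi}\in J$---is the right way to extract the strictness from the definitions.
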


\begin{proof} This is immediate from the definitions and
the proof of Lemma \ref{transportlemma}.
\end{proof}

\begin{definition}
For $I$ an ideal in $P$ with $\sqrt{I}=J$, let
\[
\Supp_I(\foD):=\bigcup_{\fod}\fod
\]
where the union is over all $(\fod,f_{\fod})\in\foD$ such
that $f_{\fod}\not\equiv 1\mod I\kk[P_{\varphi_{\tau_{\fod}}}]$.
By Definition \ref{scatdiagdef}, (4), this is a finite union.
\end{definition}

\begin{definition}
\label{deflift}
Let $I$ be an ideal of $P$ with $\sqrt{I}=J$, and let
$Q\in B\setminus\Supp_I(\foD)$, $Q\in\tau\in\Sigma$. For
$q\in B_0(\ZZ)$, define
\begin{equation}\label{Liftdef}
\Lift_Q(q):=\sum_{\gamma} \Mono(\gamma)\in \kk[P_{\varphi_{\tau}}]/
I\cdot \kk[P_{\varphi_{\tau}}],
\end{equation}
where the sum is over all broken lines $\gamma$ for $q$
with endpoint $Q$, and $\Mono(\gamma)$ denotes the monomial attached
to the last domain of linearity of $\gamma$. The word ``Lift'' is used
to indicate that this is, as we shall show, a lifting of a monomial $z^q$
on $X^o_{J,\foD}$ to $X^o_{I,\foD}$. The fact that
$\Lift_Q(q)$ lies in the
stated ring follows from:
\end{definition}

\begin{lemma}
\label{finitenesslemma}
Let $Q \in \sigma \in \Sigma_{\max}$, $q\in B_0(\ZZ)$.
Let $I$ be an ideal with $\sqrt{I}=J$. Assume that
$\kappa_{\rho,\varphi}\in J$ for at least one ray $\rho\in\Sigma$.
Then the following hold:
\begin{enumerate}
\item The collection of $\gamma$
in Definition \ref{deflift}
with
\[
\Mono(\gamma) \not \in I \cdot \kk[P_{\varphi_{\sigma}}]
\]
is finite.
\item
If one boundary ray of the connected component
of $B \setminus \Supp_I(\foD)$ containing $Q$
is a ray $\rho\in\Sigma$, then $\Mono(\gamma) \in
\kk[P_{\varphi_{\rho}}]$,
and the collection of $\gamma$ with
\[
\Mono(\gamma) \not \in I \cdot \kk[P_{\varphi_{\rho}}]
\]
is finite.
\end{enumerate}
\end{lemma}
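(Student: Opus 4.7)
My plan is as follows. Both parts of the lemma reduce to bounding the complexity of broken lines whose final monomial $\Mono(\gamma) = c\,z^{q_{L_0}}$ is nonzero modulo the relevant ideal. Because $\sqrt{I} = J$ and $P$ is a toric monoid, $J$ is finitely generated as a monoid ideal and there exists an integer $N$ with $J^N \subseteq I$. If $\ord_{J,\gamma}(0) \geq N$, then $q_{L_0} - \varphi_{\sigma}(r(q_{L_0})) \in J^N \subseteq I$, and hence $\Mono(\gamma) \in I \cdot \kk[P_{\varphi_{\sigma}}]$. It therefore suffices to enumerate the broken lines with $\ord_{J,\gamma}(0) \leq N-1$.

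Lemma \ref{orderincreasinglemma} tells us that $\ord_{J,\gamma}$ strictly increases at each of two event-types along $\gamma$: (a) at each bend, and (b) at each transversal crossing of a ray $\rho \in \Sigma$ with $p_{\rho,\varphi} \in J$. Hence any candidate broken line accumulates at most $N-1$ events. Bends at rays $\fod \in \foD$ with $\fod \not\subset \Supp_I(\foD)$ contribute trivially modulo $I$ (since $f_{\fod} \equiv 1 \bmod I$ forces every nontrivial monomial in the expansion of \eqref{bendingterms} into $I$), so we may restrict to effective bends, occurring on the finite set of rays in $\Supp_I(\foD)$ granted by Definition \ref{scatdiagdef}(4). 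The type-(b) crossings occur among the $n$ rays of $\Sigma$. Moreover, the hypothesis that at least one ray of $\Sigma$ has $p_{\rho,\varphi}\in J$ prevents $\gamma$ from winding indefinitely around the origin, since every full winding must cross this ray and thus absorb at least one of the $N-1$ permissible events.

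Consequently, the combinatorial type of $\gamma$ — the ordered sequence of event loci, together with the cone of $\Sigma$ traversed on each linear segment — admits only finitely many possibilities. For each such type, $\gamma$ is captured by its sequence of attached monomials; the initial monomial $m_L = z^{\varphi_{\sigma}(q)}$ is fixed by $q$, and at every subsequent effective bend the next monomial is one of the finitely many terms appearing in $m_L \cdot \prod_j f_{\fod_j}^{\langle n, r(q_L)\rangle}$ after reducing modulo $I$ (finite because the relevant $f_{\fod_j}$ reduce to polynomials modulo $I$ and the exponent $\langle n, r(q_L)\rangle$ is determined by $m_L$). Since the monomial sequence determines the directions $-r(q_L) = \gamma'(t)$ of every segment and $\gamma$ terminates at the prescribed endpoint $Q$, the sequence reconstructs $\gamma$ uniquely. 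This proves (1).

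For (2), the finiteness assertion follows from (1) via the natural inclusion $\kk[P_{\varphi_{\rho}}] \hookrightarrow \kk[P_{\varphi_{\sigma}}]$, so the substantive content is the containment $\Mono(\gamma) \in \kk[P_{\varphi_{\rho}}]$ for \emph{every} broken line ending at $Q$. I would argue that the final domain of linearity $L_0$ must meet the ray $\rho$ at some time $t$: if $\gamma$ has a preceding bend then, since $\rho$ bounds the component $C \ni Q$ of $B \setminus \Supp_I(\foD)$, a continuity argument (varying $Q$ within $C$, using that the combinatorial type is locally constant in $Q$ and that $q_{L_0}$ depends continuously on $Q$) forces either the bend to lie on $\rho$ or $L_0$ to cross $\rho$ in its interior; if $\gamma$ has no bends then the asymptotic cone is adjacent to $\rho$ and the unique straight segment from infinity to $Q$ again meets $\rho$. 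Corollary \ref{stayinPtaucor}(2) applied at a point of $L_0 \cap \rho$ then yields $q_{L_0} \in P_{\varphi_{\rho}}$. The main obstacle I anticipate is exactly this geometric verification — rigorously showing that every $L_0$ must meet $\rho$ — which will likely require careful work on the universal cover $\tilde B$ of \S\ref{affinesection} to control the interaction between segments and the monodromy around the origin, especially in the negative semi-definite case where $\delta$ wraps around the origin.
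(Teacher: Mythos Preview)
Your argument for part (1) is correct and matches the paper's proof: bound $\ord_{J,\gamma}(0)$ by some $N$ with $J^N\subset I$, use Lemma~\ref{orderincreasinglemma} to see that bends and crossings of rays $\rho$ with $p_{\rho,\varphi}\in J$ each consume one unit of order, and conclude that both the number of bends and the winding around the origin are bounded, whence finiteness.

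Your argument for part (2), however, has a genuine gap. The claim that the final domain of linearity $L_0$ must meet $\rho$ is simply false. Take $q\in\Int(\sigma_+)$ and the trivial (unbent) broken line: its image is the ray $Q+\RR_{\ge 0}q$, which stays in $\sigma_+$ and never touches $\rho$. More generally, whenever the final segment is heading \emph{towards} $\rho$ (i.e., $-r(q_L)\in\rho^{-1}\sigma_-$, equivalently $r(q_L)\in\rho^{-1}\sigma_+$) it terminates at $Q$ before ever reaching $\rho$; your continuity argument cannot force a crossing that does not exist, and the no-bends case you describe is exactly the simplest instance where it fails. The obstacle you anticipate is therefore not a matter of ``careful work on the universal cover'' but an incorrect dichotomy.

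The paper's remedy is a clean case split on the direction $r(q_L)$. If $r(q_L)\in\rho^{-1}\sigma_+$, one applies Lemma~\ref{transportlemma} directly (with the roles of $\sigma_\pm$ interchanged): from $q_L\in P_{\varphi_{\sigma_+}}$ and $-r(q_L)\in\rho^{-1}\sigma_-$ one obtains $q_L\in P_{\varphi_\rho}$ with no reference to any crossing. If instead $r(q_L)\notin\rho^{-1}\sigma_+$, then the final segment, traced backwards from $Q$, heads into $\rho^{-1}\sigma_-$; since $\rho$ bounds the component of $B\setminus\Supp_I(\foD)$ containing $Q$ and is itself a ray of $\Sigma$, there is nothing between $Q$ and $\rho$ on which $\gamma$ could have bent, so $\gamma$ genuinely crosses $\rho$ and Corollary~\ref{stayinPtaucor}(2) applies at that crossing point. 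This replaces your geometric claim with an algebraic one and requires no deformation argument.
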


\begin{proof} Note there is some $k$ such that
$J^k\subset I$ because $\kk[P]$ is Noetherian. If $\gamma$ is a broken line with
$\Mono(\gamma)\not\in I\cdot \kk[P_{\varphi_{\sigma}}]$, then
$\gamma$ crosses the rays of $\Sigma$
in a cyclic order. Indeed, this follows from condition (3) of Definition
\ref{brokenlinedef}, as a broken line must cross from one side to the other
of each ray of $\foD$ it intersects. From this, the hypotheses on
the $\kappa_{\rho,\varphi}$ imply that in any set of at least $n$
consecutive rays of $\Sigma$ that it crosses, there is
at least one ray $\rho$ with $\kappa_{\rho,\varphi} \in J$.
By Lemma \ref{orderincreasinglemma}, $\ord_{J,\gamma}$
increases every time $\gamma$ crosses such a ray, and also
every time $\gamma$ bends at a ray $\fod$ not contained in a ray
of $\Sigma$. Once $\ord_{J,\gamma}\ge k$,
$\Mono(\gamma)\in I\cdot \kk[P_{\varphi_{\sigma}}]$.
Hence there is an absolute bound
on the number of rays of $\Sigma$ that $\gamma$ can cross,
and the number of times $\gamma$ can bend. When $\gamma$ crosses
a ray, there are a finite number of terms in \eqref{bendingterms} modulo
$I\cdot\kk[P_{\varphi_{\sigma}}]$, as the exponent is always positive,
see Remark \ref{positiveexpremark}. Thus there are a finite number of
possible choices of bend, and hence only a finite number of possible
choices for the exponent of
$\Mono(\gamma)$ modulo $I\cdot \kk[P_{\varphi_{\sigma}}]$ once
the initial monomial of $\gamma$ is fixed. Given any prescribed sequence
of bends and initial direction, one sees that there is only one possibility
for the underlying map $\gamma$ with endpoint a fixed point $Q$ by tracing
the broken line back from $Q$. Each such underlying map $\gamma$ supports
only a finite number of broken lines modulo $I\cdot \kk[P_{\varphi_{\sigma}}]$.
This yields the finiteness of (1).

The argument for the finiteness statement in (2), once the
first part of (2) is established, is the same. For the
first part of (2), consider a broken line $\gamma$ contributing
to $\Lift_Q(q)$. We take $Q \in \sigma_+$, in the notation
of Lemma \ref{transportlemma}. Write $\Mono(\gamma)=c_Lz^{q_L}$.
If $r(q_L) \in \rho^{-1}\sigma_+$  then the
statement follows from Lemma \ref{transportlemma}. Otherwise
(by the definition of broken line) $\gamma$ crosses $\rho$, which is
the last ray of $\Sigma$ and the last ray of
$\Supp_I(\foD)$ it crosses
before reaching $Q$. Now the result follows from Lemma \ref{transportlemma}
and the definition of broken line.
\end{proof}

\begin{definition} \label{consistentdef}
Assume that $\kappa_{\rho,\varphi}\in J$ for at least one ray
$\rho\in\Sigma$. We say a scattering diagram $\foD$ is
\emph{consistent} if for all ideals $I\subset P$ with
$\sqrt{I}=J$ and for all $q\in B_0(\ZZ)$, the following holds.
Let $Q\in B_0$ be chosen so that the line joining
the origin and $Q$ has irrational slope, and $Q'\in B_0$ similarly.
Then:
\begin{enumerate}
\item If $Q,Q'\in\sigma\in\Sigma_{\max}$, then we can view $\Lift_{Q}(q)$
and $\Lift_{Q'}(q)$ as elements of $R_{\sigma,I}$, and as such,
we have
\[
\Lift_{Q'}(q)=\theta_{\gamma,\foD}(\Lift_Q(q))
\]
for $\gamma$ a path contained in the interior of $\sigma$ connecting $Q$ to
$Q'$.
\item
If $Q_-\in \sigma_-$ and $Q_+\in\sigma_+$ with $\sigma_{\pm}\in\Sigma_{\max}$
and $\rho=\sigma_+\cap\sigma_-$ a ray, and furthermore
$Q_-$ and $Q_+$ are contained in connected components of
$B \setminus\Supp_I(\foD)$ whose closures contain $\rho$,
then $\Lift_{Q_{\pm}}(q)\in R_{\sigma_{\pm},I}$ are both images
under $\psi_{\rho,\pm}$ of a single element
\[
\Lift_{\rho}(q)\in R_{\rho,I}.
\]
\end{enumerate}
\end{definition}

Of course the definition is introduced so that the following construction
works:

\begin{construction}[Construction of $\vartheta_q$]
\label{thetaconstruction}
Suppose $D\subset Y$ has $n\ge 3$ irreducible components, and that
$\foD$ is a consistent scattering diagram for data $(B,\Sigma)$, $P$, $\varphi$
and $J$. Assume further that $\kappa_{\rho,\varphi}\in J$ for all
$\rho\in\Sigma$, so that we may apply Construction \ref{gluingconstruction}.
We now construct for any $I$ with $\sqrt{I}=J$ a function
$\vartheta_q\in\Gamma(X^o_{I,\foD},
\shO_{X^o_{I,\foD}})$ for $q\in B(\ZZ)=B_0(\ZZ)\cup\{0\}$.

We define $\vartheta_0=1$. Next, let $q\in B_0(\ZZ)$.
For each ray $\rho\in\Sigma$ contained
in $\sigma_{\pm}\in\Sigma_{\max}$,
choose two points $Q_{\rho}^{\pm}\in B$, one each in the two
connected components of
$B\setminus (\Supp_I(\foD)\cup\rho)$ which are adjacent to $\rho$, with
$Q^+_{\rho}\in\sigma_+$ and $Q^-_{\rho}\in\sigma_-$.

We first note that $\Lift_{Q_{\rho}^{\pm}}(q)$ is a well-defined
element of $R_{\sigma_{\pm},I}$, independent of the particular
choice of $Q_{\rho}^{\pm}$: given a choice say of $Q=Q_{\rho}^{+}$ and
another choice $Q'$, we take a path $\gamma$ connecting $Q$ and $Q'$
wholly contained in the connected component of $B\setminus
(\Supp_I(\foD)\cup\rho)$
containing $Q$ and $Q'$. By Definition \ref{consistentdef}, (1), it then follows
that $\Lift_Q(q)=\Lift_{Q'}(q)$.

By Definition \ref{consistentdef}, (2),
we have an element $\Lift_{\rho}(q)\in R_{\rho,I}$ whose
image under $\psi_{\rho,\pm}$ is $\Lift_{Q_{\rho}^{\pm}}(q)$.
It then follows via
another application of Definition \ref{consistentdef}, (1), applied to the
path of Figure \ref{basicgluingpath}, that if $\rho$, $\rho'$ are adjacent
rays in $\Sigma$, then $\Lift_{\rho}(q)$ and $\Lift_{\rho'}(q)$ glue
under the identification of open subsets of $U_{\rho,I}$ and $U_{\rho',I}$
given by $\theta_{\gamma,\foD}$.
Thus all these elements of the rings $R_{\rho,I}$ for $\rho\in\Sigma$
glue to give a regular
function on $X^o_{I,\foD}$, by construction of this latter space.
This regular function is what we call $\vartheta_q$.
\end{construction}

\begin{theorem} \label{univfamth}
Suppose $D$ has $n\ge 3$ irreducible components, and
let $\varphi$ be a multivalued piecewise linear function on $B$ such that
$\kappa_{\rho,\varphi}\in J$ for all rays $\rho\in\Sigma$.
Let $\foD$ be a consistent scattering diagram and $I\subset P$
an ideal with $\sqrt{I}=J$. Set
\[
X_I:=\Spec \Gamma(X^o_{I,\foD},\shO_{X^o_{I,\foD}}).
\]
Since $X^o_{I,\foD}$ has the structure of a scheme
over $\Spec R_I$, so does $X_I$, which we write as
\[
f_I:X_I\rightarrow\Spec R_I.
\]
Then
\begin{enumerate}
\item $X_I$ contains $X^o_{I,\foD}$ as an open subset and
$f_I$ is flat with fibre over a closed
point $x$ of $\Spec R_I$ isomorphic to
the $n$-vertex $\VV_{n}$.
\item
For each $q\in B(\ZZ)$, there is a section $\vartheta_q\in \Gamma(X_I,
\shO_{X_I})$, and the set
\[
\{\vartheta_q\,|\,q\in B(\ZZ)\}
\]
is a free $R_I$-module basis for $\Gamma(X_I,\cO_{X_I})$.
\end{enumerate}
\end{theorem}

\begin{proof}
Construction \ref{thetaconstruction} constructs regular
functions $\vartheta_q$ on $X^o_{I,\foD}$, hence by definition of $X_I$,
we obtain $\vartheta_q\in \Gamma(X_I,\shO_{X_I})$.

Now note that $X^o_{J,\foD}=X^o_{J}$ as defined in
\S\ref{pgroupbundle}. Indeed, for any $(\fod,f_{\fod})\in\foD$ with
$\dim\tau_\fod=2$ we have $f_{\fod}\equiv 1\mod J$, so the open
sets $U_{\rho_i,J}$, $U_{\rho_{i+1},J}$ are glued trivially. Similarly,
if $\dim\tau_{\fod}=1$
then since $\kappa_{\rho,\varphi}\in J$, the rings $R_{\rho,I}$ as given
in \eqref{localequations} and \eqref{ringfirstequation} coincide
and are glued trivially. Thus with $I=J$, we see the gluing constructions
Constructions \ref{XoIdef} and \ref{gluingconstruction} coincide.

Note that with the assumption that $\kappa_{\rho,\varphi}\in J$ for all rays
$\rho$,
\[
X^o_J\cong (\Spec R_J[\Sigma])\setminus (\Spec R_J)\times\{0\}
\]
by Lemma \ref{invsystemindep}. We see that the canonical map
\[
\bigoplus_{q \in B(\bZ)} R_J\cdot \vartheta_q \rightarrow \Gamma(X^o_{J},\cO_{X^o_J})
\]
is an isomorphism. Indeed, by Lemma \ref{relS2},
$\Gamma(X^o_J,\cO_{X^o_J})\cong R_J[\Sigma]$. Furthermore, under this
isomorphism, $\vartheta_q$ is clearly taken to $z^q\in R_J[\Sigma]$. This
is because the only broken lines contributing to $\Lift_Q(q)$ modulo $J$
for any $Q$ is the straight line with endpoint $Q$, and this provides a
contribution only if $Q$ lies in the same maximal cone as $q$.

It also follows that $X_J:=\Spec \Gamma(X^o_J,\cO_{X^o_J})=\Spec R_J[\Sigma]$
is flat
over $\Spec R_J$ and the fiber over a closed point $x$ is given by
$\Spec\kk[\Sigma]$.

%Thus the map $f_J:X^o_{J,\foD}\rightarrow \Spec\kk[P]/J$ has fibres being
%given by Lemma \ref{invsystemindep}.
%Since $\Spec\kk[\Sigma_x]$ is $S_2$ (see \cite{A02}, 2.3.19),
%it follows from Lemma \ref{relS2} that
%\begin{eqnarray}\label{X_J}
%X_{J}=\Spec\Gamma(X^o_{J,\foD},\shO_{X^o_{J,\foD}})\rightarrow
%\Spec \kk[P]/J
%\end{eqnarray}
%has fibre over a closed point $x$ given by $\Spec \kk[\Sigma_x]$.

Now let $I$ be an ideal with $\sqrt{I}=J$.
Let $i \colon X^o_{J} \subset X_J$ be the inclusion.
Define a ringed space $X'_I$ with underlying topological space $X_J$ by $\cO_{X'_I}:=i_*\cO_{X^o_{I,\foD}}$.
Then the natural map $\cO_{X'_I} \rightarrow \cO_{X_J}$ is surjective by the existence of the lifts $\vartheta_q$.
Thus $X'_I / \Spec R_I$ is a flat deformation of $X_J / \Spec R_J$ by Lemma~\ref{flatnesslemma} below.
Now since $X_J$ is affine it follows that $X'_I$ is also affine, so $X'_I=X_I:=\Spec \Gamma(X^o_{I,\foD},\cO_{X^o_{I,\foD}})$.

We showed above that the $\vartheta_q$ form an
$R_J$-module basis of $\Gamma(X_J,\cO_{X_J})$.
Now since $X_I/\Spec R_I$ is a flat infinitesimal deformation of $X_J/\Spec
R_J$ it follows that the $\vartheta_q$ form a $R_I$-module basis of $\Gamma(X_I,\cO_{X_I})$, see Lemma~\ref{basis} below.
\end{proof}

\begin{lemma}
\label{flatnesslemma}
Let $X_0/S_0$ be a flat family of surfaces such that the fibres satisfy
Serre's condition $S_2$.
Let $i \colon X_0^o \subset X_0$ be the inclusion of an open subset
such that the complement has finite fibres. Note that $i_*\cO_{X^o_0}=\cO_{X_0}$ by Lemma~\ref{relS2}.

Let $S_0 \subset S$ be an infinitesimal thickening of $S_0$ and let $X^o \rightarrow S$ be a flat deformation of $X^o_0/S_0$ over $S$.
Define a family of ringed spaces $X \rightarrow S$
by $\cO_{X}:=i_*\cO_{X^o}$.

Then $X/S$ is a flat deformation of $X_0/S_0$ (that is, $X/S$ is flat and $X_0=X \times_S S_0$) if and only if the map
\begin{eqnarray} \label{infthicksurj}
\cO_{X}:=i_*\cO_{X^o} \rightarrow i_*\cO_{X^o_0}=\cO_{X_0}
\end{eqnarray}
is surjective.
\end{lemma}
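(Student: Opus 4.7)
\bigskip

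\noindent\emph{Proof proposal.} The ``only if'' direction is essentially formal: if $X/S$ is flat with $X_0 = X \times_S S_0$, then tensoring the structure sheaf of $X$ with $\cO_{S_0}$ over $\cO_S$ yields $\cO_{X_0}$, and the canonical map $\cO_X \to \cO_X \otimes_{\cO_S} \cO_{S_0} = \cO_{X_0}$ is surjective. So the real content is the ``if'' direction, and the plan is to reduce the problem, via standard d\'evissage, to the case of a square-zero thickening and then apply the local criterion of flatness, using Lemma~\ref{relS2} at the key step.

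First I would reduce to the square-zero case. Writing $I \subset \cO_S$ for the ideal of $S_0$, which is nilpotent, one can filter $\cO_S$ by ideals $I \supset I^2 \supset \cdots \supset I^N = 0$ and argue inductively through the square-zero thickenings $S_k := V(I^{k+1}) \hookrightarrow S_{k+1} := V(I^{k+2})$. Granting the conclusion at each intermediate stage (so that the partial thickenings are flat deformations with correct central fibre), the problem reduces to proving the following: if $0 \to I \to \cO_S \to \cO_{S_0} \to 0$ is square-zero (so that $I$ is naturally a coherent $\cO_{S_0}$-module) and the global hypotheses hold, then $X/S$ is a flat deformation.

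Next, in the square-zero case, I would exploit the flatness of $X^o/S$. Flatness gives the short exact sequence
\[
0 \to I \otimes_{\cO_{S_0}} \cO_{X_0^o} \to \cO_{X^o} \to \cO_{X_0^o} \to 0
\]
on $X_0^o$. Applying the left exact functor $i_*$ and invoking the second assertion of Lemma~\ref{relS2} (with $\cF = I$, a coherent sheaf on $S$ supported on $S_0$) identifies $i_*(I \otimes_{\cO_{S_0}} \cO_{X_0^o})$ with $I \otimes_{\cO_{S_0}} \cO_{X_0}$, yielding the four-term exact sequence
\[
0 \to I \otimes_{\cO_{S_0}} \cO_{X_0} \to \cO_X \to \cO_{X_0}.
\]
The hypothesis that $\cO_X \to \cO_{X_0}$ is surjective now converts this to a short exact sequence. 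Since $I^2=0$, the module $I \otimes_{\cO_{S_0}} \cO_{X_0}$ is naturally identified with $I \otimes_{\cO_S} \cO_X$ via the multiplication map, and exactness of the sequence says precisely that $\cO_X \otimes_{\cO_S} \cO_{S_0} = \cO_{X_0}$ and $\Tor_1^{\cO_S}(\cO_X,\cO_{S_0}) = 0$. The local criterion of flatness, applied fibrewise using that $\cO_{X_0}$ is already flat over $\cO_{S_0}$, then yields flatness of $\cO_X$ over $\cO_S$.

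The only real technical point, and the one I expect to require care rather than difficulty, is the interchange of $i_*$ with the tensor product $(-)\otimes_{\cO_{S_0}} I$ when applying Lemma~\ref{relS2}: one must check that the second statement of that lemma indeed applies to $I$ regarded as a coherent $\cO_S$-module (rather than only to pullbacks $\pi^*\cF$ in a more restrictive sense). Once that identification is in hand, everything else is a short diagram chase combined with the standard square-zero flatness criterion, and the induction on nilpotency of $I$ assembles the pieces into the general statement.
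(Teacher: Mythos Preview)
Your proposal is correct and follows essentially the same route as the paper: filter by powers of the nilpotent ideal, at each square-zero step push forward the flatness exact sequence on $X^o$, invoke Lemma~\ref{relS2} to identify the kernel as $I\otimes\cO_{X_0}$, and conclude via the local flatness criterion (the paper cites Matsumura, Theorem~22.3). One point you gloss over that the paper makes explicit: in the inductive step you need surjectivity of $\cO_{X_{k+1}}\to\cO_{X_k}$, not just $\cO_X\to\cO_{X_0}$; the paper observes this follows because $\cO_{X_{k+1}}\to\cO_{X_0}$ is surjective (it factors the given surjection), $\cO_{X_k}/\cI\cO_{X_k}=\cO_{X_0}$ by the induction hypothesis, and $\cI$ is nilpotent. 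Your flagged technical concern about applying Lemma~\ref{relS2} to $I$ is fine as stated: the lemma is written for an arbitrary coherent sheaf $\cF$ on the base.
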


\begin{proof}
The condition is clearly necessary.

Conversely, suppose (\ref{infthicksurj}) is surjective.
Let $\cI \subset \cO_S$ be the nilpotent ideal defining $S_0 \subset S$.
Let $X_n^o/S_n$ denote the $n$th order infinitesimal thickening of $X_0/S_0$ determined by $X^o/S$, that is,
$\cO_{X^o_n}=\cO_{X^o}/\cI^{n+1}\cdot\cO_{X^o}$ and $\cO_{S_n}=\cO_{S}/\cI^{n+1}$. Define $X_n/S_n$ by $\cO_{X_n}:=i_*\cO_{X^o_n}$.
Note that $\cO_{X_n} \rightarrow \cO_{X_0}$ is surjective because $\cO_{X} \rightarrow \cO_{X_0}$ is surjective by assumption.
We show by induction on $n$ that $X_n/S_n$ is a flat deformation of $X_0/S_0$. For $n=0$ there is nothing to prove. Suppose the induction hypothesis is true for $n$.
Since $X^o_{n+1}/S_{n+1}$ is flat (being the restriction of the flat family $X^o/S$ to $S_{n+1}$) we have a short exact sequence
$$0 \rightarrow \cI^{n+1}/\cI^{n+2} \otimes \cO_{X_0^o} \rightarrow \cO_{X^o_{n+1}} \rightarrow \cO_{X_n^o} \rightarrow 0.$$
Applying $i_*$ we obtain an exact sequence
\[
0 \rightarrow i_*(\cI^{n+1}/\cI^{n+2} \otimes \cO_{X^o_0}) \rightarrow \cO_{X_{n+1}} \rightarrow \cO_{X_n}.
\]
By Lemma~\ref{relS2} the first term is equal to $\cI^{n+1}/\cI^{n+2} \otimes \cO_{X_0}$. Moreover, the last arrow is surjective because $\cO_{X_{n+1}} \rightarrow \cO_{X_0}$ is surjective, $\cO_{X_n}/\cI \cdot \cO_{X_n} = \cO_{X_0}$ by the induction hypothesis, and $\cI$ is nilpotent, as in Theorem 8.4
of \cite{Matsumura89}
(where the module $M$ need not be finitely generated for the
argument given there to work). So we have an exact sequence
\begin{eqnarray} \label{flatnessinduction}
0 \rightarrow \cI^{n+1}/\cI^{n+2} \otimes \cO_{X_0} \rightarrow \cO_{X_{n+1}} \rightarrow \cO_{X_n} \rightarrow 0.
\end{eqnarray}
It follows that $\cO_{X_{n+1}}/\cI^{n+1} \cdot \cO_{X_{n+1}}=\cO_{X_n}$.
(Indeed, consider the map $$\alpha \colon \cI^{n+1} \otimes \cO_{X_{n+1}} \rightarrow \cO_{X_{n+1}}, \quad \alpha(f \otimes g)=fg.$$ We claim that $\alpha$ is equal to the composition of the map
$$\beta \colon \cI^{n+1} \otimes \cO_{X_{n+1}} \rightarrow \cI^{n+1}/\cI^{n+2} \otimes \cO_{X_0}$$ given by the natural maps on the factors and the first map $\gamma$ of the exact sequence (\ref{flatnessinduction}).
Since $\cO_{X_{n+1}}=i_*\cO_{X^o_{n+1}}$ by definition, it suffices to check the equality after restriction to $X^o$, where it is obvious. The map $\beta$ is surjective because $\cO_{X_{n+1}} \rightarrow \cO_{X_0}$ is surjective.
So the image of $\gamma$ is equal to the image of $\alpha$, namely $\cI^{n+1} \cdot \cO_{X_{n+1}}$.)
Now by \cite{Matsumura89}, Theorem~22.3, p.\ 174, the exact sequence (\ref{flatnessinduction}) shows that $X_{n+1}/S_{n+1}$ is a flat deformation of $X_0/S_0$.
\end{proof}

\begin{lemma} \label{basis}
Let $A \rightarrow B$ be a flat homomorphism of Noetherian rings and
$I \subset A$ a nilpotent ideal. Suppose given a set $S$ of elements of $B$ such that the reductions of the elements of $S$ form an $A/I$-module basis of $B/IB$. Then $S$ is an $A$-module basis of $B$.
\end{lemma}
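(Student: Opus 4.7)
The plan is to exploit the nilpotence of $I$ together with flatness to reduce both the spanning and independence statements modulo successive powers of $I$, where the hypothesis applies directly.

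First I would set up the key isomorphism. Since $A \to B$ is flat, tensoring the short exact sequence $0 \to I^{n+1} \to I^n \to I^n/I^{n+1} \to 0$ with $B$ over $A$ yields an exact sequence $0 \to I^{n+1} B \to I^n B \to (I^n/I^{n+1}) \otimes_A B \to 0$, and because $I$ annihilates $I^n/I^{n+1}$ the rightmost term equals $(I^n/I^{n+1}) \otimes_{A/I} (B/IB)$. By hypothesis the images $\bar{s}$ of elements of $S$ form a free $A/I$-basis of $B/IB$, so we obtain a canonical identification
\[
I^n B / I^{n+1} B \;\cong\; \bigoplus_{s \in S} (I^n/I^{n+1})\cdot \bar{s}.
\]

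For spanning, I would take any $b \in B$ and argue by induction on the smallest $n$ such that $b \in I^n B$ (using that $I^N = 0$ for some $N$, so $I^N B = 0$). Reducing $b$ modulo $I^{n+1} B$ and using the above decomposition, write $b \equiv \sum a_s s \pmod{I^{n+1} B}$ with $a_s \in I^n$ and only finitely many nonzero. The difference $b - \sum a_s s$ lies in $I^{n+1} B$, and the induction terminates because $I$ is nilpotent.

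For linear independence, suppose $\sum_{s} a_s s = 0$ with finitely many nonzero $a_s \in A$, and assume for contradiction that some $a_s \neq 0$. Let $n$ be the largest integer such that $a_s \in I^n$ for every $s$; this exists because $I$ is nilpotent (for $n$ large all $a_s$ lie in $I^n = 0$, so we can take the last $n$ for which the containment holds while some $a_s$ is still nonzero modulo $I^{n+1}$). Then the image of $\sum a_s s$ in $I^n B / I^{n+1} B$ is $\sum \bar{a}_s \bar{s}$ under the identification above, which by the direct sum decomposition forces each $\bar{a}_s = 0$ in $I^n/I^{n+1}$, contradicting the choice of $n$. Hence all $a_s = 0$.

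I do not expect a serious obstacle here: once the flatness-plus-nilpotence filtration and the identification of $I^n B/I^{n+1} B$ are in place, both assertions reduce immediately to the hypothesis at the associated graded level. The only mild subtlety is organising the induction on $n$ so that finite support of the representing sum is preserved at each stage, which is automatic since at each step only finitely many new coefficients are introduced.
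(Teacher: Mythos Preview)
Your argument is correct, but the paper takes a slightly different and somewhat shorter route. The paper first observes (as you do) that nilpotence of $I$ gives spanning, hence an exact sequence $0 \to K \to A^{(S)} \to B \to 0$. It then tensors this sequence with $A/I$: flatness of $B$ over $A$ gives $\operatorname{Tor}^A_1(B,A/I)=0$, so the resulting sequence $0 \to K/IK \to (A/I)^{(S)} \to B/IB \to 0$ is exact, and the hypothesis forces $K/IK=0$; nilpotence of $I$ then gives $K=0$. Your approach instead works through the $I$-adic filtration, identifying each graded piece $I^nB/I^{n+1}B$ with $\bigoplus_{s}(I^n/I^{n+1})$ and handling spanning and independence by explicit induction on $n$. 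The paper's proof is a one-shot Nakayama argument after a single application of flatness, while yours is more elementary and transparent at the level of the associated graded; both use flatness in essentially the same way. One small phrasing issue: ``induction on the smallest $n$ such that $b \in I^nB$'' is not quite what you mean (that $n$ is always $0$); you want descending induction on $n$ proving that $I^nB$ lies in the span, starting from $I^NB=0$.
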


\begin{proof}
Since $I$ is nilpotent and $S$ generates $B/IB$ it is clear that $S$ spans $B$
by Theorem 8.4 of \cite{Matsumura89}.
So we have an exact sequence
\[
0 \rightarrow K \rightarrow A^{S} \rightarrow B \rightarrow 0.
\]
Tensoring with $A/I$ we obtain an exact sequence
$$0 \rightarrow K/IK \rightarrow (A/I)^{S} \rightarrow B/IB \rightarrow 0$$
using flatness of $B$ over $A$. We deduce that $K/IK=0$ by our assumption, hence $K=0$ because $I$ is nilpotent.
\end{proof}

\begin{proposition} \label{volform}
Let $X_I/S_I:=\Spec R_I$ be the family of Theorem~\ref{univfamth}.
Then the relative dualizing sheaf $\omega_{X_I/S_I}$ is trivial. It is generated by the global section $\Omega$ given on local patches $U_{\rho_i,I}$ by
$\dlog X_{i-1} \wedge \dlog X_i = \dlog X_i \wedge \dlog X_{i+1}$. Here we
take the rays $\rho_j$ in counter-clockwise order, after
choosing an orientation on $B$, to obtain a consistent choice of signs.
\end{proposition}

\begin{proof}
By the adjunction formula for the closed embedding
$$U_{\rho_i} \subset \bA^2_{X_{i-1},X_{i+1}} \times \bG_{m,X_i} \times S_I,$$
the dualizing sheaf $\omega_{X_I/S_I}$ is freely generated over $U_{\rho_i}$
by the local section in the statement.
These sections patch to give a generator $\Omega$ of
$\omega_{X^o_{I,\foD}/S_I}$ because the scattering automorphisms preserve the
torus invariant two-forms. Both $\omega_{X_I/S_I}$ and $\cO_{X_I}$ satisfy the relative $S_2$ property $i_*i^*\cF=\cF$ where $i \colon X^o_I \subset X_I$ is the inclusion (\cite{H04}, Appendix, where the hypothesis of slc is not needed),
hence $\omega_{X_I/S_I}$ is freely generated by $\Omega$.
\end{proof}

\subsection{The algebra structure}
\label{algebrasection}
In the previous section, we saw that the $R_I$-algebra
\[
A_I=\Gamma(X^o_{I,\foD},\shO_{X^o_{I,\foD}})
\]
defining the flat deformation $X_I$ has an $R_I$-module basis
of theta functions $\{\vartheta_m\,|\, m\in B(\ZZ)\}$.
Here we derive a description of the multiplication rule on
$R_I$ using the geometry of the integral affine manifold $B$.
Besides being an attractive combinatorial description of the multiplication
rule, we will
use this (in the case of the canonical scattering diagram $\foD^{\can}$) in \S \ref{positivecase} to prove that our deformation extends over completions
of larger strata of $\Spec\kk[P]$, as well as for the case that $D$
has $1$ or $2$ irreducible components.

\begin{definition}
For a broken line $\gamma$ with endpoint $Q\in\tau\in\Sigma$,
define $s(\gamma) \in \Lambda_{\tau}$, $c(\gamma) \in \kk[P]$
by demanding that
\[
\Mono(\gamma) = c(\gamma) \cdot z^{\varphi_{\tau}(s(\gamma))}.
\]
Write $\Limits(\gamma) = (q,Q)$ if $\gamma$ is a broken line for
$q$ and has endpoint $Q\in B$.
\end{definition}

\begin{remark}
\label{canonicalidremark}
Recall that $B_0$ in fact has the structure of an integral \emph{linear}
manifold. One feature of such manifolds is that for any
simply connected set $U\subset B_0$, there is is a canonical linear immersion
$U\rightarrow\Lambda_{\RR,U}$, compatible with parallel transport inside
$U$.

In particular, if $q$ is a point of $B_0$ with $q\in\sigma\in\Sigma$,
and $\tau\subset\sigma$, then the canonical embedding of a neighbourhood
of $\tau\setminus\{0\}$ in $\Lambda_{\tau,\RR}$ identifies $q$ with a point of
$\Lambda_{\tau,\RR}$.
\end{remark}

\begin{theorem}
\label{multrule}
Let $q_1,q_2 \in B(\bZ)$. In the
canonical expansion
\[
\vartheta_{q_1} \cdot \vartheta_{q_2} =
\sum_{q \in B(\bZ)} \alpha_q \vartheta_q,
\]
where $\alpha_q\in R_I$ for each $q$, we have
\[
\alpha_q = \sum_{\substack{(\gamma_1,\gamma_2) \\
                 \Limits(\gamma_i) = (q_i,z) \\
                 s(\gamma_1) + s(\gamma_2) = q}}
                           c(\gamma_1)c(\gamma_2)
\]
Here $z\in B_0$ is a point very close to $q$ contained in a cell
$\tau$, and we identify $q$ with a point of $\Lambda_{\tau}$ using
Remark \ref{canonicalidremark}.
\end{theorem}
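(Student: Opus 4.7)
The plan is to verify the claimed expansion by computing both sides in a carefully chosen local chart near $q$. Fix $q \in B(\ZZ)$ and pick $z \in B_0 \setminus \Supp_I(\foD)$ very close to $q$, generic (with irrational slope from the origin) and contained in the cell $\tau$ of $\Sigma$ containing $q$. By the construction of theta functions in the proof of Theorem~\ref{univfamth}, each $\vartheta_{q'}$ restricts on the local patch $\Spec R_{\tau,\tau,I}$ to $\Lift_z(q')$, so
\[
\vartheta_{q_1}\cdot \vartheta_{q_2}\big|_{\Spec R_{\tau,\tau,I}} \;=\; \Lift_z(q_1)\cdot\Lift_z(q_2)\;=\;\sum_{(\gamma_1,\gamma_2)} c(\gamma_1)c(\gamma_2)\,z^{\varphi_\tau(s(\gamma_1)+s(\gamma_2))},
\]
a sum which is finite modulo $I$ by Lemma~\ref{finitenesslemma}, with the product-to-sum conversion of exponents valid because $\varphi_\tau$ is linear on $\Lambda_\tau$.

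Since $\{\vartheta_{q'}\}_{q' \in B(\ZZ)}$ is a free $\kk[P]/I$-module basis of $\Gamma(X_I,\cO_{X_I})$ by Theorem~\ref{univfamth}, the coefficients $\alpha_{q'}$ in $\vartheta_{q_1}\vartheta_{q_2} = \sum_{q'}\alpha_{q'}\vartheta_{q'}$ are uniquely determined. Expanding the right-hand side in the same chart as $\sum_{q'}\alpha_{q'}\Lift_z(q') = \sum_{s}\bigl(\sum_{q'}\alpha_{q'}\,d_{q',s}(z)\bigr)\,z^{\varphi_\tau(s)}$, with $d_{q',s}(z) := \sum_{\gamma:\Limits(\gamma)=(q',z),\,s(\gamma)=s}c(\gamma)$, the theorem is equivalent to matching coefficients of $z^{\varphi_\tau(s)}$ with the left-hand side. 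I would then proceed by induction on the $J$-adic order. The base case follows from Lemma~\ref{orderincreasinglemma}: modulo $J$, only trivial broken lines survive, so $\Lift_z(q') \equiv z^{\varphi_\tau(q')}$ mod $J$ (when the straight ray through $z$ in direction $q'$ ends in the appropriate maximal cone), and the identity reduces to the toric multiplication rule in $\kk[\bar\Sigma]$ of \eqref{fanmult}. At each subsequent order the lower-order contributions are determined by the induction hypothesis, and reading off the coefficient of $z^{\varphi_\tau(q)}$ pins down the next order of $\alpha_q$.

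The main obstacle is the inductive step: we need that for $z$ sufficiently close to $q$ and $q' \neq q$, every broken line for $q'$ ending at $z$ with $s(\gamma) = q$ has strictly positive $J$-adic order, so that it does not contaminate the leading matching of the monomial $z^{\varphi_\tau(q)}$. Geometrically, such a broken line arrives at $z$ with final velocity $-q$ but starts from infinity in direction $q' \neq q$, so it must bend at least once; each bend at a wall $\fod$ with $\dim\tau_\fod = 2$ multiplies the monomial by a factor in $J_{\tau_\fod,\tau_\fod}$ since $f_\fod \equiv 1\pmod J$ by Definition~\ref{scatdiagdef}(3), and bends across rays of $\Sigma$ whose bending parameter lies in $J$ increase the $J$-order by Lemma~\ref{orderincreasinglemma}. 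Combined with the uniqueness of the $\alpha_{q'}$, this isolates $\alpha_q = \sum c(\gamma_1)c(\gamma_2)$ summed over pairs with $s(\gamma_1)+s(\gamma_2) = q$, as claimed.
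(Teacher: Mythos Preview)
Your setup is correct: restricting to the chart at $z$ and writing
\[
\Lift_z(q_1)\cdot\Lift_z(q_2)=\sum_{q'}\alpha_{q'}\,\Lift_z(q')
\]
and then comparing coefficients of $z^{\varphi_\tau(q)}$ is exactly the right idea. But your proposed induction on $J$-adic order does not close, and the obstacle you identify is both weaker than needed and weaker than what is actually true.

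You aim to show that for $q'\neq q$ every broken line for $q'$ ending at $z$ with $s(\gamma)=q$ has strictly positive $J$-order, so that $d_{q',q}(z)\in J$. Granting this, your comparison of coefficients yields
\[
\alpha_q=\sum_{s(\gamma_1)+s(\gamma_2)=q}c(\gamma_1)c(\gamma_2)\;-\;\sum_{q'\neq q}\alpha_{q'}\,d_{q',q}(z).
\]
To deduce the theorem you need the correction term $\sum_{q'\neq q}\alpha_{q'}\,d_{q',q}(z)$ to vanish. Your induction hypothesis gives each $\alpha_{q'}$ (mod $J^k$) by the claimed formula \emph{at a basepoint $z'$ near $q'$}, not at $z$; plugging this in does not produce a visible cancellation with $d_{q',q}(z)$, which is computed at $z$. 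So ``pinning down the next order of $\alpha_q$'' does not verify that it equals the claimed expression.

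The missing observation, and the whole content of the paper's proof, is that for $z$ sufficiently close to $q$ one has $d_{q',q}(z)=0$ for all $q'\neq q$, not merely $d_{q',q}(z)\in J$. Indeed, any broken line with endpoint $z$ and $s(\gamma)=q$ has final segment on the ray $z+\RR_{\ge 0}q$. Since $z$ is near $q$, this ray goes to infinity inside the cone containing $q$ without meeting $\Supp_I(\foD)$ or any ray of $\Sigma$; hence the broken line cannot bend at all and must be the straight line for $q$ itself. With this, the right-hand side contributes only $\alpha_q\cdot 1$ to the coefficient of $z^{\varphi_\tau(q)}$, and the formula follows immediately---no induction is required.
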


\begin{proof}
To identify the coefficient of $\vartheta_q$,
choose a point $z \in B$ very close to $q$,
and describe the product using the lifts of $z^{q_1}$, $z^{q_2}$
at $z$:
\[
(\Lift_z(q_1))(\Lift_z(q_2)) = \sum_{q'} \alpha_{q'}\Lift_z(q').
\]
Now observe first that there is only one broken line $\gamma$
with endpoint $z$ and $s(\gamma)=q\in \Lambda_{\tau}$:
this is the broken line whose image is $z+\RR_{\ge 0} q$.
Indeed, the final segment of such a $\gamma$ is on this ray, and this ray
meets no scattering rays, so the broken line cannot bend.
Thus the coefficient of $\Lift_z(q)$ on the right-hand side of
the above equation can be read off by looking at the coefficient
(in $R_I$) of $z^{\varphi_{\tau}(q)}$.
This gives the desired description.
\end{proof}

\section{The canonical scattering diagram}\label{canscatdiag}

\subsection{Definition}
\label{Definitionsection}
Here we give the precise definition of $\foD^{\can}$.
As explained in the introduction, it is, roughly speaking,
defined in terms of maps $\AA^1\rightarrow Y\setminus D$, which
are algebro-geometric analogues of the holomorphic disks used for instanton
corrections in the symplectic heuristic.
We begin by recalling necessary facts about
relative Gromov-Witten invariants used to count these curves.

\begin{definition} \label{subsectionGW}
\label{basicrelinvs}
Let $(\tilde Y,\tilde D)$ be a non-singular rational surface with $\tilde D$
an anti-canonical
cycle of rational curves, and let $C$ be an irreducible component
of $\tilde D$. Consider a class $\beta\in A_1(\tilde Y,\ZZ)$
such that
\begin{equation}
\label{betadef}
\beta\cdot \tilde D_i=\begin{cases} k_{\beta} & \tilde D_i=C\\
0 & \tilde D_i\not= C\end{cases}
\end{equation}
for some $k_{\beta}>0$. Let $F$ be the closure of $\tilde D\setminus C$,
and let
\[
\tilde Y^o:=\tilde Y\setminus F,\quad C^o:=C\setminus F.
\]
Let $\overline{\foM}(\tilde Y^o/C^o,\beta)$ be the
moduli space of stable relative maps of genus zero curves representing
the class $\beta$ with tangency of order $k_{\beta}$ at an unspecified
point of $C^o$. (See \cite{Li00}, \cite{Li02} for the algebraic definition
for these relative Gromov-Witten invariants, and \cite{LR01}, \cite{IP03}
for the original symplectic definitions.)
We refer to $\beta$ informally as an {\it $\bA^1$-class}.
The virtual dimension of this moduli space is
\[
-K_{\tilde Y}\cdot\beta+(\dim \tilde Y-3)-(k_{\beta}-1)=0.
\]
Here the first two terms give the standard dimension formula
for the moduli space of stable rational curves in $\tilde Y$ representing
the class $\beta$, and the term $k_{\beta}-1$ is the change in dimension
given by imposing the $k_{\beta}$-fold tangency condition.
The moduli space carries a virtual fundamental class. Furthermore,
we have

\begin{lemma}
\label{GPSinvariancelemma}
$\overline{\foM}(\tilde Y^o/C^o,\beta)$ is proper
over $\kk$.
\end{lemma}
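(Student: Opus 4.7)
The plan is to apply the valuative criterion of properness. Let $R$ be a discrete valuation ring over $\kk$ with fraction field $K$, and suppose given an $R$-point of $\overline{\foM}(\tilde Y^o/C^o,\beta)$, i.e., a relative stable map $f_K\colon \cC_K\to\tilde Y^o$ of class $\beta$ with tangency of order $k_\beta$ to $C^o$ at an unspecified marked point. After a finite base change I must produce a unique extension over $\Spec R$; separatedness is inherited from the Kontsevich moduli, so the content is existence. First I compose with the open inclusion $\tilde Y^o\hookrightarrow \tilde Y$ and forget the relative structure to obtain a stable map to the proper target $\tilde Y$ in class $\beta$. By properness of $\overline{\foM}_{0,1}(\tilde Y,\beta)$ this extends (after a possibly further finite base change) to a stable map $f_0\colon\cC_0\to \tilde Y$ over the closed point. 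It then suffices to show that $f_0(\cC_0)\cap F=\emptyset$, since the distinguished marked point must specialise to a point of $C$ accounting for the full intersection $\beta\cdot C=k_\beta$, and once the image avoids $F$ that point automatically lies in $C^o$.

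To rule out intersection with $F$, the easy case is when no irreducible component of $\cC_0$ maps into any component of $F$: then for each $\tilde D_i\subset F$ the pullback $f_0^{-1}(\tilde D_i)$ is an effective Cartier divisor of degree $\beta\cdot\tilde D_i=0$, hence empty. The main obstacle is the remaining case, in which some component of $\cC_0$ is swallowed into $F$. Writing
\[
f_{0*}[\cC_0]=\beta_T+\sum_{\tilde D_j}n_j[\tilde D_j]
\]
with $n_j\ge 0$ and $\beta_T\cdot\tilde D_i\ge 0$ for every $i$, the equations $\beta\cdot\tilde D_i=0$ for $\tilde D_i\subset F$ give a tridiagonal linear system in the $n_j$. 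Using connectedness of $\cC_0$, the fact that the generic fibre $f_K$ already avoids $F$, and that components mapping into $F$ can only attach to the rest of $\cC_0$ at nodes of $\tilde D$, one can argue that the only solution compatible with the prescribed tangency being concentrated at a single point of $C^o$ forces $n_j=0$ for every $\tilde D_j\subset F$. This is essentially the argument that appears in \cite{GPS09} in the closely related toric setting.

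A cleaner alternative, which I expect to be preferable, is to invoke the logarithmic Gromov--Witten moduli stack $\overline{\foM}(\tilde Y/\tilde D,\beta,\mathbf c)$ of \cite{GS11} or \cite{AC11}, with contact data $\mathbf c$ prescribing order $k_\beta$ with $C$ at one marked point and trivial contact order along every $\tilde D_i\subset F$. Log Gromov--Witten moduli stacks for proper targets are proper over $\kk$, and I would argue that the natural comparison morphism
\[
\overline{\foM}(\tilde Y^o/C^o,\beta)\longrightarrow\overline{\foM}(\tilde Y/\tilde D,\beta,\mathbf c)
\]
is an isomorphism: a log stable map with trivial log contact at every component of $F$ cannot set-theoretically meet $F$, because zero log contact at $\tilde D_i$ combined with $\beta\cdot\tilde D_i=0$ forbids image components lying in $\tilde D_i$ (any such would require compensating positive contact at the attaching nodes, contradicting the contact data). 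Properness then transfers from the log side. Either way, the hard step is the exclusion of stable limits with components mapping into $F$; this is where the intersection-theoretic input $\beta\cdot\tilde D_i=0$ for $\tilde D_i\subset F$ is essential.
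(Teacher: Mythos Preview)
Your overall strategy via the valuative criterion is right, and you have correctly isolated the hard case. But there are two genuine gaps, and the paper's argument differs from both of your suggested routes.

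First, you extend by forgetting the relative structure and invoking properness of $\overline{\foM}_{0,1}(\tilde Y,\beta)$. This is not enough: even if you succeed in showing the limit avoids $F$, you cannot simply recover a \emph{relative} stable map from an ordinary one. The tangency with $C$ need not remain concentrated at a single marked point in the limit, and components of $\cC_0$ could land inside $C$. The paper instead uses properness of the relative moduli space $\overline{\foM}(\tilde Y/C,\beta)$ (proper because $\tilde Y$ is proper and $C$ is a smooth divisor), so the tangency condition is carried along automatically and the only remaining issue is whether the limit meets $F$.

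Second, neither of your arguments for the hard case is complete. The tridiagonal system sketch is not carried through, and the justification ``one can argue that the only solution \ldots forces $n_j=0$'' is exactly the content to be proved. In the log alternative, the assertion that zero contact order at $\tilde D_i$ together with $\beta\cdot\tilde D_i=0$ forbids components mapping into $\tilde D_i$ is not correct as stated: log stable maps can and do have components in boundary strata. The paper's argument, taken from the proof of \cite{GPS09}, Theorem~4.2, is different and cleaner: one shows that if the image of $\cC_0$ meets $F$ then $\cC_0$ has genus at least $1$, contradicting the genus zero hypothesis. The mechanism is that $\tilde D$ is a \emph{cycle} of rational curves, and the balancing constraints force the components of $\cC_0$ lying in $\tilde D$ to close up into a loop in the dual graph. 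This genus obstruction is the missing idea in your proposal.
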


\begin{proof} This follows as in the proof of \cite{GPS09}, Theorem 4.2.
In brief, let $R$ be a valuation ring with quotient field $K$,
with $S=\Spec R$, $T=\Spec K$. We would like to extend a morphism
$T\rightarrow \overline{\foM}(\tilde Y^o/C^o,\beta)$ to $S$.
We know that the moduli space $\overline{\foM}(\tilde Y/C,\beta)$
is proper, so we obtain a family of relative stable maps $\shC
\rightarrow S$ to $\tilde Y$. We just need to show that in fact
the image of the closed fibre $\shC_0$ lies in $\tilde Y^o$.
However, the argument in the proof of \cite{GPS09}, Theorem 4.2
shows that if the image of $\shC_0$ intersects $F$, then
$\shC_0$ must be of genus at least $1$, which is not the case.
\end{proof}

Given this, we define
\[
N_{\beta}:=\int_{[\overline{\foM}(\tilde Y^o/C^o,\beta)]^{vir}} 1.
\]
Morally, one should view $N_{\beta}$ as counting maps from
affine lines to $\tilde Y\setminus \tilde D$ whose closures represent
the class $\beta$.
\end{definition}

In what follows, we fix as usual the pair $(Y,D)$, with
tropicalisation $(B,\Sigma)$,
and $\varphi$ the function given by Example \ref{standardphi} for
some choice of $\eta:\NE(Y)\rightarrow P$. We can assume here
that $D$ has an arbitrary number of irreducible components.

\begin{definition}
\label{canscatdiagdef}
Fix a ray $\fod\subset B$ with endpoint the origin, with rational
slope. If $\fod$ coincides with a ray of $\Sigma$, set $\Sigma':=
\Sigma$; otherwise, let $\Sigma'$ be a refinement of $\Sigma$
obtained by adding the ray $\fod$ and a number of other rays chosen
so that each cone of $\Sigma'$ is integral affine isomorphic to the first
quadrant of $\RR^2$. This gives a toric blow-up
$\pi:\tilde Y\rightarrow Y$ (the identity in the first case) by
Lemma \ref{toricblowuplemma}.
Let $C\subset\pi^{-1}(D)$ be the irreducible component
corresponding to $\fod$.

Let $\tau_{\fod}\in\Sigma$ be the smallest cone containing
$\fod$.
Let $m_{\fod}\in \Lambda_{\tau_\fod}$ be a primitive generator of the
tangent space to $\fod$, pointing away from the origin.
Define
\[
f_{\fod} := \exp \left[ \sum_{\beta} k_{\beta} N_{\beta}
z^{\eta(\pi_*(\beta))-\varphi_{\tau_{\fod}}(k_{\beta} m_{\fod})}\right].
\]
Here the sum is over all classes $\beta\in A_1(\tilde Y,\ZZ)$
satisfying \eqref{betadef}. Note that if $N_{\beta}\not=0$, then
necessarily $\overline{\foM}(\tilde Y^o/C^o,\beta)$ is non-empty, and
thus $\beta\in \NE(\tilde Y)$, so $\pi_*(\beta)\in \NE(Y)$.
We note the numbers $N_{\beta}$ do not depend on the particular
choice of refinement $\Sigma'$. Indeed, further
refining $\Sigma'$ does not change
the pair $\tilde Y^o/C^o$, and hence does not change
the numbers $N_{\beta}$.

We define
\[
\foD^{\can}:=\{(\fod, f_{\fod}) \,|\, \hbox{$\fod\subset B$
a ray of rational slope}\}.
\]

We call a class $\beta\in A_1(\tilde Y,\ZZ)$ an \emph{$\AA^1$-class}
if $N_{\beta}\not=0$.
\end{definition}

Note that all rays of the canonical scattering diagram are outgoing.

\begin{remark}
In theory, one should be able to use logarithmic
Gromov-Witten invariants (\cite{GS11} or \cite{AC11}) to define $N_{\beta}$
without the technical trick of blowing up and working on
an open variety. This would be done by working relative
to $D$, and counting rational curves of class $\beta$
with one point mapping to the boundary with specified
orders of tangency with each boundary divisor, with non-zero
order of tangency with either one divisor $D_i$ or two adjacent
divisors $D_i$, $D_{i+1}$. However, some additional arguments are required
to compare logarithmic invariants with the invariants described above
as developed in \cite{GPS09}, and we do not wish to do this here.
\end{remark}

\begin{lemma}
\label{Scanlemma} Let $J\subset P$ be an ideal with
$\sqrt{J}=J$.
Suppose the map $\eta \colon \NE(Y)\rightarrow P$ satisfies
the following conditions:
\begin{enumerate}
\item For any ray $\fod\subset B$ of rational slope, let $\pi:\tilde Y
\rightarrow Y$ be the corresponding blow-up. We require that
if $\dim\tau_{\fod}=2$ or $\dim\tau_{\fod}=1$ and $\kappa_{\tau_{\fod},\varphi}
\not\in J$ then
for any $\AA^1$-class $\beta$ contributing to $f_{\fod}$,
we have $\eta(\pi_*(\beta)) \in J$.
\item For any ideal $I$ with $\sqrt{I}=J$, there are
only a finite number of $\fod$ and $\AA^1$-classes $\beta$
such that $\eta(\pi_*(\beta))\not\in I$.
\end{enumerate}
Then $\foD^{\can}$ is a scattering diagram for the data $(B,\Sigma), P,
\varphi,$ and $J$.
\end{lemma}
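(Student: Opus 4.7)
The plan is to verify the four conditions of Definition \ref{scatdiagdef} in turn. Condition (1) is immediate from the construction of $\foD^{\can}$: each element corresponds to a rational slope ray $\fod \subset B$ emanating from the origin, and such a ray either coincides with a ray of $\Sigma$ (whence $\dim\tau_\fod=1$) or lies in the interior of a two-dimensional cone (whence $\dim\tau_\fod=2$).

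For condition (2), I would first rewrite each exponent appearing in $f_\fod$ as $\eta(\pi_*\beta)+\varphi_{\tau_\fod}(-k_\beta m_\fod)$. Since $\pi_*\beta\in\NE(Y)$ and $\eta$ is a monoid homomorphism, $\eta(\pi_*\beta)\in P$, so this element belongs to $P_{\varphi_{\tau_\fod}}$ by \eqref{Pvarphitaudef}. Its image under $r$ is $-k_\beta m_\fod$, parallel to $\fod$ and pointing toward the origin (since $m_\fod$ points away from the origin by construction), confirming that $\fod$ behaves as an outgoing ray. To interpret the exponential in $\widehat{\kk[P_{\varphi_{\tau_\fod}}]}$, I would note that because $J$ is radical, $\sqrt{J^k}=J$ for every $k\ge 1$, so hypothesis (2) applied to $I=J^k$ bounds the number of $\AA^1$-classes contributing to $f_\fod$ modulo $J^k$, yielding convergence. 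Expanding the series, each monomial that appears has exponent of the form $\sum_i\eta(\pi_*\beta_i)+\varphi_{\tau_\fod}(-\sum_i k_{\beta_i}m_\fod)\in P_{\varphi_{\tau_\fod}}$, with $r$-image a nonzero negative multiple of $m_\fod$, as required.

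Conditions (3) and (4) follow from direct monomial-by-monomial computations combined with hypotheses (1) and (2). The key observation is this: for any maximal cone $\sigma\supseteq\tau_\fod$, since $m_\fod$ is tangent to $\tau_\fod$, one has $\varphi_\sigma(-k_\beta m_\fod)=\varphi_{\tau_\fod}(-k_\beta m_\fod)$, so for $q=\eta(\pi_*\beta)-\varphi_{\tau_\fod}(k_\beta m_\fod)$ the displacement $q-\varphi_\sigma(r(q))$ appearing in the definitions of $J_{\tau_\fod,\sigma}$ and $I_{\tau_\fod,\sigma}$ collapses to $\eta(\pi_*\beta)$. Hence $\eta(\pi_*\beta)\in J$ places $z^q$ in $J_{\tau_\fod,\tau_\fod}$, and similarly for $I$. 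For condition (3), the hypothesis of the condition together with hypothesis (1) yields $\eta(\pi_*\beta)\in J$ for every contributing class, so the argument of the exponential lies in $J_{\tau_\fod,\tau_\fod}$ and $f_\fod\equiv 1\pmod{J_{\tau_\fod,\tau_\fod}}$. For condition (4), any ray $\fod$ with $f_\fod\not\equiv 1\pmod{I_{\tau_\fod,\tau_\fod}}$ must host at least one $\AA^1$-class $\beta$ with $\eta(\pi_*\beta)\notin I$; by hypothesis (2) there are only finitely many such pairs $(\fod,\beta)$, hence only finitely many such $\fod$.

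The main obstacle is purely notational bookkeeping: disentangling the three flavours of ideal---the monoid ideal $J\subset P$, the corresponding monomial ideal in $\kk[P]$, and the enlarged ideals $J_{\tau,\sigma}\subset \kk[P_{\varphi_\tau}]$---and tracking how exponents of the form $\eta(\pi_*\beta)-\varphi_{\tau_\fod}(k_\beta m_\fod)$ transfer between them as $\tau_\fod$ varies between one- and two-dimensional. The one-dimensional case requires in particular the observation that the tangent direction to $\fod$ is a direction in which $\varphi_{\tau_\fod}$ is single-valued on $\tau_\fod^{-1}\Sigma$, so bending parameters do not interfere with the displacement computation.
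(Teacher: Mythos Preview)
Your proposal is correct and follows the same approach as the paper. The paper's proof is a one-liner stating precisely your ``key observation'': that $z^{\eta(\pi_*(\beta))-\varphi_{\tau_{\fod}}(k_{\beta}m_{\fod})}\in I_{\tau_{\fod},\tau_{\fod}}$ if and only if $\eta(\pi_*(\beta))\in I$, from which conditions (2)--(4) of Definition~\ref{scatdiagdef} follow. You have simply unpacked this more carefully, including the verification of condition (2) (convergence in the completion, outgoing direction) that the paper leaves implicit.
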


\begin{proof}
Note that
\[
z^{\eta(\pi_*(\beta))-\varphi_{\tau_{\fod}}(k_{\beta}
m_{\fod})}\in I\kk[P_{\varphi_{\tau_{\fod}}}]
\]
if and only if
$\eta(\pi_*(\beta))\in I$. So the hypotheses of the lemma imply conditions
(2)-(4) of Definition \ref{scatdiagdef}.
\end{proof}

\begin{example}
\label{goodPexample}
Let $\sigma \subset A_1(Y)\otimes_{\ZZ}\RR$ be a strictly convex rational
polyhedral cone containing $\NE(Y)$.
(This can be obtained as the dual of a strictly
convex rational polyhedral cone in $\Pic(Y)\otimes_{\ZZ}\RR$ which spans
this latter space and is contained in the nef cone.) Let $P=\sigma\cap A_1(Y)$.
Since $\sigma$ is strictly convex, $P^{\times}=0$. For any
$\fom$-primary ideal $I$, $P\setminus I$ is a finite set. Let
$\eta:\NE(Y)\rightarrow P$ be the inclusion. Then the finiteness hypotheses
of the above Lemma hold for $J=\fom$ (note that
the conditions (\ref{betadef}) determine $\beta
\in A_1(\tilde Y)$ given $\pi_*(\beta)$).
\end{example}

\begin{example}
\label{M05more}
We return to the example $(Y,D)$ of a del Pezzo surface together with a cycle of $5$ $(-1)$-curves studied
in Example \ref{M05example}.
Let $P=\NE(Y)$ and $\eta$ be
the identity. Let $J=\fom\subset P$ and $I\subset P$ be an ideal with
$\sqrt{I}=J$. Then $\foD^{\can}$ consists of five rays:
\[
\foD^{\can}=\{(\rho_i,1+z^{[E_i]-\varphi_{\rho_i}(v_i)})\,|\,1\le i\le 5\}.
\]
Here $E_i$ is the unique $(-1)$-curve in $Y$ which is not contained in $D$ and meets $D_i$
transversally, and $v_i$ is the primitive generator of the ray
$\rho_i$ corresponding to $D_i$.
To derive this formula from the above definition of the canonical
scattering diagram one needs to show that the only possible
stable relative maps contributing to $\foD^{\can}$ are multiple covers
of the $E_i$'s, and that a $k$-fold multiple cover contributes
a Gromov-Witten invariant of $(-1)^{k-1}/k^2$. It is easier
to compute this using the main result of \cite{GPS09}, which is done
by way of Theorem \ref{relatingcanandscatter}. See Example \ref{M05stillmore}.

If we accept this description of $\foD^{\can}$, then we can describe
all broken lines and the multiplication law given by this diagram.

We first note that no broken line can wrap around $0 \in B$, i.e., if
a broken line leaves a cone $\sigma \in \Sigma_{\max}$, it will never return
to that cone.
It is enough to check this for a straight
line (as the bending in any broken line for $\foD_{\can}$
is always \emph{away} from
the origin), and this is easily verified, using e.g.,
Figure \ref{M05affinemanifold}.

Next, since the only scattering rays are the rays $\rho\in\Sigma$,
if $q,Q \in \sigma \in \Sigma_{\max}$,
then the obvious straight line is the unique broken line for $q$
with endpoint $Q$.
Thus if we describe $\vartheta_q$ in the open subset of $X_{I,\foD^{\can}}^o$
corresponding to $\sigma$, $\vartheta_q$
is just the monomial $z^{\varphi_{\sigma}(q)}$.
It follows that
\[
\vartheta_{v_i}^{a} \vartheta_{v_{i+1}}^b = \vartheta_{av_i + b v_{i+1}}
\]
for $a,b \geq 0$.
In particular, the $\vartheta_{v_i}$'s generate the $\kk[P]/I$-algebra
$\Gamma(X_I,\shO_{X_I})$, and the
algebra structure is determined once we compute
$\vartheta_{v_i} \cdot \vartheta_{v_{i+2}}$.

\begin{figure}
\input{M05newfigure.pstex_t}
\caption{The different types of broken lines in Example \ref{M05more}.}
\label{M05newfigure}
\end{figure}

We consider a broken line for $v_i$.
One checks the following, using Figure \ref{M05affinemanifold} and
the above description of $\foD^{\can}$:
The broken line
can cross at most two rays of $\Sigma$, and it bends at most
once, at the last ray of $\Sigma$ that it crosses.
See Figure \ref{M05newfigure}.
>From this one deduces using Theorem \ref{multrule}:
\begin{equation}
\label{eqmo5}
\vartheta_{v_{i-1}} \vartheta_{v_{i+1}} = z^{[D_i]}(\vartheta_{v_i}+z^{[E_i]}).
\end{equation}
The term $z^{[D_i]} \cdot \vartheta_{v_i}$ corresponds to two straight
broken lines for $v_{i-1}, v_{i+1}$, with endpoint the point $v_i$ of
$\rho_i$. The term $z^{[D_i]} \cdot z^{[E_i]}$ is the coefficient
of $1 = \vartheta_0$. To compute this we use the invariance of broken
lines, and so choose a generic point $Q$ near $0$ and compute
the coefficient
$\alpha_0$ of $\vartheta_0$ using pairs $\gamma_i$ as in
Theorem \ref{multrule} whose
final directions are opposite, i.e., $s(\gamma_1)+s(\gamma_2) = 0$.
If we take $Q \in \sigma_{i,i+1}$,
then there is exactly one term contributing to $\alpha_0$:
$\gamma_1$ will bend once where it crosses $\rho_i$, and
$\gamma_2$ is straight. Alternatively, one can use the explicit expressions
for $\Lift_Q(v_j)$, $j=i-1, i$ and $i+1$, and see they satisfy the
relation \eqref{eqmo5}.

One can check that the five equations \eqref{eqmo5}
define $X_I$. These equations are algebraic, and in fact define
a flat family over $\Spec\kk[\NE(Y)]$. (This is always the case
in the non-negative semi-definite case, see Corollary
\ref{polymultpositive}).
\end{example}

Our goal now is to prove consistency of $\foD^{\can}$, as stated
in the following (the final step in the construction of our mirror family):

\begin{theorem}
\label{canonicalconsistency}
Suppose that we are given a map $\eta:\NE(Y)\rightarrow P$ such that
$\varphi$ is defined as in Example \ref{standardphi} by
$\kappa_{\rho,\varphi}=\eta([D_{\rho}])$. Suppose furthermore
the following conditions hold:
\begin{itemize}
\item[(I)] For any $\AA^1$-class $\beta$, $\eta(\pi_*(\beta))\in  J$;
\item[(II)] For any ideal $I$ with $\sqrt{I}=J$, there are only a finite
number of $\AA^1$-classes $\beta$ such that $\eta(\pi_*(\beta))\not
\in I$.
\item[(III)] $\eta([D_i])\in J$ for at least one boundary component
$D_i\subset D$.
\end{itemize}
Then $\foD^{\can}$ is a consistent scattering diagram.
\end{theorem}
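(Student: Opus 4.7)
The plan is to reduce consistency on the singular affine manifold $B=B_{(Y,D)}$ to a local consistency question on the smooth affine manifold $\oB=B_{(\oY,\oD)}$ arising from a toric model, and then to invoke the Kontsevich--Soibelman/Carl--Pumperla--Siebert machinery, which is available in the smooth setting. By Proposition \ref{toricmodelexists}, after a toric blowup (which, by Lemma \ref{toricblowuplemma}, refines $\Sigma$ without changing $B$ or the problem), we may assume that we have a toric model $p\colon(Y,D)\to(\oY,\oD)$; the ideal $J$ can be assumed to contain the classes of all exceptional divisors of $p$, since otherwise we can enlarge $J$ and specialize. First, I will choose positions for the $I_1$ singularities as in Example \ref{movingworms}, obtaining an intermediate integral linear structure, and then push the singularities to infinity along their invariant rays to obtain the smooth manifold $\oB=\RR^2$ with its toric fan $\Sigma_{\oY}$. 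The identity on the underlying topological space induces a $\ZZPL$ homeomorphism $\nu\colon B\to\oB$.

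Next, I will set up the auxiliary scattering diagram $\ofoD_0$ on $\oB$, consisting of a single ``incoming'' ray for each exceptional divisor $E_{ij}$ of $p$, decorated by the standard function $1+z^{[E_{ij}]}x^{-1}$ associated to the moving--worm initial data, located along the edge of $\Sigma_{\oY}$ corresponding to the component of $\oD$ containing $p(E_{ij})$. Applying the Kontsevich--Soibelman algorithm (Theorem \ref{KSlemma}) produces a scattering diagram $\ofoD$ on $\oB$ which is \emph{locally consistent at every joint}: for every point $x\in\oB\setminus\{0\}$, the path-ordered product of the wall-crossing automorphisms of rays of $\ofoD$ through $x$ around a small loop at $x$ is the identity. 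By the main theorem of \cite{GPS09}, the outgoing rays in $\ofoD$ are decorated precisely by generating functions of the relative Gromov--Witten invariants defining $\foD^{\can}$; this is the content of Proposition \ref{relatingcanandscatter} in the body of the paper, and it exhibits $\ofoD$ and $\foD^{\can}$ as two different, but invariant-theoretically related, scattering diagrams on $\oB$ and $B$ respectively.

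The bridge between the two consistency questions is a bijection between broken lines on $B$ for $\foD^{\can}$ and broken lines on $\oB$ for $\ofoD$ with the same endpoint data (Lemma \ref{brokenlinebijection}). The point is that although the individual wall functions transform nontrivially under $\nu$ (they absorb contributions from walls passing through singularities), the composition rule along any broken line does not: a broken line on $B$ for $q\in B(\ZZ)$ with endpoint $Q$ transports under $\nu$ to a broken line on $\oB$ with endpoint $\nu(Q)$ and asymptotic direction $\nu(q)$, and $\Mono(\gamma)$ is preserved under $\nu$ in the obvious sense. Granted this bijection, the lifts $\Lift_Q(q)$ computed on $B$ from $\foD^{\can}$ coincide with the lifts $\Lift_{\nu(Q)}(\nu(q))$ computed on $\oB$ from $\ofoD$. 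Thus the consistency conditions (1) and (2) of Definition \ref{consistentdef} for $\foD^{\can}$ on $B$ translate, under $\nu$, into the corresponding patching conditions for $\ofoD$ on $\oB$.

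Finally, on $\oB$ the consistency of $\ofoD$ is a direct consequence of the local ``trivial monodromy around each joint'' condition, by the argument of \cite{CPS}: starting from the endpoint $\nu(Q)$ one shows, by an induction on the order $\ord_J$ and a path-ordered product argument using the fact that $\oB$ is simply connected away from the origin (and that broken lines cannot wind, since $\eta([D_\rho])\in J$ for at least two rays by hypothesis~(III), forcing $\ord_J$ to increase after each complete circuit), that $\Lift_{\nu(Q)}(\nu(q))$ is independent of $Q$ in each chamber and transforms by the chamber--crossing automorphism across each wall. The finiteness hypothesis~(II) and Lemma \ref{finitenesslemma} ensure that all sums involved are finite modulo $I$, for any $J$-primary $I$, so everything takes place in the Artin rings $R_{\rho,I}$ where the argument makes sense. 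Combining with the bijection of broken lines above gives the consistency of $\foD^{\can}$ on $B$. The main obstacle is establishing the broken line bijection (Lemma \ref{brokenlinebijection}): it requires a detailed bookkeeping of how individual monomial contributions are redistributed among wall functions when one ``moves the worms,'' and showing that each broken line on one side lifts uniquely and with the same monomial weight to the other side. The rest of the argument is, granted the bijection and \cite{GPS09}, essentially a formal consequence of the local consistency established by the Kontsevich--Soibelman algorithm.
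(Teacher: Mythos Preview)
Your overall strategy is exactly the paper's: reduce to a toric model, push the singularities to infinity to land on $\oB=\RR^2$, apply Kontsevich--Soibelman to an initial diagram $\ofoD_0$ to get $\ofoD$, use \cite{GPS09} (Proposition \ref{relatingcanandscatter}) to identify $\ofoD$ with $\nu(\foD^{\can})$, invoke the broken-line bijection (Lemma \ref{brokenlinebijection}), and finish with the CPS argument (Theorem \ref{wallcrossingtheorem}). Two steps in your execution, however, are genuinely wrong.

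First, ``enlarge $J$ and specialize'' does not work. Consistency for a radical ideal $J'\supsetneq J$ is a statement about $J'$-primary ideals, and a $J$-primary ideal is not $J'$-primary, so nothing specialises from the larger radical to the smaller one. What the paper does instead is change the \emph{monoid}: Proposition \ref{barPtoPprop} shows that if $\eta$ factors as $\NE(Y)\xrightarrow{\bar\eta}\oP\xrightarrow{\psi}P$ and the canonical diagram is consistent for $(\oP,\bar\fom)$, then $\foD^{\can}=\psi(\ofoD^{\can})$ is consistent for $(P,J)$. One then manufactures such a $\oP$ with $\oP^\times=\{0\}$, so that the hypotheses become automatic for $\bar J=\bar\fom$ (Assumptions \ref{GSassumptions}), and a further localisation along the face spanned by the $[E_{ij}]$ reduces to the Gross--Siebert ideal $G$ (Theorem \ref{thetafunctions2}, (2) and the lemma after Remark \ref{GSlocusrem}).

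Second, your formula for $\ofoD_0$ is the wrong one: on $\oB$ the incoming rays carry $\prod_j(1+b_{ij}^{-1}\oX_i)$, not $\prod_j(1+b_{ij}X_i^{-1})$. What you wrote is (the mod-$G$ part of) $f_{\rho_i}$ on $B$, cf.\ Lemma \ref{grhoilemma}; the sign flip on $\oB$ is precisely the moving-worms phenomenon (Remark \ref{foD'remark}). This is not cosmetic. With your sign the monomials have $r(p)$ a \emph{negative} multiple of $m_i$, violating the incoming-ray condition of Definition \ref{scatdiagQdef}, and they do not lie in $\fom_{P_{\bar\varphi}}$, so Theorem \ref{KSlemma} does not even apply; the bridge to \cite{GPS09} (Theorem \ref{GPScor}) then breaks as well. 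The correct transport is set up in Definition \ref{nufoDdef}, and matching the wall-crossing automorphisms (not just the lifts) across $\nu$ requires the explicit ring isomorphisms of Lemma \ref{movingsingularitylemma}, which you also need to invoke to conclude that the consistency equations on $B$ are equivalent to those on $\oB$.
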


We include here an observation we will need later showing that
the canonical scattering diagram only depends on the deformation
class of $(Y,D)$.

\begin{lemma}
\label{definvlemma}
Let $(\cY,\cD)\rightarrow S$ be a flat family of pairs over a connected base $S$, with
each fibre $(\cY_s,\cD_s)$ being a non-singular
rational surface with anti-canonical
cycle. Suppose further that
there is a trivialization $\cD\cong D\times S$ and the restriction map
$\Pic(\cY)\rightarrow \Pic(\cY_s)$ is an isomorphism for any $s\in S$. This
in particular gives a canonical identification $A_1(\cY_s,\ZZ)$ with
$A_1(\cY_{s'},\ZZ)$ for any $s,s'\in S$.
Then for any
$s,s'\in S$, $(\cY_s,\cD_s)$ and $(\cY_{s'},\cD_{s'})$ induce
the same canonical scattering diagram.
\end{lemma}

\begin{proof}
It is enough to show that the numbers $N_{\beta}$ are deformation
invariants in the above sense, i.e., if we are given a family
$\pi:(\tilde\cY, \tilde\cD)\rightarrow S$ with each fibre as in
Definition \ref{basicrelinvs}, with an irreducible component
$\cC\subset \tilde \cD$, then the number
\[
N_{\beta,s}:=\int_{[\overline\foM(\tilde\cY^o_s/\cC^o_s,\beta)]^{vir}} 1
\]
is independent of $s$. Indeed, once this is shown, then if
$N_{\beta,s}\not=0$, necessarily $\beta$ defines a class
in $\NE(\cY_s)$, as well as in $\NE(\cY_{s'})$, under the chosen
identification. This invariance follows from the standard argument that
(relative) Gromov-Witten invariants are deformation invariants, with
a little care because our target spaces are open.
For this, one considers the moduli space
$\overline\foM(\tilde\cY^o/\cC^o,\beta)$ of stable
maps to $\tilde\cY^o$ relative to $\cC^o$ and whose composition
with $\pi$ is constant. Then one has a map
$\psi:\overline\foM(\tilde\cY^o/\cC^o,\beta)\rightarrow S$
whose fibre over $s$ is $\overline\foM(\tilde\cY^o_s/\cC^o_s,\beta)$.
Letting $\xi$ be the inclusion of this fibre in
$\overline\foM(\tilde\cY^o/\cC^o,\beta)$, deformation invariance
will follow if we know that
\[
\xi^![\overline\foM(\tilde\cY^o/\cC^o,\beta)]^{vir}=
[\overline\foM(\tilde\cY^o_s/\cC^o_s,\beta)]^{vir}
\]
and $\psi$ is proper. The first statement is standard in Gromov-Witten
theory, see e.g.\ the argument of Theorem 4.2 of \cite{LT98} in
the non-relative case, which carries over to the relative case.
The second point,
the properness of $\psi$, follows exactly as in the proof of Lemma
\ref{GPSinvariancelemma}.
\end{proof}

\subsection{Consistency: Overview of the proof}
\label{proofoverview}
We will describe in detail the intuition behind each step of the proof
of consistency. In the next subsection, we will work somewhat more
generally with a more general scattering diagram $\foD$ for certain
steps, as this will be needed in \cite{K3} for the K3 case. However, for
the discussion here let us assume we are only studying the consistency
of the canonical scattering diagram.

\emph{Step I}. \emph{We can replace $(Y,D)$ with a toric blow-up of
$(Y,D)$.} This is straightforward --- toric blowups just correspond to
refinements of $\Sigma$, but do not change broken lines or scattering
diagrams.

\emph{Step II}. \emph{We can assume that $(Y,D)$ has a toric model
and $P$ is a finitely generated
submonoid of $A_1(Y,\ZZ)$ containing $\NE(Y)$, with
$\eta$ the inclusion}. By Step I and Proposition \ref{toricmodelexists},
we can assume $(Y,D)$ has a toric model. We can then always factor
$\eta$ as $\NE(Y)\mapright{\bar\eta}\oP\mapright{\psi} P$
where $\oP$ is a finitely generated submonoid of $A_1(Y,\ZZ)$ containing
$\NE(Y)$ with $\bar\eta$ the inclusion. In this case there are
two canonical scattering diagrams, $\ofoD$ and $\foD$ defined using
$\bar\eta:\NE(Y)\rightarrow\oP$ and $\eta:\NE(Y)\rightarrow P$ respectively.
Then $\foD$ can be obtained from $\ofoD$ essentially just by applying $\psi$
to each exponent appearing in each function $f_{\fod}$.

In this case we show that if consistency holds for $\ofoD$ then it holds
for $\foD$. The idea is that given a broken line $\gamma$ for $\ofoD$,
we can get something like a broken line for $\foD$ by applying $\psi$
to the exponents of monomials attached to $\gamma$. However, this isn't
necessarily a broken line for $\foD$. Indeed, there might be two different
broken lines for $\ofoD$, say $\gamma_1$ and $\gamma_2$, which after
we apply $\psi$ give broken lines with the same sequence of attached
exponents. These should not arise as distinct broken lines for
$\foD$, and we have to combine the monomials attached to
these broken lines. This requires a certain amount of book-keeping.

\emph{Step III. Reduction to the Gross--Siebert locus.}
By Step II we can assume we have a
toric model $p:Y\rightarrow\oY$. Let $H$ be an ample divisor on
$\oY$. Shrinking $P$ if necessary, we can assume that $P$ has a face
of the form $P\cap (p^*H)^{\perp}$.
Let $G$ be the monomial ideal which is the complement of this
face, $E$ the subgroup of $P^{\gp}$ generated by $P\setminus G$.
The main work in this step is to show that we can replace $P$ by $P+E$.
This requires a bit of analysis of the rays $(\rho_i,f_{\rho_i})$
of $\foD^{\can}$. In particular, we need to understand the contribution
to $f_{\rho_i}$ coming from the exceptional curves of $p$ meeting $D_i$.

After doing this, we have $P^{\times}=E$, so now $X^o_{I,\foD}$ lives over
the thickening of a torus $T^{\gs}$ we call the Gross-Siebert locus.

\emph{Step IV}. \emph{Pushing the singularities to infinity}. This is the
crucial step, and we explain carefully the intuition here. In \cite{GS07},
Gross and Siebert considered a smoothing construction associated to an
integral affine manifold with singularities where (in the two-dimensional
case) the singularities occurred only in the interior of edges of a
polyhedral decomposition of $B$, rather than at the vertices. The case
at hand, with one singularity at the origin, does not fit into that
framework.
In particular, in the Gross-Siebert world, the singularities must have
monodromy of the
form $\begin{pmatrix} 1&k\\0&1\end{pmatrix}$ for some $k>0$, with the tangent
line to the edge containing the singularity being the invariant direction;
in analogy with
the Kodaira classification, we call this an $I_k$ singularity. Indeed, one
expects a cycle of $k$ two-spheres as fibre over such a point in the SYZ
picture.

Here, we can view such a surface as being obtained by \emph{factoring}
the complicated singularity $0 \in B$ into $I_k$ singularities
along the edges of $\Sigma$. We should have an $I_{k_i}$ singularity
on the ray $\rho_i$ where $k_i$ is the number of exceptional
divisors of $p: Y \to \oY$ intersecting $D_i$. This process can be
described as follows.
Let $(\oB,\oSigma)$ be the fan associated to $(\oY,\oD)$.
There is a piecewise linear isomorphism
\[
\nu: B \to \oB
\]
which identifies each cone in $\Sigma$ with the corresponding cone
in $\oSigma$. This is an isomorphism of integral affine manifolds
outside of $\rho_i$, but it is not
affine along $\rho_i$. There is a natural one-parameter family of
integral affine manifolds
interpolating between the two structures by a process Kontsevich and Soibelman
\cite{KS06} call
{\it moving worms}.  Precisely, choose points $y_i \in \rho_i\setminus\{0\}$.
Let $\Delta:=\{y_i\,|\,1\le i\le n\}$, $B_0':=B\setminus\Delta$.
Put a new affine structure on $B_0'$ compatible with the
affine structures on the interior of each maximal cell by defining a
$\Sigma$-piecewise linear function to be linear if its
restriction to a small neighbourhood of $(y_i,+\infty) \subset \rho_i$ in
$B_0'$ is $B$-linear, and its restriction to a small neighbourhood of
$[0,y_i)\subset\rho_i$ in $B_0'$ is $\oB$-linear.
Call the resulting integral affine manifold with singularities $B'$.
The map $\nu:B' \to \oB$ is a linear isomorphism near $0$
This new manifold can be seen to have
an $I_{k_i}$ singularity at $y_i$, with invariant direction $\rho_i$.

Now if we were to apply the algorithm of Gross and Siebert \cite{GS07}
to $B'$, one would find roughly that one obtains a scattering diagram
which initially has two rays emanating from each singularity. The rays
emanating from $\rho_i$ are initially contained in $\rho_i$; one of these
goes out to infinity and the other passes through the origin and then
to infinity. Where all these rays meet at the origin, one must
follow a procedure of Kontsevich and Soibelman \cite{KS06} and add
some additional rays to ensure that the composition of automorphisms
associated to the rays about a loop centered at the origin is the identity.
We then obtain a scattering diagram which can be shown is very close to
the canonical scattering diagram, the only difference being the segments of
the rays between the $y_i$ and the origin.

We do not actually work with this affine manifold with singularities.
Rather, we instead push the singularities $y_i$ to infinity. In doing so, we
replace $B'$ with $\oB$. We transfer
the canonical scattering diagram $\foD$ to a scattering diagram $\ofoD$
on $\oB$, differing from $\foD$ essentially only by changing the rays
supported on the $\rho_i$'s in a simple way motivated by the above description.
Once this is done, we show consistency of $\foD$ is equivalent to
consistency of $\ofoD$. Now we no longer have to deal with any singularities.

It is much easier to determine consistency when there are no singularities.
In particular, we appeal to a result in \cite{CPS}, which shows that
$\ofoD$ is consistent
provided that the composition of automorphisms
associated to the rays about a loop centered at the origin is the identity.
We say such a scattering diagram is \emph{compatible}.
The important point is that we can now make sense of such a statement: when
we had a singularity at the origin, there was no common ring which the
automorphisms associated to rays could act on. However, without a singularity
at the origin, there are such rings, as appeared in \cite{GS07}.

\emph{Step V}. \emph{$\ofoD$ satisfies the required compatibility
condition}.
This step is really the punch-line, explaining why the particular choice
of the canonical scattering diagram $\foD$ gives a diagram $\ofoD$ which is compatible. We make use of \cite{GPS09} to link the enumerative
definition of $\foD$ to the notion of compatibility. Indeed, the definition
of the canonical scattering diagram was originally obtained by working
backwards from the enumerative description of \cite{GPS09}. This connection
is worked out in \S \ref{connectionwithGPS}.

\subsection{Consistency: Reduction to the Gross--Siebert locus}
\label{GSreductionsection}

We now begin the proof of Theorem \ref{canonicalconsistency}, following
the outline given in \S\ref{proofoverview}.
We will, however, prove a number of lemmas in a slightly more general
context, as we will need some more general consistency results
in \cite{K3}. We assume we are given $(Y,D)$, $\eta:\NE(Y)\rightarrow P$
and $\varphi$ defined as in Example \ref{standardphi}, and a radical
ideal $J\subseteq P$. Suppose
we are given a scattering diagram $\foD$ for this data; the application
in this paper will be $\foD=\foD^{\can}$. In particular, the hypotheses
of Theorem \ref{canonicalconsistency} imply $\foD^{\can}$ is a scattering
diagram for this data.

\emph{Step I}. \emph{Replacing $(Y,D)$ with a toric blowup.}

\begin{proposition} \label{reducetotoricmodel}
Let $p: (\tY,\tD) \to (Y,D)$ be a toric
blowup. Then if we take $\tilde\eta:=
\eta\circ p_*:\NE(\tY)\rightarrow P$, then $\foD$ can also
be viewed as a scattering diagram for $B_{(\tY,\tD)},P$.
Furthermore, if $\foD$ is consistent for this latter data, it
is consistent for the data $B_{(Y,D)},P$.
\end{proposition}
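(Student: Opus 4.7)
The plan is to leverage Lemma \ref{toricblowuplemma}, which says that under a toric blowup $p:(\tY,\tD)\to(Y,D)$ the underlying integral affine manifolds $B_{(Y,D)}$ and $B_{(\tY,\tD)}$ are canonically isomorphic, with $\tSigma$ a refinement of $\Sigma$. Thus both settings live on the \emph{same} affine manifold $B$, just with different fan structures. The argument then has two parts: first, verify $\foD$ really does constitute a scattering diagram for the refined data $(B,\tSigma)$, $P$, $\tilde\varphi$, $J$; second, show that $\Lift_Q(q)$ is insensitive to this refinement, so consistency for $(B,\tSigma)$ transfers to $(B,\Sigma)$.

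For the first part, the key compatibility is that the multivalued PL function $\tilde\varphi$ associated to $\tilde\eta=\eta\circ p_*$ can be chosen to agree with $\varphi$ on each maximal cone of $\Sigma$. Indeed, for rays $\rho\in\Sigma\cap\tSigma$ the corresponding strict transform satisfies $p_*[\tD_\rho]=[D_\rho]$, so $p_{\rho,\tilde\varphi}=\tilde\eta([\tD_\rho])=\eta([D_\rho])=p_{\rho,\varphi}$; for new rays $\tilde\rho\in\tSigma\setminus\Sigma$, the associated exceptional divisor $E$ is contracted by $p$ so $p_*[E]=0$ and hence $p_{\tilde\rho,\tilde\varphi}=0$, which matches the fact that $\varphi$ extends affinely across $\tilde\rho$. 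With this choice, for every ray $\fod$ in $\foD$, if $\tilde\tau_\fod\subseteq\tau_\fod$ denote the smallest cones containing $\fod$ in $\tSigma$ and $\Sigma$ respectively, the identity $\tilde\varphi_{\tilde\tau_\fod}=\varphi_{\tau_\fod}$ on $\tilde\tau_\fod$ yields a canonical identification $P_{\tilde\varphi_{\tilde\tau_\fod}}=P_{\varphi_{\tau_\fod}}$ as submonoids of $\shP_{\tilde\tau_\fod}=\shP_{\tau_\fod}$. Thus $f_\fod$ is unambiguously a series in the required ring, and conditions (1)--(4) of Definition \ref{scatdiagdef} for $(B,\tSigma)$ follow immediately from those for $(B,\Sigma)$ (the new rays of $\tSigma$ are irrelevant to these conditions since no ray of $\foD$ sits strictly inside a two-dimensional cone of $\Sigma$ that gets subdivided with a change of behavior).

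For the second part, I will establish a natural bijection between broken lines for $q\in B(\bZ)$ with endpoint $Q$ in the two settings, preserving $\Mono(\gamma)$. Given a broken line $\gamma$ in $(B,\Sigma)$ for a generic endpoint $Q$, its unbounded segment lies in some $\sigma\in\Sigma_{\max}$; under the refinement it lies in the unique $\tilde\sigma\in\tSigma_{\max}$ with $\tilde\sigma\subseteq\sigma$, and the initial monomial $z^{\varphi_\sigma(q)}=z^{\tilde\varphi_{\tilde\sigma}(q)}$ is unchanged. Along the way $\gamma$ may cross rays $\tilde\rho\in\tSigma\setminus\Sigma$, but since such rays carry no element of $\foD$ and have $p_{\tilde\rho,\tilde\varphi}=0$, the attached monomial passes through unchanged (by Lemma \ref{transportlemma} applied on both sides, or more simply by the local triviality of $\tilde\varphi$ across $\tilde\rho$). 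Conversely, any broken line in $(B,\tSigma)$ is automatically a broken line in $(B,\Sigma)$, provided its segments avoid the rays of $\Sigma\subseteq\tSigma$ (which is automatic for generic endpoints). Consequently $\Lift_Q(q)$ computed in the $(B,\tSigma)$ setting coincides with $\Lift_Q(q)$ computed in the $(B,\Sigma)$ setting, and the wall-crossing automorphisms $\theta_{\gamma,\foD}$ depend only on $\foD$ itself, hence agree as well. Consistency in the refined data then yields consistency for $(B,\Sigma)$.

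The main obstacle, which nonetheless is handled cleanly by the bookkeeping above, is to make the identifications $P_{\tilde\varphi_{\tilde\tau}}=P_{\varphi_\tau}$ and $\shP_{\tilde\tau}=\shP_\tau$ compatible with the representatives of the multivalued function $\tilde\varphi$: this requires the judicious choice of $\tilde\varphi$ that restricts to $\varphi$ on each cone of $\Sigma$, which in turn hinges on $\tilde\eta([E])=0$ for exceptional components $E$ of $p$. Once this is in place everything is essentially tautological, because all the data — the affine manifold $B$, the local system $\shP$, the rays and functions of $\foD$, and the broken lines with generic endpoint — are intrinsic to $(B,\varphi)$ and unaffected by refining the cone structure.
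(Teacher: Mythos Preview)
Your argument is correct and follows essentially the same route as the paper's: identify $B_{(\tY,\tD)}$ with $B_{(Y,D)}$ via Lemma~\ref{toricblowuplemma}, check that $\tilde\varphi$ can be taken equal to $\varphi$ (because $p_*[\tD_\rho]=[D_\rho]$ for old rays and $p_*[E]=0$ for exceptional $E$), deduce that $\shP$, the monoids $P_{\varphi_\tau}$, and the broken lines are all unchanged by the refinement, and conclude that $\Lift_Q(q)$ and hence consistency transfer. The paper is terser about the broken-line bijection and the verification of condition~(2) of Definition~\ref{consistentdef}, but the content is the same; your parenthetical about rays of $\foD$ inside subdivided cones is slightly muddled, though the underlying point (that new rays of $\tSigma$ carry bending parameter $0$ and no element of $\foD$, so nothing changes) is right.
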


\begin{proof}
Decorate notation,
writing for example $\tB, \tSigma$
for the singular affine manifold with subdivision into cones associated to $(\tY,\tD)$.
By Lemma \ref{toricblowuplemma}, we have a
canonical identification of the underlying
singular affine manifolds $B = \tB$, and $\tSigma$ is the refinement
of $\Sigma$ obtained by
adding one ray for each $p$-exceptional
divisor. We have multivalued piecewise linear functions $\varphi$
on $B$ and $\tvarphi$ on $\tilde B$. We can in fact choose representatives
so that $\tvarphi=\varphi$. Indeed,
$\kappa_{\rho,\tvarphi}=\eta(p_*([\tD_{\rho}]))$
where $\tD_{\rho}$ is the irreducible component of $\tD$ corresponding to
$\rho$. But $p_*([\tD_{\rho}])=0$ if $\rho\not\in\Sigma$, and $p_*([\tD_{\rho}])
=[D_{\rho}]$ if $\rho\in\Sigma$. Thus $\tvarphi$ in fact has the same
domains of linearity as $\varphi$, and the same bending parameters,
so we can choose representatives which agree.

As a consequence, we note that the sheaves $\shP$ and $\tilde\shP$ on $B_0$
defined using $\varphi$ and $\tilde\varphi$ coincide.
Furthermore,
if $\tilde\tau\subset\tilde\sigma$ are cones in $\tilde\Sigma$,
with $\tau\in\Sigma$ the smallest cone containing $\tilde\tau$ and
$\sigma\in\Sigma$ the smallest cone containing $\tilde\sigma$, there
is a canonical identification of $P_{\varphi_\tau}$ with $P_{\tvarphi_{\tilde
\tau}}$ and a canonical isomorphism
\begin{equation}
\label{tBBcaniso}
\tilde R_{\tilde\tau,I}\cong R_{\tau,I};
\end{equation}
note the slightly
non-trivial case when $\dim\tilde\tau=1$ but $\dim\tau=2$,
in which case we use the fact that $\kappa_{\tilde\tau,\tilde\varphi}=0$.

Using these identifications, we can view $\foD$ as living on $\tilde B$,
and as such, one sees from the definition that $\foD$ is a scattering
diagram for the data $\tilde B,P,\tilde\varphi$.

Now suppose $\sqrt{I}=J$.
One observes that the set of broken lines contributing
to $\Lift_Q(q)$ are the same whether we are working in $B$ or $\tilde B$.
Thus if $Q\in\tilde\sigma\in\tilde\Sigma_{\max}$,
$\Lift_Q(q)\in\tilde R_{\tilde\sigma,I}$, defined using
$\tilde B$, coincides under the isomorphism \eqref{tBBcaniso} with
$\Lift_Q(q)\in R_{\sigma,I}$. From this one sees easily
that if $\foD$ is consistent for $\tY$, it is consistent for $Y$.
\end{proof}

\begin{corollary}
\label{reducetotoricmodelcor}
Given $Y, P, \eta, J$ satisfying the hypotheses of Theorem
\ref{canonicalconsistency}, then Theorem \ref{canonicalconsistency}
holds for this data if it holds for the data $\tY,P,\tilde\eta,J$.
\end{corollary}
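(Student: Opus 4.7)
The plan is to reduce to Proposition \ref{reducetotoricmodel} by checking two things: (a) that the canonical scattering diagram $\foD^{\can}$ built from $(\tY,\tD,\tilde\eta)$ coincides, under the identification $B=\tB$ and the canonical isomorphisms $P_{\varphi_\rho}\cong P_{\tvarphi_{\tilde\rho}}$, with the canonical scattering diagram built from $(Y,D,\eta)$; and (b) that the hypotheses (I)--(III) of Theorem \ref{canonicalconsistency} are preserved under passage from $(Y,D,\eta)$ to $(\tY,\tD,\tilde\eta)$.

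For (a), fix a rational ray $\fod\subset B$. The function $f_\fod$ attached by Definition \ref{canscatdiagdef} depends on a further toric blowup $\pi\colon Y'\to Y$ making $\fod$ an exceptional ray; taking $\tilde\pi\colon Y''\to\tY$ to be the analogous blowup of $\tY$, we may assume $Y''=Y'$ and that $\pi = p\circ\tilde\pi$. Then the relative moduli space $\overline{\foM}(Y'^o/C^o,\beta)$ used to define $N_\beta$ depends only on $(Y',C)$ and not on whether we regard $Y'$ as sitting over $Y$ or over $\tY$. Moreover $\tilde\eta(\tilde\pi_*(\beta))=\eta(p_*\tilde\pi_*(\beta))=\eta(\pi_*(\beta))$, so the $P$-exponents agree. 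Finally, since we choose $\tvarphi=\varphi$ as in the proof of Proposition \ref{reducetotoricmodel}, the monomial $z^{-\varphi_{\tau_\fod}(k_\beta m_\fod)}$ appearing in the definition of $f_\fod$ matches $z^{-\tvarphi_{\tilde\tau_\fod}(k_\beta m_\fod)}$ under the canonical isomorphism $P_{\varphi_{\tau_\fod}}\cong P_{\tvarphi_{\tilde\tau_\fod}}$ (even when $\dim\tilde\tau_\fod=1$ while $\dim\tau_\fod=2$, since in that case $p_{\tilde\tau_\fod,\tvarphi}=0$). Hence $\foD^{\can}_{(Y,D)}=\foD^{\can}_{(\tY,\tD)}$.

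For (b), every $\AA^1$-class for a blowup of $\tY$ is, via composition with $p$, an $\AA^1$-class for a blowup of $Y$ with the same image in $P$ under $\tilde\eta$ and $\eta$ respectively; thus (I) and (II) transfer verbatim. For (III), each boundary component $D_\rho\subset D$ has a proper transform $\tilde D_\rho\subset\tilde D$ with $p_*[\tilde D_\rho]=[D_\rho]$, so $\tilde\eta([\tilde D_\rho])=\eta([D_\rho])$, and the two components required by (III) for $Y$ furnish two components satisfying the same condition for $\tY$.

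Combining (a) and (b): Theorem \ref{canonicalconsistency} applied to $(\tY,P,\tilde\eta,J)$ yields consistency of $\foD^{\can}_{(\tY,\tD)}$ as a scattering diagram on $\tB$; by (a) this is the same diagram as $\foD^{\can}_{(Y,D)}$ viewed on $\tB=B$; and by Proposition \ref{reducetotoricmodel} consistency on $\tB$ implies consistency on $B$. The only mildly delicate point is the matching in (a), which requires keeping track of the case where $\fod$ lies in the interior of a two-cone of $\Sigma$ but is a ray of $\tSigma$; this is handled by the canonical identifications established in the proof of Proposition \ref{reducetotoricmodel} and is the only place that requires genuine care.
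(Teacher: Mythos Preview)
Your proof is correct and follows essentially the same approach as the paper: use Proposition \ref{reducetotoricmodel} and verify that the canonical scattering diagrams for $Y$ and $\tY$ coincide by passing to a common toric blowup $Y'$ and using $\tilde\eta\circ\tilde\pi_*=\eta\circ p_*\circ\tilde\pi_*=\eta\circ\pi_*$. The paper's proof is simply more terse, leaving your part (b) (the transfer of hypotheses (I)--(III)) and the $\varphi$-matching entirely implicit.
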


\begin{proof}
By the proposition, one just needs to check that the
canonical scattering diagrams defined using $Y$ or $\tY$ are identical.
Indeed, given a
ray $\fod\subset B$, we can choose a refinement $\Sigma'$ of $\Sigma$
which is also a refinement of $\tilde\Sigma$, giving maps $\tilde\pi:
Y'\rightarrow\tY$ and $\pi:Y'\rightarrow Y$. Then for an $\AA^1$-class
$\beta\in A_1(Y',\ZZ)$, $\eta(\pi_*(\beta))=\tilde\eta(\tilde\pi_*(\beta))$,
and so $f_{\fod}$ is the same for $\tY$ and $Y$.
\end{proof}

\emph{Step II}. \emph{Changing the monoid $P$}.

We would like to change the monoid $P$, which was fairly arbitrary,
to one with better properties. For this
step, assume we are given monoid homomorphisms
\[
\NE(Y)\mapright{\bar\eta} \oP \mapright{\psi} P
\]
with $\eta=\psi\circ\bar\eta$. Then $\eta$ and $\bar\eta$
induce multivalued piecewise linear functions $\varphi$ and $\bar\varphi$
respectively, via Example \ref{standardphi}, with $\varphi=\psi\circ\bar
\varphi$.
The monoids $P$,$\oP$ and functions $\varphi,\bar\varphi$ yield sheaves $
\shP$ and $\oshP$ over $B_0$.
The map $\psi:\oP\rightarrow P$
induces a map of sheaves $\psi:\oshP\rightarrow\shP$ using
$\varphi=\psi\circ\bar{\varphi}$, and hence it also induces monoid
homomorphisms $\psi:\oP_{\bar\varphi_{\tau}}\rightarrow P_{\varphi_{\tau}}$
for any $\tau\in\Sigma\setminus\{0\}$.

Suppose $\ofoD$ is a scattering
diagram for the data $B,\oP,\bar\fom=\oP\setminus\oP^{\times}$.
For each ray $(\fod,f_{\fod})$, $f_{\fod}\in
\widehat{\kk[\oP_{\bar\varphi_{\tau_{\fod}}}]}$. Now we can try to define
$\psi(f_{\fod})$ by applying $\psi$ to each exponent of $f_{\fod}$, but
in general, this need not make sense even formally since $\psi$ may take
an infinite number of exponents occuring in $f_{\fod}$ to a single
element of $P$. However, we shall write $\psi(f_{\fod})$ for such
an expression if it does make sense as an element of
$\widehat{\kk[P_{\varphi_{\tau_{\fod}}}]}$. If $\psi(f_{\fod})$ makes
sense for each $(\fod,f_{\fod})\in \ofoD$, we write
\[
\psi(\ofoD)=\{(\fod,\psi(f_{\fod}))\,| \, (\fod,f_{\fod})\in \ofoD\}.
\]

\begin{proposition}
\label{barPtoPprop}
In the above situation,
suppose $\ofoD$ is a scattering
diagram for the data $B,\oP,\bar\fom=\oP\setminus\oP^{\times}$,
such that $\foD=\psi(\ofoD)$ makes sense and is a scattering
diagram for the data $B,P,J$, where $J$ is a radical ideal in $P$.
Assume that $\kappa_{\rho,\varphi}\in J$ for at least one ray $\rho\in\Sigma$.
If $\ofoD$ is consistent for $\oP,\bar\eta,\bar\fom$, then
$\foD$ is consistent for $P,\eta,J$.
\end{proposition}

\begin{proof}
Let $q\in B_0(\ZZ)$. Then if $\bar{\gamma}$ is a broken line for $q$
with endpoint $Q$ with respect to the barred data, i.e., $\oP$, $\oshP$ etc.,
we can construct what we shall call $\psi(\bar\gamma)$. This will
be the data required for defining a broken line for the unbarred
data. The underlying map of $\psi(\bar\gamma)$ coincides with that
of $\bar\gamma$. For the attached monomials,
we simply apply $\psi$ to the monomial $m_L(\bar\gamma)$ attached to
a domain of linearity $L$ of $\bar\gamma$ to get the attached monomial
for $\psi(\bar\gamma)$. This is not a broken line for the unbarred
data, as condition (3) of Definition \ref{brokenlinedef} need not
hold. Indeed, when a broken line bends at a ray, the attached monomial
will be replaced by a term in \eqref{bendingterms}. However, there might
be several different terms $c_iz^{\bar s_i}$ appearing in \eqref{bendingterms}
with $\bar s_i\in \bar P_{\varphi_{\tau}}$ such that $\psi(\bar s_i)$
all coincide with some $s\in P_{\varphi_{\tau}}$. Each choice $c_iz^{\bar s_i}$
leads
to a different broken line $\bar\gamma_i$, but $\psi(\bar\gamma_i)$
is not a broken line because $c_iz^{\psi(\bar s_i)}=c_iz^s$ is not a term
in the formula \eqref{bendingterms} for the monoid $P$. Rather, one needs
to replace the collection of broken lines $\bar\gamma_i$ with a single one
which has monomial $\sum_i c_iz^s$ attached after the bend. To deal with
this, we need to do a certain amount of book-keeping.

Fix an ideal $I\subset P$ with $\sqrt{I}=J$, $Q\in\sigma\in\Sigma$,
and let $\bar\foB$ be the set of broken lines $\bar\gamma$ for the barred data
with endpoint $Q$ such that $\psi(\Mono(\bar\gamma))\not\in I\cdot
\kk[P_{\varphi_\sigma}]$. The same finiteness argument of
Lemma \ref{finitenesslemma} shows that $\bar\foB$ is a finite set. Note this
uses the facts (1) at least one $\kappa_{\rho,\varphi}\in J$ and (2) all but
a finite number of monomials appearing in $\foD$ lie in $I$.

We define an equivalence relation on $\bar\foB$ by saying
$\bar\gamma_1\sim\bar\gamma_2$ provided $\psi(\bar\gamma_1)$ and
$\psi(\bar\gamma_2)$ coincide except possibly for the $\kk$-valued
coefficients of the monomials attached to the domains of linearity. Given an
equivalence class $\xi\subset\bar\foB$
with respect to this equivalence relation,
we will show there is at most one broken line $\gamma_{\xi}$
for the unbarred data such that
\begin{equation}
\label{sumbareq}
\sum_{\bar\gamma\in\xi} \psi(\Mono(\bar\gamma))
=\Mono(\gamma_{\xi}),
\end{equation}
with there being no such broken line precisely if the above quantity is zero.
Furthermore, every broken line $\gamma$ for the unbarred data
with $\Mono(\gamma)\not\in I\cdot\kk[P_{\varphi_{\sigma}}]$
arises in this way.

Define $\gamma_{\xi}$ to be the broken line with underlying piecewise
linear map
given by any element of $\xi$, with the following attached monomials.
For any domain of linearity $L=[s,t]$ for $\gamma_{\xi}$, choose a maximal
subset $\xi_L\subset\xi$ of broken lines such that the attached
monomials for $\bar\gamma_1$ and $\bar\gamma_2$ on $(-\infty,t]$ do not
coincide for any $\bar\gamma_1,\bar\gamma_2
\in\xi_L$. Then define
\[
m_L(\gamma_{\xi})=\sum_{\bar\gamma\in\xi_L} m_L(\psi(\bar\gamma)).
\]
Assuming that the final monomial attached to $\gamma_{\xi}$
is not zero, one checks easily that
$\gamma_{\xi}$ is a broken line, now satisfying (3) of Definition
\ref{brokenlinedef}, and \eqref{sumbareq} is satisfied since for
$L$ the last domain of linearity of $\gamma_{\xi}$, one takes
$\xi_L=\xi$. Furthermore, it is easy to see that any broken line
for the unbarred data with the same underlying map and attached monomials
at most differing by their coefficients from $\gamma_{\xi}$ must in fact
coincide with $\gamma_{\xi}$. This shows the claim.

Since
$\bar\foB$ is finite,
there is some $k>0$ such that for any $\bar\gamma\in\bar\foB$,
$\Mono(\bar\gamma)\in \kk[\oP_{\ovarphi_{\tau}}]$ does not lie in
$\bar\fom^k\cdot \kk[\oP_{\ovarphi_{\tau}}]$. If we take
$\bar I=\psi^{-1}(I)$, then it is clear from \eqref{sumbareq} that
\begin{equation}
\label{psieq}
\psi(\overline{\Lift}_Q(q))=\Lift_Q(q),
\end{equation}
where $\overline{\Lift}_Q(q)$ is the lift defined with respect to
the ideal $\oI$ and the other barred data, and $\Lift_Q(q)$
is defined with respect to the unbarred data and the ideal $I$. Now
$\ofoD$ is consistent for $\bar\fom$, which implies (1) and (2) of
Definition \ref{consistentdef} hold for the ideal $\bar\fom^k+\oI$.
Since any monomial in $\oP\setminus\oI$ appearing in $\overline{\Lift}_Q(q)$
is in
$\oP\setminus (\bar\fom^k+\oI)$, we can use
\eqref{psieq} to deduce consistency of $\foD$ from consistency of
$\ofoD$.
\end{proof}

\emph{Step III. Reduction to the Gross-Siebert locus.}
As a consequence of Proposition \ref{toricmodelexists} and Corollary
\ref{reducetotoricmodelcor}, in order
to prove Theorem \ref{canonicalconsistency} (i.e., with $\foD=\foD^{\can}$),
we may assume we have a toric model $p: (Y,D) \to (\oY,\oD)$
with $\oD=\oD_1+\cdots+\oD_n$. Furthermore, by replacing
$(Y,D)$ with a deformation equivalent pair and using
Lemma \ref{definvlemma}, we can assume that
$p$ is the blowup at distinct points $x_{ij}$, $1\le j\le \ell_i$,
along $\oD_i$, with exceptional divisors $E_{ij}$. Assume $D_i$ is the
proper transform of $\oD_i$, corresponding to the ray $\rho_i\in\Sigma$.

By Proposition
\ref{barPtoPprop}, we can replace $P$ with a better suited choice of
monoid. We shall do this as follows in the case that
$\foD=\foD^{\can}$. As in Example \ref{goodPexample},
the nef cone $\ocK(Y) \subset A^1(Y,\RR)$ contains a strictly convex rational polyhedral
cone $\sigma$, so $\sigma^{\vee}\subset A_1(Y,\RR)$ is a
strictly convex rational polyhedral cone containing $\NE(Y)$.
The map $\eta:\NE(Y)\rightarrow P$ induces a map $\eta:A_1(Y,\RR)
\rightarrow P^{\gp}_\RR$. Since $P$ is toric, there is some
rational polyhedral cone $\sigma_P\subset P^{\gp}_\RR$
such that $P=\sigma_P\cap P^{\gp}$. In addition, let $H$ be an
ample divisor on $\oY$, so that $\NE(Y)\cap (p^*H)^{\perp}$
is a face of $\NE(Y)$, generated by the classes $[E_{ij}]$. Now take
\[
\sigma_{\oP}=\eta^{-1}(\sigma_P)\cap\sigma^{\vee}\cap \{q\in A_1(Y,\RR)\,|\,
p^*H\cdot q\ge 0\},
\]
and take
\[
\oP=\sigma_{\oP} \cap A_1(Y,\ZZ).
\]

As $\sigma_{\oP}$ is strictly convex,
$(\oP)^{\times}=\{0\}$, $\bar{\fom}=\oP\setminus \{0\}$, and if $\oI$ is
an $\bar{\fom}$-primary ideal, $\oP\setminus \oI$
is finite. Thus the hypotheses
of Theorem \ref{canonicalconsistency} trivially hold for $\bar{\eta}:\NE(Y)
\rightarrow \oP$.
By Proposition \ref{barPtoPprop}, we
can replace $P$ with $\oP$ to prove Theorem \ref{canonicalconsistency}.

The above discussion
shows that in order to complete a proof of consistency of $\foD^{\can}$,
(i.e., Theorem \ref{canonicalconsistency}),
we can operate under the following assumptions:

\begin{assumptions}
\label{GSassumptions}
\begin{itemize}
\item There is a toric model
\[
p:(Y,D)\rightarrow (\oY,\oD)
\]
which blows up distinct points $x_{ij}$ on $D_i$, with exceptional
divisors $E_{ij}$.
\item $\eta:\NE(Y)\rightarrow P$ is an inclusion, and $P^{\times}=\{0\}$.
Via Example \ref{standardphi}, this gives the function $\varphi$.
\item There is a face of $P$ whose intersection with $\NE(Y)$
is $\NE(Y)\cap (p^*H)^{\perp}$. Let $G$ be the prime monomial ideal
given by the complement of this face.
Note that $G\not=\fom$ unless $p$ is an isomorphism.
\item $J=\fom=P\setminus \{0\}$.
\item $\foD$ is a scattering diagram for the data $P, \varphi$ and $J$.
\end{itemize}
\end{assumptions}

\begin{definition}
\label{GSlocusdef}
The \emph{Gross--Siebert locus} is the open torus orbit $T^{\gs}$
of $\Spec\kk[P]/G$.
\end{definition}

We now want to work not with the maximal ideal $\fom$ but with the ideal
$G$, effectively extending the families $X_{I,\foD}^o$ with $\sqrt{I}=\fom$
to infinitesimal neighbourhoods
of the toric boundary stratum of $\Spec\kk[P]$ associated to $G$. We
will then find it easier to check
the explicit equalities of Definition \ref{consistentdef}
after restricting to these neighbourhoods of the Gross--Siebert
locus. To do so requires showing that the diagram $\foD$ we are working
with ($\foD^{\can}$ in this paper) is also a scattering diagram for the
data $P,\eta,G$. In the case of $\foD^{\can}$, this requires analyzing
elements of this scattering diagram supported on the $\rho_i$.

We first perform this analysis for $\foD^{\can}$;
we will then continue our proof assuming that the elements of
$\foD$ supported on the rays $\rho_i$ take the same form as the corresponding
elements of $\foD^{\can}$ modulo $G$.

For each ray $\rho_i$ in $\Sigma$,
we have a unique ray $(\rho_i,f_{\rho_i})\in\foD^{\can}$
with support $\rho_i$. The following describes $f_{\rho_i}\bmod G$.

\begin{lemma} Given Assumption \ref{GSassumptions} with $\foD=\foD^{\can}$,
viewing $f_{\rho_i}$ as an element of $\kk[P_{\varphi_{\rho_i}}]\otimes
R_I$ with $\sqrt{I}=\fom$,
we have
\label{grhoilemma}
\[
f_{\rho_i}=g_{\rho_i}\prod_{j=1}^{\ell_i}(1+ b_{ij} X_i^{-1})
\]
where $b_{ij} = z^{\eta([E_{ij}])}$ and $g_{\rho_i}\equiv 1
\mod G$. The $j^{th}$ term of the product is the contribution from
$\bA^1$-classes coming from multiple covers of the
$p$-exceptional divisor $E_{ij}$, and
$g_{\rho_i}$ is the product of contributions from all other $\bA^1$-classes.
\end{lemma}

\begin{proof}
Note that in defining $f_{\rho_i}$ using the definition of the canonical
scattering diagram, we take $\tilde Y=Y$. Now the only
terms that contribute to $f_{\rho_i}\bmod G$ will involve classes
$\beta\in \NE(Y)\subset A_1(Y)$ with $\eta(\beta)\not\in G$,
so in particular, such a $\beta$ must be a linear combination
$\sum_{j=1}^{\ell_i} c_j [E_{ij}]$, with $k_{\beta}=\sum c_j$. Furthermore,
if $f:C\rightarrow Y$ contributes to $N_{\beta}$,
$f(C)$ must be contained in $\bigcup_{i,j}
E_{ij}$. Indeed, if $f(C)$ has an irreducible component $C'$ not contained
in this set, then $\eta([C'])\in G$, so $\eta(f_*([C]))\in G$, as $G$ is
an ideal. But $\eta(f_*([C]))=\eta(\beta)$, which we have assumed is not
an element of $G$.

Since $f(C)$ is connected and intersects $D_i$, we now see that the
image of $f$ is $E_{ij}$ for some $j$, and in particular, $f$ is
a degree $k_{\beta}$ cover of $E_{ij}$. Then Theorem 6.1 of \cite{GPS09}
tells us that the contribution from $k_{\beta}$-fold multiple covers of
$E_{ij}$ is $(-1)^{k_{\beta}-1}/k_{\beta}^2$. From this we conclude that
\begin{align*}
f_{\rho_i}={} & \exp\left(h+\sum_{k=1}^{\infty}\sum_{j=1}^{\ell_i}
k\left((-1)^{k-1}\over k^2\right)
(b_{ij}X_i^{-1})^{k}\right)\\
= {} & \exp(h)\prod_{j=1}^{\ell_i} (1+b_{ij} X_i^{-1})
\end{align*}
where $h\equiv 0\mod G$. We take $g_{\rho_i}=\exp(h)$.
\end{proof}

\begin{corollary} $\foD^{\can}$ is a scattering diagram
for the data $(B,\Sigma)$, $P$, $\varphi$ and $G$.
\end{corollary}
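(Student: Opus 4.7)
The plan is to verify the two hypotheses of Lemma~\ref{Scanlemma} for $J = G$, which will then imply that $\foD^{\can}$ is a scattering diagram for the data $(B,\Sigma), P, \varphi, G$. I work throughout under Assumptions~\ref{GSassumptions}: in particular, $p\colon(Y,D)\to(\oY,\oD)$ is a toric model with exceptional curves $E_{ij}$, and $G$ is the prime monomial ideal complementary to the face $F = \NE(Y)\cap(p^*H)^\perp$ for an ample class $H$ on $\oY$.

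For condition (1) of Lemma~\ref{Scanlemma}, I split on $\dim\tau_\fod$. If $\dim\tau_\fod = 1$, then $\fod$ coincides with some ray $\rho_i\in\Sigma$ and $p_{\tau_\fod,\varphi} = [D_i]$; ampleness of $H$ on $\oY$ gives $p^*H\cdot[D_i] = H\cdot\oD_i > 0$, so $[D_i]\notin F$ and hence $[D_i]\in G$, making the hypothesis ``$p_{\tau_\fod,\varphi}\notin G$'' vacuous. If $\dim\tau_\fod = 2$, so that $\fod$ lies in the interior of some cone $\sigma_{i,i+1}$ and $\pi\colon\tilde{Y}\to Y$ is a nontrivial toric blowup of the node $p_{i,i+1}$ with exceptional component $C$, I will show that no $\AA^1$-class $\beta$ on $\tilde Y$ with $\eta(\pi_*\beta)\notin G$ can contribute a nonzero $N_\beta$. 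Indeed, $\eta(\pi_*\beta)\notin G$ is equivalent to $p^*H\cdot\pi_*\beta = 0$, and so ampleness of $H$ forces $p_*\pi_*\beta = 0\in\NE(\oY)$; every stable map $f\colon C'\to\tilde{Y}^o$ representing $\beta$ must then have image contracted by $p\circ\pi$. After passing to a smooth toric refinement of $\Sigma'$ (which does not change $N_\beta$), the connected nontrivial fibers of $p\circ\pi$ are either the $\PP^1$-fibers $\tilde{E}_{ij}$ (disjoint from $C$) or the exceptional tree over the node $p_{i,i+1}$, whose components other than $C$ lie in $\tilde{D}\setminus C$. Since $f(C')\subset\tilde{Y}^o$ meets $\tilde D$ only at $C$, its image must lie either in a single $\tilde{E}_{ij}$, giving $\beta\cdot C = 0$, or entirely inside $C$, giving $\beta = k[C]$ with $\beta\cdot C = kC^2 < 0$ by the negativity of contractions. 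Both contradict $\beta\cdot C = k_\beta > 0$.

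For condition (2), fix an ideal $I$ with $\sqrt I = G$; Noetherianity of $\kk[P]$ gives $G^k\subset I$ for some $k$. If an $\AA^1$-class $\beta$ satisfies $\eta(\pi_*\beta)\notin I$, then $\ord_G(\pi_*\beta) < k$; since $p^*H$ is a $\ZZ$-valued linear functional on $P$ that vanishes on $F$ and takes strictly positive values on nonzero integral elements of $G$, this bounds $p^*H\cdot\pi_*\beta$, hence the $H$-degree of $p_*\pi_*\beta\in\NE(\oY)$. Because $H$ is ample on the projective toric surface $\oY$, only finitely many classes in $\NE(\oY)$ have bounded $H$-degree; each admits only finitely many lifts to $\NE(Y)$ along $p_*$, and each such $\pi_*\beta$ determines the direction of $\fod$ via the intersection vector $(\pi_*\beta\cdot D_j)_j$. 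Once $\fod$ (hence $\tilde Y$ and $C$) is fixed, the class $\beta$ is pinned down by $\pi_*\beta$ together with $\beta\cdot C = k_\beta$. This produces the required finiteness, and Lemma~\ref{Scanlemma} then yields the corollary.

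The conceptual heart of the argument is condition (1) in the $\dim\tau_\fod = 2$ case, where ampleness of $H$ on $\oY$ is combined with the structure of the contraction $p\circ\pi$ to rule out all ``unwanted'' $\AA^1$-classes; the finiteness in condition (2) is then a fairly routine consequence of ampleness together with Noetherianity of $\kk[P]$.
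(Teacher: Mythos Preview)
Your verification of hypothesis~(1) of Lemma~\ref{Scanlemma} is correct and in fact more detailed than the paper's one-line assertion for the $\dim\tau_\fod=2$ case. However, your overall strategy cannot succeed, because hypothesis~(2) of Lemma~\ref{Scanlemma} is actually \emph{false} for $J=G$. The problem lies with the rays $\fod=\rho_i$: by the proof of Lemma~\ref{grhoilemma}, the $k$-fold covers of each exceptional curve $E_{ij}$ are $\AA^1$-classes with $N_{k[E_{ij}]}=(-1)^{k-1}/k^2\neq 0$ and $\eta(\pi_*\beta)=k[E_{ij}]\in F=P\setminus G\subset P\setminus I$. This is an infinite family of pairs $(\rho_i,\beta)$ with $\eta(\pi_*\beta)\notin I$, so the lemma's hypothesis fails. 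Your concrete false step is the assertion that each $\gamma\in\NE(\oY)$ admits only finitely many lifts to $\NE(Y)$ along $p_*$: for $\gamma=0$, the classes $k[E_{ij}]$ are an infinite family of such lifts.

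The paper avoids Lemma~\ref{Scanlemma} and verifies the conditions of Definition~\ref{scatdiagdef} directly. For condition~(4) only finitely many \emph{rays} are needed: the $n$ rays $\rho_i$ are a finite set, and for rays with $\dim\tau_\fod=2$ the finiteness argument goes through (there one can argue that contributing stable maps cannot have $\tilde E_{jl}$ as a component, since $\tilde E_{jl}$ meets $F=\overline{\tilde D\setminus C}$; this forces $\beta\cdot\tilde E_{jl}\ge 0$ and hence bounds the lifts). Condition~(3) is as you argue. For condition~(2), convergence of $f_{\rho_i}$ in the $G$-adic completion does \emph{not} follow from any finiteness of $\AA^1$-classes; it is precisely the content of Lemma~\ref{grhoilemma} that the infinitely many multiple-cover contributions exponentiate to the finite product $\prod_j(1+b_{ij}X_i^{-1})$, multiplied by $g_{\rho_i}\equiv 1\bmod G$. (The paper's own write-up is also imprecise at the ``finitely many $\pi_*\beta$'' step, but its stated conclusions only require it on rays with $\dim\tau_\fod=2$, where it holds, together with Lemma~\ref{grhoilemma} for the rays $\rho_i$.)
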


\begin{proof}
Fixing an $I\subset P$
with $\sqrt{I}=G$, there exists a bound $n$ such that
$q\in P\setminus I$ implies $q\cdot p^* H<n$, where $H$
is a fixed ample divisor on $\oY$. Thus if $\beta$ is an $\AA^1$-class
with $\eta(\pi_*(\beta))\in P\setminus I$, there are only
a finite number of choices for $p_*\pi_*\beta$. We need to examine the
possible choices for $\pi_*\beta$.
Given a choice for $\alpha=p_*\pi_*\beta$, we have
$\pi_*\beta=p^*\alpha+\sum a_{ij} E_{ij}$ for some collection of
$a_{ij}\in\ZZ$. Clearly the $a_{ij}$ are bounded below by the requirement
that $(\pi_*\beta)\cdot D_i\ge 0$ for each $i$. On the other hand, if
$a_{ij}>0$ for some $i,j$, then $(\pi_*\beta)\cdot E_{ij}<0$, so if
$f\colon C\rightarrow Y$ is an $\AA^1$-curve with $f_*[C]=\pi_*\beta$ then
its reduced
image $C'$ must contain $E_{ij}$.
(Technically a relative stable map in
$\tilde Y^{\circ}$ is a map to an expanded degeneration of $\tilde Y^{\circ}$,
but we compose with the projection to $\tilde Y^{\circ}$ and then the
natural map $\tilde Y^{\circ}\rightarrow Y$.)
Write $C'=C''\cup E_{ij}$, with $C''$ a reduced divisor distinct from
$E_{ij}$. Suppose $C''$ is non-empty.
Necessarily either $C''\cap D$ is empty or $C''$ intersects
$D$ only at $E_{ij}\cap D$; otherwise $C'$ cannot be the image of a relative
stable map with one point of tangency with $D$. In either case there is
an integer $k$ such that $\shO_Y(C''+kE_{ij})|_D$ is the trivial sheaf.
However, by \cite{GHK12}, Proposition 4.1, for a general deformation
$(Y',D)$ of $(Y,D)$, the kernel of the restriction map $\Pic Y'
\rightarrow \Pic D$ is trivial. Thus by Lemma \ref{definvlemma}, $N_{\beta}=0$.
We conclude that there are only a finite number of choices of $\pi_*\beta$,
except when $\pi_*\beta$ is a multiple of some $E_{ij}$.
This shows condition (4) in Definition \ref{scatdiagdef} of scattering diagrams,
as well as condition (2).
Note that $\kappa_{\rho_i,\varphi} = [D_i] \in G$ for each $i$ so condition (3) is vacuous for $\dim\tau_{\fod}=1$.
If $\dim\tau_{\fod}=2$, any contributing $\AA^1$-class
$\beta$ satisfies $\pi_*\beta\in G$, so (3) holds.
\end{proof}

\begin{theorem}
\label{thetafunctions2}
We follow the above notation.
If $\foD^{\can}$ is consistent as a scattering diagram for
$(B,\Sigma)$, $P$, $\varphi$, and $G$, then
Theorem \ref{canonicalconsistency} is true.
\end{theorem}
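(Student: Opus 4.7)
The strategy splits cleanly into the algebraic reduction (2) and the geometric core (1).

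\textbf{Part (2).} By Corollary \ref{reducetotoricmodelcor} and Proposition \ref{barPtoPprop}, Theorem \ref{canonicalconsistency} has already been reduced to the setting of Assumptions \ref{GSassumptions}, in which $J = \fom = P \setminus \{0\}$, $P^\times = 0$, and the Gross--Siebert ideal $G$ satisfies $G \subset \fom$. Consequently for any $\fom$-primary monoid ideal $I$ there is an $N$ with $\fom^N \subset I$, and hence $G^N \subset I$ as well. Granting (1), the consistency identities $\Lift_{Q'}(q) = \theta_{\gamma,\foD^{\can}}(\Lift_Q(q))$ required by Definition \ref{consistentdef} hold in $R_{\sigma,\sigma,G^N}$, and the natural surjection $R_{\sigma,\sigma,G^N} \twoheadrightarrow R_{\sigma,\sigma,I}$ transports them to the corresponding identities in $R_{\sigma,\sigma,I}$. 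The broken lines contributing to $\Lift$ modulo $I$ form a subset of those contributing modulo $G^N$, and the extra terms map to zero under the surjection, so the reduction of $\Lift_{G^N}$ is exactly $\Lift_I$.

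\textbf{Part (1).} I follow the strategy of \S\ref{firstoverview}. Fix a toric model $p \colon (Y,D) \to (\oY,\oD)$ blowing up distinct points $x_{ij} \in \oD_i$ with exceptional divisors $E_{ij}$. Using the moving-worms procedure of Example \ref{movingworms}, factor the singularity of $B = B_{(Y,D)}$ at the origin into $I_1$-singularities, one for each $x_{ij}$ placed along the ray $\rho_i$, and then migrate each $I_1$-singularity along its invariant direction out to infinity. The resulting integral affine manifold is the smooth manifold $\oB = B_{(\oY,\oD)} = \RR^2$, and the identity on the underlying topological spaces provides a $\bZPL$-homeomorphism $\nu \colon B \to \oB$.

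On the smooth manifold $\oB$, take as input the initial scattering diagram $\ofoD_0$ consisting, for each $x_{ij}$, of a single ray incoming from infinity along $\rho_i$ toward the origin, with attached function $1 + z^{[E_{ij}]} X_i^{-1}$. The Kontsevich--Soibelman scattering lemma (Theorem~\ref{KSlemma}) produces a unique extension $\ofoD \supset \ofoD_0$ such that the path-ordered product of wall-crossing automorphisms around every joint is trivial. Because $\oB$ is smooth, local consistency at every joint forces global consistency of the broken-line theta-function lifts by the argument of \cite{CPS}. To transport consistency from $\ofoD$ on $\oB$ back to $\foD^{\can}$ on $B$, invoke Proposition~\ref{relatingcanandscatter}, which uses the main theorem of \cite{GPS09} to identify the KS output $\ofoD$ with $\foD^{\can}$ after applying $\nu^{-1}$ and substituting the $\AA^1$-class generating functions of Definition \ref{canscatdiagdef}, together with Lemma~\ref{brokenlinebijection}, which establishes a natural bijection between broken lines for $\foD^{\can}$ on $B$ and broken lines for $\ofoD$ on $\oB$ preserving attached monomials. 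Under this bijection the consistency identities on $\oB$ translate directly into the identities on $B$ demanded by Definition~\ref{consistentdef}.

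The main obstacle is precisely this last transport step. Although $\nu$ identifies the underlying topological data, the scattering \emph{functions} of $\foD^{\can}$ and $\ofoD$ are genuinely different: the initial diagram $\ofoD_0$ depends on the chosen toric model and on the positions of the worms, while $\foD^{\can}$ is canonical and defined intrinsically on $B$. Only the combinatorial collection of broken lines turns out to be invariant under these choices. Establishing the broken-line bijection of Lemma \ref{brokenlinebijection}, and matching the Gromov--Witten counts $N_\beta$ of Definition \ref{canscatdiagdef} with the outputs of the KS algorithm via the theorem of \cite{GPS09}, constitutes the technical heart of the proof.
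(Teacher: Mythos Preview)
Your outline matches the paper's approach: part (2) is the same $G^N\subset I$ reduction, and part (1) is the moving-worms / Kontsevich--Soibelman / \cite{CPS} / \cite{GPS09} / broken-line-bijection pipeline carried out in \S\ref{GSreductionsection}--\S\ref{connectionwithGPS}.

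One concrete slip: on $\oB$ the incoming rays of $\ofoD_0$ carry the function $\prod_j(1+z^{-[E_{ij}]}\oX_i)$, not $1+z^{[E_{ij}]}X_i^{-1}$; see \eqref{canonicalinputs} and Remark~\ref{foD'remark}. The sign flip is forced by Definition~\ref{scatdiagQdef}: for an incoming ray the exponent must project to a positive multiple of the ray's direction, and $\oX_i^{-1}$ does not even lie in $P_{\bar\varphi}$. This matters because Lemma~\ref{brokenlinebijection} relies on precisely this discrepancy between the functions on $B$ and on $\oB$ to get the bijection on broken lines. You also omit the localisation $P\rightsquigarrow P+E$ (turning $G$ into the maximal ideal $\fom_{P+E}$) and Lemma~\ref{movingsingularitylemma}, which supplies the ring isomorphisms needed to compare lifts on $B$ and $\oB$; both are part of the transport step you flag as the main obstacle.
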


\begin{proof} This just follows
from the series of reductions of Theorem \ref{canonicalconsistency} already
made and the observation that if $I'$ is an $\fom$-primary ideal,
then since $G\subset\fom$ one can find some $k$ such that $kG\subset I'$.
To show consistency holds for the ideal $I'$, we
use the assumed consistency to observe
consistency holds for the ideal $I=kG$, and this gives the desired result.
\end{proof}

\begin{remark}
\label{GSlocusrem}
Given a consistent scattering diagram $\foD$ for
$(B,\Sigma)$, $P$, $\varphi$, and $G$, and $\kappa_{\rho,\varphi}\in G$
for all rays $\rho\in\Sigma$,
Theorem \ref{univfamth}
shows that with $\sqrt{I}=G$,
\[
X_I:=\Spec \Gamma(X^o_{I,\foD},\shO_{X^o_{I,\foD}})
\]
is flat over $\Spec \kk[P]/I$, and $X_G=\VV_n\times \Spec \kk[P]/G$.

Let $T^{\gs} \subset \Spec \kk[P]/G$ be the Gross--Siebert locus, Definition
\ref{GSlocusdef}.
Note $T^{\gs}$ determines open subschemes of the thickenings
$\Spec \kk[P]/I$, which we will shall denote by $T^{\gs}_I$.

We can describe the subscheme $T^{\gs}_I$ of $\Spec\kk[P]/I$ as follows.
Let $E\subset P^{\gp}$ be the lattice generated by the face $P\setminus
G$. Then as a subset of $\Spec\kk[P]/G$, $T^{\gs}\cong \Spec
\kk[E]$. Furthermore, if we take the localization $P+E$ of
$P$ along the face $P\setminus G$, then $T^{\gs}_I$ as a subscheme of
$\Spec\kk[P]/I$ is $\Spec \kk[P+E]/(I+E)$.

Note that
$\fom_{P+E}=(P+E)\setminus E$, and $G=P\cap\fom_{P+E}$, so
we can write $\kk[E]=\kk[P+E]/\fom_{P+E}$.
\qed
\end{remark}

We can now view $\varphi$ as a multivalued
strictly $(P+E)$-convex function. Then we have the following obvious

\begin{lemma}
\label{PEreduction}
Suppose $\foD$
is a consistent scattering diagram for
the data $(B,\Sigma)$, $P+E$, $\varphi$, $\fom_{P+E}$, and
a scattering diagram for the data $(B,\Sigma)$, $P$, $\varphi$, and $G$.
Then $\foD$ is also consistent as a scattering diagram for the latter data.
In particular, by Theorem \ref{thetafunctions2},
Theorem \ref{canonicalconsistency}
holds if $\foD^{\can}$ is consistent as a scattering
diagram for $P+E$, $\fom_{P+E}$.
\end{lemma}

\begin{proof}
Since $P\cap \fom_{P+E}=G$, the equalities in Definition \ref{consistentdef}
can be tested for an ideal $I$ of $P$ with $\sqrt{I}=G$ by choosing
some ideal $I'\subseteq P+E$ with $\sqrt{I'}=\fom_{P+E}$ and $I'\cap P
\subset I$. Then the equalities of Definition \ref{consistentdef}
hold for the data $P+E$ and $I'$ by the assumed consistency, and hence
also for $P$ and $I$.
\end{proof}

Let $\oI\subset\fom_{P+E}$ be an ideal with $\sqrt{\oI}=\fom_{P+E}$.
Set $I=\oI\cap P$. Then $X^o_{I,\foD}$ is flat over
$\Spec\kk[P]/I$. Restricting $X^o_{I,\foD}$ to the open set $\Spec\kk[P+E]/\oI$
gives the flat family $X^o_{\oI,\foD}$.

We now replace $P$ by $P+E$ and $J$ by $\fom_{P+E}$ in what follows. We now
summarize our current situation with the following assumptions:

\begin{assumptions}
\label{GSassumptions2}
\begin{itemize}
\item There is a toric model
\[
p:(Y,D)\rightarrow (\oY,\oD)
\]
which blows up distinct points $x_{ij}$ on $D_i$, $1\le j\le\ell_i$,
with exceptional
divisors $E_{ij}$.
\item $\eta:\NE(Y)\rightarrow P$ is an inclusion.
Via Example \ref{standardphi}, this gives the function $\varphi$.
$E=P^{\times}=
P\cap (p^*H)^{\perp}$ is generated by the classes of exceptional curves
of $p$. Let $G=P\setminus E=\fom_P$.
\item $\foD$ is a scattering diagram for the data $P, \varphi$ and $G$.
Furthermore, for each ray $\rho_i\in\Sigma$, the unique outgoing ray
$(\rho_i,f_{\rho_i})\in\foD$ satisfies
\[
f_{\rho_i}=g_{\rho_i}\prod_{j=1}^{\ell_i} (1+b_{ij}X_i^{-1})
\]
with $g_{\rho_i}\equiv 1\mod G$ and $b_{ij}=z^{[E_{ij}]}$.
\end{itemize}
\end{assumptions}

We note we have shown that $\foD=\foD^{\can}$ achieves these assumptions.

\medskip

\emph{Step IV. Pushing the singularities to infinity.}
We work with Assumptions \ref{GSassumptions2}.
Consider the tropicalisation $(\oB,\oSigma)$ of
$(\oY,\oD)$. By Example \ref{basictoricexample},
$\oB$ in fact has no singularity at the origin, and is affine isomorphic
to $M_{\RR}=\RR^2$ (with $M=\ZZ^2$), while $\oSigma$
is precisely the fan for $\oY$.
In order to distinguish between constructions
on $(Y,D)$ and $(\oY,\oD)$, we decorate all existing notation with bars.
For example, if $\tau\in\Sigma$, denote the corresponding
cone of $\oSigma$ by $\bar\tau$. Let $\bar\varphi$ be the multivalued $P^{\gp}_{\bR}$-valued function on $\oB$ such that
\begin{equation}
\label{kappabardef}
\kappa_{\bar\rho,\bar\varphi}=p^*[\bar D_{\bar\rho}].
\end{equation}
Note that by Lemma \ref{varphitoric}, we can assume
$\bar\varphi$ is in fact a single-valued function on $M_{\RR}$.
This single-valuedness will be important to be able to apply
the method of Kontsevich and Soibelman, Theorem \ref{KSlemma}.

We now have sheaves $\oshP$ on $\oB_0$ and $\shP$ on $B_0$,
induced by the two functions $\bar\varphi$ and $\varphi$ respectively.

Note that since $\bar\varphi$ is single-valued and $\oB$ has no
singularities, $\oshP$ is the constant sheaf with fibre $P^{\gp}\oplus
M$.

There is a canonical piecewise linear map
\[
\nu:B\rightarrow \oB
\]
which restricts to an integral affine isomorphism $\nu|_{\sigma}:\sigma\rightarrow
\bar\sigma$, where $\sigma\in\Sigma_{\max}$ and $\bar\sigma\in
\oSigma_{\max}$ is the corresponding cell of $\oSigma$. Note this
map identifies $B(\ZZ)$ with $\oB(\ZZ)$.

For each maximal cone $\sigma\in\Sigma_{\max}$, the derivative $\nu_*$ of $\nu$
induces a canonical identification of $\Lambda_{B,\sigma}$ with
$\Lambda_{\oB,\osigma}$.
This then gives
an induced isomorphism of monoids:
\begin{equation} \label{pl}
\tilde\nu_{\sigma}:P_{\varphi_{\sigma}} \to P_{\bar
\varphi_{\bar\sigma}}
\end{equation}
given by
\[
\varphi_{\sigma}(m) + p \mapsto
\bar\varphi_{\bar\sigma}(\nu_*(m))
+p,
\]
for $p \in P$ and $m \in \Lambda_\sigma$. This identifies the $\kk[P]$-algebras
$\kk[P_{\varphi_{\sigma}}]$ and $\kk[P_{\bar\varphi_{\bar\sigma}}]$,
and the completions
$\widehat{\kk[P_{\varphi_{\sigma}}]}$ and
$\widehat{\kk[P_{\bar\varphi_{\bar\sigma}}]}$.

Because the map $\nu$ is only piecewise linear around rays
$\rho\in\Sigma$, there is only a piecewise linear identification
of $P_{\varphi_{\rho}}$ with $P_{\bar\varphi_{\bar\rho}}$ and hence
no identification of the corresponding rings. However, $\nu_*$
is still defined on the tangent space to $\rho$, and
there is an identification
\[
\tilde\nu_{\rho}:\{\varphi_{\rho}(m)+p\,|\,\hbox{$m$ is tangent to $\rho$,
$p\in P$}\}
\rightarrow
\{\bar\varphi_{\bar\rho}(m)+p\,|\,\hbox{$m$ is tangent to $\bar\rho$,
$p\in P$}\}
\]
given by
\[
\varphi_{\rho}(m)+p \mapsto \bar\varphi_{\bar\rho}(\nu_*(m)) + p.
\]

We now explain the Kontsevich-Soibelman lemma. This
has to do with scattering diagrams on the smooth affine surface
$M_{\bR} = \bR^2$ (such as $\oB = B_{(\oY,\oD)}$).
For this general discussion, we fix the data of a monoid $Q$ which comes
along with a map $r:Q\rightarrow M$. Let $\fom_Q=Q\setminus Q^{\times}$,
and let $\widehat{\kk[Q]}$ denote the completion of $\kk[Q]$ with
respect to the monomial ideal $\fom_Q$. (In our application we take $Q=P_{\bar\varphi}$ as defined in \eqref{Mumford cone}.)

We can then consider a variant of the notion of scattering diagram:

\begin{definition}
\label{scatdiagQdef}
We define a \emph{scattering diagram for the pair $Q$,
$r:Q\rightarrow M$}. This is a set
\[
\foD=\{(\fod,f_{\fod})\}
\]
where
\begin{itemize}
\item
$\fod\subset M_{\RR}$ is given by
\[
\fod=-\RR_{\ge 0} m_0
\]
if $\fod$ is an \emph{outgoing ray} and
\[
\fod=\RR_{\ge 0} m_0
\]
if $\fod$ is an \emph{incoming ray}, for some
$m_0\in M\setminus \{0\}$.
\item $f_{\fod}\in \widehat{\kk[Q]}$.
\item
$f_{\fod}\equiv 1\mod \fom_Q$.
\item $f_{\fod}=1+\sum_p c_pz^p$
for $c_p\in \kk$, $r(p)\not=0$ a positive multiple of $m_0$.
\item For any $k>0$, there are only a finite number of rays
$(\fod,f_{\fod})\in\foD$ with $f_{\fod}\not\equiv 1\mod \fom_Q^k$.
\end{itemize}
\end{definition}

\begin{definition} \label{pop}
Given a loop $\gamma$ in $M_{\RR}$ around the origin, we define
the \emph{path ordered product}
\[
\theta_{\gamma,\foD}:\widehat{\kk[Q]}\rightarrow \widehat{\kk[Q]}
\]
as follows.
For each $k>0$, let $\foD[k]\subset\foD$ be the subset of
rays $(\fod,f_{\fod})\in\foD$ with $f_{\fod}\not\equiv 1\mod \fom_Q^k$.
This set is finite. For $\fod\in\foD[k]$ with $\gamma(t_0)\in \fod$, define
\[
\theta_{\gamma,\fod}^k:\kk[Q]/\fom_Q^k\rightarrow\kk[Q]/\fom_Q^k
\]
by
\[
\theta^k_{\gamma,\fod}(z^q)=z^qf_{\fod}^{\langle n_{\fod},r(q)\rangle}
\]
for $n_{\fod}\in M^*$ primitive satisfying, with $m$ a non-zero
tangent vector of $\fod$,
\[
\langle n_{\fod},m\rangle=0,\quad \langle n_{\fod},\gamma'(t_0)\rangle<0.
\]
Then, if $\gamma$ crosses the rays $\fod_1,\ldots,\fod_n$ in order
with $\foD[k]=\{\fod_1,\ldots,\fod_n\}$, we can define
\[
\theta_{\gamma,\foD}^k=\theta_{\gamma,\fod_n}^k\circ\cdots\circ
\theta_{\gamma,\fod_1}^k.
\]
We then define $\theta_{\gamma,\foD}$ by taking the limit as $k\rightarrow
\infty$.
\end{definition}

The following is a slight generalisation of a result of
Kontsevich and Soibelman which appeared in \cite{KS06}.

\begin{theorem}
\label{KSlemma}
Let $\foD$ be a scattering diagram in the sense of
Definition \ref{scatdiagQdef}. Then there is another scattering
diagram $\Scatter(\foD)$ containing $\foD$ such that $\Scatter(\foD)
\setminus\foD$ consists only of outgoing rays and $\theta_{\gamma,\Scatter(\foD)}$
is the identity for $\gamma$ a loop around the origin.
\end{theorem}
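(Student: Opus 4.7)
\medskip

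The plan is to construct $\Scatter(\foD)$ order by order with respect to the $\fom_Q$-adic filtration on $\widehat{\kk[Q]}$, and then pass to the limit. The natural Lie-theoretic setup is the following: let $\fog\subset \Der_{\kk}(\widehat{\kk[Q]})$ denote the Lie subalgebra topologically generated by elements of the form $z^q\partial_n$, where $q\in Q\setminus Q^\times$, $n\in M^*$, and $\langle n,r(q)\rangle=0$; here $\partial_n(z^{q'})=\langle n,r(q')\rangle z^{q'}$. A direct computation gives
\[
[z^q\partial_n,\, z^{q'}\partial_{n'}]=z^{q+q'}\bigl(\langle n,r(q')\rangle\,\partial_{n'}-\langle n',r(q)\rangle\,\partial_n\bigr),
\]
and one checks that the right-hand side lies in $\fog$ (the resulting covector annihilates $r(q+q')$). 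Each wall-crossing automorphism $\theta_{\gamma,\fod}$ of Definition~\ref{pop} is $\exp(D_\fod)$ for a unique $D_\fod\in\fog$ whose monomial support has $r$-image in $\bR_{>0}(\pm m_0)$, where $\pm m_0$ is the direction of $\fod$; conversely, any element of $\fog$ whose support has $r$-image in a single ray is of this form, i.e.\ corresponds uniquely to a ray with attached function. The filtration $\fog_{\geq k}:=\fog\cap \fom_Q^k\widehat{\kk[Q]}$ is descending and complete, and the finiteness clause of Definition~\ref{scatdiagQdef} means exactly that, modulo any $\fog_{\geq k}$, $\foD$ contributes only finitely many terms.

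The inductive claim is: for each $k\geq 1$ there is a scattering diagram $\foD_k\supseteq \foD$ such that $\foD_k\setminus\foD$ consists only of outgoing rays, only finitely many rays of $\foD_k$ are non-trivial modulo $\fom_Q^{k+1}$, and for any small counterclockwise loop $\gamma$ about the origin missing all rays of $\foD_k$ transversely, $\log\theta_{\gamma,\foD_k}\in\fog_{\geq k+1}$. The base case $k=0$ is $\foD_0:=\foD$. For the inductive step, fix $\foD_k$ and set $\Phi:=\log\theta_{\gamma,\foD_k}\in\fog_{\geq k+1}$; it reduces to a well-defined element $\bar\Phi$ in the finite-dimensional abelian quotient $\fog_{\geq k+1}/\fog_{\geq k+2}$ (independence of $\gamma$ modulo $\fog_{\geq k+2}$ follows because any two such loops differ by a homotopy avoiding the origin, and in the abelian quotient the order of crossings becomes irrelevant). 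Decompose $\bar\Phi=\sum_{m_0} \bar\Phi_{m_0}$ by grouping monomials according to the primitive vector $m_0\in M$ with $r(q)\in \bZ_{>0}\cdot m_0$. Each summand $\bar\Phi_{m_0}$ lies in $\fog$ and is supported in a single ray direction, hence lifts uniquely to a derivation of the form $D_{\fod_{m_0}}$ for an outgoing ray $\fod_{m_0}=-\bR_{\geq 0}m_0$ with some $f_{\fod_{m_0}}\equiv 1\pmod{\fom_Q}$. Define $\foD_{k+1}$ by adjoining to $\foD_k$ the ray $\bigl(\fod_{m_0},\,f_{\fod_{m_0}}^{-1}\bigr)$ for each $m_0$ in the (finite) support of $\bar\Phi$; the inverse sign cancels $\bar\Phi$. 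Commutativity within each direction means these new rays may be inserted anywhere in the cyclic product without altering the result to order $k+1$; so $\log\theta_{\gamma,\foD_{k+1}}\in\fog_{\geq k+2}$, as required.

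The limit diagram $\Scatter(\foD):=\bigcup_k \foD_k$ is then a scattering diagram: the required finiteness modulo each $\fom_Q^N$ holds because only finitely many $(\fod,f_\fod)$ contribute at each filtration level, and $\theta_{\gamma,\Scatter(\foD)}\equiv \theta_{\gamma,\foD_k}\equiv\id\pmod{\fom_Q^{k+1}}$ for every $k$, hence equals the identity. By construction every new ray is outgoing.

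The main obstacle is verifying the Lie-algebraic closure: that the iterated wall-crossing obstruction $\bar\Phi$ really lies in the subspace of $\fog$ consisting of derivations $\sum c_{q,n}z^q\partial_n$ with $\langle n,r(q)\rangle=0$, so that it can be reinterpreted as a formal sum of wall-crossing generators. This is essentially the content of the bracket computation above, but one must carry it through carefully together with the check that, at the grading level being cancelled, contributions from rays whose directions are parallel do commute (so that their $\bar\Phi_{m_0}$ assembles cleanly into a single $f_{\fod_{m_0}}$), while non-parallel directions contribute to distinct abelian summands of $\fog_{\geq k+1}/\fog_{\geq k+2}$. Granting this, the induction and passage to the limit are routine.
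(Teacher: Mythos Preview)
Your argument is correct and is precisely the standard order-by-order construction that the paper defers to \cite{GPS09}, Theorem~1.4 (and ultimately to Kontsevich--Soibelman). The paper itself gives no proof beyond this citation, so there is nothing further to compare.

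One small imprecision: you call $\fog_{\geq k+1}/\fog_{\geq k+2}$ ``finite-dimensional'', but Definition~\ref{scatdiagQdef} does not assume $Q$ is finitely generated, so this need not hold in general. However, your argument only uses that $\bar\Phi$ itself has finite monomial support, which follows because it is the log of a finite product (by the finiteness clause in the definition of $\foD_k$); this is what actually makes the inductive step go through. Also, the precise sign in ``adjoin $(\fod_{m_0},f_{\fod_{m_0}}^{-1})$'' depends on the orientation convention for $n_\fod$ relative to the counterclockwise loop; the point is simply that a unique choice of $f$ in that direction cancels $\bar\Phi_{m_0}$, and you have this right in substance.
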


For a proof of this theorem essentially as stated here, see \cite{GPS09},
Theorem 1.4. The result is unique if $\Scatter(\foD)\setminus\foD$
has at most one ray in each possible direction; we shall assume
$\Scatter(\foD)$ has been chosen to have this property. This can
always be done.

We apply this in the following situation.
We take $Q$ to be the monoid $P_{\bar\varphi}$
which yields the Mumford degeneration
associated to the data $(\oB,\oSigma)$, $\bar\varphi$ (recalling
$\oB=M_{\RR}$), defined by
\[
P_{\bar\varphi}=\{(m,\bar\varphi(m)+p)\,|\, m\in M,p\in P\}\subset M\times
P^{\gp}.
\]
This comes with a canonical map $r:P_{\bar\varphi}\rightarrow M$
by projection.

\begin{definition}
\label{nufoDdef}
Suppose we are in the situation of Assumptions \ref{GSassumptions2}.
We define a scattering diagram $\nu(\foD)$ on $\oB$
%MARK I CHANGED THE WORDING I THOUGHT IT WAS TOO CONFUSING
%TO SAY WE DEFINE ONE IN THE SENSE OF SCATDIAGQDEF AND THEN
% A FEW LINES LATER SAY IT IS NOT A DIAGRAM IN THAT SENSE -Sean
as follows. For every ray $(\fod,f_{\fod})
\in\foD$ not equal to $(\rho_i,f_{\rho_i})$ for some $i$,
$\nu(\foD)$ contains the
ray $(\nu(\fod),\tilde\nu_{\tau_{\fod}}(f_{\fod}))$, and for each
ray $(\rho_i,f_{\rho_i})$, $\nu(\foD)$ contains two rays,
$(\bar\rho_i,\tilde\nu_{\tau_{\fod}}(g_{\rho_i}))$ and
$(\bar\rho_i,\prod_{j=1}^{\ell_i}(1+b_{ij}^{-1}\bar
X_i))$.

We note that $\nu(\foD)$ may not actually be a scattering diagram
in the sense of Definition \ref{scatdiagQdef}, as it is possible
that $f_{\fod}\not\in \widehat{\kk[P_{\bar\varphi}]}$: if
$p\in P_{\varphi_{\tau}}$, then $\tilde\nu_{\tau}(p)\in P_{\ovarphi_{\tau}}$ but
need not lie in $P_{\ovarphi}$.
\end{definition}

In the case of $\foD=\foD^{\can}$, we can use the Kontsevich-Soibelman
lemma to describe $\nu(\foD^{\can})$. This will both show that
$\nu(\foD^{\can})$ is a scattering diagram in the sense of
Definition \ref{scatdiagQdef} and that it satisfies an important
additional property, namely
the condition that $\theta_{\gamma,\nu(\foD^{\can})}$ is equal to
the identity. This will allow us to prove consistency. Let
\begin{equation} \label{canonicalinputs}
\ofoD_0=\{(\bar\rho_i,\prod_{j=1}^{\ell_i}(1+b_{ij}^{-1}\oX_i))
\,|\, 1\le i\le n\}.
\end{equation}
Let $\fom_{\bar\varphi}=P_{\bar\varphi}\setminus P_{\bar\varphi}^{\times}$
as usual. Then by the strict convexity of $\bar\varphi$, $\oX_i
\in \fom_{\bar\varphi}$ so that $\ofoD_0$ is a scattering
diagram for the pair $P_{\bar\varphi}$, $r$ in the sense of Definition
\ref{scatdiagQdef}. Now define
\[
\ofoD := \Scatter(\ofoD_0)
\]
where we require $\ofoD \setminus \ofoD_0$ to have only one
outgoing ray in each direction (and no incoming rays).

The following will be Step V, which we defer until \S\ref{connectionwithGPS}.

\begin{theorem} \label{relatingcanandscatter}
$\ofoD=\nu(\foD^{\can})$. In particular, $\nu(\foD^{\can})$
is a scattering diagram in the sense of Definition \ref{scatdiagQdef}
and $\theta_{\gamma,\ofoD}
\equiv 1$ for a loop $\gamma$ around the origin.
\end{theorem}

\begin{example}
\label{M05stillmore}
Continuing with Example \ref{M05more}, note that the pair $(Y,D)$
can be obtained from the toric pair $(\oY,\oD)$ defined by the fan $\bar\Sigma$ with rays
generated by $(1,0)$, $(1,1)$, $(0,1)$, $(-1,0)$ and $(0,-1)$, corresponding
to $\oD_1,\ldots,\oD_5$, by blowing up one point on each of $\oD_4$ and $\oD_5$.
This description
determines $\ofoD_0$ and hence $\ofoD$. One can check
this description agrees with that given in Example \ref{M05more}
for $\foD^{\can}$, see e.g.\ \cite{GPS09}, Example 1.6 for a similar
computation.
\end{example}

Returning to the situation of Assumptions \ref{GSassumptions2}, suppose
in addition that
$\nu(\foD)$ is a scattering diagram in the sense of Definition
\ref{scatdiagQdef}. (For example, by Theorem \ref{relatingcanandscatter},
$\foD=\foD^{\can}$ satisfies these assumptions.)
For $I\subset P$ an ideal with $\sqrt{I}=J$, we now have
deformations $X^o_{I,\foD}$ and $\oX^o_{I,{\nu(\foD)}}$.
The latter scheme is glued from open sets
\[
\overline{U}_{\bar\rho,I}=\Spec \overline{R}_{\bar\rho,I}
\]
along open sets identified with $\Spec \overline{R}_{\bar\sigma,I}$.
Here we are decorating the rings coming
from the data on $\oB$ with bars as before, while we maintain
the notation $R_{\rho,I}$, etc., for those rings coming from the
data on $B$.

\begin{lemma}
\label{movingsingularitylemma}
Given Assumptions \ref{GSassumptions2}, assume also that
$\nu(\foD)$ is a scattering diagram
in the sense of Definition \ref{scatdiagQdef}. Then there are isomorphisms
\[
p_i:R_{\rho_i,I}\rightarrow \oR_{\bar\rho_i,I}
\]
and
\[
p_{i-1,i}:
R_{\sigma_{i-1,i},I}
\rightarrow
\oR_{\bar\sigma_{i-1,i},I}
\]
for all $i$ such that the diagrams
\[
\xymatrix@C=30pt
{
R_{\rho_i,I}
\ar[r]^{\psi_{\rho_i,-}}\ar[d]_{p_i}&
R_{\sigma_{i-1,i},I}
\ar[d]^{p_{i-1,i}}\\
\oR_{\bar\rho_i,I}
\ar[r]_{\psi_{\bar\rho_i,-}}&
\oR_{\bar\sigma_{i-1,i},I}
}\quad\quad
\xymatrix@C=30pt
{
R_{\rho_i,I}
\ar[r]^{\psi_{\rho_i,+}}\ar[d]_{p_i}&
R_{\sigma_{i,i+1},I}
\ar[d]^{p_{i,i+1}}\\
\oR_{\bar\rho_i,I}
\ar[r]_{\psi_{\bar\rho_i,+}}&
\oR_{\bar\sigma_{i,i+1},I}
}
\]
and
\[
\xymatrix@C=30pt
{
R_{\sigma_{i-1,i},I}
\ar[r]^{\theta_{\gamma,\foD}}\ar[d]_{p_{i-1,i}}&
R_{\sigma_{i-1,i},I}
\ar[d]^{p_{i-1,i}}\\
\oR_{\bar\sigma_{i-1,i},I}
\ar[r]_{\theta_{\bar\gamma,\nu(\foD)}}&
\oR_{\bar\sigma_{i-1,i},I}
}
\]
are commutative, where $\gamma$ is any path in $\sigma_{i-1,i}$
for which $\theta_{\gamma,\foD}$ is defined, and $\bar\gamma=
\nu\circ\gamma$.

Consequently, the maps $p_i$ and $p_{i-1,i}$ induce an isomorphism
\[
p:
X^o_{I,\foD}
\rightarrow
\oX^o_{I,\nu(\foD)}
\]
over $\Spec\kk[P]/I$.
\end{lemma}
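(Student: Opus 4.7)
The plan is to build the maps $p_{i-1,i}$ and $p_i$ directly, verify commutativity of the three squares by unwinding the local models, and then invoke the gluing construction of \S\ref{scatdiagsection} to assemble the global isomorphism.

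For the maximal cones, I would simply set $p_{i-1,i}$ to be the ring isomorphism induced by the monoid isomorphism $\tilde\nu_{\sigma_{i-1,i}}\colon P_{\varphi_{\sigma_{i-1,i}}}\xrightarrow{\sim} P_{\bar\varphi_{\bar\sigma_{i-1,i}}}$ of \eqref{pl}. Since the map $\nu_*$ is linear on $\sigma_{i-1,i}$, this is immediate and $p_{i-1,i}$ is clearly an isomorphism of $\kk[P]/I$-algebras.

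For the edges, I would use the explicit description of both $R_{\rho_i,I}$ and $\bar R_{\bar\rho_i,I}$ provided by Lemma \ref{localmodelsingmanifold}. On the unbarred side one has a single equation
\[
X_+X_-=z^{p_{\rho_i,\varphi}}X^{-D_i^2}\,g_{\rho_i}\prod_{j=1}^{\ell_i}(1+b_{ij}X^{-1}),
\]
while on the barred side the local equation has bending parameter $p_{\bar\rho_i,\bar\varphi}=p_{\rho_i,\varphi}+\sum_j\eta([E_{ij}])$, self-intersection $\bar D_i^2=D_i^2+\ell_i$, and scattering function $\tilde\nu(g_{\rho_i})\prod_j(1+b_{ij}^{-1}\bar X)$. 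A short direct calculation shows that the identities $D_i^2+\ell_i=\bar D_i^2$ and $\prod_j b_{ij}=z^{\sum_j\eta([E_{ij}])}$ precisely absorb the sign-inversion discrepancy, so that the two local equations become identical after sending $X\mapsto\bar X$, $X_\pm\mapsto\bar X_\pm$ and applying $\tilde\nu_{\rho_i}$ to coefficient monomials tangent to $\rho_i$. This gives the isomorphism $p_i$. The two squares relating $p_i$ to $p_{i-1,i}$ and $p_{i,i+1}$ then commute by construction, since the localization maps $R_{\rho_i,I}\to R_{\sigma_{\pm},\sigma_{\pm},I}$ are described in Lemma~\ref{localmodelsingmanifold} in terms of $X_\pm$ and $X$, and these are matched on the nose by the above correspondence.

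The square involving path-ordered products is the heart of the argument. For a ray $(\fod,f_\fod)\in\foD$ whose support lies in the interior of $\sigma_{i-1,i}$, the definition of $\nu(\foD)$ gives $(\nu(\fod),\tilde\nu_{\sigma_{i-1,i}}(f_\fod))\in\nu(\foD)$, and the primitive normal $n_\fod$ corresponds under $\nu_*$ to $n_{\nu(\fod)}$. Hence the elementary wall-crossing $\theta_{\gamma,\fod}$ intertwines with $\theta_{\bar\gamma,\nu(\fod)}$ via $\tilde\nu_{\sigma_{i-1,i}}$ directly from the formula $z^q\mapsto z^qf_\fod^{\langle n_\fod,r(q)\rangle}$. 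Composing over all rays met by $\gamma$ (in the same order as $\bar\gamma$ meets their images) gives commutativity of the third square. I expect this bookkeeping step to be the one requiring the most care, primarily to check that the monomial terms of each $f_\fod$ actually lie in the image of $\tilde\nu_{\sigma_{i-1,i}}$, so that $\tilde\nu_{\sigma_{i-1,i}}(f_\fod)$ makes sense as an element of $\widehat{\kk[P_{\bar\varphi_{\bar\sigma_{i-1,i}}}]}$; this is where the hypothesis that $\nu(\foD)$ is a scattering diagram in the sense of Definition~\ref{scatdiagQdef} enters in an essential way.

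Finally, the global map $p\colon X^o_{I,\foD}\to\bar X^o_{I,\nu(\foD)}$ is obtained by applying Construction \ref{gluingconstruction} on both sides. The first two commutative squares show that the local isomorphisms $\Spec p_i$ and $\Spec p_{i-1,i}$ are compatible with the open immersions $U_{\rho_i,\sigma_{i\pm 1,i},I}\hookrightarrow U_{\rho_i,I}$, and the third square shows they are compatible with the twisted gluings used to build $X^o_{I,\foD}$ (respectively $\bar X^o_{I,\nu(\foD)}$) along $\sigma_{i-1,i}$. Therefore the $\Spec p_i$ patch to the desired global isomorphism over $\Spec\kk[P]/I$.
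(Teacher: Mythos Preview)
Your proposal is correct and follows essentially the same approach as the paper's proof: both define $p_{i-1,i}$ via $\tilde\nu_{\sigma_{i-1,i}}$, construct $p_i$ as $X_j\mapsto \bar X_j$ on the explicit local models of Lemma~\ref{localmodelsingmanifold}, and verify the equations match using $D_i^2=\bar D_i^2-\ell_i$ and $[D_i]=p^*[\bar D_i]-\sum_j[E_{ij}]$. The paper dismisses the three commutativity checks as ``straightforward'', whereas you supply more detail on the third square; otherwise the arguments are the same.
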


\begin{proof}
Recall that
\begin{align}
R_{\rho,I}= {} &{R_I[X_{i-1},X_i^{\pm},X_{i+1}]
\over (X_{i-1}X_{i+1}-z^{\eta([D_i])}X_i^{-D_i^2}g_{\rho_i}
\prod_{j=1}^{\ell_i}(1+b_{ij}X_i^{-1}))},\\
\label{oRrhoeq}
\oR_{\rho,I}={} &
{R_I[\oX_{i-1},\oX_i^{\pm},\oX_{i+1}]
\over (\oX_{i-1}\oX_{i+1}-z^{\eta(p^*[\oD_i])}\oX_i^{-\oD_i^2}
\bar g_{\rho_i}\prod_{j=1}^{\ell_i}(1+b_{ij}^{-1}\oX_i))}.
\end{align}

We simply define $p_i$ to be the identity on $R_I$ and
$p_i(X_j)=\oX_j$. This makes
sense since $D_i^2=\bar D_i^2-\ell_i$ and $[D_i]=p^*[\bar D_i]-
\sum_{j=1}^{\ell_i} E_{ij}$, so that
\begin{align*}
p_i\left(
z^{\eta([D_i])}X_i^{-D_i^2}\prod_{j=1}^{\ell_i}(1+b_{ij}X_i^{-1})\right)
= {} &
z^{\eta(p^*[\bar D_i])}\oX_i^{-\bar D_i^2}\left (
\prod_{j=1}^{\ell_i} b_{ij}^{-1}\oX_i\right) \left(\prod_{j=1}^{\ell_i}
(1+b_{ij}\oX_i^{-1})\right)\\
= {} & z^{\eta(p^*[\bar D_i])}\oX_i^{-\bar D_i^2} \prod_{j=1}^{\ell_i}
(1+b_{ij}^{-1}\oX_i).
\end{align*}
The map $p_{i-1,i}$ is induced by $\tilde\nu_{\sigma_{i-1,i}}$ defined
in \eqref{pl}. It is then
straightforward to check the commutativity of the three diagrams.
\end{proof}

\begin{lemma} \label{brokenlinebijection}
Given Assumptions \ref{GSassumptions2},
suppose $\nu(\foD)$ is a scattering diagram
in the sense of Definition \ref{scatdiagQdef}.
For $Q\in\sigma_{i-1,i}$, we distinguish between
\[
\Lift_Q(q)\in R_{\sigma_{i-1,i},I}
\]
for the lift of $q\in B_0(\ZZ)$ and
\[
\Lift_{\nu(Q)}(\nu(q))\in \bar R_{\bar\sigma_{i-1,i},I}
\]
the lift of $\nu(q)$. Then
\begin{enumerate}
\item $p_{i-1,i}(\Lift_Q(q))=\Lift_{\nu(Q)}(\nu(q))$.
\item Under the natural identifications $(P_{\bar\varphi_{\tau}})^{\gp}
=(P_{\bar\varphi})^{\gp}$, for $\tau\in\oSigma\setminus\{0\}$,
$P_{\bar\varphi}\subset P_{\bar\varphi_{\tau}}$, and for any broken line
$\gamma$ for $q$, $\Mono(\gamma)\in\kk[P_{\bar\varphi}]$.
\item $\nu$ induces a bijection between broken lines:
If $\gamma:(-\infty,0]\rightarrow B_0$
is a broken line in $B_0$, then $\nu\circ\gamma$ is a broken line
in $\oB_0$, and conversely, if $\bar\gamma:(-\infty,0]\rightarrow\oshP$
is a broken line in $\oB_0$, then $\nu^{-1}\circ\bar\gamma$
is a broken line in $B_0$.
\end{enumerate}
\end{lemma}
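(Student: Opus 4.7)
The three items are tightly coupled; our plan is to prove (3) first, with (2) emerging from the same induction and (1) falling out as a formal consequence. The main technical work is in (3). For (3), given a broken line $\gamma:(-\infty,0]\to B_0$ for $q$ with endpoint $Q\in\sigma_{i-1,i}$, define $\bar\gamma:=\nu\circ\gamma$, and on each maximal domain of linearity $L$ of $\bar\gamma$ with $\bar\gamma(L)\subset\bar\sigma\in\oSigma_{\max}$, set $m_L(\bar\gamma):=\tilde\nu_\sigma(m_L(\gamma))$. Because $\nu$ is an integral affine isomorphism on the interior of each $\sigma\in\Sigma_{\max}$ but merely piecewise linear across each $\rho_i$, the path $\bar\gamma$ may acquire extra corners wherever $\gamma$ crosses $\rho_i$ without itself bending: these ``forced breaks'' must still be verified to satisfy the broken-line axiom. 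The initial unbounded condition of Definition~\ref{brokenlinedef}(1) and the tangent condition (2) are immediate from $\tilde\nu_\sigma(z^{\varphi_\sigma(q)})=z^{\bar\varphi_{\bar\sigma}(\nu(q))}$ and $r\circ\tilde\nu_\sigma=\nu_\ast|_\sigma\circ r$. The bending condition splits into two cases. \emph{Case A}: a bend of $\gamma$ at $\fod\in\foD$ lying in the interior of some $\sigma\in\Sigma_{\max}$; since $\tilde\nu_\sigma:P_{\varphi_\sigma}\to P_{\bar\varphi_{\bar\sigma}}$ is a monoid isomorphism sending $f_\fod$ to the attached function at $\nu(\fod)\in\nu(\foD)$, the prescription transfers term-by-term. \emph{Case B}: a bend or forced break at $\rho_i$; this is the heart of the argument.

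For Case B, set $k:=\langle n_{\rho_i},r(q_L)\rangle>0$ and expand $q_L\cdot f_{\rho_i}^k=q_L\cdot g_{\rho_i}^k\prod_j(1+b_{ij}X_i^{-1})^k$ into monomials; the next monomial $q_{L'}$ is either such a term (a true bend) or equals $q_L$ itself (a forced break, corresponding to the constant term). Two identities must mesh. Using $[D_i]=p^\ast[\bar D_i]-\sum_j[E_{ij}]$, $D_i^2=\bar D_i^2-\ell_i$, and the two fan relations $v_{i-1}+D_i^2 v_i+v_{i+1}=0$, $\bar v_{i-1}+\bar D_i^2\bar v_i+\bar v_{i+1}=0$, one computes $\nu_\ast|_{\sigma_-}(m_+)=\bar m_++\ell_i\bar v_i$ and hence, for $q\in P_{\varphi_{\rho_i}}$,
\[
\tilde\nu_{\sigma_+}(z^q)=\tilde\nu_{\sigma_-}(z^q)\cdot\bigl(\oX_i^{\ell_i}\prod_j b_{ij}^{-1}\bigr)^{\langle n_{\rho_i},r(q)\rangle}.
\]
On the other hand, factoring $\oX_i b_{ij}^{-1}$ out of each factor of $\prod_j(1+b_{ij}^{-1}\oX_i)$ gives the algebraic identity
\[
\prod_j(1+b_{ij}^{-1}\oX_i)=\oX_i^{\ell_i}\prod_j b_{ij}^{-1}\cdot\prod_j(1+b_{ij}\oX_i^{-1}).
\]
Combining these with $\tilde\nu_{\sigma_+}(f_{\rho_i})=\tilde\nu(g_{\rho_i})\prod_j(1+b_{ij}\oX_i^{-1})$, one finds that $\tilde\nu_{\sigma_+}(q_L\cdot z^\alpha)$, as $z^\alpha$ ranges over the terms of $f_{\rho_i}^k$, ranges bijectively over the terms of
\[
\tilde\nu_{\sigma_-}(q_L)\cdot\bigl(\tilde\nu(g_{\rho_i})\prod_j(1+b_{ij}^{-1}\oX_i)\bigr)^k,
\]
which is precisely the bending prescription for $\bar\gamma$ at $\bar\rho_i$ using the two rays $(\bar\rho_i,\tilde\nu(g_{\rho_i}))$ and $(\bar\rho_i,\prod_j(1+b_{ij}^{-1}\oX_i))$ of $\nu(\foD)$. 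Forced breaks are the sub-case $z^\alpha=1$, which maps to the leading-monomial term $(\oX_i^{\ell_i}\prod_j b_{ij}^{-1})^k$ on the right. Running the entire argument in reverse recovers $\gamma$ from $\bar\gamma$, supplying the inverse bijection and proving (3).

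Part (2) follows by induction on domains of linearity of $\bar\gamma$: the initial monomial is $z^{\bar\varphi_{\bar\sigma}(\nu(q))}\in\kk[P_{\bar\varphi}]$, and each subsequent monomial is obtained by multiplying by a term of a function in $\widehat{\kk[P_{\bar\varphi}]}$ (by the standing assumption that $\nu(\foD)$ is a scattering diagram in the sense of Definition~\ref{scatdiagQdef}). Part (1) is then formal: by the bijection of (3), the identification $p_{i-1,i}=\tilde\nu_{\sigma_{i-1,i}}$ of Lemma~\ref{movingsingularitylemma}, and the finiteness given by Lemma~\ref{finitenesslemma},
\[
p_{i-1,i}(\Lift_Q(q))=\sum_\gamma\tilde\nu_{\sigma_{i-1,i}}(\Mono(\gamma))=\sum_{\bar\gamma}\Mono(\bar\gamma)=\Lift_{\nu(Q)}(\nu(q)).
\]
The chief obstacle is the Case B calculation: the algebraic identity for $\prod_j(1+b_{ij}^{\pm 1}\oX_i^{\mp 1})$ must \emph{exactly} absorb the piecewise-linear discrepancy between $\tilde\nu_{\sigma_-}$ and $\tilde\nu_{\sigma_+}$ on $P_{\varphi_{\rho_i}}$. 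This is the one place where both the specific form of $f_{\rho_i}$ required by Definition~\ref{nufoDdef} and the toric-model adjunction data are essential, and it is precisely what renders ``moving the worms to infinity'' invisible at the level of broken lines.
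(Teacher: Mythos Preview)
Your proof is correct and follows essentially the same approach as the paper: reduce (1) to (3), prove (3) by analyzing what happens at a single ray $\rho_i$, and derive (2) from the standing hypothesis that $\nu(\foD)$ lives in $\widehat{\kk[P_{\bar\varphi}]}$. The only difference is packaging: the paper encodes Case~B as the single identity
\[
p_{i,i+1}(\theta_{\rho_i}(cz^q))=\bar\theta_{\bar\rho_i}(p_{i-1,i}(cz^q))
\]
and verifies it on the generators $X_{i-1},X_i$, whereas you unwind this into an explicit term-by-term bijection (making the ``forced break'' case visible). Both arguments hinge on exactly the same algebraic identity $\prod_j(1+b_{ij}^{-1}\oX_i)=\oX_i^{\ell_i}\prod_j b_{ij}^{-1}\cdot\prod_j(1+b_{ij}\oX_i^{-1})$ together with the adjunction data $[D_i]=p^*[\oD_i]-\sum_j[E_{ij}]$ and $D_i^2=\oD_i^2-\ell_i$.
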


\begin{proof}
(3) implies (1). For (3),
clearly it is enough to compare bending and attached monomials
of broken lines near a ray $\rho_i$.

Consider a broken line $\gamma$ in $B_0$ passing from $\sigma_{i-1,i}$ to
$\sigma_{i,i+1}$, and let $cz^q$ be the monomial attached to the
broken line before it crosses over $\rho_i$, so that $q\in
P_{\varphi_{\sigma_{i-1,i}}}$. Let $\theta_{\rho_i}$, $\bar\theta_{\bar\rho_i}$
be defined by
\begin{align*}
\theta_{\rho_i}(z^p) := {} & z^pf_{\rho_i}^{\langle n,r(p)\rangle}\\
\bar\theta_{\bar\rho_i}(z^p) := {} &
z^p\left(\bar g_{\rho_i}\prod_{j=1}^{\ell_i}(1+b_{ij}^{-1}\oX_i)
\right)^{\langle \bar n,\bar r(p)\rangle}
\end{align*}
where $(\bar\rho_i,\bar g_{\rho_i})\in\nu(\foD)$ is the outgoing
ray with support $\bar\rho_i$. Here $n, \bar n$ are primitive cotangent
vectors vanishing on tangent vectors to $\rho_i, \bar\rho_i$
and positive on $\sigma_{i-1,i}, \bar\sigma_{i-1,i}$ respectively.
Then we need to show that
\begin{equation}
\label{basicthingtoshow}
p_{i,i+1}(\theta_{\rho_i}(cz^q))=\bar\theta_{\bar\rho_i}(p_{i-1,i}(cz^q))
\end{equation}
to get the correspondence between broken lines.

Note that
\[
p_{i-1,i}(X_{i-1})=\bar X_{i-1},\quad p_{i,i-1}(X_i)=\bar X_i,
\quad p_{i,i+1}(X_i)=\bar X_i,
\]
but to compute $p_{i,i+1}(X_{i-1})$, we need to use the relation
(see Proposition \ref{ringfirstversion})
\[
X_{i-1}X_{i+1}=z^{\eta([D_i])}X_i^{-D_i^2}
\]
in $\kk[P_{\varphi_{\rho_i}}]$ to write
\[
X_{i-1}=z^{\eta([D_i])}X_i^{-D_i^2}X_{i+1}^{-1}.
\]
On the other hand, one has the relation
\[
\bar X_{i-1}\bar X_{i+1}=z^{\eta(p^*[\bar D_i])}\bar X_i^{-\bar D_i^2}
\]
in $\kk[P_{\bar\varphi_{\bar\rho_i}}]$, so
\begin{align*}
p_{i,i+1}(X_{i-1})= {} & z^{\eta([D_i]-p^*[\oD_i])}
\bar X_i^{- D_i^2+\bar D_i^2}\bar X_{i-1}\\
= {} & \bar X_i^{\ell_i}\bar X_{i-1}\prod_{j=1}^{\ell_i} b_{ij}^{-1}.
\end{align*}
Thus using Assumptions \ref{GSassumptions2} for the form of $f_{\rho_i}$, we
have
\begin{align*}
p_{i,i+1}(\theta_{\rho_i}(X_{i-1}))= {} &
p_{i,i+1}(X_{i-1}f_{\rho_i})\\
= {} & \bar X_i^{\ell_i}\bar X_{i-1} \left(\prod_{j=1}^{\ell_i}
b_{ij}^{-1}\right) \left(\prod_{j=1}^{\ell_i} (1+b_{ij}\bar X_{i}^{-1})\right)
\bar g_{\rho_i}\\
= {} & \bar X_{i-1}\bar g_{\rho_i}\prod_{j=1}^{\ell_i} (1+b_{ij}^{-1}
\bar X_i)\\
= {} & \bar\theta_{\bar\rho_i}(p_{i-1,i}(X_{i-1}))
\end{align*}
as desired. Also,
\[
p_{i,i+1}(\theta_{\rho_i}(X_i))=\bar X_i=\bar\theta_{\bar\rho_i}(p_{i-1,i}
(\bar X_i)).
\]
Thus \eqref{basicthingtoshow} holds. This shows (3).

For (2), the statement that $P_{\bar\varphi}\subset P_{\bar\varphi_{\tau}}$
is obvious. For $q\in\sigma\in\Sigma$, by definition
the monomial attached to the first domain of linearity of a broken
line for $q$ is $z^{\varphi_{\sigma}(q)}$, which
is identified under $\tilde\nu_{\sigma}$ with $z^{(\nu(q),\bar\varphi(\nu(q)))}
\in \kk[P_{\bar\varphi}]$. For any
$(\fod,f_{\fod})\in \nu(\foD)$, $f_{\fod}\in
\widehat{\kk[P_{\bar\varphi}]}$ by assumption,
and hence all monomials associated
to broken lines in $\oB_0$ lie in $\kk[P_{\bar\varphi}]$, hence (2).
\end{proof}

\begin{definition}
\label{newliftdef}
Let $\ofoD$ be a scattering diagram in the sense of Definition
\ref{scatdiagQdef}
for the pair $P, r:P\rightarrow M$ for some toric monoid $P$.
Let $I\subset P$ be an ideal with $\sqrt{I}=
\fom_{P}$. We define for $q\in \oB_0(\ZZ)$
and $Q\in\oB_0$,
\[
{\Lift}_Q(q)=\sum \Mono(\gamma)\in \kk[P]/I
\]
where the sum is over all broken lines $\gamma$ for $q$ with endpoint
$Q$ in $\oB_0$ with respect to the scattering diagram
$\ofoD$. One sees easily as in Lemma \ref{finitenesslemma}
that this is a finite sum.
\end{definition}

The last crucial result we need for consistency is the following result
of \cite{CPS}.

\begin{theorem}
\label{wallcrossingtheorem} With the assumptions of Definition
\ref{newliftdef}, suppose furthermore
that $\theta_{\gamma,\ofoD}\equiv 1$ for a loop $\gamma$ around
the origin.
Fix an ideal $I\subset P$ with
$\sqrt{I}=\fom_{P}$ and $q\in \oB_0(\ZZ)$.
If $Q,Q'\in M_{\RR}\setminus \Supp(\ofoD_I)$ are
general, and $\gamma$ is a path connecting $Q$ and $Q'$ for which
$\theta_{\gamma,\ofoD_I}$ is defined, then
\[
{\Lift}_{Q'}(q)=\theta_{\gamma,\ofoD_I}({\Lift}_Q(q))
\]
as elements of $\kk[P]/I$.
\end{theorem}

\begin{proof}
This is shown in \cite{CPS} in a rather more general setup. For a version
of the argument closer to the current setup, see the proof of Theorem 5.35
of \cite{G11}.
\end{proof}

\emph{Proof of Theorem \ref{canonicalconsistency}.}
By Lemmas \ref{grhoilemma} and
\ref{PEreduction}, we can assume we are in the situation of
Assumptions \ref{GSassumptions2} with $\foD=\foD^{\can}$.
In checking (1) of Definition \ref{consistentdef} for
$\foD^{\can}$ in this situation,
we want to check equalities
\[
\Lift_{Q'}(q)=\theta_{\gamma,\foD^{\can}}(\Lift_Q(q))
\]
for $Q,Q'\in\sigma_{i-1,i}$. By
Lemmas \ref{movingsingularitylemma}
and \ref{brokenlinebijection}, it
is sufficient to show that
\begin{equation}
\label{lifteq}
\Lift_{\nu(Q')}(\nu(q))=\theta_{\bar\gamma,\nu(\foD)}
(\Lift_{\nu(Q)}(\nu(q))).
\end{equation}
To check this equality we can compare coefficients
of monomials, and given any monomial $z^p$ appearing on the left-
or right-hand sides, we can apply Theorems \ref{relatingcanandscatter}
and \ref{wallcrossingtheorem},
where we take $P=P_{\bar\varphi}$, $I=\fom^k_{P_{\bar\varphi}}$
for sufficiently large $k$ so that
$p\not\in I$.
The hypothesis $\theta_{\gamma,\nu(\foD)}\equiv 1$
of Theorem \ref{wallcrossingtheorem} holds by Theorem
\ref{relatingcanandscatter}.

To show (2) of Definition \ref{consistentdef}, we can take $Q=Q_-$ and
$Q'=Q_+$ on opposite sides of a ray $\rho_i$.
If $\gamma$ is a short path joining $Q$ and $Q'$,
we still have \eqref{lifteq} after inverting $f_i$.
We have a map
\[
\psi:=(\psi_{\bar\rho_i,-},\psi_{\bar\rho_i,+}):
\oR_{\orho_i,I}\rightarrow
\oR_{\osigma_{i-1,i},I}\times \oR_{\osigma_{i,i+1},I}.
\]
If $f_i=\bar g_{\rho_i} \prod_{j=1}^{\ell_i}
(1+b_{ij}^{-1}\oX_i)$ then $\psi$ is given by
\begin{align*}
\oX_{i-1}\mapsto {} &  (z^{\ovarphi_{\orho_i}(v_{i-1})},
f_i z^{\ovarphi_{\orho_i}(v_{i-1})}),\\
\oX_i\mapsto {} & (z^{\ovarphi_{\orho_i}(v_{i})},
z^{\ovarphi_{\orho_i}(v_{i})}),\\
\oX_{i+1}\mapsto {} & (f_i
z^{\ovarphi_{\orho_i}(v_{i+1})},
z^{\ovarphi_{\orho_i}(v_{i+1})}).
\end{align*}
One checks easily that this map is injective. Furthermore,
the image is described as follows. Let $I_{\bar\rho_i}\subset
P_{\bar\varphi_{\bar\rho_i}}$ be the monoid ideal
\[
I_{\bar\rho_i}=
\{q \in P_{\orho_i}\,|\, \hbox{$q-\varphi_{\osigma_{i-1,i}}(r(q))
\in I$
or $q-\varphi_{\osigma_{i,i+1}}(r(q))
\in I$}\}.
\]
Then the image consists of those
elements $(g_-,g_+)$ such that every monomial of $g_-$ and $g_+$ has
exponent in $P_{\bar\varphi_{\bar\rho_i}}\subset
P_{\bar\varphi_{\osigma_{i-1,i}}}, P_{\bar\varphi_{\osigma_{i,i+1}}}$,
and the images $\bar g_{\pm}$ of $g_{\pm}$ in
$(\kk[P_{\orho_i}]/I_{\bar\rho_i})_{f_i}$ satisfy
$\theta_{\gamma,\nu(\foD)}(\bar g_-)=\bar g_+$, where this makes sense
as we have localized at $f_i$. (See e.g., the proof of Lemma 2.34 in
\cite{GS07} for a similar statement.)
Thus by \eqref{lifteq} and Lemma \ref{finitenesslemma}, (2),
there is an $\alpha\in \oR_{\orho_i,I}$ such that
\[
\psi_{\orho_i,-}(\alpha)=\Lift_{\nu(Q)}(\nu(q)),
\quad
\psi_{\orho_i,+}(\alpha)=\Lift_{\nu(Q')}(\nu(q)).
\]
Thus we may take $\Lift_{\rho_i}(q)=p_i^{-1}(\alpha)$, and by Lemma
\ref{brokenlinebijection}, $\psi_{\rho_i,\pm}(\Lift_{\rho_i}(q))
=\Lift_{Q_{\pm}}(q)$, giving consistency.
\qed

\subsection{Step V: The proof of Theorem \ref{relatingcanandscatter} and
the connection with \cite{GPS09}} \label{connectionwithGPS}
Here we derive Theorem \ref{relatingcanandscatter} from the
main result of \cite{GPS09}. We will need to review one form of
this result, which gives an enumerative interpretation for
the output of the Kontsevich-Soibelman lemma.

Fix $M\cong\ZZ^2$ as usual.
Suppose we are given positive integers $\ell_1,\ldots,\ell_n$ and
primitive vectors $m_1,\ldots,m_n\in M$. Let $\ell=\sum_{i=1}^n
\ell_i$ and $Q=M\oplus \NN^{\ell}$, with $r:Q\rightarrow M$ the projection.
Denote the variables in $\kk[Q]$ corresponding to the generators of
$\NN^{\ell}$ as $t_{ij}$, for $1\le i\le n$ and $1\le j\le \ell_i$.
Consider the scattering diagram for the data $r:Q\rightarrow M$
(in the sense of Definition \ref{scatdiagQdef})
\[
\foD=\{(\RR_{\ge 0} m_i,\prod_{j=1}^{\ell_i} (1+t_{ij}z^{m_i}))\,|\,
1\le i\le n\}.
\]
We wish to interpret $(\fod,f_{\fod})\in \Scatter(\foD)\setminus\foD$.
Choose a complete fan $\Sigma_{\fod}$ in $M_{\RR}$ which contains
the rays $\RR_{\ge 0}m_1,\ldots,\RR_{\ge 0}m_n$ as well as the ray $\fod$
(which may coincide with one of the other rays). Let $X_{\fod}$ be
the corresponding toric surface, and let $D_1,\ldots,D_n,D_{\out}$ be
the divisors corresponding to the above rays. Choose general points
$x_{i1},\ldots,x_{i\ell_i}\in D_i$, and let
\[
\nu:\tilde X_{\fod}\rightarrow X_{\fod}
\]
be the blow-up of all the points $\{x_{ij}\}$. Let $\tilde D_1,
\ldots,\tilde D_n, \tilde D_{\out}$ be the proper transforms
of the divisors $D_1,\ldots,D_n,D_{\out}$ and $E_{ij}$ the exceptional
curve over $x_{ij}$.

Now introduce the additional data of ${\bf P}=({\bf P}_1,\ldots,{\bf P}_n)$,
where ${\bf P}_i$ denotes a sequence $p_{i1},\ldots,p_{i\ell_i}$ of
$\ell_i$ non-negative numbers. We will use the notation ${\bf P}_i=
p_{i1}+\cdots+p_{i\ell_i}$ and call ${\bf P}_i$ an \emph{ordered partition}.
Define
\[
|{\bf P}_i|=\sum_{j=1}^{\ell_i} p_{ij}.
\]
We shall restrict attention to those ${\bf P}$ such that
\begin{equation}
\label{balancePeq}
-\sum_{i=1}^n |{\bf P}_i|m_i=k_{\bf P}m_{\fod}
\end{equation}
where $m_{\fod}\in M$ is a primitive generator of $\fod$ and $k_{\bf P}$
is a positive integer.

Given this data, consider the class $\beta\in A_1(X_{\fod},
\ZZ)$ specified by the requirement that, if $D$ is a toric divisor of
$X_{\fod}$ with $D\not\in \{D_1,\ldots,D_n,D_{\out}\}$, then $D\cdot\beta
=0$;
if $D_{\out}\not\in \{D_1,\ldots,D_n\}$,
\[
D_i\cdot \beta=|{\bf P}_i|,\quad\quad D_{\out}\cdot\beta=k_{{\bf P}};
\]
while if $D_{\out}=D_j$ for some $j$, then
\[
D_i\cdot\beta=\begin{cases} |{\bf P}_i|&i\not=j,\\
|{\bf P}_i|+k_{\bf P}&i=j.
\end{cases}
\]
That such a class exists follows easily from \eqref{balancePeq} and the
first part of Lemma \ref{H2toric} below.
It is also unique. We can then define
\[
\beta_{\bf P}=\nu^*(\beta)-\sum_{i=1}^p\sum_{j=1}^{\ell_i} p_{ij}
[E_{ij}]\in A_1(\tilde X_{\fod},\ZZ).
\]
We define $N_{\bf P}:=N_{\beta_{\bf P}}$ as in Definition \ref{basicrelinvs},
using $(\tilde Y,\tilde D)=(\tilde X_{\fod},\tilde D)$,
where $\tilde D$ is the proper transform of the toric boundary
of $X_{\fod}$, and using $C=\tilde D_{\out}$.
Then one of the main theorems of \cite{GPS09} (see \S 5.7 of that paper)
states

\begin{theorem}
\label{mainGPStheorem}
\begin{equation}
\label{1stGPSformula}
\log f_{\fod}=\sum_{\bf P}
k_{\bf P} N_{\bf P} t^{\bf P}z^{-k_{\bf P} m_{\fod}},
\end{equation}
where the sum is over all ${\bf P}$ satisfying \eqref{balancePeq}
and $t^{\bf P}$ denotes the monomial
$\prod_{ij} t_{ij}^{p_{ij}}$.
\end{theorem}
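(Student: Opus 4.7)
The plan is to follow the strategy of \cite{GPS09}: interpret the scattering diagram output enumeratively via relative Gromov--Witten invariants on the blown-up toric surface, and then check that the enumerative generating function satisfies the Kontsevich--Soibelman consistency property that uniquely characterises $\Scatter(\foD)\setminus \foD$.

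First I would set up the enumerative side. For each ordered partition data ${\bf P}$ satisfying \eqref{balancePeq}, the moduli space $\overline{\foM}(\tilde X_\fod^o / \tilde D_{\out}^o, \beta_{\bf P})$ is proper by the argument of Lemma \ref{GPSinvariancelemma}, and its virtual dimension is zero (the normalization of $\beta_{\bf P}$ is arranged so that $-K_{\tilde X_\fod}\cdot\beta_{\bf P} = k_{\bf P}$). This gives well-defined invariants $N_{\bf P}$, and one should first check via Lemma \ref{definvlemma} that these numbers are independent of the auxiliary choice of fan $\Sigma_\fod$ and of the positions of the points $x_{ij}$ on the toric boundary.

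Next, define the formal series
\[
g_{\fod} := \exp\!\left(\sum_{\bf P} k_{\bf P} N_{\bf P} t^{\bf P} z^{-k_{\bf P} m_{\fod}}\right)
\]
and let $\foD^{\GW}$ be the scattering diagram obtained by attaching $g_{\fod}$ to every rational ray $\fod\subset M_\RR$. I would then prove $\foD^{\GW}\supset \foD$ (the input rays are recovered by the multiple cover contributions of the exceptional curves $E_{ij}$, using the Aspinwall--Morrison/multiple cover formula $(-1)^{k-1}/k^2$, exactly as in Lemma \ref{grhoilemma}) and that $\foD^{\GW}$ has at most one outgoing ray in each direction outside of $\foD$. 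By the uniqueness clause of Theorem \ref{KSlemma}, the equality $\foD^{\GW} = \Scatter(\foD)$ then reduces to verifying the single identity
\[
\theta_{\gamma, \foD^{\GW}} = \mathrm{Id}
\]
for a small loop $\gamma$ around the origin, order by order in the ideal $\fom_Q^k$.

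The key input for this consistency check is the degeneration formula for relative Gromov--Witten invariants applied to a toric degeneration of $\tilde X_\fod$: degenerate $X_\fod$ into a chain of toric surfaces parameterised by a polyhedral decomposition of $M_\RR$ that is sufficiently fine to resolve the combinatorics of every tropical curve of fixed ${\bf P}$-degree, and use the fact that the exceptional classes $[E_{ij}]$ specialize to classes meeting a single irreducible component of the central fibre. The resulting degeneration formula expresses each $N_{\bf P}$ as a sum over rigid tropical curves in $M_\RR$ with unbounded edges in the directions $m_1,\dots,m_n$ (with prescribed weights $p_{ij}$) and one unbounded edge in direction $m_{\fod}$, weighted by Mikhalkin multiplicities together with vertex contributions given by multiple-cover formulae. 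I would then match this tropical expansion term-by-term with the combinatorial expansion of the path-ordered product $\theta_{\gamma,\foD^{\GW}}$, using the observation that the BCH-type commutators obtained from composing wall-crossing automorphisms along $\gamma$ are precisely indexed by the same trivalent tropical trees.

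The hard part will be organising this tropical/algebraic correspondence so that the contribution of each tropical curve to the degeneration formula matches the contribution of the same combinatorial type to the path-ordered product, including signs and automorphism factors. This is essentially the content of the main theorem of \cite{GPS09}, whose proof proceeds via a careful analysis of the deformation theory of the relevant relative stable maps and an induction on the order of the scattering ideal; accepting that analysis, the statement of Theorem \ref{mainGPStheorem} follows.
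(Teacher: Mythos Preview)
The paper does not prove this theorem: it is introduced with the phrase ``Then one of the main theorems of \cite{GPS09} states'' and is used as a black box. So there is no argument in the paper to compare against, and your proposal, which explicitly defers the hard step to \cite{GPS09}, is in that sense aligned with what the paper does.

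Your sketch of the \cite{GPS09} strategy is broadly accurate (degeneration formula, tropical curve expansion, matching with the Kontsevich--Soibelman algorithm order by order), but one point is garbled. The input diagram $\foD$ consists of \emph{incoming} rays $(\RR_{\ge 0}m_i,\prod_j(1+t_{ij}z^{m_i}))$, while the enumeratively defined functions $g_{\fod}$ you write down are attached to \emph{outgoing} rays, so the inclusion ``$\foD^{\GW}\supset\foD$'' is not meaningful as stated. The multiple-cover calculation you cite from Lemma~\ref{grhoilemma} produces the contribution to the outgoing ray with the same support as an incoming one, not the incoming ray itself. The correct statement is that one must show $\theta_{\gamma,\foD\cup\foD^{\GW}}=\Id$ for the union of the given incoming rays with the enumerative outgoing rays, and then invoke the uniqueness in Theorem~\ref{KSlemma}. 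Also, in \cite{GPS09} the logic runs in the other direction: one first computes $\Scatter(\foD)$ tropically (via a reduction to the case $\ell_i=1$ and an order-by-order analysis of the algorithm), and then matches the tropical count with the relative invariants $N_{\bf P}$ via the degeneration formula, rather than verifying consistency of an a priori enumerative diagram.
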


We can adapt this theorem for our purposes as follows. Fix a fan
$\Sigma$ in $M_{\RR}$ defining a complete non-singular toric surface $\oY$,
with $\oD=D_1+\cdots+D_n$ the toric boundary. Choose points
$x_{i1},\ldots,x_{i\ell_i}\in D_i$, and define a new surface
$Y$ as the blow-up $\nu:Y\rightarrow \oY$ at the points $\{x_{ij}\}$.
Let $E_{ij}$ be the exceptional curve over $x_{ij}$.

Let $P=\NE(\oY)$; because $\oY$ is toric, this is a finitely generated
monoid with $P^{\times}=\{0\}$. Let $\bar\varphi:M_{\RR}\rightarrow
P^{\gp}_\RR$ be the $\Sigma$-piecewise linear strictly
$P$-convex function given by Lemma \ref{varphitoric}.

We will need the following, an immediate corollary of Lemma \ref{varphitoric},
using the notation of that lemma applied to the fan $\Sigma$ for $\oY$:

\begin{lemma}
\label{H2toric}
If $\sum_{\rho} a_{\rho}t_{\rho}\in \ker s$, then
the corresponding element of $\ker s = A_1(\oY,\ZZ)$ is
\[
\sum_{\rho} a_{\rho}\bar\varphi(m_{\rho})\in P^{\gp}.
\]
\end{lemma}
%HEY MARK, I CUT OUT THE PROOF OF THE LEMMA, I THINK NOW WITH THE NEW VERSION OF
%LEMMA VARPHITORIC THIS STATEMENT IS OBVIOUS.

Let $E\subset
A_1(Y,\ZZ)$ be the lattice spanned by the classes of the exceptional
curves of $\nu$, so that $A_1(Y,\ZZ)=\nu^*A_1(\oY,\ZZ)\oplus E$.
We then obtain a map
\[
\varphi=\nu^*\circ\bar\varphi:M_{\RR}\rightarrow \nu^*P^{\gp}\oplus E.
\]
Let
\[
Q=\{(m,p)\in M\oplus A_1(Y,\ZZ)\,|\,\hbox{$\exists p'\in \nu^*P\oplus E$
such that $p=p'+\varphi(m)$}\}.
\]
There is an obvious projection $r:Q\rightarrow M$,
and by strict convexity of $\bar\varphi$, $Q^{\times}=E$.

We consider the scattering diagram, $\ofoD_0$,
over $\widehat{\kk[Q]}$ given by
\[
\ofoD_0=\{(\RR_{\ge 0}m_i,\prod_{j=1}^{\ell_i} (1+z^{(m_i,
\varphi(m_i)-E_{ij})}))
\,|\,1\le i\le n\}.
\]

Then we have

\begin{theorem}
\label{GPScor}
Let $(\fod,f_{\fod})\in\Scatter({\ofoD_0})\setminus \ofoD_0$,
assuming that
there is at most one ray of $\Scatter(\ofoD_0)\setminus\ofoD_0$
in each
possible outgoing direction. (Note by definition of $\Scatter(\ofoD_0)$,
$(\fod,f_{\fod})$ cannot be incoming.)
Then, following the notation of Definition
\ref{basicrelinvs} and \ref{canscatdiagdef},
\begin{equation}
\label{2ndGPSformula}
\log f_{\fod}=\sum_{\beta} k_{\beta}N_{\beta}
z^{(-k_{\beta}m_{\fod},\pi_*(\beta)-\varphi(k_{\beta}m_{\fod}))}.
\end{equation}
Here $\pi:\tY\rightarrow Y$ is the toric blow-up of $Y$ determined by
$\fod$ and $C\subset\tY$ is the component of the boundary determined
by $\fod$. If $\fod$ is not one of the rays $\bR_{\geq 0} m_i$, then
we sum over all $\AA^1$-classes $\beta\in A_1(\tY,\ZZ)$
satisfying \eqref{betadef}, and if $\fod = \bR_{\geq 0} m_i$
we sum over all such classes {\bf except for} classes given
by multiple covers of one of the exceptional divisors $E_{ij}$.
\end{theorem}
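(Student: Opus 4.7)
The plan is to derive Theorem \ref{GPScor} from Theorem \ref{mainGPStheorem} of \cite{GPS09} by careful translation of the setups. First I would invoke Theorem \ref{mainGPStheorem} with $m_i$ the primitive generators of the rays $\rho_i \in \Sigma$, with $\ell_i$ the number of points blown up on $\oD_i$ to form $Y$, and with $\Sigma_\fod$ the refinement of $\Sigma$ by $\fod$ (trivial if $\fod$ is already a ray). Since the points $\{x_{ij}\}$ lie in the smooth loci of the toric boundary, blowing up at these points commutes with the toric blow-up along $\fod$; hence GPS09's $\tilde X_\fod$ is canonically identified with $\tY$, and the class $\beta_{\bf P}$ becomes an $\AA^1$-class on $\tY$ satisfying \eqref{betadef} with contact order $k_{\bf P}$ at $C$.

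Next I would establish a bijection between ordered partitions $\bf P$ satisfying \eqref{balancePeq} and $\AA^1$-classes $\beta$ on $\tY$ with $\beta \cdot E_{ij} \ge 0$ for all $i, j$, given by $p_{ij} := \beta \cdot E_{ij}$. The omitted $\AA^1$-classes are the multi-covers $\beta = k[E_{ij}]$: for $\fod \ne \RR_{\ge 0} m_i$ these fail $\beta \cdot C = k_\beta$, so are not $\AA^1$-classes for $\fod$ at all; for $\fod = \RR_{\ge 0} m_i$ they are genuine $\AA^1$-classes but the balancing $-|{\bf P}_i| m_i = k_{\bf P} m_i$ would force $k_{\bf P} < 0$, impossible. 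This accounts precisely for the exclusion stated in the theorem.

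The monomial matching is most cleanly handled by using balancing to rewrite
\[
t^{\bf P} z^{-k_{\bf P} m_\fod} = \prod_{i,j}(t_{ij} z^{m_i})^{p_{ij}},
\]
and then applying the identification $t_{ij} z^{m_i} \leftrightarrow z^{(m_i,\, \varphi(m_i) - E_{ij})}$ (which matches GPS09's input with our $\ofoD_0$), yielding
\[
\prod_{i,j} z^{p_{ij}(m_i,\, \varphi(m_i) - E_{ij})} = z^{\bigl(-k_{\bf P} m_\fod,\, \sum_i |{\bf P}_i|\varphi(m_i) - \sum_{i,j} p_{ij} E_{ij}\bigr)}.
\]
A direct intersection computation on $\tY$ gives the identity $\pi_*\beta_{\bf P} = \sum_i|{\bf P}_i|\varphi(m_i) + \varphi(k_{\bf P} m_\fod) - \sum p_{ij} E_{ij}$, obtained from $\pi_*\beta_{\bf P} \cdot D_i = k_{\bf P} a_i$ (where $m_\fod = \sum a_i m_i$ in the cone of $\Sigma$ containing $\fod$), $\pi_*\beta_{\bf P} \cdot E_{ij} = p_{ij}$, and Lemma \ref{H2toric} applied to $\oY$. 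The right-hand side of the display above then simplifies to $z^{(-k_{\bf P} m_\fod,\, \pi_*\beta_{\bf P} - \varphi(k_{\bf P} m_\fod))}$, as required.

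The main obstacle will be establishing that $N_\beta = 0$ for $\AA^1$-classes $\beta$ with some $\beta \cdot E_{ij} < 0$ that are not of multi-cover form, so that the bijection above exhausts all contributions to $\log f_\fod$. A moduli-theoretic argument (any stable relative map of such a class would have to contain $E_{ij}$ as a component, forcing the class to be a multi-cover and reducing to the calculation of Lemma \ref{grhoilemma}) should suffice; alternatively, one can directly compare the moduli spaces $\overline{\foM}(\tY^o/C^o, \beta)$ of Definition \ref{basicrelinvs} with those used in \cite{GPS09} under the identification $\tilde X_\fod = \tY$ and verify the formula class by class.
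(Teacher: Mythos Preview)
Your approach is essentially the paper's: the paper packages your identification $t_{ij}z^{m_i}\leftrightarrow z^{(m_i,\varphi(m_i)-E_{ij})}$ as a monoid homomorphism $\alpha\colon Q'\to Q$, observes that $\alpha$ carries the GPS input diagram to $\ofoD_0$ and hence $\Scatter$ to $\Scatter$, and then applies $\alpha$ term-by-term to \eqref{1stGPSformula}, using Lemma \ref{H2toric} and the balancing relation \eqref{balancePeq} to obtain \eqref{2ndGPSformula}. Your intersection computation $\pi_*\beta_{\bf P}\cdot D_i=k_{\bf P}a_i$ is a correct equivalent route to the same identity.

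The point you flag as the main obstacle is in fact only \emph{asserted} in the paper (``we now note that the set of possible $\AA^1$ classes \ldots\ are precisely the classes $\{\beta_{\bf P}\}$''), so your willingness to argue it is an improvement. Your moduli sketch is sound when $D_i\neq C$: then $E_{ij}\cap\tY^o\cong\AA^1$ is affine, so no irreducible component of a genus-zero stable map into $\tY^o$ can dominate it, forcing $\beta\cdot E_{ij}\ge 0$. The case $\fod=\RR_{\ge 0}m_i$ (so $E_{ij}\subset\tY^o$) is not quite as automatic as ``forcing the class to be a multi-cover'' --- a map could in principle have an $E_{ij}$-component together with others --- and needs a bit more care than you indicate.
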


\begin{proof} Let $Q'$ be the submonoid of $M\oplus\NN^{\ell}$ generated
by elements of the form $(m_i,d_{ij})$, where $d_{ij}$ is the $(i,j)$-th
generator of $\NN^{\ell}$. Note that $Q'$ itself is
freely generated by these elements. Thus we can define a map
\[
\alpha:Q'\rightarrow Q
\]
by $(m_i,d_{ij})\mapsto (m_i, \varphi(m_i)-E_{ij})$.
The scattering diagram
\[
\foD':=\{(\RR_{\ge 0}m_i,\prod_{j=1}^{\ell_i}(1+z^{(m_i,d_{ij})}))
\,|\, 1\le i\le n\}
\]
then has image under the map $\alpha$ (applying $\alpha$ to each
$f_{\fod}$) the scattering diagram $\ofoD_0$. Thus if we apply
$\alpha$ to each element of $\Scatter(\foD')$, we must get
$\Scatter(\ofoD_0)$, as $\theta_{\gamma,\Scatter(\foD')}$ being the
identity on $\widehat{\kk[Q']}$ implies that
$\theta_{\gamma,\alpha(\Scatter(\foD'))}$ is the identity on
$\widehat{\kk[Q]}$.

To obtain the result, we now note that the set of possible
$\AA^1$-classes in $\tY$ occurring in the expression \eqref{2ndGPSformula}
are precisely the classes $\{\beta_{\bf P}\}$ where ${\bf P}$ runs over
all partitions satisfying \eqref{balancePeq}. Now applying $\alpha$
to a term appearing in \eqref{1stGPSformula} of the form
\[
k_{\bf P}N_{\bf P}t^{\bf P} z^{-k_{\bf P}m_{\fod}}
=k_{\beta_{\bf P}}N_{\beta_{\bf P}}\left( \prod t_{ij}^{p_{ij}}\right)
z^{\sum_{i=1}^p
|{\bf P}_i| m_i},
\]
we get
\[
k_{\beta_{\bf P}}N_{\beta_{\bf P}}
z^{(-k_{\bf P}m_{\fod},\sum_{i=1}^p |{\bf P}_i|\varphi(m_i)-\sum_{i,j}
p_{ij} E_{ij})}.
\]
But by Lemma \ref{H2toric} and \eqref{balancePeq},
\[
\sum_{i=1}^p |{\bf P}_i|\varphi(m_i)-\sum_{i,j} p_{ij}E_{ij}
=\pi_*(\beta_{\bf P})-\varphi(k_{\bf P}m_{\fod}),
\]
hence the result.
\end{proof}

A direct comparison of the formula of the above theorem and the
formula in the definition of the canonical scattering diagram
then yields Theorem \ref{relatingcanandscatter}.

\section{Smoothness: Around the Gross--Siebert locus}
\label{smoothnesssection}
Next we prove that our deformation of $\bV_n$ is indeed a smoothing.
The main theorem of this section (Theorem \ref{injectivitym}) will show
this in the situation of Theorem \ref{maintheoremlocalcase} when $(Y,D)$
has a toric model. The full smoothness statement of
Theorem \ref{maintheoremlocalcase} will require some more work, which will
be carried out in \S\ref{positivecase}.

We prove smoothness by working
over the Gross--Siebert locus (Definition \ref{GSlocusdef}). Here our
deformation (when restricted to one-parameter subgroups
associated to $p^*A$, $A$ an ample divisor on $\oY$)
agrees with the construction of \cite{GS07}. This is important
here because the deformations of \cite{GS07} come with explicit charts
that cover all of $\bV_n$, from which it is clear that they give a
smoothing. So conceptually, the smoothing claim is clear. Because
we work with formal families the actual argument is a bit more delicate.
First we make rigorous the notion of a {\it smooth generic fibre}
for a formal family:

\begin{definition-lemma} \label{singdef} Let
$f: Z \to W$ be a flat finite type morphism of schemes of
relative dimension $d$. Then $\Sing(f) \subset Z$
is the closed embedding defined by the $d^{th}$ Fitting ideal
of $\Omega^1_{Z/W}$. $\Sing(f)$ is empty if and only if
$f$ is smooth.  Formation of $\Sing(f)$ commutes with
all base extensions of $W$.
\end{definition-lemma}

\begin{proof}
For the definition of the Fitting ideal, see e.g., \cite{E95}, 20.4.
The fact that it commutes with base-change follows from
the fact that $\Omega^1_{Z/W}$ commutes with base-change and
\cite{E95}, Cor.~20.5. That $\Sing(f)$ is empty if and only if
$f$ is smooth follows from \cite{E95}, Prop.~20.6 and the
definition of smoothness.
\end{proof}

%Now if $\mathfrak{f} \colon \mathfrak{X} \rightarrow \mathfrak{S}$ is an adic flat morphism of formal schemes or formal complex analytic spaces,
%we say $\mathfrak{f}$ has \emph{smooth generic fibre} if

Now for a formal family, {\it smoothness of the generic fibre} is
measured by the fact that $\Sing(f)$ does not surject
scheme-theoretically onto the base.
More precisely:

\begin{definition}
\label{genericsmoothness}
Let $S$ be a normal variety,
$V \subset S$ a connected closed subset, and $\mathfrak{S}$ the formal completion of $S$ along $V$.
Let $\mathfrak{f} \colon \mathfrak{X} \rightarrow \mathfrak{S}$ be an adic flat morphism of formal schemes of pure relative dimension
and $\mathfrak{Z} \subset \mathfrak{X}$ the scheme theoretic singular locus of $\mathfrak{f}$.
Then we say the \emph{generic fiber of $\mathfrak{f}$ is smooth} if the map $\cO_{\mathfrak{S}} \rightarrow \mathfrak{f}_*\cO_{\mathfrak{Z}}$ is not injective.
\end{definition}

For the statement of Proposition \ref{GSlocus}, we
fix our usual setting of a surface $(Y,D)$, and assume given
Assumptions \ref{GSassumptions2} and that $\nu(\foD)$ is a
scattering diagram in the sense of Definition \ref{scatdiagQdef}.
Suppose furthermore that
$\theta_{\gamma,\nu(\foD)}\equiv 1$ for a loop $\gamma$ around the
origin. Thus by Theorem \ref{wallcrossingtheorem}, $\foD$ is consistent.
These hypotheses on $\foD$ apply in particular when $\foD=\foD^{\can}$.

Let $T^{\gs}$ be the Gross--Siebert locus; we have
$T^{\gs}=\Spec \kk[P]/G$. Consistency of $\foD$ gives a flat family
\[
f_I:X_I\rightarrow\Spec\kk[P]/I
\]
over a thickening of $T^{\gs}$ whenever $\sqrt{I}=G$.

On the other hand, letting $\bar\Sigma$ be the fan for $\oY$
in $\oB=M_{\RR}$, we have the piecewise linear function
$\bar\varphi:\oB\rightarrow P^{\gp}_\RR$ with
$\kappa_{\bar\rho,\bar\varphi}=p^*[\oD_{\bar\rho}]$, as in \eqref{kappabardef}.
This now determines the Mumford family
\[
\bar f_{I}: \oX_{I} \to \Spec\kk[P]/I.
\]

Our goal is to compare these two families. Note that both $X_G
\rightarrow T^{\gs}$ and $\oX_G\rightarrow T^{\gs}$ are the trivial family
$\bV_n \times T^{\gs} \rightarrow T^{\gs}$. Thus either family contains
a canonical copy of $T^{\gs}$, i.e., $\{0\} \times T^{\gs}$,
where $0$ is the vertex of $\bV_n$.

\begin{proposition} \label{GSlocus} In the above situation,
fix an ideal $I$ with $\sqrt{I}=G$. There are open affine sets
$\oU_{I} \subset \oX_{I}$,
$U_{I} \subset X_{I}$, both sets containing
the canonical copy of $T^{\gs}$, and an isomorphism
\[
\mu_{I}: U_{I} \to \oU_{I}
\]
of families over $\Spec\kk[P]/I$.

Moreover, there is a non-zero monomial $y \in \kk[P]$
whose pullback to $X_{I}$ is in the stalk at any
point $x\in \{0\}\times T^{\gs}\subset \VV_n\times T^{\gs}$ of the ideal
of $\Sing_{f_{I}}$ for all $I$.
\end{proposition}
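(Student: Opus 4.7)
The plan is to identify $X_I$ with the pure Mumford family $\oX_I$ on a suitable affine open neighborhood of $\{0\}\times T$, and then to exploit the toric structure of $\oX_I$ to locate a monomial controlling the singular locus.

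First, I apply Lemma~\ref{movingsingularitylemma} to obtain an isomorphism of punctured families $X^o_{I,\foD} \cong \oX^o_{I,\nu(\foD)}$ over $\Spec\kk[P]/I$. By Proposition~\ref{relatingcanandscatter}, $\nu(\foD^{\can}) = \ofoD$, and the path-ordered product around the origin is trivial. I then plan to compare $\oX^o_{I,\nu(\foD)}$ with the pure Mumford family $\oX^o_I$: both schemes are glued from the same local torus charts $\oR_{\bar\sigma,\bar\sigma,I}$, differing only by scattering automorphisms on overlaps.

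Next, I will absorb these scattering automorphisms over a suitable open affine $V_I \subset \Spec\kk[P]/I$ containing $T$. Such a $V_I$ is obtained by inverting monomials in the face $E$ of $P$ generated by the exceptional classes $[E_{ij}]$, which are units on $T$ and so leave $T \subset V_I$. Triviality of $\theta_{\gamma,\nu(\foD)}$ ensures that the change of coordinates needed to cancel the scattering along a path depends only on its endpoints, so these local changes of coordinates glue to a global isomorphism $\oX^o_{I,\nu(\foD)}|_{V_I} \cong \oX^o_I|_{V_I}$. Passing to global sections extends this to the required isomorphism $\mu_I \colon U_I \to \oU_I$ of affine open subsets. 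Both families are trivial along $T$ with fibre $\VV_n$, so both $U_I$ and $\oU_I$ contain the canonical copy of $T$.

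For the second statement, I use the toric description $\oX_I = \Spec\kk[P_{\bar\varphi}]/I$. The morphism $\bar f_I$ is toric, so $\Sing(\bar f_I)$ is a union of torus orbit closures. Inspection of the Mumford equation $\bar{X}_{i-1}\bar{X}_{i+1} = z^{p^*[\bar D_i]}\bar{X}_i^{-\bar D_i^2}$ shows that the monomial $y = z^{[D_i]} \in \kk[P]$ (or its analogue $z^{p^*[\bar D_i]}$) lies in the singular ideal at every point of $\{0\}\times T$: the partials with respect to $\bar{X}_{i\pm 1}$ vanish on $\{0\}\times T$, and the remaining Jacobian entry is a unit multiple of this monomial. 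The pullback of $y$ to $X_I$ via $\mu_I$ inherits this property. Since $y$ depends only on $(Y,D)$, it works uniformly for all $I$.

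The main difficulty is constructing the global change of coordinates absorbing the scattering automorphisms on $V_I$. Triviality of the path-ordered product gives a formal cocycle trivialization, but realizing this on an honest open subscheme $V_I$ rather than only on the formal completion requires careful control of which scattering functions contribute at each finite order, using the finiteness of $P \setminus I$ together with the face structure of $P$ afforded by Assumptions~\ref{GSassumptions}.
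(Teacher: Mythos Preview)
Your overall strategy---pass to $\oB$ via Lemma~\ref{movingsingularitylemma}, then compare $\oX^o_{I,\ofoD}$ with the pure Mumford family by undoing the scattering using path-independence of $\theta_{\gamma,\ofoD}$---is the paper's strategy. The gap is in how you produce the open set on which the comparison becomes an honest isomorphism.

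You propose $V_I$ open in the \emph{base}, obtained by inverting the monomials $z^{[E_{ij}]}$. But (after the replacement of $P$ by $P+E$ made just before the proposition) these are already units, and in any case inverting base monomials cannot help: the scattering functions $f_{\fod}$ depend on the \emph{fibre} coordinates---for instance the incoming rays of $\ofoD$ carry $\prod_j(1+b_{ij}^{-1}\bar X_i)$ with $\bar X_i$ a unit in $R_{\bar\sigma,\bar\sigma,I}$---so $f_{\fod}$ is not invertible in $R_{\bar\sigma,\bar\sigma,I}$, and the path-ordered product $\theta_{p,q}$ (which involves negative powers of $f_{\fod}$) is not a well-defined automorphism of that ring. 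The paper's remedy is to truncate $\ofoD$ modulo a carefully chosen ideal $I_0\subset P_{\bar\varphi}$ to a finite diagram $\ofoD_I$, set $h=\prod_{\fod\in\ofoD_I} f_{\fod}\in\kk[P_{\bar\varphi}]$, and take the open set $\{h\neq 0\}$ on the \emph{total space}. Since $h\equiv 1\bmod\fom_{P_{\bar\varphi}}$, this open set contains $\{0\}\times T$. After localizing the chart rings at $h$, the $\theta_{p,q}$ become honest automorphisms, and $\mu_I$ is assembled chart by chart from explicit commutative diagrams built from them; the $S_2$ extension across $T$ then goes as you indicate.

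A secondary point: your explicit candidate $y=z^{[D_i]}$ is justified by a Jacobian computation on the chart $\oU_{\bar\rho_i,I}$, but that chart does not contain $\{0\}\times T$, so the calculation does not directly give membership in the stalk of the Fitting ideal there. The paper argues more simply: the global Mumford family $\Spec\kk[P_{\bar\varphi}]\to\Spec\kk[P]$ has smooth generic fibre (an algebraic torus), hence some monomial $y\in\kk[P]$ lies in the ideal of $\Sing_{\bar f}$ on all of $\Spec\kk[P_{\bar\varphi}]$, and this restricts to every $\oX_I$.
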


\begin{proof}
The generic fibre of the Mumford family over $\Spec \kk[P]$
is smooth: indeed the family is trivial over the
open torus orbit of $\Spec\kk[P]$,
with fibre an algebraic torus. It follows that there is a non-zero
monomial $y \in \kk[P]$ in the ideal of $\Sing_{\bar f}$ for the
global Mumford family
\[
\bar f: \Spec \kk[P_{\bar\varphi}] \to \Spec \kk[P].
\]
Of course its restriction then lies in the stalk at any point $x$ of the ideal
sheaf of $\Sing_{\bar f_{I}}$ for all $I$.
Thus once we establish the claimed isomorphisms, the final statement follows.

Recall from \S\ref{GSreductionsection} the construction
of $\ofoD:=\nu(\foD)$ and the scheme
$\oX^o_{I,\ofoD}$ from $\ofoD$. By Lemma
\ref{movingsingularitylemma},
$X^o_{I,\foD}\cong \oX^o_{I,\ofoD}$, so we in fact
have an isomorphism
\[
X_I\cong\Spec \Gamma(\oX^o_{I,\ofoD},\shO_{\oX^o_{I,\ofoD}})
=:\oX_{I,\ofoD}.
\]
So we can work with
$\oX_{I,\ofoD}$ instead of $X_{I}$. On the other hand,
the Mumford family $\Spec\kk[P_{\bar\varphi}]/I\kk[P_{\bar\varphi}]$
over $\Spec \kk[P]/I$ can be described similarly.
Using the empty scattering diagram instead of the scattering diagram
$\ofoD$, one has by Lemma \ref{opensubschemeGKZ}
\[
\oX_{I,\emptyset}=\Spec \Gamma(\oX^o_{I,\emptyset},\shO_{\oX^o_{I,\emptyset}}).
\]

Now define an ideal $I_0\subset P_{\bar\varphi}$ as follows.
For $\sigma\in\oSigma_{\max}$, let $\bar\varphi_{\sigma}$ denote
the linear extension of $\bar\varphi|_{\sigma}$. We set
\[
I_0:=\{(m,p)\in P_{\bar\varphi}\,|\, \hbox{$p-\bar\varphi_{\sigma}(m)
\in I$ for some $\sigma\in\oSigma_{\max}$}\}.
\]
Note that $\sqrt{I_0}=\fom_{P_{\bar\varphi}}$.
By assumption,
$\ofoD$ is a scattering diagram
for $P_{\bar\varphi}$, and hence there are only a finite number of
$(\fod,f_{\fod})\in\ofoD$ for which $f_{\fod}\not\equiv 1\mod I_0$.
Furthermore, modulo $I_0$, each $f_{\fod}$ is a polynomial.

Let $\ofoD_{I}$ be the scattering diagram obtained from $\ofoD$ by, for each
outgoing ray $(\fod,f_{\fod})$, truncating
each $f_{\fod}$ by throwing out all terms which lie in $I_0$. The
incoming rays remain unchanged. Thus
$\ofoD_{I}$ can be viewed as a finite scattering diagram. Let
\[
h:=\prod_{\fod\in\ofoD_{I}} f_{\fod}.
\]
This is an element of $\kk[P_{\bar\varphi}]$. Note that necessarily
$h\equiv 1\mod \fom_{P_{\bar\varphi}}$. Thus $h\not=0$
defines an open subset $\oU\subset \Spec \kk[P_{\bar\varphi}]/G
\kk[P_{\bar\varphi}]
=\oX_{G,\ofoD}
=\oX_{G,\emptyset}=\VV_n\times T^{\gs}$. Furthermore, $\oU$ contains the
canonical copy of $T^{\gs}$.
Since $\oX_{I,\ofoD}$ and $\oX_{I,\emptyset}$ both have underlying topological
space $\oX_{G}$, this defines open sets $\oU_{I,\ofoD}$ of $\oX_{I,\ofoD}$
and $\oU_{I,\emptyset}$ of $\oX_{I,\emptyset}$.
We shall show these two open subschemes are isomorphic.

First the following claim shows that $\oX_{I,\ofoD}\cong \oX_{I,\ofoD_I}$,
as for any $\tau\in\oSigma\setminus\{0\}$,
the automorphisms involved will have the same effect modulo $I_{\tau}$.
As a consequence, we can work with the scattering
diagram $\ofoD_I$.

\begin{claim}
Let $\tau\in\oSigma$, and suppose $(m,p)\in P_{\bar\varphi}$
satisfies $-m\in\tau$. Then $(m,p)\in I_0$ if and only if $(m,p)\in I_{\tau}$,
where
\[
I_{\tau}:=\{(m,p)\in P_{\bar\varphi_{\tau}}\,|\,
\hbox{$p-\bar\varphi_{\sigma}(m)\in I$ for some $\sigma\in\oSigma_{\max}$
with $\tau\subset\sigma$}\}.
\]
\end{claim}

\begin{proof}[Proof of claim]
Clearly $I_{\tau}\cap P_{\ovarphi}\subset I_0$, so one implication
is clear. Conversely, suppose that $(m,p)\in I_0$,
so that $p-\ovarphi_{\sigma}(m)\in I$ for some $\sigma\in\Sigma_{\max}$.
If $\tau\subset\sigma'\in\Sigma_{\max}$, let $\rho_1,\ldots,\rho_n$ be the
sequence of rays traversed in passing from $\sigma$ to $\sigma'$, chosen
so that all $\rho_1,\ldots,\rho_n$ lie in a half-plane bounded
by the line $\RR m$. Then
\[
\ovarphi_{\sigma'}(m)=\ovarphi_{\sigma}(m)+\sum_{i=1}^n \langle n_{\rho_i},
m\rangle \kappa_{\rho_i,\ovarphi},
\]
with $n_{\rho_i}$ primitive, vanishing on $\rho_i$, and positive
on $\rho_{i+1}$.
Note that since $-m\in\tau$, we must have $\langle n_{\rho_i},m\rangle\le
0$ for each $i$, and hence $p-\ovarphi_{\sigma'}(m)=p-\ovarphi_{\sigma}(m)
+p'$ for some $p'\in P$. Hence $(m,p)\in I_{\tau}$.
\end{proof}

To show that $\oU_{I,\ofoD}$ and $\oU_{I,\emptyset}$ are isomorphic,
let us describe these open subschemes explicitly away from the origin.
Recall that $\oX^o_{I,\foD}$ is obtained by gluing together schemes
which are spectra of rings $\oR_{\tau,I}$ for $\tau\in\oSigma$.
However in the case that $\dim\tau=1$, this
ring depends on the scattering diagram, so we write $\oR_{\tau,I,\foD}$
for $\foD=\ofoD$ or $\emptyset$.

If $\dim\tau=2$, then $\oR_{\tau,I,\foD}=
\kk[P_{\bar\varphi_{\tau}}]/I\kk[P_{\bar\varphi_{\tau}}]$. Since
$h\in \kk[P_{\bar\varphi}]\subset \kk[P_{\bar\varphi_{\tau}}]$, $h$
defines an element of $\oR_{\tau,I,\foD}$ in this case.

If $\dim\tau=1$, then
$\tau=\rho_i$ for some $i$, and we have
a surjection
\[
\oR_{\rho_i,I,\foD}\rightarrow \oR_{\rho_i,G,\foD} =
R_G[\oX_{i-1},\oX_i^{\pm 1},
\oX_{i+1}]/(\oX_{i-1}\oX_{i+1})\cong \kk[P_{\bar\varphi_{\rho_i}}]/G
\kk[P_{\bar\varphi_{\rho_i}}],
\]
so that $h\in \kk[P_{\bar\varphi}]\subset \kk[P_{\bar\varphi_{\rho_i}}]$ defines
an element of $\oR_{\rho_i,G,\foD}$. Choosing any lift of $h$ to
$\oR_{\rho_i,I,\foD}$, we note the localization $(\oR_{\rho_i,I,\foD})_h$
is independent of the
lift since the kernel of the above surjection is nilpotent.

We can then define regardless of $\dim\tau$,
\[
S_{\tau,I,\foD}:=(\oR_{\tau,I,\foD})_h.
\]

Note there is an isomorphism
\[
\psi_i:S_{\rho_i,I,\ofoD}  \rightarrow S_{\rho_i,I,\emptyset}
\]
given by
\[
\bar X_{i-1} \mapsto \bar X_{i-1}\left(\bar g_{\rho_i}\prod_{j=1}^{\ell_i}
(1+b_{ij}^{-1}\oX_i)\right),
\quad
\bar X_i\mapsto \bar X_i,\quad
\bar X_{i+1}\mapsto \bar X_{i+1}.
\]
This has an inverse because of the localization at $h$.

Given a path $\gamma$ in $M_{\RR}\setminus \{0\}$, note that by construction
of $h$, $\theta_{\gamma,\ofoD_{I}}$ makes sense as an automorphism
of the localization $\kk[P_{\bar\varphi}]_h$, since to define
the automorphism associated with crossing a ray $(\fod,f_{\fod})$, we only
need $f_{\fod}$ to be invertible. However since by construction
$h$ is divisible by $f_{\fod}$, $f_{\fod}$ is invertible. In particular,
$\theta_{\gamma,\ofoD_{I}}$ also makes sense as an automorphism
of $(\kk[P_{\bar\varphi_{\tau}}]/I\kk[P_{\bar\varphi_{\tau}}])_h
=S_{\tau,I, \emptyset}$ for any $\tau\in \oSigma\setminus \{0\}$. Thus using the
equality $S_{\sigma,I,\emptyset}=S_{\sigma,I,\ofoD}$ for $\dim
\sigma=2$
we see
that $\theta_{\gamma,\ofoD_{I}}$ also makes sense as an automorphism
of $S_{\sigma,I,\ofoD}$.

Choose an orientation on $M_{\RR}$, labelling the rays $\rho_1,\ldots,
\rho_n$ of $\oSigma$ in a counterclockwise order, with $\sigma_{i-1,i}$
as usual the maximal cone containing $\rho_{i-1}$ and $\rho_i$. For two distinct
points $p,q$ on the unit circle in $M_{\RR}$ not contained in
$\Supp(\ofoD_{I})$, let
$\gamma_{p,q}$ be a \emph{counterclockwise} path from $p$ to $q$, and
write $\theta_{p,q}$ for $\theta_{\gamma_{p,q},\ofoD_{I}}$
acting on any of the rings $S_{\tau,I,\emptyset}$.

For each $\rho_i$, let $p_{i,+}$ be a point on this unit circle
contained in the connected component of $\sigma_{i,i+1}\setminus
\Supp(\ofoD_{I})$ adjacent to $\rho_i$, and
$p_{i,-}$ a point in this unit circle contained in
the connected component of $\sigma_{i-1,i}\setminus
\Supp(\ofoD_{I})$ adjacent to $\rho_i$.

Choose a base-point $q$ on the unit circle not in
$\Supp(\ofoD_{I})$.

Recall in the construction of $\oX^o_{I,\foD}$, the open sets
$\Spec \oR_{\rho_{i+1},I,\foD}$ and $\Spec \oR_{\rho_i,I,\foD}$
are glued together along the common open set $\Spec
\oR_{\sigma_{i,i+1},I,\foD}$ using the
trivial automorphism or the  automorphism $\theta_{p_{i,+},p_{i+1,-}}$
in the cases $\foD=\emptyset$ or $\foD=\ofoD$ respectively. After
localizing at $h$, we have a commutative diagram
\[
\xymatrix@C=30pt
{
S_{\rho_i,I,\ofoD}\ar[rr]^{\theta_{p_{i,+},q}\circ\psi_i}
\ar[dr]_{\theta_{p_{i,+},p_{i+1,-}}\circ\psi_{\rho_i,+}}&&
S_{\rho_i,I,\emptyset}\ar[rd]^{\psi_{\rho_i,+}}&\\
&S_{\sigma_{i,i+1},
I,\ofoD}\ar[rr]^{\theta_{p_{i+1,-},q}}&&S_{\sigma_{i,i+1},I,\emptyset}\\
S_{\rho_{i+1},I,\ofoD}\ar[ur]^{\psi_{\rho_{i+1},-}}
\ar[rr]_{\theta_{p_{i+1,+},q}\circ\psi_{i+1}}&&S_{\rho_{i+1},I,\emptyset}
\ar[ur]_{\psi_{\rho_{i+1},-}}}
\]
Here the maps $\psi_{\rho,\pm}$ are the ones
defined in Proposition \ref{ringfirstversion} and \eqref{psirhoi}.
This shows that the isomorphisms $\theta_{p_{i,+}}\circ\psi_i$ between
$\Spec S_{\rho_i,I,\ofoD}$
and $\Spec S_{\rho_i,I,\emptyset}$
are compatible with the gluings, and hence give
an isomorphism between $\oU_{I,\foD}\setminus (\{0\}\times T^{\gs})$ and
$\oU_{I,\emptyset}
\setminus(\{0\}\times T^{\gs})$.

Now $\VV_n$ satisfies Serre's condition $S_2$.
Since $\oX_{I}$ and $X_{I}$ are flat deformations
of $\VV_n \times T^{\gs}$, by Lemma~\ref{relS2} the above isomorphism
extends across the codimension two set $\{0\}\times T^{\gs}$, giving the
desired isomorphism between $U_{I}$ and $\oU_{I}$.
\end{proof}

We now need to use the above observations along the Gross-Siebert locus
to obtain results about deformations away from the Gross-Siebert locus.
For the remainder of the section, we work with data $(Y,D)$, $\eta$, $P$,
but now as in Assumptions
\ref{GSassumptions}. Furthermore, we take $\foD=\foD^{\can}$.
Thus if we take $I$ an ideal with either
$\sqrt{I}=G$ or $\sqrt{I}=\fom$, we obtain a flat family
$X_I\rightarrow\Spec R_I=:S_I$, and in the former case, $X_I\rightarrow
S_I$ restricts to the open subscheme of $\Spec R_I$ whose underlying
open subset is $T^{\gs}$, giving the family over the thickening of the
Gross-Siebert locus.

With $J=\fom$ or $G$, let
$\mathfrak{f}_J \colon \mathfrak{X}_J \rightarrow \mathfrak{S}_J$
denote the formal deformation determined by the deformations
$X_{J^{N+1}} \rightarrow \Spec R_{J^{N+1}}$ for $N \ge 0$.
Thus $\mathfrak{S}_J= \Spf(\varprojlim \kk[P]/J^{N+1})$
is the formal spectrum of the $J$-adic completion of $\kk[P]$, $\mathfrak{X}_J$ is a formal scheme, and $\mathfrak{X}_J \rightarrow \mathfrak{S}_J$ is an adic flat morphism of formal schemes.
We refer to \cite{G60} for background on formal schemes.

Let $Z_I:=\Sing(f_I) \subset X_I$ denote the singular locus of $f_I \colon X_I \rightarrow S_I$. Thus $Z_I \subset X_I$ is a closed embedding of schemes.
Since the singular locus is compatible with base-change, the singular loci $Z_{J^n} \subset X_{J^n}$ determine a closed embedding $\mathfrak{Z}_J \subset \mathfrak{X}_J$ which we refer to as the singular locus of $\fof_J \colon \mathfrak{X}_J \rightarrow \mathfrak{S}_J$.

Again, with $J=\fom$ or $G$, we have a section
$s \colon S_J \rightarrow X_J=S_J\times\VV_n$
given by $s(t)=t\times \{0\}$ for $t\in S_J$.
We write $X_J^o := X_J \setminus s(S_J) \subset X_J$ and
$X_I^o \subset X_I$, $\mathfrak{X}_J^o \subset \mathfrak{X}_J$ for the induced open embeddings.

\begin{lemma} \label{reducetofinite}
In the above situation,
there exists $0 \neq g \in \kk[P]$ such that $\Supp(g \cdot \cO_{\mathfrak{Z}_J})$ is contained in $s(S_J)$.
In particular, $\fof_{J*}(g \cdot \cO_{\mathfrak{Z}_J})$ is a coherent sheaf on $\mathfrak{S}_J$.
\end{lemma}

\begin{proof}
We can write an explicit open covering $\{\foU_{i,J}\}$ of
$\mathfrak{X}_J^o$ in the two cases $J=\fom$ or $J=G$, as follows.
Write $a_i=z^{[D_{i}]}$ and $m_i=-D_i^2$.
In the case $J=\fom$,
\begin{equation}
\label{mathfrakU}
\mathfrak{U}_{i,J}=V(X_{i-1}X_{i+1}-a_iX_i^{m_i}) \subset
\bA^2_{X_{i-1},X_{i+1}} \times (\bG_m)_{X_i} \times \mathfrak{S}_J.
\end{equation}
In the case $J=G$,
\[
\mathfrak{U}_{i,J}=V(X_{i-1}X_{i+1}-a_iX_i^{m_i}\prod(1+b_{ij}X_i^{-1}))
\subset \bA^2_{X_{i-1},X_{i+1}} \times (\Gm)_{X_i} \times \mathfrak{S}_J,
\]
with $b_{ij}=z^{[E_{ij}]}$ as usual.

We now use the charts $\mathfrak{U}_{i,J}$ to compute the singular locus explicitly.
In the case $J=\fom$,
the singular locus $\mathfrak{Z}_{i,J}$ of $\mathfrak{U}_{i,J}/\mathfrak{S}_J$ is given by
\[
\mathfrak{Z}_{i,J}=V(X_{i-1},X_{i+1},a_i) \subset \mathfrak{U}_{i,J}.
\]
Hence if we define
$g=a_1 \cdots a_n$ then
$\Supp(g \cdot \cO_{\mathfrak{Z}_J})$ is contained in $s(S_J)$.

Similarly, if $J=G$, the structure sheaf of the singular locus of $\mathfrak{U}_{i,J}$ is annihilated by $g_i:= a_i\prod_{j \neq k}(b_{ij}-b_{ik})$. (Here $\prod_{j \neq k}(b_{ij}-b_{ik})$ is the discriminant of the polynomial $f(X_i):=\prod(X_i+b_{ij})$. It is a linear combination of $f$ and $f'$ with coefficients in $\kk[\{b_{ij}\}][X_i]$. See \cite{L02}, p.~200--204.) So we can take $g=g_1 \cdots g_n$.

The support of $g \cdot \cO_{\mathfrak{Z}_J}$ is a closed subset of $s(S_J)$, hence proper over $S_J$.
It follows that $\fof_{J*}(g \cdot \cO_{\mathfrak{Z}_J})$ is coherent by \cite{G61}, 3.4.2.
\end{proof}

\begin{theorem}\label{injectivitym}
Let $(Y,D), \eta, P$ satisfy Assumptions \ref{GSassumptions}, and take
$\foD=\foD^{\can}$. Then
the maps $\kk[P]\rightarrow {\fof_{\fom}}_*\cO_{\foZ_\fom}$ and
$\cO_{\mathfrak{S}_\fom} \rightarrow
{\mathfrak{f}_\fom}_*\cO_{\mathfrak{Z}_\fom}$ are not injective, so
the generic fibre of $\fof_\fom$ is smooth in the sense of Definition
\ref{genericsmoothness}.
This also implies that for $I=\fom^{N+1}$ and $N \gg 0$, the map
$\kk[P]/I \rightarrow f_{I*}\cO_{\Sing(f_I)}$ is not injective.
\end{theorem}

\begin{proof}
By Lemma~\ref{reducetofinite} there exists $0 \neq g \in \kk[P]$ such that $\Supp(g \cdot \cO_{\mathfrak{Z}_G}) \subset s(S_G)$.
Let $E$ be the subgroup of $P^{\gp}$ generated by $P\setminus G$, so that
$U=\Spec \kk[P+E]$ is an open subset of $\Spec \kk[P]$. Denote by $\foS_G^o$
the open subset of $\foS_G$ isomorphic to the completion of $U$
along the subscheme defined by $G+E$. This is the formal thickening
of the Gross-Siebert locus $T^{\gs}$.
By Proposition~\ref{GSlocus} there then exists $0 \neq h \in \kk[P+E]$
such that $\Supp(h \cdot \cO_{\mathfrak{Z}_G\cap\fof_G^{-1}(\foS_G^o)})$ is disjoint from $s(S_G\cap U)$.
By multiplying $h$ by a monomial whose exponent lies in $P\setminus G$, we
can assume that $h\in\kk[P]$.
Thus $gh \cdot \cO_{\mathfrak{Z}_G}$ has support in the closed subset
$s(S_G\setminus (S_G\cap U))$.
Since this sheaf is coherent, there exists a non-zero element
$k\in\fom\subset\kk[P]$  such that $ghk\cdot
\cO_{\foZ_G}=0$. Noting by construction that
$ghk\in \kk[P]$, we have $\kk[P]\rightarrow \Gamma(\foZ_{G},\shO_{\foZ_G})$
is not injective, hence the composition $\kk[P]\rightarrow
\Gamma(\foZ_G,\shO_{\foZ_G})\rightarrow \Gamma(\foZ_{\fom},\shO_{\foZ_{\fom}})$
is not injective. Since $\kk[P]\subset \Gamma(\foS_{\fom},\shO_{\foS_{\fom}})$,
$\shO_{\foS_{\fom}}\rightarrow \fof_{\fom*}\shO_{\foZ_{\fom}}$ is not
injective.
\end{proof}

\section{The relative torus}
\label{relativetorussection}
The flat deformations $X_{I,\foD^{\can}}\rightarrow \Spec \kk[P]/I$
produced by the canonical scattering diagram have a useful
special property: there is a natural torus action on the total
space $X_{I,\foD^{\can}}$ compatible with a torus action on
the base.  The meaning of this action will be clarified in
Part II, where we will prove that our family extends naturally, in the positive case,
to a universal family of Looijenga pairs $(Z,D)$ together with
a choice of isomorphism $D \stackrel{\sim}{\rightarrow} D_*$, where $D_*$ is a fixed $n$-cycle.
The torus action then corresponds to changing the choice of isomorphism.

Fixing the pair $(Y,D)$ as usual, $D=D_1+\cdots+D_n$,
let $\bA^D = \bA^n$ be the affine space with one coordinate
for each component $D_i$. Let $T^D$ be the diagonal
torus acting on $\AA^D$, i.e., the torus $T^D$ whose character group
\[
\chi(T^D) = \bZ^D
\]
is the free module with basis $e_{D_1},\ldots,e_{D_n}$.
\begin{definition} \label{weights}
We define a canonical map $w: A_1(Y) \to \chi(T^D)$ given by
$$
C \mapsto \sum (C \cdot D_i) e_{D_i}.
$$
\end{definition}
Suppose $P\subset A_1(Y)$ is a toric submonoid containing
$\NE(Y)$. We then get an action of $T^D$ on $\Spec \kk[P]$,
as well as on $\Spec \kk[P]/I$ for any monomial ideal $I$,
and hence also on $\Spf(\widehat{\kk[P]})$ for any completion
of $\kk[P]$ with respect to a monomial ideal.

We can also define a unique piecewise linear map
\[
w: B \to \chi(T^D)\otimes_{\ZZ}\RR
\]
with $w(0)=0$ and $w(v_i) = e_{D_i}$, for $v_i$ the primitive generator
of the ray $\rho_i$.

\begin{theorem}
\label{TDequivariance}
Let $I$ be an ideal
for which $X_{I,\foD^{\can}}\rightarrow\Spec\kk[P]/I$ is defined.
Then $T^D$ acts equivariantly on $X_{I,\foD^{\can}}\rightarrow\Spec\kk[P]/I$.
Furthermore, each theta function $\vartheta_q$, $q \in B(\bZ)$, is
an eigenfunction of this action, with character $w(q)$.
\end{theorem}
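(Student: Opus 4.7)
My plan is to construct the $T^D$-action patch by patch on the open cover of $X^o_{I,\foD^{\can}}$ by the schemes $U_{\rho,I}$ of Construction \ref{gluingconstruction}, and then push it through to the affinization $X_{I,\foD^{\can}}=\Spec\Gamma(X^o_{I,\foD^{\can}},\cO)$.

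The first step is to install a $\chi(T^D)$-grading on each ring $\kk[P_{\varphi_\tau}]$. The key observation is that the piecewise linear function $w\colon B\to\chi(T^D)_{\bR}$ in the statement has bending parameter at $\rho_i$ equal to $w([D_i])=\sum_j(D_i\cdot D_j)\,e_{D_j}$; indeed, the relation $v_{i-1}+D_i^2 v_i+v_{i+1}=0$ in $\Lambda_{\rho_i}$ forces $p_{\rho_i,w}=e_{D_{i-1}}+D_i^2 e_{D_i}+e_{D_{i+1}}$, which equals $w([D_i])$ by our cycle convention. Since $p_{\rho_i,\varphi}=[D_i]$ by Example \ref{standardphi}, the multi-valued function $w\circ\varphi$ and the single-valued $w$ have identical bending parameters, so we may choose local representatives $\varphi_\tau$ with $w\circ\varphi_\tau=w|_\tau$. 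Given such a choice, define $\tilde w\colon P_{\varphi_\tau}\to\chi(T^D)$ by $\tilde w(q):=w(r(q))+w(q-\varphi_\tau(r(q)))$, using the unique decomposition $q=\varphi_\tau(r(q))+p$ with $p\in P$. This map is additive, yielding a $T^D$-action on $\Spec\kk[P_{\varphi_\tau}]$ lifting the given action on $\Spec\kk[P]$. The condition defining $I_{\tau,\sigma}$ (namely $q-\varphi_\sigma(r(q))\in I$) is $T^D$-invariant, so each $R_{\tau,\sigma,I}$ inherits a $T^D$-grading; the localizations and surjections between these rings respect it, and thus the fibre products $R_{\rho,I}$ of \eqref{fibreprod} also carry $T^D$-actions.

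The heart of the argument is the $T^D$-invariance of each wall function $f_\fod$, which reduces to showing that every monomial in $\log f_\fod$ has weight zero. A typical term has exponent $\eta(\pi_*\beta)-\varphi_{\tau_\fod}(k_\beta m_\fod)$, so I need
$$w(\pi_*\beta)=k_\beta\cdot w(m_\fod)\in\chi(T^D).$$
Writing the primitive vector as $m_\fod=a v_i+b v_{i+1}$ (with $(a,b)=(1,0)$ in the boundary case $\fod=\rho_i$), the toric pullback formula gives $\pi^*D_i=\tilde D_i+a\,C+\cdots$, $\pi^*D_{i+1}=\tilde D_{i+1}+b\,C+\cdots$ (the ellipses being contributions from other interior rays of $\tilde\Sigma$, which we may refine to be smooth) and $\pi^*D_j=\tilde D_j$ for $j\neq i,i+1$. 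Combined with \eqref{betadef}, which gives $\beta\cdot\tilde D_k=k_\beta\delta_{\tilde D_k,C}$, the projection formula yields $\pi_*\beta\cdot D_i=ak_\beta$, $\pi_*\beta\cdot D_{i+1}=bk_\beta$ and $\pi_*\beta\cdot D_j=0$ otherwise, so $w(\pi_*\beta)=k_\beta(ae_{D_i}+be_{D_{i+1}})=k_\beta w(m_\fod)$. This is the only real computation, and I expect it to be the main (minor) obstacle. With $f_\fod$ in weight zero, the wall-crossing map $z^p\mapsto z^p f_\fod^{\langle n_\fod,r(p)\rangle}$ is $T^D$-equivariant, as are all the gluings of Construction \ref{gluingconstruction}. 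Passing to global sections equips $X_{I,\foD^{\can}}$ with the desired $T^D$-action, equivariant over $\Spec\kk[P]/I$.

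Finally, for the eigenfunction claim I would trace weights along broken lines. By Definition \ref{brokenlinedef}(1), the monomial attached to the unbounded domain of a broken line $\gamma$ for $q\in B(\ZZ)$ is $z^{\varphi_\sigma(q)}$, which has $\tilde w$-weight $w_\sigma(q)=w(q)$ by our normalization. At each bend, the new monomial is one of the terms in $m_L\cdot f_\fod^{\langle n,r(q_L)\rangle}$, and since $f_\fod$ has been shown to be $T^D$-invariant of weight zero, this operation preserves $\tilde w$. Consequently every monomial contributing to $\Lift_Q(q)$ has weight $w(q)$, so $\vartheta_q$ is a $T^D$-eigenfunction of weight $w(q)$, as claimed.
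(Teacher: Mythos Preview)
Your proof is correct and follows essentially the same approach as the paper: construct the $T^D$-action on the open patches, verify that every monomial in each $f_\fod$ has weight zero (so that the scattering automorphisms and the defining equations of the $U_{\rho,I}$ are equivariant), and then trace weights along broken lines to conclude that $\vartheta_q$ has weight $w(q)$. The only differences are cosmetic: the paper works directly in the explicit hypersurface charts of Lemma~\ref{localmodelsingmanifold} and declares weights on $x_j$ and $z^p$, whereas you package this as a grading $\tilde w$ on $\kk[P_{\varphi_\tau}]$ via the bending-parameter compatibility of $w$ and $\varphi$; and you spell out the projection-formula computation for $w(\pi_*\beta)=k_\beta\,w(m_\fod)$ that the paper leaves implicit in its reference to Definition~\ref{canscatdiagdef}.
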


\begin{proof} It's enough to check this on the
open subset $X^o_{I,\foD^{\can}}\subset X_{I,\foD^{\can}}$.
We have a cover of $X^o_{I,\foD^{\can}}$ by open sets
the hypersurfaces
\[
U_{\rho_i,I}\subset\bA^2_{X_{i-1},X_{i+1}} \times (\Gm)_{X_i}\times
\Spec R_I
\]
given by the equation
\[
X_{i-1} X_{i+1} = z^{[D_i]} X_i^{- D_i^2} f_{\rho_i},
\]
where $f_{\rho_i}$ is the function attached to the ray
$\rho_i$ in $\foD^{\can}$. If we act on $X_j$ with weight
$w(v_j)$ and on $z^p$ with weight $w(p)$ (for $p\in P$), then
we note that for every $(\fod,f_{\fod})\in \foD^{\can}$,
every monomial in $f_{\fod}$ has weight zero by the explicit
description of $f_{\fod}$ in Definition \ref{canscatdiagdef}.
In particular, the equation defining $U_{\rho_i,I}$ is
clearly $T^D$-equivariant, and each of the monomials
is an eigenfunction.

Now $X^o_{I,\foD^{\can}}$ is obtained by gluing
$U_{\rho_i,\sigma_{i,i+1},I}\subset U_{\rho_i,I}$
with $U_{\rho_{i+1},\sigma_{i,i+1},I}\subset U_{\rho_{i+1},I}$,
using scattering automorphisms of $\foD^{\can}$,
and these open sets are naturally identified with
$(\Gm)^2_{X_i,X_{i+1}}\times \Spec R_I$.
The scattering automorphisms commute with the $T^D$ action,
by the fact that the scattering functions have weight zero.
Thus $T^D$ acts equivariantly on $X_{I,\foD^{\can}}\rightarrow
\Spec\kk[P]/I$.

Now we check our canonical global function $\vartheta_q$ is an eigenfunction,
with character $w(q)$. By construction, given a broken line $\gamma$,
the weights of monomials attached to adjacent domains of linearity
are the same, since the functions in the scattering diagram are
of weight zero. Thus the weight of $\Mono(\gamma)$ only depends on $q$.
This weight can be determined by fixing the base point $Q$ in a cone $\sigma$
which contains $q$, in which case the broken line for $q$ which doesn't
bend and is wholly contained in $\sigma$ yields the monomial
$z^{\varphi_{\sigma}(q)}$, which has weight $w(q)$.
Thus $\vartheta_q$ is an eigenfunction with weight $w(q)$.
\end{proof}

\section{Extending the family over boundary strata}
\label{positivecase}
Here we prove Theorem \ref{maintheoremlocalcase} and
Theorem \ref{extensiontheorem}. Let us review what we know so far.
For any pair $(Y,D)$, we know that $\foD^{\can}$ is consistent by
Theorem \ref{canonicalconsistency}. Thus, if the number $n$ of irreducible
components of $D$ satisfies $n\ge 3$, Theorem
\ref{univfamth} gives the construction of $f:X_I\rightarrow \Spec R_I$
of Theorem \ref{maintheoremlocalcase}. The algebra structure on $A_I$
has structure constants given by counts of broken lines as in Theorem
\ref{multrule}. The $T^D$ equivariance is
given by Theorem \ref{TDequivariance}. If furthermore $(Y,D)$ has a toric
model, then the smoothness statement follows from Theorem \ref{injectivitym}.

We will give a proof of Theorem \ref{extensiontheorem} and the remaining
cases of \ref{maintheoremlocalcase} by first proving Theorem
\ref{extensiontheorem} in the case that we know that we have the desired
algebra structure on $A_I$, and then bootstrap to the general case for
both theorems.

\subsection{Theorem \ref{extensiontheorem} in the case that $(Y,D)$ has
a toric model}
As usual, let $P$ be the toric monoid associated to a strictly convex
rational polyhedral cone $\sigma_P \subset A_1(Y)_{\bR}$ which
contains the Mori cone $\NE(Y)_{\bR}$. We have $\fom=P\setminus
\{0\}$.
For a monomial ideal $I \subset P$ we define
\[
A_I := \bigoplus_{q \in B(\bZ)} R_I \cdot \vartheta_q
\]
where $R_I = \kk[P]/I$.
We take throughout $\foD = \foD^{\can}$.

\begin{assumptions} \label{ringass}
For any
monomial ideal $I$ with $\sqrt{I} = \fom$, the
multiplication rule of Theorem~\ref{multrule} defines an
$R_I$-algebra structure on $A_I$, so that
$A_I \otimes_{R_I} R_{\fom} = H^0(\bV_n,\cO_{\VV_n})$.
\end{assumptions}

Note we have already shown that Assumptions~\ref{ringass} hold if
$n\ge 3$ by Theorems \ref{univfamth}, \ref{multrule} and
\ref{canonicalconsistency}.

Let $\Gamma \subset B(\bZ)$ be
a finite collection of integral points such that the corresponding
functions $\vartheta_q$ generate the $\kk$-algebra $H^0(\bV_n,\cO_{\VV_n})$.
(Then the $\vartheta_q$, $q \in \Gamma$ generate $A_I$ as an $R_I$-algebra if $\sqrt{I}=\fom$ and Assumptions~\ref{ringass} hold.)
Note for $n \geq 3$ we can take for $\Gamma$ the points $\{v_i\}$,
and for $n=1,2$, one can make a simple choice for $\Gamma$,
see \S\ref{mtsmalln}.

\begin{lemma} For any monomial ideal $J\subset P$,
$\bigcap_{k > 0} (J + \fom^k) = J$.
\end{lemma}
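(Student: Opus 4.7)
The plan is to exploit the fact that each term $J+\fom^k$ is a monomial ideal, reducing the lemma to a purely combinatorial statement about the monoid $P$. Since the sum of two monomial ideals in $\kk[P]$ is monomial and an intersection of monomial ideals is monomial, $\bigcap_{k>0}(J+\fom^k)$ is itself a monomial ideal. Checking the equality therefore reduces to checking which exponents $p\in P$ satisfy $z^p\in \bigcap_k(J+\fom^k)$. Translating into the language of monoid ideals (Definition~\ref{idealdef}), $z^p\in J+\fom^k$ if and only if $p\in J\cup\fom^k$, where $\fom^k=k\fom$ in additive notation. Thus it suffices to show that if $p\in J\cup\fom^k$ for every $k\ge 1$, then $p\in J$.

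The key input is the strict convexity of $\sigma_P\subset A_1(Y)_\bR$, assumed at the start of the section. Strict convexity provides a linear functional $\ell\colon P^{\gp}\to \bZ$ with $\ell(q)\ge 1$ for every $q\in\fom$, and a trivial induction on $k$ shows $\ell(q)\ge k$ for all $q\in\fom^k$. Applying this to a candidate $p$ in the intersection: if $p\notin J$, then $p\in\fom^k$ for all $k$, hence $\ell(p)\ge k$ for all $k$, which is absurd since $\ell(p)$ is a fixed integer. Hence $p\in J$, completing the argument. There is no substantive obstacle; the only step requiring any care is noting the reduction to monomial generators, after which strict convexity of $\sigma_P$ immediately closes the argument.
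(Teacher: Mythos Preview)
Your proof is correct and follows essentially the same approach as the paper: both reduce to monomials, use that a monomial lies in $J+\fom^k$ iff it lies in $J$ or in $\fom^k$, and conclude from $\bigcap_k \fom^k=0$. The paper simply asserts this last intersection vanishes, while you supply the explicit justification via a strictly positive linear functional coming from strict convexity of $\sigma_P$; this is a harmless elaboration, not a different route.
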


\begin{proof} The inclusion $\supset$ is obvious. For the other
direction, as the intersection is a monomial ideal, it's enough to
consider a monomial in the intersection. But notice that a monomial
is in $J +\fom^k$ iff it is either in $J$ or in $\fom^k$. The
result follows since $\bigcap \fom^k =0$.
\end{proof}

Assuming \ref{ringass},
let $\cA$ be the collection of monomial ideals $J \subset P$
with the following properties:
\begin{enumerate}
\item There is an $R_J$-algebra structure on
$A_J$ such that the canonical isomorphism of $R$-modules
$A_J \otimes_{R_J} R_{I+J} = A_{I+J}$ is an algebra isomorphism, for
all $\sqrt{I} = \fom$.
\item $\vartheta_q, q \in \Gamma$ generate $A_J$ as an $R_J$-algebra.
\end{enumerate}

By the lemma, the algebra structure in (1) is unique if it
exists. The algebra structure on all $A_I$ determines such
a structure on
$\widehat{A} := \invlim_{\sqrt{I} = \fom } A_I,$
$\widehat{A}_J := \invlim_{\sqrt{I} = \fom} A_{I + J}.$
Also, there are canonical inclusions
\begin{align*}
\widehat{A} &\subset \prod_{q \in B(\bZ)} \hR \cdot \vartheta_q \\
\widehat{A}_J &\subset \prod_{q \in B(\bZ)} \hR_J \cdot \vartheta_q
\end{align*}
where $\hR$ is the completion of $R$ at $\fom$ and $\hR_J=\invlim R/(I+J)$
the inverse limit over all ideals $I$ with $\sqrt{I}=\fom$.
Here the direct products are viewed purely as $\hR,\hR_J$ modules. We can
also view
\[
A_J := \bigoplus_{q \in B(\bZ)} R_J \cdot \vartheta_q \subset
\prod_{q \in B(\bZ)} \hR_J \cdot \vartheta_q.
\]
It is clear that $A_J \subset \widehat{A}_J$ (as submodules of the direct product).
Thus (1) holds if and only if the following holds:
\begin{itemize}
\item[($1'$)]
For each $p,q \in B(\bZ)$, at most finitely
many $z^{C}\vartheta_s$ with $C \not \in J$ appear in the product expansion
of Theorem \ref{multrule} for
$\vartheta_p \cdot \vartheta_q \in \widehat{A}_J$.
\end{itemize}

\begin{lemma} \label{intlemma} If $J\in\cA$ and $J\subset J'$, then
$J'\in\cA$. In addition, $\cA$ is closed under finite intersections.
\end{lemma}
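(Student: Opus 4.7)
The first assertion is handled by base change. Given $J \in \cA$ and $J \subseteq J'$, I would endow $A_{J'}$ with the $R_{J'}$-algebra structure coming from the identification $A_{J'} = A_J \otimes_{R_J} R_{J'}$. The equivalent characterization $(1')$ (finiteness of the Theorem~\ref{multrule} expansion) transfers immediately from $J$ to $J'$: any class $C \notin J'$ already satisfies $C \notin J$, so the set of contributing classes can only shrink. For any monomial ideal $I$ with $\sqrt{I}=\fom$, compatibility follows by chaining tensor products:
\[
A_{J'} \otimes_{R_{J'}} R_{I+J'} = A_J \otimes_{R_J} R_{I+J'} = (A_J \otimes_{R_J} R_{I+J}) \otimes_{R_{I+J}} R_{I+J'} = A_{I+J} \otimes_{R_{I+J}} R_{I+J'} = A_{I+J'}
\]
as algebras, using $J \in \cA$ for the penultimate identification and the evident naturality of the broken-line multiplication rule of Theorem~\ref{multrule} under the quotient $R_{I+J} \twoheadrightarrow R_{I+J'}$ (both radicals equal $\fom$, so Assumption~\ref{ringass} furnishes both algebra structures from the same formal sum) for the last. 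Generation by $\{\vartheta_q : q \in G\}$ descends to any ring-theoretic quotient, giving~(2) for $J'$.

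For the intersection assertion, the critical combinatorial input is the identity
\[
(I + J_1) \cap (I + J_2) = I + (J_1 \cap J_2)
\]
for monomial ideals, which combined with the general pullback $R/(K_1 \cap K_2) = R/K_1 \times_{R/(K_1+K_2)} R/K_2$ yields
\[
R_{I+(J_1 \cap J_2)} = R_{I+J_1} \times_{R_{I+J_1+J_2}} R_{I+J_2}, \qquad A_{I+(J_1 \cap J_2)} = A_{I+J_1} \times_{A_{I+J_1+J_2}} A_{I+J_2}
\]
(the case $I=(0)$ in particular). By the first part of the lemma, $J_1 + J_2 \in \cA$, so the reduction maps $A_{J_i} \to A_{J_1+J_2}$ are algebra homomorphisms, the fibre product carries a canonical $R_{J_1 \cap J_2}$-algebra structure, and I would take this as the structure on $A_{J_1 \cap J_2}$. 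Condition $(1')$ for $J_1 \cap J_2$ is automatic, since the set of classes $C \notin J_1 \cap J_2$ is the union of those not in $J_1$ and those not in $J_2$; compatibility with $A_{I+(J_1 \cap J_2)}$ then reduces via the fibre-product decomposition above to the already-established compatibilities for $J_1$ and $J_2$.

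The one subtlety, and the main obstacle, is verifying condition~(2) for $J_1 \cap J_2$. Representations of a basis element $\vartheta_s$ as a polynomial in the generators $\{\vartheta_q : q \in G\}$ are highly non-unique, so polynomial expressions for $\vartheta_s$ over $R_{J_1}$ and over $R_{J_2}$ need not have matching reductions in $R_{J_1+J_2}$. I plan to handle this by a lifting argument: start with any expression $\vartheta_s = P_1(\vartheta_q)$ in $A_{J_1}$, lift $P_1$ arbitrarily to $\tilde P_1 \in R_{J_1 \cap J_2}[x_q : q \in G]$, and examine the difference $\vartheta_s - \tilde P_1(\vartheta_q) \in A_{J_2}$. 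Its image in $A_{J_1+J_2}$ vanishes, so it lies in the submodule $(J_1 \cdot R_{J_2}) \cdot A_{J_2}$, and hence admits an expression $\sum c_i Q_i(\vartheta_q)$ with $c_i \in J_1 \cdot R_{J_2}$ and $Q_i \in R_{J_2}[x_q]$. Each $c_i$ has a canonical pullback lift $(0, c_i) \in R_{J_1} \times_{R_{J_1+J_2}} R_{J_2} = R_{J_1 \cap J_2}$; choosing arbitrary lifts of the $Q_i$ then corrects $\tilde P_1$ into a polynomial expression for $\vartheta_s$ over $R_{J_1 \cap J_2}$, establishing~(2).
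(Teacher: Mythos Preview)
Your argument is correct and follows the same overall architecture as the paper: the first statement is immediate from $(1')$, and for the intersection you use the module-level fibre product $A_{J_1\cap J_2}=A_{J_1}\times_{A_{J_1+J_2}}A_{J_2}$ together with $(1')$ to get the algebra structure. The only real difference is in how you establish condition~(2).

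The paper proceeds in two steps: first an abstract argument that a fibre product $A_1\times_B A_2$ of finitely generated $\kk$-algebras (with both maps surjective) is again finitely generated, using lifts of a finite generating set for $\ker(A_2\to B)$; then Lemma~\ref{gen}, which invokes properness of $\Spec A_{J_1\cap J_2}\to\AA^m\times\Spec R_{J_1\cap J_2}$ and Nakayama to upgrade ``some finite generating set'' to ``the specific set $\{\vartheta_q:q\in G\}$''. Your lifting argument for each $\vartheta_s$ is exactly the $(0,c_i)$ trick from the paper's first step, but run with the specific generators $\vartheta_q$ from the outset rather than abstract ones. This collapses the two steps into one and avoids Lemma~\ref{gen} entirely, at the cost of being slightly less modular. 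Both approaches are short; yours is marginally more elementary, while the paper's isolates a reusable general lemma.
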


\begin{proof} The first statement is clear.
Now assume $J_1,J_2 \in \cA$. It's clear that
($1'$) holds for $J_1 \cap J_2$, so $A_{J_1 \cap J_2}$ is an algebra.
Moreover we have an exact sequence of $\kk$-modules
\[
0 \to A_{J_1 \cap J_2} \to A_{J_1} \times A_{J_2} \to A_{J_1 + J_2} \to 0
\]
exhibiting $A_{J_1\cap J_2}$ as the fibre product $A_{J_1}\times_{A_{J_1
+J_2}} A_{J_2}=:A_1\times_B A_2=:A$. We now show this
fibre product is a finitely
generated $\kk$-algebra. Indeed, note that since the maps $A_1,A_2\rightarrow
B$ are surjective, so are the maps $A\rightarrow A_i$.
Let $\{u_i\}$ be a generating set for the ideal $\ker(A_2\rightarrow
B)$.
Since $A_i$ is Noetherian, one can find a finite such set.
Note that $\tilde u_i:=(0,u_i)\in A$. In addition, choose finite sets
$\{x_i\}$, $\{y_j\}$ generating $A_1$ and $A_2$ as $\kk$-algebras.
For each of these elements, choose a lift to $A$, giving a finite set of
lifts $\{\tilde u_i, \tilde x_i,\tilde y_i\}$, which we
claim generate $A$. Indeed, given $(x,y)\in A$, one can subtract
a polynomial in the $\tilde x_i$'s to obtain $(0,y')$. Necessarily
$y'\in\ker (A_2\rightarrow B)$, and hence we can write $y'=\sum f_i u_i$
with $f_i$ a polynomial in the $y_i$'s. Let $\tilde f_i$ be the
same polynomial in the $\tilde y_i$'s. Then $\sum \tilde f_i\tilde{u}_i=(0,y')$,
showing generation.

Thus $A_{J_1 \cap J_2}$ is also a finitely generated
$R_{J_1\cap J_2}$-algebra. Now the generation statement
follows from Lemma \ref{gen}, taking $R=R_{J_1\cap J_2}$, $S=A_{J_1\cap J_2}$,
$I=J_1/J_1\cap J_2$, $J=J_2/J_1\cap J_2$, $\Gamma=\{q_1,\ldots,q_m\}$, and the
map $R[T_1,\ldots,T_m]\rightarrow S$ given by $T_i\mapsto \vartheta_{q_i}$.
\end{proof}

\begin{lemma} \label{gen} Let $I,J \subset R$ be ideals in
a Noetherian ring, with
$I \cdot J =0$, and let $S$ be a finitely generated $R$-algebra,
and $R[T_1,\dots,T_m] \to S$ an $R$-algebra map which is surjective
modulo $I$ and $J$. Then the map is surjective.
\end{lemma}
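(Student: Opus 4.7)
The plan is to reduce the problem to a short computation with the cokernel. Write $\phi: R[T_1,\dots,T_m] \to S$ for the given $R$-algebra map and let $C := \coker(\phi)$, viewed as an $R$-module (indeed as an $S$-module, but only the $R$-module structure is needed). The hypotheses translate immediately: since $\phi$ is surjective modulo $I$, the induced map $R/I[T_1,\dots,T_m] \to S/IS$ is surjective, which is to say $C \otimes_R R/I = C/IC = 0$, hence $C = IC$. Symmetrically, surjectivity modulo $J$ gives $C = JC$.

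Given these two identities, the conclusion is a one-line consequence of $I\cdot J = 0$. Substituting $C = JC$ into $C = IC$ yields
\[
C = IC = I(JC) = (IJ)C = 0,
\]
so $\phi$ is surjective. The main (and only) point to check carefully is that the tensor-product identification $C/IC = \coker(\phi \otimes_R R/I)$ is valid and that no Noetherian or finite-generation hypothesis on $C$ is needed for the argument, since we are not invoking Nakayama but only the exact cancellation afforded by $IJ = 0$. There is no real obstacle; the Noetherian hypothesis on $R$ and finite generation of $S$ are not used in this lemma itself, and only reflect the context in which it is applied in Lemma~\ref{intlemma}.
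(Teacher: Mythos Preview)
Your proof is correct, and it takes a genuinely different (and more elementary) route than the paper. The paper first argues that the induced morphism $\Spec S \to \AA^m \times \Spec R$ is proper: since $IJ=0$ we have $V(I)\cup V(J)=\Spec R$, and over each piece the map is a closed immersion by hypothesis, so properness follows. Properness plus affineness gives that $S$ is a finite $R[T_1,\dots,T_m]$-module, and then one concludes by Nakayama (applied at each maximal ideal of $R[T_1,\dots,T_m]$, which necessarily contains $I$ or $J$). Your argument bypasses all of this: the cokernel identity $C=IC=I(JC)=(IJ)C=0$ is exact and needs no finiteness of $C$ whatsoever. As you correctly observe, the Noetherian and finite-generation hypotheses are not actually used in your version; they are present in the statement only because the paper's own argument (via properness and Nakayama) requires them.
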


\begin{proof} The associated map $\Spec S \to \AA^m\times\Spec R$
is proper, as can be easily checked using the valuative criterion for
properness. Indeed, any map $S\rightarrow K$ for a field $K$ factors
through either $S/IS$ or $S/JS$. Since this is a map of affine schemes,
$S$ is a finite $R[T_1,\dots,T_m]$-module. Now we
can apply Nakayama's lemma.
\end{proof}

\begin{proposition}
\label{minradideal}
There is a unique minimal
radical monomial ideal $I_{\min} \subset P$ such that (1) and (2) hold
for any monomial ideal $J$ with $I_{\min} \subset \sqrt{J}$.
\end{proposition}

\begin{proof}
Certainly any ideal $J$ with $\sqrt{J}=\fom$ lies in $\cA$.
Note that a radical monomial ideal is the complement of a union of
faces of $P$, so there are only a finite number of such ideals.
Suppose $I_1, I_2$ are two radical ideals such that
$J_i\in \cA$ for any $J_i$ with $I_i\subset\sqrt{J_i}$. Note that
any ideal $J$ with $I_1\cap I_2\subset \sqrt{J}$ can be written
as $J_1\cap J_2$, with $I_i\subset \sqrt{J_i}$. (Indeed, we can use
the primary decomposition of $J$. If
$J=\bigcap_k \fop_k$ is an intersection of primary ideals,
necessarily the prime ideal
$\sqrt{\fop_k}$ contains either $I_1$ or $I_2$ for each
$k$. Then let $J_1$ be the intersection of those $\fop_k$ whose
radical contains $I_1$ and
$J_2$ be the intersection of those $\fop_k$ whose radical contains $I_2$.)
Thus by Lemma \ref{intlemma}, $J\in \cA$. This shows the existence of
$I_{\min}$.
\end{proof}

\begin{proposition} \label{extensionprop}
Assume \ref{ringass}.
\begin{enumerate}
\item Suppose the intersection matrix $(D_i \cdot D_j)$ is not negative semi-definite. Then $I_{\min}= (0) \subset \kk[P]$.
\item Suppose $F \subset \sigma_P$ is a face such that $F$ does not contain the class of every component of $D$. Then $I_{\min} \subset P \setminus F$.
\end{enumerate}
%provided there is a contraction
%$p: Y \to \oY$ such that
%such that $p(C)$ is a point for all $C \in \NE(Y) \cap F$, and an effective
%divisor $W$ supported on $D$ such that $W \cdot D_i > 0$ for each component $D_i$ of $D$ contracted by $p$.
\end{proposition}

\begin{proof}
We prove both cases simultaneously, writing $F:=P$ in case (1).
We claim there exists an effective divisor $W=\sum a_iD_i$
with support $D$ such that $W \cdot D_j > 0$ for all $D_j$ contained in $F$
and $a_i>0$ for all $i$.
For case (1), see Lemma~\ref{pospair}. In case (2), say $[D_1] \notin F$. Then we can take
$a_1 \gg a_2 \gg \cdots \gg a_n > 0$.

The algebra structure depends only on the deformation type
of $(Y,D)$.
By Proposition 4.1 of \cite{GHK12},
we may replace $(Y,D)$ by a deformation equivalent pair such that any irreducible
curve $C \subset Y$ intersects $D$.

Let $\oNE(Y)_{\bR} \subset A_1(Y,\bR)$ denote the closure of $\NE(Y)_{\bR}$.
Let $F':=\oNE(Y)_{\bR} \cap F$, a face of $\oNE(Y)_{\bR}$.
Define $\Delta=D-\epsilon W$, $0<\epsilon \ll 1$.
Then $(Y,\Delta)$ is KLT (Kawamata log terminal).

We claim $K_Y+\Delta \sim -\epsilon W$ is negative on $F' \setminus \{0\}$.
By construction $(K_Y+\Delta) \cdot D_j < 0$ for $[D_j] \in F'$ and $(K_Y+\Delta)\cdot C < 0$ for $C \not\subset D$.
Let $N$ be a nef divisor such that $F'=\oNE(Y)_{\bR} \cap N^{\perp}$.
Then $aN-(K_Y+\Delta)$ is nef and big for $a \gg 0$, and thus some multiple
of $N$ defines a birational morphism $g$ by the basepoint-free theorem \cite{KM98}, Theorem~3.3. Thus $F'$ is generated by exceptional curves of $g$.
We deduce that $(K_Y+\Delta)^{\perp} \cap F' =\{0\}$ and $(K_Y+\Delta)$ is negative on $F'\setminus \{0\}$ as claimed.

Now by the cone theorem \cite{KM98}, Theorem~3.7, $\oNE(Y)_{\bR}$ is rational polyhedral near $F'$ and there is a contraction $p \colon Y \rightarrow \oY$
such that $F'$ is generated by the classes of curves contracted by $p$.
It follows
that we can find $\NE(Y)_{\bR} \subset \sigma_{P'} \subset \sigma_P$ such that $F'$
is a face of $\sigma_{P'}$. Now the algebra structure for $P$ comes from
$P'$ by base extension, so (replacing $P$ by $P'$)
we can assume $F = F'$, and thus that $W$ is
positive on $F \setminus \{0\}$.

%By adding $\epsilon D$ to $W$ we can assume $W$ has support $D$.
%The algebra structure depends only on the deformation type
%of $(Y,D)$.
%By the local Torelli theorem for Looijenga pairs, see \cite{L81}, II.2.5,
%we may replace $(Y,D,p)$ by a deformation equivalent triple such that any irreducible
%curve contracted by $p$ intersects $D$.
%(Indeed, contractible curves disjoint from $D$ are $(-2)$-curves by adjunction, and a very general small deformation of $(Y,D)$ does not contain such %curves by \cite[I.5.2]{L81}.)
%Thus we can assume
%$W$ is $p$-relatively ample. Now $K_Y + D - \epsilon W$ is $p$-negative
%and Kawamata log terminal (KLT), so the cone theorem \cite[3.7]{KM98} applies, and $\NE(Y)$ is finite polyhedral
%in a neighborhood of the face $F_p$ of $\NE(Y)$ of curve classes contracted by $p$. It follows
%that we can find $\NE(Y) \subset P' \subset P$ such that $F_p$
%is a face of $P'$. Now the algebra structure for $P$ comes from
%$P'$ by base extension, so (replacing $P$ by $P'$)
%we can assume $F = F_p$, and thus that $W$ is strictly
%positive on $F \setminus \{0\}$.

Now let $J$ be a monomial ideal with $\sqrt{J} = P \setminus F$.
Consider condition ($1'$). By the $T^D$-equivariance of Theorem
\ref{TDequivariance}, any
$z^C \vartheta_s$ that appears in $\vartheta_p \cdot \vartheta_q$
has the same weight for $T^D$.
Thus it is enough to show that the map
\[
w \colon B(\bZ)\times (P \setminus J) \to\chi(T^D), \quad (q,C) \mapsto w(q)+w(C)
\]
has finite fibres.
It is enough to consider fibres of $\sigma(\bZ)\times(P \setminus J)
\to \chi(T^D)$ for each $\sigma \in \Sigma_{\max}$. Note that
$\sigma(\bZ) \times P$ is the set of integral points of a rational
polyhedral cone, and $w$ is linear on this set.
Thus it is enough to check that
$\ker(w) \cap (\sigma(\bZ) \times F) = 0$. So suppose we have $q\in\sigma(\ZZ)$, $C\in F$ with
$w(q)+w(C)=0$. Say $\sigma = \sigma_{i,i+1}$. Then
$q = a v_i + b v_{i+1}$, for $a, b \in \bZ_{\geq 0}$.
We have
\[
w(q)+w(C) = a e_{D_i} + b e_{D_{i+1}} + \sum_j (C \cdot D_j)e_{D_j};
\]
thus if this is zero, we have $C \cdot D_j \leq 0$ for all $j$.
In particular,
$W \cdot C \leq 0$. Since $W$ is positive on $F \setminus \{0\}$,
$C = 0$. Now necessarily $a = b = q = 0$. This proves ($1'$).

For (2), let $A_J' \subset A_J$
be the subalgebra generated by the $\vartheta_{q}, q \in \Gamma$.
Fix a weight $w \in \chi(T^D)$. To show $A_J' = A_J$ it is enough to show
that the finite set
\[
\{z^C\vartheta_q \in A_J\,|\,
\hbox{$(q,C) \in B(\bZ) \times (P \setminus J)$
of weight $w$}
\}
\]
is contained in $A'_J$ (since the $z^C\vartheta_q$ give a $\kk$-basis
of $A_J$).
We argue by decreasing induction on $\ord_{\fom}(C)$ (see Definition
\ref{orderdef}).
Since the set of possible
$(q,C)$ is finite, there is an upper bound on the possible $\ord_{\fom}$'s.
So the claim is vacuously true for large $\ord_{\fom}$.
Consider $z^C \cdot \vartheta_p$, with $\ord_{\fom}(C)=h$.
Since the $\vartheta_q$ generate $A_J$ modulo $\fom$, we can find $a \in A'_J$
such that
\[
\vartheta_p = a + m
\]
with $m \in \fom \cdot A_J$.
Moreover, we can assume $a$, and thus $m$, is homogeneous for the $T^D$ action.
Now
\[
z^C \vartheta_p = z^C a + z^C m.
\]
Clearly $z^C m$ is a sum of terms $z^{D}\vartheta_{q}$
of weight $w$ and $\ord_\fom(D)>h$,
so $z^C m \in A_J'$ by induction. \end{proof}

\begin{remark}
Suppose $p \colon Y \rightarrow Y'$ is a contraction such that some component of $D$ is not contracted by $p$.
Let $F$ be the face of $\NE(Y)_{\bR}$ generated by classes of curves contracted by $p$.
Then $\NE(Y)_{\bR}$ is rational polyhedral near $F$. (This follows from the cone theorem, cf. the proof of Proposition~\ref{extensionprop}.)
In particular there exists a rational polyhedral cone $\sigma_P \subset A_1(Y,\bR)$ such that $\NE(Y)_{\bR} \subset \sigma_P$
and $\sigma_P$ coincides with $\NE(Y)_{\bR}$ near $F$.
\end{remark}

\begin{corollary} Theorem \ref{extensiontheorem} holds if $D$ has $n\ge 3$
irreducible components.
\end{corollary}

\begin{proof} Immediate from Proposition \ref{extensionprop}.
\end{proof}

\subsection{Proof of Theorems \ref{maintheoremlocalcase}
and \ref{extensiontheorem} in general}
\label{mtsmalln}

We now consider an arbitrary Looijenga pair $(Y,D)$, along with a toric
monoid $P$ with $\NE(Y)\subset P\subset A_1(Y,\ZZ)$.
Let $\tau:(Y',D') \to (Y,D)$ be a toric blowup such that $(Y',D')$ has a toric
model $p:(Y',D')\to (\oY,\oD)$.
We have the map $\tau_*:A_1(Y',\ZZ)\rightarrow A_1(Y,\ZZ)$.
We can find a strictly convex rational polyhedral cone
$\sigma_{P'}$ with
\[
\NE(Y')_{\bR} \subset \sigma_{P'} \subset A_1(Y',\bR)
\]
which has a face $F$ spanned by the $\tau$-exceptional curves,
and which surjects under $\tau_*$ onto $\sigma_P \subset A_1(Y,\bR)$. For any
monomial ideal $I \subset P$ with $\sqrt{I}=\fom$, let $I' \subset P'$
be the inverse image of $I$ under $\tau_*$.
Then $\sqrt{I'}$ is the prime monomial
ideal associated to the face $F$.  Since
the exceptional curves are a proper subset of $D'$
we have $\sqrt{I'}\in \cA(Y')$ by Proposition~\ref{extensionprop}.
Note that $\Spec\kk[P]/I$ is naturally a closed subscheme of
$\Spec\kk[P']/I'$, via the map induced by the surjection
$\tau_*:P'\rightarrow P$.
Now restrict the family $\cX_{I'} \to \Spec \kk[P']/I'$ to
$\Spec \kk[P]/I$. This gives an algebra structure on
\[
A_I := \bigoplus_{q \in B(\bZ)} (\kk[P]/I) \vartheta_q.
\]
We now verify Assumptions \ref{ringass}. First, we show that the
multiplication rule of this algebra is the one described in
Theorem~\ref{multrule}. The argument is just as in
the proof of Proposition~\ref{barPtoPprop}:
We have $B_{(Y',D')} = B_{(Y,D)}$ and
take $\psi:=\tau_*:P'\to P$. Note
$\psi(\foD^{\can}_{(Y',D')}) = \foD^{\can}_{(Y,D)}$
(i.e., the rays are the same,
and we apply $\psi$ to the decoration function). This does not
literally give a bijection on broken lines (because different exponents
in the decoration of a ray in $\foD^{\can}_{(Y',D')}$ could map to
the same exponent under $\psi$).
However, by Equation \eqref{sumbareq}, with $z$ a point close to $q$,
\begin{align*}
\sum_{\substack{(\gamma_1,\gamma_2) \\
                                  \Limits(\gamma_i) = (q_i,z) \\
                                  s(\gamma_1) + s(\gamma_2) = q}} c(\gamma_1)c(\gamma_2)
 &= \sum_{\substack{(\gamma_1,\gamma_2) \\
                 \Limits(\gamma_i) = (q_i,z) \\
                 s(\gamma_1) + s(\gamma_2) = q}}
\psi\left( \sum_{\gamma_1'\in\xi_{\gamma_1}} c(\gamma_1') \right)
\psi\left(\sum_{\gamma_2'\in \xi_{\gamma_2}} c(\gamma_2') \right)   \\
&= \psi\left(\sum_{\substack{(\gamma'_1,\gamma'_2) \\
                                  \Limits(\gamma'_i) = (q_i,z) \\
                                  s(\gamma_1') + s(\gamma_2') = q}}
                                 c(\gamma_1')c(\gamma_2') \right),
\end{align*}
where $\xi_{\gamma_i}$ denotes the set of all broken lines $\gamma_i'$
for $\foD^{\can}_{(Y',D')}$ such that $\psi\circ\gamma_i'=\gamma_i$ as
paths and the monomials attached to $\psi(\gamma_i')$ differ from
those attached to $\gamma_i$ only in the $\kk$-valued coefficients (see
the proof of Proposition \ref{barPtoPprop}). This
implies the claim.

Next we need to check that the fibre over
the zero stratum of $\Spec\kk[P]$ is $\bV_n$. In case $n\ge 3$, this
is straightforward from the multiplication rule. Indeed, modulo $\fom$,
every broken line contributing to the multiplication rule is a
straight line, and furthermore it cannot cross any ray of $\Sigma$.
>From this one sees that $A_{\fom}=R_{\fom}[\Sigma]$.

The cases $n=1$ and $2$ require special attention.
We will do the case of $n=1$,
as $n=2$ is similar (and simpler). We cut $B = B_{(Y,D)}$ along
the unique ray $\rho=\rho_1 \in \Sigma$, and consider the image under
a set of linear coordinates $\psi$ on $B\setminus\rho$. This identifies
$B\setminus\rho$ with a strictly convex
rational cone in $\bR^2$. Let $w,w'$ be the primitive generators of
the two boundary rays. Modulo $\fom$ the decoration on
every scattering ray is trivial, so every broken line is straight.
Moreover, no line can cross $\rho$ (or the attached monomial
becomes trivial modulo $\fom$ by the strict convexity of $\varphi$). Now it
follows for any $x \in B(\bR) \setminus \rho$ and any
$q \in (B \setminus \rho)(\bZ)$
there is a unique (straight) broken line with $\Limits = (q,x)$, while
there are exactly two (straight) broken lines with $\Limits = (v,x)$,
$v = v_1$ --- under $\psi$ these become two
distinct straight lines with directions $w,w'$. Performing a toric
blowup of $(Y,D)$ to get $n'=3$ can be accomplished by subdividing
the cone generated by $w$ and $w'$ along the rays generated by
$w+w'$ and $2w+w'$.  Then by Theorem \ref{extensiontheorem} in the
case $n=3$, we see that
$A_{\fom}$ is generated over $\kk$ by
\[
\vartheta_v= \vartheta_{w} = \vartheta_{w'}, \vartheta_{w + w'}, \vartheta_{2w + w'}
\]
where we abuse notation and use the same symbol for an integer
point in the convex cone generated by $w$ and $w'$,
and the corresponding point in $B(\bZ)$.
Now applying the multiplication rule of Theorem \ref{multrule} one checks easily
the equalities:
\begin{align*}
\vartheta_v \cdot \vartheta_{w + w'} &= \vartheta_{2w + w'} + \vartheta_{w + 2w'} \\
\vartheta_{2w + w'} \cdot \vartheta_{w + 2w'} & = \vartheta_{3w + 3w'} = \vartheta_{w + w'}^3.
\end{align*}
It follows that
$$
\vartheta_{2w + w'} \cdot \vartheta_v \cdot \vartheta_{w + w'} =
\vartheta_{2w + w'}^2 + \vartheta_{w + w'}^3
$$
and thus
$A_{\fom} = \kk[x,y,z]/(xyz - x^2 - z^3)$, which is isomorphic
to the ring of sections
\[
\bigoplus_{m \geq 0} H^0(C,\cO(m))
\]
for a line bundle $\cO(1)$ of degree one on an irreducible rational nodal
curve $C$ of arithmetic genus $1$. Thus $\Spec A_{\fom} = \bV_1$.

Combining this with Propositions \ref{minradideal} and
\ref{extensionprop}, this proves
Theorems \ref{maintheoremlocalcase}
and \ref{extensiontheorem} hold for all $(Y,D)$ except for the smoothness
statement of Theorem \ref{maintheoremlocalcase}.

To show smoothness, note that if $\fom'$ denotes the maximal monomial ideal
of $P'$, $\foX'_{\fom'}\rightarrow\foS_{\fom'}$ the formal deformation
provided by Theorem \ref{maintheoremlocalcase} for the pair $(Y',D')$
with the toric model,
we know that $\kk[P']\rightarrow H^0(\foZ_{\fom'},\shO_{\foZ_{\fom'}})$
is not injective by Theorem \ref{injectivitym}. Now choose (see the beginning
of the proof of Proposition \ref{extensionprop}) a divisor
$A=\sum_i a_iD'_i$ with $a_i\ge 0$ for all $i$ and $A$ relatively $\tau$-ample,
so that $A\cdot D'_j>0$ for any $D'_j$ contracted by $\tau$. This determines
a one-parameter subgroup $T^A\cong\Gm$
of $T^{D'}$ via the map $\chi(T^{D'})\rightarrow
\ZZ$ given by $e_{D_i'}\mapsto a_i$.

Let $J=P'\setminus F$, so that $[C]\in J$ if and only if $C$ is not
contracted by $\tau$. Thus if $[C]\in F$, $T^A$ acts on $z^{[C]}$ with
weight $C.A>0$, and for $q\in B(\ZZ)$, $T^A$ acts with non-negative weight
since $a_i\ge 0$ for all $i$. It then follows that the map
\[
H^0(\foZ_{J},\shO_{\foZ_J})\rightarrow H^0(\foZ_{\fom'},
\shO_{\foZ_{\fom'}})
\]
is injective because every component of $\foZ_{J}$ has a limit point
in $\foZ_{\fom'}$ under the $T^A$ action. So we conclude that
$\kk[P']\rightarrow H^0(\foZ_{J},\shO_{\foZ_J})$ is not injective.

Now $F^{\gp}$ is generated by the classes of the $D_i'$ contracted by
$\tau$. Let $T^F:=\Hom(F,\Gm)$. The composition $F^{\gp}\subset A_1(Y',\ZZ)
\rightarrow \chi(T^{D'})$ is a primitive embedding, because the intersection
matrix of
$F\subset \langle D_1',\ldots,D_r'\rangle$
is unimodular, where $D_1',\ldots,D_r'$ are the irreducible components
of the boundary of $Y'$. So the corresponding composition $T^{D'}\rightarrow
\Hom(A_1(Y',\ZZ),\Gm)\rightarrow T^F$ admits a splitting $T^F
\rightarrow T^{D'}$. By $T^F$-equivariance, the restriction of
the family $\foX'_{J}/\foS'_{J}$ to the open subscheme of
$\foS'_{J}$ defined by $T^F\subset S'_J$
is isomorphic to a direct
product of $\foX_{\fom}/\foS_{\fom}$ (coming from $(Y,D), P$)
with $T^F$. In particular, $\foX_{\fom}/\foS_{\fom}$ has smooth generic fibre.

\subsection{The case that $(Y,D)$ is positive}

\begin{lemma} \label{pospair} The following are equivalent for
a Looijenga pair $(Y,D)$:
\begin{itemize}
\item[(1.1)] There exist integers $a_1,\dots,a_n$ such that
$(\sum a_i D_i)^2 > 0$.
\item[(1.2)] There exist positive integers $b_1,\dots,b_n$ such
that $(\sum b_i D_i) \cdot D_j > 0$ for all $j$.
\item[(1.3)] $Y \setminus D$ is the minimal resolution of an
affine surface with (at worst) Du Val singularities.
\item[(1.4)] There exist $0 < c_i < 1$ such that
$-(K_Y + \sum c_i D_i)$ is nef and big.
\end{itemize}
If any of the above equivalent conditions hold, then so do
the following:
\begin{itemize}
\item[(2.1)] The Mori cone $\NE(Y)_{\bR}$ is rational polyhedral,
generated by finitely many classes of rational curves. Every
nef line bundle on $Y$ is semi-ample.
\item[(2.2)] The subgroup $G$ of $\Aut(\Pic(Y),\langle\cdot,\cdot\rangle)$
fixing the classes $[D_i]$ is finite.
\item[(2.3)] The union $R \subset Y$ of all curves
disjoint from $D$ is contractible.
\end{itemize}
\end{lemma}

\begin{definition} We say a Looijenga pair $(Y,D)$ is \emph{positive}
if it satisfies any of the equivalent conditions (1.1)-(1.4) of the above
lemma.
\end{definition}

\begin{proof}
We have
\[
K_Y + \sum c_i D_i = (K_Y+D) - \sum(1-c_i)D_i = - \sum (1-c_i)D_i
\]
so (1.2) implies (1.4), and (1.2) obviously implies (1.1).

If (1.1) holds then $(D^{\perp},\langle\cdot,\cdot\rangle)$,
where $D^{\perp}=\{H\in \Pic Y\,|\, H\cdot D_i=0 \quad \forall i\}$,
is negative definite, by the Hodge Index Theorem,
and this implies (2.2) and (2.3).

Suppose (1.4) holds.
By the basepoint-free theorem \cite{KM98}, 3.3,
the linear system $$|m(\sum b_i D_i)|=|-m(K_Y+\sum c_i D_i)|$$ defines a birational morphism for $m \in \bN$ sufficiently large, with exceptional locus the union $R$ of curves disjoint from $D$.
Adjunction shows $R$ is a contractible configuration of $(-2)$-curves, which gives (1.3).
(2.1) follows from the cone theorem \cite{KM98}, 3.7.

We show (1.1) implies (1.2).
By the Riemann--Roch theorem, if $W$ is a Weil
divisor (on any smooth surface) and $W^2 >0$ then either
$W$ or $-W$ is big (i.e., the rational map given by
$|nW|$ is birational for sufficiently large $n$). So,
possibly replacing the divisor by its negative,
we may assume $W = \sum a_i D_i$ is big. Write
$$
W' = \sum_{a_i > 0} a_i D_i = W + \sum_{-a_i > 0} (-a_i)D_i.
$$
Thus $W'$ is big, and replacing $W$ by $nW'$, we may
assume all $a_i \geq 0$ and $|W|$ defines a birational
(rational) map. Subtracting off the divisorial base-locus
(which does not affect the rational map) we may further
assume the base locus is at most zero dimensional. Now
$W = \sum b_i D_i$ is effective, nef and big, and supported on $D$.
We show we may assume that in addition $b_i>0$ and $W \cdot D_i > 0$ for each $i$.
If $W \cdot D_i > 0$, then we may assume $b_i>0$ (by adding $\epsilon D_i$ to $W$ if necessary).
Now consider the set $S \subset \{1,\ldots,n\}$ of components $D_i$ of $D$ such that $W \cdot D_i=0$.
By connectedness of $D$ we find $b_i > 0$ for each $i \in S$. Thus $\Supp(W)=D$.
By the Hodge index theorem the intersection matrix $(D_i \cdot D_j)_{i,j \in S}$ is negative definite.
Hence there exists a linear combination $E=\sum_{i \in S} \alpha_i D_i$, with $\alpha_i \in \bZ$ for each $i \in S$,
such that $E \cdot D_i<0$ for each $i \in S$.
Now replacing $W$ by $W-\epsilon E$, we obtain $W \cdot D_i> 0$ for each $i=1,\ldots,n$.

Finally we show (1.3) implies (1.1). Since $U=Y\setminus D$ is
the resolution of an affine variety $U'$ with du Val singularities,
we have $U'=Y'\setminus D'$ where $Y'$ is a normal projective surface and $D'$ is a Weil divisor such that $D'$ is the support of an ample divisor $A$.  Let $\pi \colon \tilde{Y} \rightarrow Y'$ be a resolution of singularities such that
$\pi|_{\pi^{-1}(U')}:\pi^{-1}(U')\rightarrow U'$ is the
resolution $U\rightarrow U'$.
Furthermore, we can assume the inclusion $U \subset Y$ extends to a birational
morphism $f \colon \tilde{Y} \rightarrow Y$.
Let $\tilde{D}$ be the inverse image of $D'$ under $\pi$, so $\tilde{Y} \setminus \tilde{D} = U$.
The divisor $\pi^*A$ has support $\tilde{D}$. So we can write $\pi^*A=f^*(\sum a_iD_i)+\sum \mu_jE_j$ where the $E_j$ are the $f$-exceptional curves and $a_i, \mu_j \in \bZ$. Then $(\sum a_i D_i)^2 \ge (\pi^*A)^2 = A^2 > 0$.
\end{proof}

\begin{corollary} \label{polymultpositive} Let $(Y,D)$ be a positive Looijenga
pair. Let $P = \NE(Y)$. The multiplication rule Theorem \ref{multrule}
applied with $\foD=\foD^{\can}$
determines a finitely generated $T^D$-equivariant
$R = \kk[P]$-algebra structure on
the free $R$-module
\[
A = \bigoplus_{q \in B(\bZ)} R \cdot \vartheta_q.
\]
Furthermore,
$\Spec A \to \Spec R$ is a flat affine family of Gorenstein semi-log canonical
(SLC) surfaces
with central fibre $\bV_n$, and smooth generic fibre.
Any collection of $\vartheta_q$ whose restrictions generate
$A/\fom = H^0(\bV_n,\cO_{\VV_n})$ generate $A$ as an $R$-algebra.
In particular the $\vartheta_{v_i}$ generate for $n \geq 3$.
\end{corollary}

\begin{proof} Everything but the singularity statement follows
from Theorem~\ref{extensiontheorem}.  The Gorenstein
SLC locus in the base is open, and $T^D$-equivariant. Taking a big and
nef divisor
$H=\sum a_i D_i$ with $a_i>0$ and $H\cdot D_i>0$ for all $i$, we obtain
a one-parameter subgroup of $T^D$ given by the map $\chi(T^D)\rightarrow
\ZZ$, $e_{D_i}\mapsto a_i$. By definition of the weights, the weights
of $z^{[C]}$ for $C\in \NE(Y)$ and $\vartheta_p$ for $p\in B(\ZZ)$ are
all non-negative. Thus the corresponding torus $T_H\cong\Gm$ gives
a contracting action. In particular, since $\VV_n$
is Gorenstein and SLC, all fibres are Gorenstein and SLC. Moreover,
the map
\[
H^0(Z,\shO_Z)\rightarrow H^0(\foZ_{\fom},\shO_{\foZ_{\fom}})
\]
is injective, where $Z$ is the singular locus of the family $X\rightarrow
S$. But since $R\rightarrow H^0(\shO_{\foZ_{\fom}},\shO_{\foZ_{\fom}})$
is not injective, as shown in the proof of Theorem \ref{maintheoremlocalcase},
we deduce that the map $R\rightarrow H^0(Z,\shO_Z)$ is not injective.
Letting $f$ be in the kernel of the map, the fibres over $\Spec R
\setminus V(f)$ are smooth.
\end{proof}

In Part II we will prove that when $D$ is positive, our mirror family
admits a canonical fibrewise $T^D$-equivariant compactification
$\cX \subset (\cZ,\cD)$. The restriction $(\cZ,\cD) \to T_Y:=\Pic (Y)
\otimes \Gm$ comes
with a trivialization $\cD \stackrel{\sim}{\rightarrow} D_* \times T_Y$.
We will show that
$(\cZ,\cD)$ is the universal family of Looijenga pairs $(Z,D_Z)$ deformation equivalent to $(Y,D)$ together with
a choice of isomorphism $D_Z \stackrel{\sim}{\rightarrow} D_*$.
Now for any positive pair $(Z,D_Z)$ together with a choice of isomorphism $\phi \colon D_Z \stackrel{\sim}{\rightarrow} D_*$,
our construction equips the complement $U = Z \setminus D_Z$ with canonical theta functions $\vartheta_q$, $q \in B_{(Z,D)}(\bZ)$.
We will give a characterisation in terms of the intrinsic geometry of $(Z,D_Z)$.
Changing the choice of isomorphism $\phi$ changes $\vartheta_q$ by a character of $T^D=\Aut^0(D_*)$, the identity component of $\Aut(D_*)$.
Here we illustrate with two examples:

\begin{example} \label{thetaexample1}
Consider first the case $(Y,D)$ a $5$-cycle of $(-1)$-curves on the
(unique) degree $5$ del Pezzo surface,
Example  \ref{M05more}. In this case
$T^D = T_Y=\Pic(Y)\otimes_{\ZZ}\Gm$,
and thus by the $T^D$-equivariance, all fibres of the
restriction $\cX \to T_Y$ are isomorphic. We consider the fibre over
the identity $e \in T_Y$, thus specializing the equations of
Example \ref{M05more} by setting all $z^{D_i} = 1$.
It's well known that these equations define an embedding of
the original $U=Y \setminus D$ into $\bA^5$ --- if we take the
closure in $\bP^5$ (for the standard compactification $\bA^5 \subset \bP^5$)
one checks easily we obtain $Y$ with $D$ the hyperplane section at
infinity.

Now it is easy to compute the zeroes and poles:
\[
(\vartheta_{v_i}) = E_i + D_i - D_{i+2} - D_{i-2}
\]
(indices mod $5$).
In particular $\{\vartheta_{v_i} =0\} = E_i \cap U \subset U$, which characterizes
$\vartheta_{v_i}$ up to scaling.
\end{example}

\begin{example}\label{thetaexample2}
Now let $(Y,D = D_1 + D_2 + D_3)$ be (the deformation type of) a cubic
surface together with a triangle of lines. Let $\cX \subset \Spec(\kk[\NE(Y)]) \times \bA^3$ be the canonical embedding given by $\vartheta_i:=\vartheta_{v_i}$, $i=1,2,3$.
In this case, as we shall see in Part II, the scattering diagram is particularly
beautiful, with every ray $\fod$ of rational slope occuring, with precisely six
curves on the cubic surface contributing to $f_{\fod}$.
We will also show in Part II that the mirror is given by the equation
\[
\vartheta_1\vartheta_2\vartheta_3= \sum_i z^{D_i} \vartheta_i^2 +
\sum_i \left(\sum_j z^{E_{ij}}\right)z^{D_i} \vartheta_i +
\sum_{\pi} z^{\pi^*H} - 4z^{D_1+D_2+D_3}.
\]
Here the $E_{ij}$ are the interior $(-1)$-curves meeting $D_i$, and the sum over
$\pi$ is the sum over all possible toric models $\pi \colon Y \rightarrow \oY$
of $(Y,D)$ to a pair $(\oY,\oD)$  isomorphic to $\bP^2$ with its toric boundary.
(Such $\pi$ are permuted simply transitively by the Weyl group $W(D_4)$ by
\cite{L81}, Prop.~4.5, p.~283.)  The same family,
in the same canonical coordinates, was discovered by Oblomkov \cite{Ob04}. As we learned
from Dolgachev, after a change of variables (in $\bA^3$), and restricting
to $T_Y$ (the locus over which the fibers have at worst Du Val singularities) this is identified with
the universal family of affine cubic surfaces (the complement to a triangle
of lines on projective cubic surface) constructed by Cayley in \cite{C1869}.
The universal
family of cubic surfaces with triangle is obtained as the closure in
$\bA^3 \subset \bP^3$.
In particular,
as in the first example, our mirror family compactifies naturally to the universal
family of Looijenga pairs deformation equivalent to the original $(Y,D)$. There
is again a geometric characterisation of $\vartheta_i$ (up to scaling):
The linear system $|-K_Y - D_i| = |D_j + D_k|$ (here $\{i,j,k\} = \{1,2,3\}$) is a basepoint free pencil.
It defines a ruling $\pi: Y \to \bP^1$ which restricts to a double cover $D_i \rightarrow \bP^1$.
Let $\{a,b\} \subset \bP^1$ be the branch points of $\pi|_{D_i}$.
Let $p = \pi(D_j + D_k) \in \bP^1$.
There is a unique point $q \in \bP^1 \setminus \{ a, b, p\}$
fixed by the unique involution of $\bP^1$ interchanging
$a$ and $b$ and fixing $p$. Let $Q =\pi^*(q) \in |D_j + D_k|$ be the
corresponding divisor. The curve $Q \subset \bP^3$ is a smooth conic.
In Part II we will show
\begin{proposition} $(\vartheta_i) = Q - D_j - D_k$. \end{proposition}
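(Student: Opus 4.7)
My plan is to deduce the formula from the explicit cubic equation for the mirror family $\cX\subset\Spec\kk[\NE(Y)]\times\bA^3$ together with the elementary geometry of the pencil $|D_j+D_k|$. First I would compactify fiberwise to $\cZ\subset\Spec\kk[\NE(Y)]\times\bP^3$ by introducing homogeneous coordinates $[w:\Theta_1:\Theta_2:\Theta_3]$ with $\vartheta_l=\Theta_l/w$ and homogenizing the cubic by multiplying through by $w^3$. Restricting to a fiber $Z\simeq Y$ and setting $w=0$ yields $\Theta_1\Theta_2\Theta_3=0$, exhibiting the hyperplane at infinity as meeting $Z$ in the reduced triangle $D_1+D_2+D_3$ and identifying each component as the line $D_l=\{w=\Theta_l=0\}\subset\bP^3$.

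Next I would read off the polar part of $(\vartheta_i)$. Viewing $\vartheta_i=\Theta_i/w$ as a quotient of sections of $\cO_{\bP^3}(1)|_Z$, the section $w$ vanishes to order exactly one along each $D_l$ (as $\{w=0\}\cap Z$ is the reduced triangle), while $\Theta_i$ vanishes to order one along $D_i$ (since $D_i\subset\{\Theta_i=0\}$) and is generically nonzero on $D_j,D_k$. Hence $\vartheta_i$ has no pole along $D_i$ and simple poles along both $D_j$ and $D_k$. The hyperplane section $\{\Theta_i=0\}\cap Z$ has class $-K_Z=D_1+D_2+D_3$ and contains $D_i$ with multiplicity one, so it can be written as $D_i+W$ for some effective $W\sim D_j+D_k$, giving
\[
(\vartheta_i)=\{\Theta_i=0\}\cap Z-\{w=0\}\cap Z=W-D_j-D_k.
\]

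The main obstacle is identifying $W$ with the specific fiber $Q$. Since $|D_j+D_k|=|{-}K_Z-D_i|$ is a base-point-free pencil, $W=\pi^{-1}(q')$ for some $q'\in\bP^1$, determined by the length-two scheme $W\cap D_i$. My plan is to combine two ingredients. Both $W$ and $Q$ depend continuously on the pair $(Y,D)$ within its deformation class (the former by Lemma~\ref{definvlemma}, since the canonical scattering diagram is a deformation invariant, and the latter by its intrinsic definition), so it suffices to identify them on a suitable dense specialization. Specializing to a pair $(Y_0,D_0)$ admitting an involution $\iota$ that fixes $D_i^0$ setwise and swaps $D_j^0\leftrightarrow D_k^0$ (which can be realized, e.g., by a diagonal cubic together with a suitable triangle), the induced action of $\iota$ on the pencil $|D_j^0+D_k^0|$ is the unique nontrivial involution of $\bP^1$ fixing $p=\pi(D_j^0+D_k^0)$ and swapping the branch points $a,b$ of $\pi|_{D_i^0}$, so its fixed fibers are $\pi^{-1}(p)$ and $\pi^{-1}(q)=Q_0$. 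From the broken-line description of $\vartheta_{v_i}$ together with the fact that $\iota^*\foD^{\can}=\foD^{\can}$, $\iota$ fixes $\vartheta_{v_i}$ up to a nonzero scalar, so $W_0$ is $\iota$-invariant. The simple-pole computation above rules out $W_0=\pi^{-1}(p)=D_j^0+D_k^0$, leaving $W_0=Q_0$ and hence $W=Q$ in general.
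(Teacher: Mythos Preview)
The paper defers this proposition to Part II, so there is no proof here to compare against; I evaluate your proposal on its own terms, granting the Part~II inputs you invoke (the explicit cubic equation and the identification of fibres of $\cZ/T_Y$ with the original pairs).

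Your argument through $(\vartheta_i)=W-D_j-D_k$ with $W\in|D_j+D_k|$ is correct. The gap is the involution step identifying $W$ with $Q$. The pencil $|D_j+D_k|$ is parametrised by $[\Theta_i:w]$ (hyperplanes through $D_i$), so $\iota$ acts trivially on the target $\bP^1$ if and only if $\iota^\ast(\Theta_i/w)=\Theta_i/w$, i.e., $\iota^\ast\vartheta_i=\vartheta_i$. You invoke exactly this identity to conclude $W_0$ is $\iota$-invariant, but it simultaneously forces \emph{every} fibre of $\pi$ to be $\iota$-invariant, so nothing is learned about $W_0$. Concretely, realising $\iota$ as $\Theta_j\leftrightarrow\Theta_k$ in the $\bP^3$ embedding, $\iota$ visibly fixes $[\Theta_i:w]$, and $\iota|_{D_i}$ is the deck involution of the double cover $D_i\to\bP^1$, which fixes (rather than swaps) the branch points. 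Separately, even had the single-fibre check succeeded, agreement at one point of $T_Y$ does not yield $W=Q$ on all fibres without a further rigidity argument.

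Given the cubic equation, a direct computation replaces the specialisation entirely. On $D_i$, parametrised by $t=\Theta_j/\Theta_k$, the cubic gives
\[
\vartheta_i|_{D_i}=\frac{z^{[D_j]}\Theta_j^2+z^{[D_k]}\Theta_k^2}{\Theta_j\Theta_k}=z^{[D_j]}t+z^{[D_k]}t^{-1},
\]
so in the affine coordinate $\vartheta_i$ on $\bP^1$ one has $p=\infty$ and branch points $\pm 2\sqrt{z^{[D_j]}z^{[D_k]}}$. The involution of $\bP^1$ swapping $\pm c$ and fixing $\infty$ is $s\mapsto -s$, with second fixed point $0$; hence $q$ corresponds to $\vartheta_i=0$, so $Q=\{\vartheta_i=0\}=W$ on every fibre.
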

\end{example}

\section{Looijenga's conjecture}
\label{cusp}

In this section we apply the main construction of this paper to give a proof of
Looijenga's conjecture on smoothability of cusp singularities,
Theorem \ref{loocor2}.
The simple conceptual idea is explained in the introduction.
Here we give the rather involved details.

\subsection{Duality of cusp singularities}
\label{dualitysec}

We review the notion of dual cusp singularities.
By definition,
a cusp is a normal surface singularity for which the exceptional
locus of the minimal resolution is a cycle of rational curves. The self-intersections of these exceptional curves determine the analytic
type of the singularity, see \cite{L73}. Cusps have a quotient construction
due to Hirzebruch \cite{Hi73} which we explain here. See also \cite{L81},
III, \S2 for this point of view.

Let $M=\bZ^2$, and let
$T \in \SL(M)$ be a hyperbolic matrix, i.e., $T$ has a real eigenvalue
$\lambda >1$. Then $T$ determines a pair of {\it dual} cusps as follows:
Let $w_1,w_2 \in M_{\bR}$ be eigenvectors with eigenvalues
$\lambda_1=1/\lambda$, $\lambda_2 = \lambda$,
chosen so that $w_1 \wedge w_2 > 0$ (in the standard counter-clockwise
orientation of $\bR^2$). Let $\oC,\oC'$ be the strictly convex cones spanned
by $w_1,w_2$ and $w_2,-w_1$, and let $C,C'$ be their interiors, either of
which is preserved by $T$. Let $U_C,U_{C'}$ be the corresponding tube
domains, i.e.,
\[
U_C := \{z \in M_{\bC}\,|\,
\Im(z) \in C\}/M \subset M_{\bC}/M = M \otimes \bG_m.
\]
$T$ acts freely and properly discontinuously on $U_C, U_{C'}$. Write
$Y_C, Y_{C'}$ for the holomorphic hulls of
$U_{C}/\Gamma,U_{C'}/\Gamma$, where $\Gamma$ is the group generated by $T$.
These each have one additional point, $p\in Y_C$, $p'\in Y_{C'}$,
and $(Y_C,p)$, $(Y_{C'},p')$ are normal
surface germs of cusps.

\begin{definition}
$(Y_C,p)$ and $(Y_{C'},p')$ are \emph{dual cusp singularities}.
\end{definition}

All cusps (and their duals) arise this way.

\begin{remark}
\label{dualremark}
If $M$ is identified with its dual
by choosing an isomorphism $\bigwedge^2 M\cong\ZZ$, the cone $\oC'$
is identified
with the dual cone of $\oC$. In this way  $C'/\Gamma$ and
$C/\Gamma$
are dual integral affine manifolds, which suggests that
the duality between the corresponding cusps is a form of mirror symmetry.
\end{remark}

To resolve the cusp singularities of $Y_C$, say, one considers the convex
hull $\Xi$ of integral points in $C$, and let $v_i$, $i\in
\ZZ$, be the integral points in $\partial\Xi$, listed so that
$v_{i-1}$ and $v_{i+1}$ are the integral points adjacent to $v_i$ on
$\partial\Xi$. Let $\tilde\Sigma$ be the infinite fan with two-dimensional
cones generated by $v_{i-1}, v_i$ for $i\in\ZZ$. Note that $T$ acts on
$\Xi$, and thus acts by translation $T(v_i)=v_{i+n}$ for some integer $n$,
which we can take to be positive by reversing the ordering of the $v_i$ if
necessary. Then $X_{\tilde\Sigma}$ is a toric variety with an infinite
chain of $\PP^1$'s, and $T$ acts on $X_{\tilde\Sigma}$. There is a tubular
neighbourhood $N$ of this infinite chain of $\PP^1$'s on which the group
generated by $T$ acts properly discontinuously, see \cite{AMRT75}, p.\ 48.
Then $N/\Gamma$ is a minimal resolution of singularities of a neighbourhood
of the singularity of $Y_C$. Note the exceptional divisors $D_i$, $i$ taken
modulo $n$, with $D_i$ corresponding to the ray of $\tilde\Sigma$ generated
by $v_i$, satisfy \eqref{negdefrel}. Thus if there is a Looijenga pair $(Y,D')$
with $D'=D_1'+\cdots+D_n'$ and $(D_i')^2=D_i^2$ for each $i$, the corresponding
affine manifold with singularities is precisely $B=|\tilde\Sigma|/\Gamma$
by Example \ref{negdefexample}, and $B_0=C/\Gamma$.

In fact, the dual cusp singularity can be described directly from
the cone $C$ and the polyhedron $\Xi$:

\begin{lemma}\label{dualcusp}
Let $T, \oC, \Xi$ and the $v_i$ be as above, giving a cusp singularity
$p\in Y_C$.
Let $Z$ be the toric variety (only locally of finite type) associated to $\Xi$. Let $E \subset Z$ be the toric boundary of $Z$, an infinite chain of smooth rational curves corresponding to the boundary of $\Xi$. Then there exists a tubular neighbourhood $E \subset N \subset Z$ such that the $\Gamma$ action on $\Xi$ induces a properly discontinuous $\Gamma$ action on $N$. Let $F \subset \tilde{X}$ denote the quotient of $E \subset N$ by $\Gamma$. So $F$ is a cycle of smooth rational curves. Then $F \subset \tilde{X}$ can be contracted to a singularity
$p' \in X$, which is a copy of the dual cusp $p' \in Y_{C'}$.
Moreover, $\tilde{X}$ is obtained from the minimal resolution of $p' \in X$
by contracting all the $(-2)$-curves.
\end{lemma}

\begin{proof}
Let $\tilde\Sigma'$ be the normal fan for the polytope $\Xi$ and
$\oC''$ the closure of its support.
We observe that $\oC''$ coincides with the dual of $\oC$,
together with the induced
$\Gamma$-action. By Remark \ref{dualremark},
it follows that $\tilde{X}$ is a partial resolution of a
copy of the dual cusp.

The surface $\tilde{X}$ has Du Val singularities of type $A$. Indeed, $v_i$ is a vertex of $\Xi$ iff $m_i: = -D_i^2 > 2$. The corresponding point of $Z$ is smooth if $m_i=3$ and a singularity of type $A_{m_i-3}$ if $m_i>3$ (by direct calculation using $v_{i-1}+v_{i+1}=m_iv_i$). Also $K_{\tilde{X}}$ is relatively ample over $X$ by Lemma~\ref{Kampletoric} below. Indeed, the vectors $u_i :=v_{i}-
v_{i-1}$ are the primitive integral vectors in the direction of the edges of $\Xi$, and
\[
u_{i+1}-u_{i}=v_{i+1}+v_{i-1}-2v_i=(m_i-2)v_i,
\]
so the lines $v_i + \bR \cdot(u_{i+1}-u_{i}) = \bR \cdot v_i$ all
meet at the origin.
We deduce that $\tilde{X}$ is obtained from the minimal resolution of $X$ by contracting all $(-2)$-curves as claimed.
\end{proof}

\begin{lemma}\label{Kampletoric}
Let $P \subset \bR^2$ be a rational convex polygon and $X$ the associated toric surface.
Fix an orientation of the boundary of $P$ and let $e_0,e_1,e_2$ denote
oriented consecutive edges of the boundary of $P$.
Let $C \subset X$ denote the component of the toric boundary associated to the
bounded edge $e_1 \subset P$.
Let $v_{0} = e_0 \cap e_1$ and $v_{1}=e_1 \cap e_2$ be the vertices of $e_1$.
Let $u_0,u_1,u_2 \in \bZ^2$ denote the primitive integral vectors in the direction of $e_0,e_1,e_2$.
Then $K_X \cdot C > 0$ if and only if the lines $v_{0}+\bR(u_1-u_0)$ and $v_{1}+\bR(u_2-u_1)$ meet on the opposite side of $e_1$ to $P$.
\end{lemma}

\begin{proof}
Write $M=\bZ^2 \subset \bR^2$ for the lattice of characters of the torus of $X$.
Choose an orientation $\bigwedge^2 M \simeq \bZ$ and use it to identify $M$ with its dual lattice $N$.
Let $U\subset X$ denote the union of the two toric affine open subsets
corresponding to the vertices $v_0$ and $v_1$ of $P$. Then $U$ is
a toric open neighbourhood of $C\subset X$.
Then, under this identification and up to a sign, the fan $\Sigma$ of $U$ in $N_{\bR}$ consists of the two cones $\langle u_0,u_1 \rangle_{\bR_{\ge 0}}$, $\langle u_1,u_2 \rangle_{\bR_{\ge 0}}$ and their faces.
The condition on the lines $v_{0}+\bR(u_1-u_0)$ and $v_{1}+\bR(u_2-u_1)$ in the statement is equivalent to the condition that the primitive generator $u_1$ of the central ray of the fan $\Sigma$
lies on the same side of the affine line spanned by the primitive generators $u_0$ and $u_2$ of the two outer rays of $\Sigma$ as the origin $0 \in N$.
Now by \cite{R83}, 4.3, this condition is equivalent to $K_X \cdot C > 0$.
\end{proof}

Because this quotient construction is analytic
we will have to deal with convergence issues to show that our
construction extends to this analytic situation.

\subsection{Cusp family}\label{cusp1}

In this subsection, we fix the following.
Let $(Y,D)$ be a rational surface with anti-canonical cycle, now
over the field $\kk=\CC$. We obtain $(B,\Sigma)$, with $\Sigma$ having
one-dimensional cones $\rho_i$ and two-dimensional cones $\sigma_{i,i+1}$
as usual.

We assume that the intersection matrix $(D_i \cdot D_j)_{1 \le i,j \le n}$
is negative definite.
Let $f \colon Y \rightarrow Y'$ be the contraction (in the analytic category)
of $D \subset Y$ to a cusp singularity $q \in Y'$.
We assume that $f$ is the minimal resolution of $Y'$, that is,
$D_i^2 \le -2$ for all $i$. We further assume that $n \ge 3$ to avoid additional
technical issues of the flavour dealt with in \S\ref{mtsmalln}.
The case of Looijenga's conjecture with $n\le 2$ is in fact trivial, see
the proof of Theorem \ref{looijengasconjecture}.

Let $L$ be a nef divisor on $Y$ such that
\[
\NE(Y)_{\bR_{\ge 0}}
\cap L^{\perp}=\langle D_1,\ldots,D_n\rangle_{\bR_{\ge 0}}.
\]
Here the subscript $\RR_{\ge 0}$ denotes the real cone in $A_1(Y,\RR)$
generated by the given elements or set.
(Indeed, if $Y'$ is projective we can take
$L=h^*A$ for $A$ an ample divisor on $Y'$.
In general, let $A$ be an ample divisor on $Y$. There exist unique
$a_i \in \bQ$ such that $L:=A + \sum a_iD_i$ is orthogonal to $D_j$
for each $j$.
By negative definiteness of $\langle D_1,\ldots,D_n \rangle$, we have $a_i > 0$ for each $i$.
It follows that $L$ is nef.)

Let $\sigma_P \subset A_1(Y,\bR)$ be a rational polyhedral cone containing
$\NE(Y)$, $P = \sigma_P \cap A_1(Y,\bZ)$ the associated toric monoid.
We assume
that $\sigma_P$ is strictly convex and $\sigma_P \cap L^{\perp}$ is a face of
$\sigma_P$. Let
$\fom=P \setminus \{0\}$ and
$J = P \setminus P \cap L^{\perp} \subset P$, the radical monomial
ideal associated to
the face $\sigma_P \cap L^{\perp}$ of $\sigma_P$.
We will write $S=\Spec \kk[P]$, and for any monomial ideal $I$, we write
$S_I=\Spec\kk[P]/I$.

We take the multivalued piecewise linear function $\varphi$ as usual
to have bending parameter $\kappa_{\rho,\varphi}=[D_{\rho}]\in P$.
We wish to build a deformation of the $n$-vertex over $S_I$ with $\sqrt{I}=J$.
However, this is already a problem over $S_J$ because none of the
$\kappa_{\rho,\varphi}$ lie in $J$. Thus we can't apply the results of
\S\ref{modifiedmumfordsection} directly as the standard open sets
$U_{\rho,J}$ will not glue compatibly because of issues involving triple
intersections. To deal with this, we need to shrink these open sets. This
procedure is carried out as follows.

\begin{theorem}\label{cuspfamily}
Fix $R>1$.
There exists an analytic open neighbourhood $S'_J$ of $0 \in S_J$ and an analytic flat family $f_J \colon X_J \rightarrow S'_J$ together with a section $s \colon S'_J \rightarrow X_J$ satisfying the following properties:
\begin{enumerate}
\item The general fibre $X_{J,t}$ of $f_J$ is a Stein analytic surface with a unique singularity $s(t) \in X_{J,t}$ isomorphic to the dual cusp to $q \in Y'$.
\item For each ray $\rho_i \in \Sigma$ there is an open analytic subset $V_{\rho_i,J} \subset X_J$ and open analytic embeddings
\[
V_{\rho_i,J} \subset \{(X_{i-1},X_i,X_{i+1})\in U_{\rho_i,J}\;|\;|X_{i-1}|<
R|X_i|, \, |X_{i+1}|<R|X_i|\}
\subset U_{\rho_i,J}
\]
where
\[
U_{\rho_i,J} :=V(X_{i-1}X_{i+1}-z^{[D_{\rho_i}]}X_i^{-D_{\rho_i}^2})
\subset \bA^2_{X_{i-1},X_{i+1}} \times (\Gm)_{X_i} \times S_J
\]
such that
\begin{enumerate}
\item $X^o_J := X_J \setminus s(S'_J) = \bigcup_{\rho \in \Sigma} V_{\rho,J}$.
\item $V_{\rho,J} \cap V_{\rho',J} = \emptyset$ unless $\rho=\rho'$ or $\rho$ and $\rho'$ are the edges of a maximal cone $\sigma \in \Sigma$.
\end{enumerate}
\item The restriction of $X_J/S'_J$ to $S_{J+\mathfrak{m}^{N+1}}$ is identified with an analytic neighbourhood of the vertex in the restriction of the family $X_{\mathfrak{m}^{N+1}}/S_{\mathfrak{m}^{N+1}}$ given by
Theorem~\ref{univfamth}, (1) with $\foD=\foD^{\can}$, for each $N \ge 0$.
\end{enumerate}
\end{theorem}

\begin{proof}
(1) We use the notation of Example \ref{negdefexample}, so that
the pair $(Y,D)$ determines
an infinite fan $\tilde{\Sigma}$ in $M_{\RR}$ with the primitive generators of the rays
being the $v_i$ for $i\in\ZZ$. We also have $T\in \SL(M)$ acting
on the fan $\tilde{\Sigma}$. We have $B=|\tilde{\Sigma}|/\Gamma$, where $\Gamma$
is the group generated by $T$.

Now as in \S \ref{dualitysec},
let $\Xi \subset M_{\bR}$ be the convex hull of the points $v_i \in M$.
Thus $\Xi$ is an infinite convex polytope.
Let $\tilde{\Sigma}'$ be the subdivision of $\Xi$ induced by $\tilde{\Sigma}$.
In what follows, we will build a Mumford degeneration $Z/S_J$ with
special fibre $Z_0$ the stable toric variety associated to $\tilde{\Sigma}'$.
In other words, $Z_0$ will be the union of the toric surfaces associated
to the maximal polytopes in $\tilde{\Sigma}'$.

$S_J$ is the affine toric variety associated to the face
$\sigma_{\bdy}:=\sigma_P \cap L^{\perp}$
of $\sigma_P$, and $P_{\bdy}:=\sigma_{\bdy} \cap A_1(Y,\bZ)$ contains the classes of the
components of the boundary of $Y$.
We define a piecewise linear convex function
$\tvarphi \colon |\tilde{\Sigma}'| \rightarrow P_{\bdy}^{\gp}\otimes_{\ZZ}{\RR}$
by restriction of a piecewise linear convex function $\tvarphi$ on
$|\tilde\Sigma|$. This function is the single-valued representative for
$\varphi$ on the universal cover of $B_0$, and as such, is
defined up to an integral linear function
by specifying its bending parameter
\[
\kappa_{\tvarphi,\rho_i}=[D_{i\bmod n}]\in P_{\bdy}
\]
if $\rho_i=\RR_{\ge 0} v_i$.

Then $\tvarphi$ determines a Mumford degeneration: this is a
slight generalization of \S\ref{Mumfordsection}. One defines
\[
\tilde\Xi:=\{(m,r)\,|\, m\in\Xi, r\in \tvarphi(m)+\sigma_{\bdy}\}
\subset
M_{\RR}\oplus (P^{\gp}_{\bdy}\otimes_{\ZZ}\RR).
\]
Let $C(\tilde\Xi)$ be the closure of
\[
\{(sm,sr,s)\,|\,(m,r)\in\tilde\Xi,
s\ge 0\}\subset M_{\RR}\oplus(P_{\bdy}^{\gp}\otimes_{\ZZ}\RR)\oplus\RR.
\]
Then $\CC[C(\tilde\Xi)\cap
(M\oplus P^{\gp}_{\bdy} \oplus\ZZ)]$ has a natural grading given by the last
coordinate, and the degree zero part of this ring is easily seen to contain
$\CC[P_{\bdy}]$. Thus we obtain the Mumford family determined by $\tvarphi$
as
\[
Z:=\Proj \CC[C(\tilde\Xi)\cap (M\oplus P^{\gp}_{\bdy} \oplus\ZZ)]
\rightarrow \Spec \CC[P_{\bdy}]=S_J.
\]

One sees easily that the fibre over $0\in S_J$ of
$Z \rightarrow S_J$ has infinitely
many components indexed by the $2$-cells of the subdivision
$\tilde{\Sigma}'$ of $\Xi$,
each of which is a copy of the blowup of $\bA^2$ at the origin.
The general fibre is a toric surface (only locally of finite type) containing an
infinite chain of smooth rational curves,
which specializes to the union of the exceptional curves of the
blowups in $Z_0$.
By construction $\Gamma$ acts on $Z$ over $S_J$ (because $\tvarphi$ is
$\Gamma$-invariant modulo integral affine functions).
Let $E \subset Z/S_J$ be the family of curves described above
(the relative toric boundary).
The group $\Gamma$ acts properly discontinuously on a tubular neighbourhood $N$ of $E \subset Z$ (cf. \cite{AMRT75}, p.~48).
Let $p \colon (F \subset \tilde{X}) \rightarrow S_J$ denote the quotient of $(E \subset N) \rightarrow S_J$ by $\Gamma$.

The divisor $F \subset \tilde{X}$ is Cartier and the dual of its normal bundle is relatively ample over
a neighbourhood of $0 \in S_J$. Indeed, the special fibre $\tilde{X}_0$ is a union of $n$ irreducible components each isomorphic to a tubular neighbourhood of the exceptional locus in the blowup of $\bA^2$,
and $F_0 \subset \tilde{X}_0$ is the cycle of $n$ smooth
rational curves formed by the exceptional curves of the blowups.
Hence the normal bundle of $F_0$ in $\tilde{X_0}$ has degree
$-1$ on each component of $F_0$.
Moreover, we have $R^1p_*(\cN^{\vee}_{F/\tilde{X}})^{\otimes k} = 0$ for each $k > 0$.
Indeed, by cohomology and base change it suffices to show that $H^1((\cN^{\vee}_{F_0/\tilde{X}_0})^{\otimes k})=0$, and this follows from Serre duality. Now by a relative version of Grauert's contractibility criterion \cite{F75},
Thm.~2, taking global sections of the structure sheaf defines a contraction
$p \colon \tilde{X} \rightarrow X_J/S_J$ to a family of Stein
analytic spaces with exceptional locus $F$.
The general fibre $X_{J,t}$ of $X_J/S_J$ is the dual cusp by Lemma~\ref{dualcusp}. The section $s:S_J\rightarrow X_J$ takes $x\in S_J$ to the cusp of
$X_{J,x}$.

We now show that $X_J/S_J$ is flat and the special fibre is the neighbourhood of the $n$-vertex obtained by contracting $F_0 \subset \tilde{X}_0$.
The key point is that $R^1p_*\cO_{\tilde{X}}$ is a locally free $\cO_{S_J}$-module, cf. \cite{W76}, Theorem~1.4(b).
Indeed, we have
\[
R^1p_*\cO_{\tilde{X}}(-F)=0
\]
by cohomology and base change, the theorem on formal functions, and the vanishing $H^1((\cN_{F_0/\tilde{X}_0}^{\vee})^{\otimes k})=0$ for $k > 0$ used above. So, pushing forward the exact sequence
\[
0 \rightarrow \cO_{\tilde{X}}(-F) \rightarrow \cO_{\tilde{X}} \rightarrow \cO_{F} \rightarrow 0
\]
we obtain
\[
R^1p_*\cO_{\tilde{X}}=R^1p_*\cO_{F} \simeq \cO_{S_J}.
\]
Recall that $S_J$ is a toric variety, so in particular Cohen-Macaulay.
Let $t_1,\ldots,t_r$ be a regular sequence at $0 \in S_J$ of length $\dim S_J$ and write
\[
S^i_J=V(t_1,\cdots,t_i) \subset S_J,
\]
$\tilde{X}^i=\tilde{X}|_{S^i_J}$, and let $X^i_J/S^i_J$ be the family
over $S^i_J$ defined by $\cO_{X^i_J}=p_*\cO_{\tilde{X}^i}$.
Arguing as above we find that $R^1p_*\cO_{\tilde{X}^i} \simeq \cO_{S^i_J}$.
Pushing forward the exact sequence
\[
0 \rightarrow \cO_{\tilde{X}^i}
\mapright{\cdot t_{i+1}}
\cO_{\tilde{X}^i} \mapright{}
\cO_{\tilde{X}^{i+1}} \mapright{} 0
\]
we deduce that the natural map
\begin{equation}\label{slice}
\cO_{X^i_J}/ t_{i+1} \cO_{X^i_J} \rightarrow \cO_{X^{i+1}_J}
\end{equation}
is an isomorphism. Hence by the local criterion of flatness \cite{Matsumura89},
Ex.~22.3, p.~178, it suffices to show that
$X^r_J/S_J^r$ is flat with special fibre the $n$-vertex. But $S_J^r$
is the spectrum of an Artinian $\bC$-algebra, so this follows from
\cite{W76}, Theorem~1.4(b).

(2) Write $Z^o=Z \setminus E$, and $m_i=-D_{i \bmod n}^2$, $a_i=z^{[D_{i \bmod n}]}$ for $i \in \bZ$.
We have an open covering
\[
Z^o = \bigcup_{i \in \bZ} U_{i,J},
\]
where
\[
U_{i,J}=V(x_{i-1}x_{i+1}-a_ix_i^{m_i}) \subset \bA^2_{x_{i-1},x_{i+1}}
\times (\Gm)_{x_i} \times S_J.
\]
Similarly, we have an open covering
\[
Z = \bigcup_{i \in \bZ} \oU_{i,J},
\]
where
\begin{align*}
\oU_{i,J}= {} & V(x'_{i-1}x'_{i+1}-a_ix_i^{m_i-2}) \subset
\AA^3_{x'_{i-1},x_i,x'_{i+1}} \times S_J,\\
E \cap \oU_{i,J} = {} & V(x_i) \subset \oU_{i,J},
\end{align*}
and $\oU_{i,J} \setminus E = U_{i,J}$ via $x_{i-1}=x_ix'_{i-1}$, $x_{i+1}=x_ix'_{i+1}$.
(Note that $m_i = -D_i^2 \ge 2$ by assumption.)

Recall that the infinite cyclic group $\Gamma$ acts on $Z/S_J$, there is a $\Gamma$-invariant tubular neighbourhood $N \subset Z$ of $E \subset Z$ on which the action is properly discontinuous, and $F \subset \tilde{X}$ is obtained as the quotient of $E \subset N$ by $\Gamma$.
In terms of the open covering above the action is given by $U_{i,J} \rightarrow U_{i+n,J}$, $x_j \mapsto x_{j+n}$.
Note that the map $U_{i,J} \cap N \rightarrow \tilde{X}$ is \emph{not} an open embedding (because, for example, $U_{i,J}$ contains the general fibre of $Z^o/S_J$). Fix $R>1$. We define $W_{i} \subset U_{i,J} \cap N$ by
\[
W_{i} = \{(x_{i-1},x_i,x_{i+1}) \in U_{i,J}\cap N\;|\; |x_{i-1}|<R|x_i|, \, |x_{i+1}| < R|x_i|\}
\]
and similarly define $\oW_i \subset \oU_{i,J} \cap N$ by
\[
\oW_{i} = \{(x'_{i-1},x_i,x'_{i+1})\in \oU_{i,J}\cap N\;|\;
|x'_{i-1}|<R, |x'_{i+1}|<R\}.
\]
Then $\oW_i \setminus E = W_i$.

The $\oW_i$ cover the special fibre $E_0$ of $E/S_J$ (using $R>1$). The open set $\bigcup \oW_i \subset N$ is $\Gamma$-invariant, the quotient $(N,E) \rightarrow (\tilde{X},F)$ by $\Gamma$ is a covering map,  and $p \colon \tilde{X} \rightarrow X_J$ is proper with exceptional locus $F$. Hence we may assume (passing to an analytic neighbourhood $S_J'$ of $0 \in S_J$ and $s(0) \in X_J$) that $N=\bigcup \oW_i$.

By Lemma~\ref{disjointestimate} below there exists $\delta>0$ such that
\[
\oW_i \cap \{(x_{i-1},x_i,x_{i+1})\,|\,|x_i|< \delta\} \subset \{|x_j|<1
\quad\forall j\}
\]
for each $i$.
We replace $\oW_i$ by $\oW_i \cap \{|x_i| < \delta\}$, and modify $W_i$
similarly.
Then as above we may assume that $N=\bigcup \oW_i$, and $N \subset \{|x_i|<1 \quad\forall i\}$.
We claim that $W_i \cap W_j = \emptyset$ for all $j > i+1$.
It suffices to work on the general fibre of $Z^o/S_J'$, which is an algebraic torus. The coordinate functions $x_i$ are characters of this torus (up to a multiplicative constant). By construction we have $|x_i|<1$ for each $i$ on $N$. Hence, shrinking the base $S_J'$, we may assume that
$|a_ix_i^{m_i-2}| < 1/R^2$ for each $i$.
The relation
$$x_{i-1}x_{i+1}=a_ix_i^{m_i}$$
gives the inequality
$$|x_{i+1}/x_i| < (1/R^2)|x_i/x_{i-1}|.$$
Combining such inequalities we obtain
\begin{equation}\label{disjointinequality}
|x_{j+1}/x_j| < (1/R^2)^{j-i}|x_{i+1}/x_i| \quad \mbox{ for } j>i.
\end{equation}
Now $|x_{i+1}/x_i| < R$ on $W_i$ and $|x_{j-1}/x_j| < R$ on $W_j$,
so $|x_{j}/x_{j-1}| > (1/R^2)|x_{i+1}/x_i|$ on $W_i \cap W_j$. For $j>i+1$ this contradicts the inequality (\ref{disjointinequality}), hence $W_i \cap W_j = \emptyset$ as claimed.

It follows that $W_i$ embeds in $X_J$, using the fact that we have assumed
$n\ge 3$. Let $V_i$ denote its image, with indices now understood modulo $n$. Thus $X^o_J=\bigcup V_i$ and the inverse image of $V_i$ is the (disjoint) union of $W_j$ such that $j \equiv i \bmod n$.
We have $V_i \cap V_j = \emptyset$ for $j \neq i-1,i,i+1$ by our claim above.
So, writing $V_{\rho,J}:=V_i$ for $\rho$ the ray of $\Sigma$ corresponding to $D_i$, the condition (2)(b) is satisfied.

(3) We have the open covering $X^o_J= \bigcup V_i$ and open embeddings $V_i \subset U_{i,J}$, and
an open covering $X^o_{\mathfrak{m}^{N+1}}=\bigcup U_{i,\mathfrak{m}^{N+1}}$. The restrictions of $U_{i,J}/S_J$ and $U_{i,\mathfrak{m}^{N+1}}/S_{\mathfrak{m}^{N+1}}$ to $S_{J+\mathfrak{m}^{N+1}}$ are identified, and the gluing maps coincide. It follows that the restriction of $X_J/S_J$ is identified with a neighbourhood of the vertex in the restriction of $X_{\mathfrak{m}^{N+1}}/S_{\mathfrak{m}^{N+1}}$ using Lemma~\ref{relS2}.
\end{proof}

\begin{lem}\label{disjointestimate}
We use the notation of the proof of Theorem~\ref{cuspfamily}(2).
Let $x_j$ be the coordinate functions on $Z^o=\bigcup U_{i,J}$. There exists $\delta> 0$ such that on each open set $U_{i,J}$ if $|x_i|<\delta$, $|x_{i-1}|<R|x_i|$, $|x_{i+1}|<R|x_i|$, and $|z^p|<1$ for all $p \in P_{\bdy}$ then $|x_j|<1$ for all $j$.
\end{lem}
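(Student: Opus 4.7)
The plan is to reduce the statement to a one-dimensional recursion on $\ell_j := \log|x_j|$ evaluated at a point of a fibre and deduce quadratic decay of $\ell_j$ in $|j-i|$. First I would pass to a smaller open analytic neighbourhood of $0 \in S_J$ on which the $|z^{[D_j]}|$ are uniformly bounded above by some $\eta<1$ for $j=1,\dots,n$; set $C:=-\log\eta>0$. This kind of shrinking of the base is already being used a few lines above in the proof of Theorem~\ref{cuspfamily}, so it is permissible here.

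On a fibre the values $x_j$ for all $j\in\ZZ$ are determined from $(x_{i-1},x_i,x_{i+1})$ by iterating the relation $x_{j-1}x_{j+1}=z^{[D_{j\bmod n}]}x_j^{m_j}$, where $m_j=-D_{j\bmod n}^2\ge 2$. Taking logs of absolute values gives the inhomogeneous linear recursion
\[
\ell_{j-1}+\ell_{j+1}= c_j + m_j\ell_j,\qquad c_j:=\log|z^{[D_{j\bmod n}]}|\le -C,
\]
and if we set $\tau_j:=\ell_j-\ell_{j-1}$ this becomes
\[
\tau_{j+1}-\tau_j = (m_j-2)\ell_j + c_j.
\]
Hence whenever $\ell_j<0$ and $m_j\ge 2$ we have $\tau_{j+1}\le \tau_j-C$, i.e.\ the differences $\tau_j$ drop by at least $C$ each step.

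The main step is a forward induction on $K\ge 0$ showing $\ell_{i+K}<0$. The hypotheses give $\ell_i<\log\delta<0$ and $\tau_{i+1}<\log R$. Assuming $\ell_{i+k}<0$ for $0\le k\le K$, the recursion for $\tau$ yields $\tau_{i+k}\le \log R-(k-1)C$ for $1\le k\le K+1$, and summing produces
\[
\ell_{i+K+1}\le \log\delta + (K+1)\log R - \tfrac{C}{2}K(K+1).
\]
The right-hand side is a quadratic in $K$ with negative leading coefficient, hence uniformly bounded above for $K\ge 0$; choosing $\delta$ small (depending only on $R$ and $C$) makes it negative for every $K\ge 0$, closing the induction. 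A completely symmetric argument handles $j<i$, so $|x_j|<1$ for all $j\in\ZZ$.

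There is no substantive obstacle here; the whole content is the quadratic-versus-linear competition in the above estimate, which encodes the fact that a uniform lower bound on $-\log|z^{[D_j]}|$ drains the second-order recursion for $\ell_j$ at a constant rate and so dominates any linear growth allowed by the initial bound $|x_{i\pm 1}|<R|x_i|$.
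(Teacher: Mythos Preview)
Your argument is correct, but it proves a slightly weaker statement than the lemma as written: you need a uniform bound $|z^{[D_j]}|\le\eta<1$ (equivalently $C>0$), whereas the stated hypothesis is only $|z^p|<1$. Without $C>0$ the quadratic term disappears and your inequality $\ell_{i+K+1}\le\log\delta+(K+1)\log R$ no longer closes the induction. As you note, shrinking $S'_J$ to force $|z^{[D_j]}|\le\eta$ is harmless for the application in Theorem~\ref{cuspfamily}, so nothing essential is lost, but it is worth being aware that your $\delta$ then depends on $\eta$ as well as on $R$.

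The paper takes a genuinely different route that proves the lemma exactly as stated. Instead of inducting on the recursion, it exploits the convex polytope $\Xi$ directly: writing $v_j=\alpha v_i+\beta(v_{i+1}-v_i)$, convexity bounds the ratio $\beta/\alpha$ uniformly by a constant $\mu$ determined by the asymptotic directions $w_1,w_2$ of $\Xi$ (these encode the hyperbolicity of the recursion, i.e.\ the negative-definiteness of $D$). Then $x_j=z^p x_i^{\alpha}(x_{i+1}/x_i)^{\beta}$ with $p\in P_{\bdy}$, so $|x_j|<\delta^{\alpha}R^{\beta}$ and $\delta=R^{-\mu}$ suffices. Here the factor $|z^p|$ is simply bounded by $1$ and does no work; all the decay comes from the exponent bound $\beta/\alpha<\mu$. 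Your argument instead extracts the needed decay from the inhomogeneous terms $c_j$ rather than from the homogeneous recursion, which is why you need $C>0$. The paper's approach is shorter once $\Xi$ and $w_1,w_2$ are in hand, and yields an explicit $\delta$ independent of the base point; yours is more elementary and self-contained.
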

\begin{proof}
The points $v_{i-1}$, $v_i$, and $v_{i+1}$ are consecutive integral points on the boundary of the infinite convex polytope $\Xi$ with asymptotic directions $w_1,w_2$.
It follows that $w_1=\alpha_{i1}v_i+\beta_{i1}(v_{i-1}-v_i)$ and $w_2=\alpha_{i2}v_i+\beta_{i2}(v_{i+1}-v_i)$ for some $\alpha_{i1},\beta_{i1},\alpha_{i2},\beta_{i2} \in \bR_{>0}$. Note that the ratios $\beta_{i1}/\alpha_{i1},\beta_{i2}/\alpha_{i2}$ only depend on $i$ modulo $n$ (because $w_1,w_2$ are eigenvectors of $T$ and $T(v_i)=v_{i+n}$).
Let $\mu$ be the maximum of the ratios
$\beta_{i1}/\alpha_{i1},\beta_{i2}/\alpha_{i2}$ for $i=1,\ldots,n$.
Let $\delta=R^{-\mu}$.
If $j>i$ then $v_j = \alpha v_i + \beta (v_{i+1}-v_i)$ with $\beta/\alpha < \beta_{i2}/\alpha_{i2}$. The coordinate function $x_j$ can be written as $z^px_i^{\alpha}(x_{i+1}/x_i)^{\beta}$ on $V_i$, where $p \in P_{\bdy}$.
Thus $|x_j|< \delta^{\alpha}R^{\beta} < 1$ for $|x_i| < \delta$ and $|z^p|<1$. The same is true for $j<i$ by symmetry.
\end{proof}

\subsection{Thickening of the cusp family}\label{cusp2}

We continue to work with the setup at the beginning of \S\ref{cusp1}, with
$(Y,D)$, $L$, $\sigma_P$, $P$, $\fom$ and $J$ as given there.

\begin{theorem}\label{thickenedcuspfamily}
Let $p_J \colon X_J \rightarrow S_J'$ be the analytic family of Theorem~\ref{cuspfamily}. Possibly after replacing $S_J'$ by a smaller neighbourhood of $0 \in S_J$ and $X_J$ by a smaller neighbourhood of $s(S_J') \subset X_J$,
independent of the choice of $I$ below, the following holds.
Let $I \subset P$ be a monomial ideal such that $\sqrt{I}=J$ and let $S'_I \subset S_I$ denote the induced thickening of $S'_J \subset S_J$.
There is an infinitesimal deformation $f_I \colon X_I \rightarrow S'_I$ of $f_J \colon X_J \rightarrow S'_J$ such that for each $N>0$ the restriction to
$\Spec\kk[P]/(I+\mathfrak{m}^{N+1})$ is identified with an analytic neighbourhood of the vertex in the restriction of the family $X_{\mathfrak{m}^{N+1}}/S_{\mathfrak{m}^{N+1}}$
given by Theorem~\ref{univfamth}, (1) applied with $\foD=\foD^{\can}$.
\end{theorem}

\begin{proof}
As usual, $\foD^{\can}$ is the canonical scattering diagram
on $B$ associated to the pair $(Y,D)$. Note that the hypotheses
(I) and (II) of Theorem \ref{canonicalconsistency} are satisfied for
the ideal $J$ (but not (III)). This is because we can take $L=A+\sum a_iD_i$
with $A$ ample on $Y$ and $a_i>0$, and any $\AA^1$-class $\beta$
intersects the $D_i$ non-negatively. Then for any $k$ there are only a
finite number of $\AA^1$-classes $\beta$ such that $L\cdot \beta<k$.
In addition, there are no $\AA^1$-classes $\beta$ with $\beta\cdot L=0$.

Let $\foD$ be the scattering diagram obtained by reducing $\foD^{\can}$ modulo $I$ as follows.
For each ray $\fod$ of $\foD^{\can}$, we truncate the attached function $f_{\fod}$ by removing monomial terms lying in $I \cdot \kk[P_{\varphi_{\tau_{\fod}}}]$, and we discard the ray if the truncated function equals $1$.
Because of (II), $\foD$ has only finitely many rays $\fod$ and the attached
functions $f_{\fod}$ are finite sums of monomials. Because of (I),
$\foD$ is empty if $I=J$.

We use the scattering diagram $\foD$ to define a complex analytic space $X^{o}_{I}/S_I'$ as follows.
Recall from \eqref{localequations} the description of the schemes $U_{\rho_i,I}/S_I$ for $\rho_i \in \Sigma$:
\[
U_{\rho_i,I}=V(X_{i-1}X_{i+1}-z^{[D_{\rho}]}X_i^{-D_{\rho}^2}f_{\rho})
\subset \bA^2_{X_{i-1},X_{i+1}} \times (\Gm)_{X_i} \times S_I.
\]
We have open subsets $U_{\rho_i,\sigma_{i-1,i},I}, U_{\rho_i,\sigma_{i,i+1},I}
\subset U_{\rho_i,I}$ defined
by $X_{i-1}\not=0$ and $X_{i+1}\not=0$ respectively.
We have canonical identifications
\begin{equation}\label{identifyoverlaps}
U_{\rho_i,\sigma_{i,i+1},I}=U_{\sigma_{i,i+1},\sigma_{i,i+1},I} =
U_{\rho_{i+1},\sigma_{i,i+1},I}
\end{equation}
where
\[
U_{\sigma_{i,i+1},\sigma_{i,i+1}}=(\Gm)_{X_i,X_{i+1}}^2 \times S_I.
\]
Recall that we have an open covering $X^o_J= \bigcup_{\rho \in \Sigma} V_{\rho,J}$ and open analytic embeddings $V_{\rho,J} \subset U_{\rho,J}$.
Write
\[
V_{\sigma_{i,i+1},\sigma_{i,i+1},J} := (V_{\rho_i,J} \cap
U_{\rho_i,\sigma_{i,i+1},J}) \cap (V_{\rho_{i+1},J} \cap U_{\rho_{i+1},
\sigma_{i,i+1},J}) \subset U_{\sigma_{i,i+1},\sigma_{i,i+1},J}
\]
where we use the identification (\ref{identifyoverlaps}).
Let $V_{\rho_i,\sigma_{i,i+1},J} \subset V_{\rho_i,J}$, $V_{\rho_{i+1},
\sigma_{i,i+1},J} \subset V_{\rho_{i+1},J}$
denote the open subsets corresponding to $V_{\sigma_{i,i+1},\sigma_{i,i+1},J}$
under \eqref{identifyoverlaps}.
Let $V_{\rho_i,I}$, $V_{\rho_i,\sigma_{i,i+1},I}$, etc., be the infinitesimal
thickenings of these open sets determined by the thickenings
$U_{\rho_i,I}$ of $U_{\rho_i,J}$.
Let $\theta_{\gamma,\foD} \colon U_{\rho_{i+1},\sigma_{i,i+1},I} \rightarrow
U_{\rho_i,\sigma_{i,i+1},I}$
be the gluing isomorphism defined as in \S\ref{scatdiagsection}. Note that as
the canonical scattering diagram $\foD^{\can}$ is trivial modulo $J$,
$\theta_{\gamma,\foD}$ restricts to the identification (\ref{identifyoverlaps}) modulo $J$, and thus restricts to an isomorphism
\[
V_{\rho_{i+1},\sigma_{i,i+1},I} \rightarrow V_{\rho_i,\sigma_{i,i+1},I}.
\]
Gluing the $V_{\rho,I}$ via these isomorphisms we obtain an infinitesimal deformation $X^{o}_{I}/S'_{I}$ of $X^{o}_J/S'_J$. Note that there are no triple overlaps of the $V_{\rho,J}$ by Theorem~\ref{cuspfamily}(2)(b), hence no compatibility condition for the gluing automorphisms.
It is clear from the construction that the families $X^o_I/S'_I$ and $X_{\mathfrak{m}^{N+1}}/S_{\mathfrak{m}^{N+1}}$ are compatible.

We define sections $\vartheta_q \in \Gamma(X^o_I,\cO_{X^o_I})$
for $q \in B(\ZZ)$, compatible with the sections of Theorem~\ref{univfamth},
(2).
We proceed as in the algebraic case: we first define a local section $\Lift_Q(q)$ for each choice of basepoint $Q \in B_0 \setminus \Supp(\foD)$ on a corresponding open patch of $X^o_I$ using the broken lines construction.
The new difficulty here is that the functions $\Lift_Q(q)$ are not algebraic,
even over the unthickened locus $S_J'$. Indeed, by definition $\Lift_Q(q) = \sum \Mono(\gamma)$ is a formal sum of monomials corresponding to broken lines $\gamma$ for $q$ with endpoint $Q$. Note that with our current choice of ideal
$J$, this sum is always infinite, as is already evident in Example
\ref{firstbrokenexample}.  So we must prove convergence. This is done in
\S\ref{convergence}, see
Propositions~\ref{lift1} and \ref{lift2}.

Once this convergence is proved,
we observe that these patch to give well-defined global sections.
This follows from the consistency of
$\foD^{\can}$ and compatibility of $X^o_I/S'_I$ with
$X^o_{\fom^{N+1}}/S_{\mathfrak{m}^{N+1}}$ for $N \ge 0$.

We define an infinitesimal thickening $X_I/S'_I$ of $X_J/S'_J$ by $\cO_{X_I}=i_*\cO_{X^o_I}$ where $i \colon X^o_J \subset X_J$ is the inclusion.
Then $X_I/S'_I$ is flat by Lemma~\ref{flatnesslemma}
and the existence of the lifts $\vartheta_q$.
\end{proof}

\subsubsection{Convergence of Lifts}\label{convergence}

Let $\oC \subset M_{\bR}$ be the closure of the support $|\tilde{\Sigma}|$ of the fan $\tilde{\Sigma}$, a closed convex cone.
Let $w_1, w_2$ be generators of $\oC$.
Then $w_1,w_2$ are eigenvectors of $T$ with eigenvalues $\lambda^{-1}, \lambda$ for some $\lambda \in \bR$. We may assume that $\lambda > 1$.
Let $\pi \colon  \tilde{B}_0 \rightarrow B_0$ denote the universal cover of $B_0$.
So $\tilde{B}_0$ is identified with $C=\Int(\oC)$,
with deck transformations given by the action of $\Gamma=\langle T \rangle$ on
$C$.
Let $\tcP$, $\tilde{\varphi}$, and $\tilde B_0(\bZ)$ denote the pullbacks of
$\cP$, $\varphi$, and $B_0(\ZZ)$. Let $\tilde{\foD}$ denote the scattering diagram on $\tilde{B}_0$ induced by $\foD$.
We fix a trivialization  of $\tcP$ as the constant
sheaf with fibre $P^{\gp}\oplus M$.

The behaviour of the broken lines $\gamma$ is best studied by passing to the universal cover $\tilde{B}_0$ of $B_0$.
Let $Q \in B_0$, $q \in B_0(\ZZ)$, and choose lifts
$\tQ \in \tilde{B}_0$, $\tq \in \tilde B_0(\ZZ)$.
Then a broken line $\gamma$ on $B_0$ for $q$ with endpoint $Q$ lifts uniquely to a broken line $\tilde{\gamma}$ on $\tilde{B}_0$ for $T^N(\tq)$ with endpoint $\tQ$, for some $N \in \bZ$, and the attached monomials are identified via $\tcP=\pi^*\cP$.
Note that $T^N(\tq)$ approaches $\bR_{\ge 0}\cdot w_2$ as $N \rightarrow \infty$ and $\bR_{\ge 0}\cdot w_1$ as $N \rightarrow -\infty$.

If $\gamma$ is a broken line for a point $\tq \in \tilde B_0(\ZZ)$
with endpoint $\tQ \in \tB_0$ then $\gamma \colon (-\infty,0] \rightarrow
\tB_0$ is a piecewise linear path in $\tB_0=C$ with initial direction
$-\tq$, ending at $\tQ$, and crossing all the rays of $\tilde\foD$
between $\bR_{\ge 0} \tq$
and $\bR_{\ge 0}\tQ$ in order. Let $t_1,\ldots, t_{l} \in (-\infty,0)$ denote
the points where $\gamma$ is not affine linear.
Each point $\gamma(t_i)$ lies on a ray $\fod_i$ of $\tilde{\foD}$ and the change
$\gamma'(t_i+\epsilon)-\gamma'(t_i-\epsilon)$ in the direction of $\gamma$ as
it crosses $\fod_i$ is an integral multiple of the primitive generator of
$\fod_i$. Moreover this multiple is positive because each ray of the
canonical scattering diagram is an outgoing ray in the terminology of
Definition~\ref{scatdiagdef}. So the path $\gamma$ is ``convex when
viewed from the origin''.

It is convenient for the convergence calculation to decompose the
monomials for broken lines as follows.
Let $\gamma \colon (-\infty,0] \rightarrow \tilde B_0$ be a broken line,
$t \in (-\infty,0]$ a point such that $\gamma$ is affine linear near $t$ and
$\gamma(t)$ lies in the interior of a maximal cone $\sigma$ of
$\tilde{\Sigma}$, and $cz^q$ the monomial attached to the domain
of linearity of $\gamma$ containing $t$.
Here $c \in \kk$ and $q \in P_{\tilde{\varphi}_{\sigma}} \subset P^{\gp}
\oplus M$.
We write $cz^q=az^{\tilde{\varphi}_{\sigma}(m)}$, where $m=r(q) \in M$
and $a=cz^{q-\tilde{\varphi}_{\sigma}(m)}$ is a monomial in $\kk[P]$.
We also use the same decomposition for the monomials occuring in the
scattering functions $f_{\fod}$ for $\fod \in \tilde{\foD}$. Let  $\fod$
be a ray in $\tilde{\foD}$ with primitive integral generator $m \in M$
and $\tau=\tau_{\fod}$ the smallest cone of $\tilde{\Sigma}$ containing
$\fod$. Then $f_{\fod}-1$ is a sum of monomials $cz^q$ where $c \in \kk$
and $q \in P_{\tilde{\varphi}_{\tau}}$, $0 \neq -r(q) \in \fod$. We write
$cz^q=az^{\tilde{\varphi}_{\tau}(m)}$ where $m=r(q)$ and $a=cz^{q-\tilde{\varphi}_{\tau}(m)}$ is a monomial in $\kk[P]$.

The scattering diagram $\foD$ on $B$ is finite (because we have reduced modulo $I$), and $\tilde{\foD}$ is its inverse image under $\pi \colon \tilde{B}_0 \rightarrow B_0$. Thus $\tilde{\foD}$ has only finitely many $\Gamma$-orbits of rays. Moreover, the $\Gamma$-action on the scattering functions $f_{\fod}$ is induced by the given action on $M_{\bR}$ and the trivial action on $\kk[P]$ as follows: writing
$f_{\fod}= 1 + \sum a_m z^{\tilde{\varphi}_{\tau}(m)}$ as above, $f_{T(\fod)}=1+\sum a_m z^{\tilde{\varphi}_{T(\tau)}(T(m))}$.

\begin{lemma}\label{brokenlinebounds}
Let $\tQ \in \tB_0 \setminus \Supp(\tilde{\foD})$ be a point contained in the interior of a maximal cone $\sigma \in \tilde{\Sigma}$.
We consider broken lines $\gamma$ on $\tilde{B}_0$ for $T^N(\tq)$, some $N \in \bZ$, with endpoint $\tQ$, such that $\Mono(\gamma) \notin I \cdot \kk[P_{\tilde{\varphi}_{\sigma}}]$.
Let $k \ge 0$ be such that $(k+1)J \subset I$.
\begin{enumerate}
\item The number of bends of $\gamma$ is at most $k$.
\item The number of broken lines for $T^N(\tq)$ with endpoint $\tQ$ is $O(|N|^k)$.
\item For $\gamma$ a broken line for $T^N(\tq)$ with endpoint $\tQ$ write $\Mono(\gamma)=a_{\gamma}z^{\tilde{\varphi}_{\sigma}(m_{\gamma})}$ where $a_{\gamma}$ is a monomial in $\kk[P]$. Assume that $|z^p| < \epsilon < 1$ for all $p \in P$.
Then
\[
|a_{\gamma}| = O(\prod_{\rho \in \Sigma\atop \dim\rho=1}
|z^{[D_{\rho}]}|^{|N|}).
\]
\end{enumerate}
\end{lemma}
\begin{proof}
(1) At a bend $t_i \in (-\infty,0]$  of $\gamma$
the attached monomial $c_{i}z^{q_{i}}$ is replaced by the monomial
$c_{i+1}z^{q_{i+1}}=cz^q \cdot c_iz^{q_{i}}$ where $cz^q$ is a term in a positive power of the scattering function $f_{\fod}$ associated to the ray $\fod$ containing $\gamma(t_i)$.
In particular $cz^q \in J \cdot \kk[P_{\tilde{\varphi}_{\tau_{\fod}}}]$.
Since $\Mono(\gamma) \notin I \cdot \kk[P_{\tilde{\varphi}_{\sigma}}]$ and $(k+1)J \subset I$ it follows that there are at most $k$ bends.

(2) Such a broken line crosses $O(|N|)$ scattering rays. If $\gamma$ is a broken line for $T^N(\tq)$ then the initial attached monomial is specified, equal to $z^{\tvarphi(T^N(\tq))}$.
At a scattering ray $\fod$, let $u$ denote the primitive generator of $\fod$, $f=f_{\fod}$ the attached function, and let $cz^q$ be the monomial attached to the incoming segment of the broken line. Then the possible continuations of the broken line past $\fod$ correspond to the monomial terms in $f^d$, where $d=|r(q) \wedge u|$ is the index of the sublattice of $M$ generated by $r(q)$ and $u$. Note that since
$f \equiv 1 \mod J \cdot \kk[P_{\tilde{\varphi}_{\tau_{\fod}}}]$ the number of monomial terms in $f^d$ not lying in $I \cdot \kk[P_{\tilde{\varphi}_{\tau_{\fod}}}]$ is bounded independent of $d$.
Further, since there are a finite number of $\Gamma$-orbits of scattering rays,
and the $\Gamma$-action preserves monomials, there is a bound on the number of monomial terms independent of the ray $\fod$.
Thus for a broken line $\gamma$ for $T^N(\tilde q)$ there are
$\begin{pmatrix} O(|N|)\\ k\end{pmatrix}=O(|N|^k)$ choices of how it may bend
by (1). So the total number of broken lines is $O(|N|^k)$.

(3) By symmetry we may assume that $N \ge 0$.
Let $\fod \in \tilde{\foD}$ be a scattering ray, $f=f_{\fod}$ the attached function, and $\gamma$ a broken line that crosses $\fod$.
Suppose first that $\fod$ is contained in the interior of a maximal cone $\sigma$ of $\tilde{\Sigma}$.
Let $az^{\tilde{\varphi}_{\sigma}(m)}$ be the monomial attached to the incoming segment of $\gamma$ near $\fod$.
Let $u$ be the primitive generator of $\fod$.
Then the outgoing monomial $a'z^{\tilde{\varphi}_{\sigma}(m')}$ is obtained from the incoming monomial by multiplication by a monomial term in $f^d$, where $d=|m \wedge u|$.
Write $f=1+f_1+\cdots + f_r$, a sum of monomials.
Since $f \equiv 1 \mod J \cdot \kk[P_{\varphi_{\sigma}}]$ we have
\begin{equation} \label{multinomial}
f^d \equiv \sum_{i_1+\cdots+i_r \le k} {d \choose i_1,\ldots,i_r} f_1^{i_1} \cdots f_r^{i_r} \mod I \cdot \kk[P_{\varphi_{\sigma}}].
\end{equation}
The multinomial coefficient
\[
{d \choose i_1,\ldots,i_r} := \frac{d!}{i_1!\cdots i_r! (d-i_1-\cdots-i_r)!}
\]
is bounded by $d^k$.
The direction of the scattering ray $\fod$
is $u=T^s(\beta)$ where $0 \le s \le N$ and $\beta \in M$ is
chosen from a finite set.

The vector $m \in M$ is of the form
\[
m=T^N(\tq)-\sum_{i=1}^l T^{s_i}\alpha_i
\]
where $l \le k$, $0 \le s_i \le N$ for each $i$, and the $\alpha_i \in M$ are chosen from a finite set.
Indeed, as in the proof of (2), for the monomial terms $cz^q$ occurring in the powers $f^d$ of the function $f=f_{\fod}$ attached to a given scattering ray $\fod$, only finitely many exponents $q \in P_{\tilde{\varphi}_{\tau_{\fod}}}$ occur (working modulo $I \cdot \kk[P_{\tilde{\varphi}_{\tau_{\fod}}}]$). So there are only finitely many possible changes of exponent $q$ for the attached monomial $cz^q$ of a broken line at a scattering ray modulo the action of $\Gamma$.

Now identify $M=\bZ^2$ and let $\| \cdot \|$ denote the standard norm on $M_{\bR}=\bR^2$. Then
$$d =|m \wedge u| \le \| m \| \cdot \| u \| =O(\lambda^{2N}).$$
So, the coefficient $a' \in \kk[P]$ of the outgoing monomial is given by $a'=c\cdot z^p \cdot a$ where $c \in \CC$, $p \in P$, and $|c|=O(\lambda^{2kN})$.
Thus $|a'| = O(\lambda^{2kN}) \cdot |a|$ for $|z^p| < 1$.

Next, let $\rho \in \tilde{\Sigma}$ be a ray and $\sigma_+$, $\sigma_-$ the
maximal cones containing $\rho$. Suppose $\gamma$ is a broken line that
crosses $\rho$, travelling from $\sigma_-$ to $\sigma_+$. Let
$a_-z^{\tilde{\varphi}_{\sigma_-}(m)}$ be the monomial attached to the
incoming segment of $\gamma$ near $\rho$ and
$a_+z^{\tilde{\varphi}_{\sigma_+}(m')}$ the monomial attached to
the outgoing segment.
By the definition of $\tilde{\varphi}$,
\[
z^{\tilde{\varphi}_{\sigma_-}(m)}=(z^{[D_{\rho}]})^{-\langle
n_{\rho},m\rangle}z^{\tilde{\varphi}_{\sigma_+}(m)},
\]
where $n_{\rho}\in N$ is primitive, annihilates $\rho$, and is
positive on $\sigma_+$.
Write $d:=-\langle n_{\rho},m\rangle$;
note $d=|u \wedge m| > 0$ where $u \in M$ is the primitive generator of $\rho$.
If $\gamma$ does not bend at $\rho$ then $a_+=(z^{[D_{\rho}]})^d \cdot a_-$.
In general $a_+=(z^{[D_{\rho}]})^d \cdot a_-'$ where
$a_-'z^{\tilde{\varphi}_{\sigma_-}(m')}$ is obtained from
$a_-z^{\tilde{\varphi}_{\sigma_-}(m)}$ as above (by applying the scattering
automorphism associated to $\rho$ and selecting a monomial term).

We need to show the exponent $d=|u \wedge m|>0$ of $z^{[D_{\rho}]}$ in the previous paragraph is large for some lift $\tilde\rho$ of any given ray
$\rho\in\Sigma$.
This will allow us to absorb the $O(\lambda^{2N})$ factors coming from bends
of $\gamma$ and obtain the estimate (3). Let $\gamma$ be a broken line for $T^N(\tq)$. Then
$$m_{\gamma}=T^N(\tq)-\sum_{i=1}^l T^{s_i}\alpha_i$$
as above, where $0 \le l \le k$, $N \ge s_1 \ge \cdots \ge s_l \ge 0$, and the $\alpha_i$ lie in a finite set.
Write $s_0=N$ and $s_{l+1}=0$. Choose $j$ such that $s_j-s_{j+1} \ge N/(k+1)$.
Now consider the exponent $d=|u \wedge m|$ for $m=T^N(\tq)-\sum_{i=1}^j
T^{s_i}\alpha_i$ given by the monomial attached to the segment of the broken
line between bends $j$ and $j+1$. Let $\tilde\rho\in\tilde\Sigma$ be the lift
of $\rho\in\Sigma$ between bends $j$ and $j+1$
which is closest to bend $j+1$ (such a lift exists if $N$ is sufficiently
large). Let $u=T^{s_{j+1}}\beta$ be the
primitive generator of $\tilde\rho$.
Then $|u \wedge m|=|\beta \wedge T^{s_j-s_{j+1}}m'|$ where $m'=T^{-s_j}(m)$.
Writing $m'=\mu_1'w_1+\mu_2'w_2$, we see that $|\mu_1'|$ is bounded since
$w_1$ has eigenvalue $\lambda^{-1}<1$. Also, $\mu_2'$ is bounded away from
zero by Lemma~\ref{discrete}.
Now
\[
u \wedge m = \beta \wedge T^{s_j-s_{j+1}}m' = \mu_1'(\beta \wedge w_1)\lambda^{-(s_j-s_{j+1})}  + \mu_2'(\beta \wedge w_2)\lambda^{s_j-s_{j+1}},
\]
where $s_j-s_{j+1}>N/(k+1)$, so
\[
|u \wedge m| > c \cdot \lambda^{N/(k+1)}
\]
for some constant $c>0$.

Combining our results now gives, when $|z^{[D_{\rho}]}|<1$ for
all $p\in P_{\bdy}$, the estimate
\[
a_{\gamma}= O((\lambda^{2kN})^k \cdot \prod_{\rho \in \Sigma
\atop\dim\rho=1} |z^{[D_{\rho}]}|^{c \cdot \lambda^{N/(k+1)}}).
\]
where the first factor bounds the contribution associated to bends of $\gamma$ and the second factor bounds the contribution associated to rays $\rho$ of the fan crossed by $\gamma$, as described in the preceding two paragraphs.
This implies the estimate (3) in the statement, using $|z^{[D_{\rho}]}|<1$.
Indeed, the above expression is of the form
$$\lambda^{a N} \cdot x^{c \cdot \lambda^{b N}}$$
where $a,b,c > 0$ and $\lambda >1$ are constants, and $x=\prod |z^{[D_{\rho}]}|$.
This is bounded by $Cx^N$ for $0 \le x < \epsilon < 1$, for some constant $C$ (depending on $\epsilon$).
(To see this, we may assume $x \neq 0$, take logarithms, and establish an inequality
\begin{equation} \label{logarithm}
aN \log \lambda + c\lambda^{bN}\log x \le \log C + N \log x
\end{equation}
We have $-\log x > -\log \epsilon > 0$. Rearranging (\ref{logarithm}),
we require that, for some choice of $C$,
$$aN \log \lambda  \le \log C + (-\log x) (c \lambda^{bN}-N)$$
for all $N$. This holds because
$$aN \log \lambda \le (-\log x)(c \lambda^{bN}-N)$$
for $N$ sufficiently large.)
\end{proof}

\begin{lemma}\label{discrete}
Let $A \subset \bR$ be a finite set and $\lambda \in \bR$, $\lambda > 1$.
For $k \in \bN$ let $S_k \subset \bR$ be the set of real numbers $s$ of the form
$s=\sum_{i=1}^l c_i\lambda^{n_i}$
where $l \le k$ and $c_i \in A$, $n_i \in \bZ_{\ge 0}$ for each $i$.
Then $S_k$ is discrete for each $k$.
\end{lemma}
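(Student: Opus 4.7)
The plan is to prove the stronger statement that every bounded subset of $S_k$ is finite, by induction on $k$. The base case $k=0$ gives $S_0=\{0\}$ (the empty sum), trivially finite on bounded subsets. For the inductive step, assume the result for $S_{k-1}$ and suppose for contradiction that $(s_j)\subset S_k$ is a sequence of pairwise distinct points converging to some $s^*\in\bR$.

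Writing $s_j=\sum_{i=1}^{l_j} c_{i,j}\lambda^{n_{i,j}}$ with $l_j\le k$, I would first rigidify the combinatorics by passing to a subsequence: the number of terms $l_j=l$ stabilizes; each coefficient $c_{i,j}=c_i$ stabilizes (using finiteness of $A$); the exponents may be sorted so that $n_{1,j}\ge n_{2,j}\ge\cdots\ge n_{l,j}$; each individual exponent $n_{i,j}$ either stabilizes or tends to $\infty$; and each pairwise difference $n_{i,j}-n_{i',j}$ either stabilizes (to a non-negative integer) or tends to $\infty$. This is a routine diagonal extraction. The indices with stabilized exponents contribute a fixed constant $T$; the remaining (divergent) indices must be nonempty, else $s_j\equiv T$ contradicts distinctness.

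Partition the divergent indices into maximal \emph{clusters} $G_1,\dots,G_r$, ordered by decreasing exponent, according to the equivalence relation $i\sim i'$ iff $n_{i,j}-n_{i',j}$ stabilizes; between distinct clusters all pairwise exponent gaps tend to $\infty$. Within $G_t$, writing $n_{i,j}=N_{t,j}-\Delta_i$ for constants $\Delta_i\ge 0$ and $N_{t,j}$ the largest exponent in the cluster, one has
\[
\sum_{i\in G_t}c_i\lambda^{n_{i,j}}=D_t\,\lambda^{N_{t,j}},\qquad D_t:=\sum_{i\in G_t}c_i\lambda^{-\Delta_i},
\]
so that
\[
s_j=T+\sum_{t=1}^{r}D_t\,\lambda^{N_{t,j}},\qquad N_{1,j}-N_{2,j}\to\infty,\ \dots,\ N_{r-1,j}-N_{r,j}\to\infty.
\]

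The key dichotomy is now: if $D_1\ne 0$, the leading term $D_1\lambda^{N_{1,j}}\to\pm\infty$ dominates the tail $\sum_{t\ge 2}D_t\lambda^{N_{t,j}}=O(\lambda^{N_{2,j}})=o(\lambda^{N_{1,j}})$, forcing $|s_j|\to\infty$ and contradicting convergence. If instead $D_1=0$, the summands indexed by $G_1$ contribute $D_1\lambda^{N_{1,j}}=0$, so $s_j$ admits a representation using only the remaining $l-|G_1|\le k-1$ summands; hence $(s_j)\subset S_{k-1}$, and the inductive hypothesis forces $\{s_j\}$ to be finite, contradicting pairwise distinctness. Either branch yields a contradiction, closing the induction. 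The delicate step is ensuring that the clustering decomposition is well-defined after subsequence extraction and that the cancellation $D_1=0$ really drops at least one summand (since $|G_1|\ge 1$), which is exactly what supplies the inductive reduction from $S_k$ to $S_{k-1}$.
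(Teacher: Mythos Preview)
Your proof is correct, but it takes a much more elaborate route than the paper. Both arguments proceed by induction on $k$ with $S_0=\{0\}$, but the paper's inductive step is a one-liner: one observes that (up to the trivial point $0$)
\[
S_{k+1}=\bigcup_{n\ge 0}\lambda^n(S_k+A),
\]
obtained by factoring out the \emph{smallest} power $\lambda^n$ from any nonempty sum $\sum_{i=1}^l c_i\lambda^{n_i}$; the remaining factor is then an element of $A$ plus a sum of at most $k$ terms with nonnegative exponents. Since $S_k+A$ is discrete (inductive hypothesis plus finiteness of $A$) and $\lambda>1$, only finitely many of the dilates $\lambda^n(S_k+A)$ meet a given bounded interval nontrivially, and each contributes finitely many points.

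Your argument instead works from the \emph{top}: you extract subsequences, cluster the largest exponents, and split into the cases where the top cluster survives (forcing $|s_j|\to\infty$) or cancels (dropping to $S_{k-1}$). This is a valid and self-contained approach, and the clustering device is robust---it would adapt to situations where no simple rewriting identity like the one above is available. But here the paper's factoring trick bypasses all the subsequence extraction and cancellation analysis entirely, yielding a proof of two or three lines.
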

\begin{proof}
Proof by induction on $k$.
We have $S_0=\{0\}$. Suppose $S_k$ is discrete. We have
$S_{k+1}=\bigcup_{n \ge 0} \lambda^n(S_k+A)$. Since $\lambda >1$ we deduce that $S_{k+1}$ discrete.
\end{proof}

For Propositions \ref{lift1} and \ref{lift2} below, the assertions hold after possibly replacing $X_J$ by a smaller neighbourhood of $s(0) \in X_J$ (independent of $I$, $Q$ and $q$).

\begin{proposition} \label{lift1}
Let $Q \in  B_0 \setminus \Supp(\foD)$ be a point contained in the interior of a maximal cone $\sigma$ of $\Sigma$.
Each term of the formal sum $\Lift_Q(q)=\sum \Mono(\gamma)$ is an analytic function on $V_{\sigma,\sigma,I}$ and the sum defines an analytic function on $V_{\sigma,\sigma,I}$.
\end{proposition}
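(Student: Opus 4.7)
The plan is to work on the universal cover $\pi\colon\tilde B_0\to B_0$ and to stratify the broken lines by how many times they ``wind'' around the origin. Fix lifts $\tilde Q\in\tilde B_0$ of $Q$ and $\tilde q\in\tilde B_0(\ZZ)$ of $q$. Any broken line $\gamma$ in $B_0$ for $q$ with endpoint $Q$ lifts uniquely to a broken line $\tilde\gamma$ in $\tilde B_0$ for $T^N(\tilde q)$ with endpoint $\tilde Q$, for a unique $N\in\ZZ$, and the attached monomials are preserved under $\pi$. Thus one can rewrite
\[
\Lift_Q(q)=\sum_{N\in\ZZ}\Sigma_N,\qquad \Sigma_N:=\sum_{\tilde\gamma}\Mono(\tilde\gamma),
\]
where for each $N$ the inner sum is over broken lines in $\tilde B_0$ for $T^N(\tilde q)$ ending at $\tilde Q$. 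By Lemma~\ref{brokenlinebounds}(1), only finitely many broken lines in each $\Sigma_N$ give contributions not in $I\cdot\kk[P_{\tilde\varphi_\sigma}]$, so each $\Sigma_N$ is a finite sum.

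Each individual term $\Mono(\tilde\gamma)=a_{\tilde\gamma}z^{\tilde\varphi_\sigma(m_{\tilde\gamma})}$ is manifestly analytic on $V_{\sigma,\sigma,I}$: $a_{\tilde\gamma}\in\kk[P]$ pulls back to a polynomial function, while $z^{\tilde\varphi_\sigma(m_{\tilde\gamma})}$ is a Laurent monomial in the torus coordinates on $U_{\sigma,\sigma,I}=(\Gm)^2\times S_I$ and hence restricts analytically to $V_{\sigma,\sigma,I}$. So the content of the proposition is the absolute uniform convergence of $\sum_N\Sigma_N$ on compact subsets of $V_{\sigma,\sigma,I}$.

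To prove convergence, we combine the three estimates in Lemma~\ref{brokenlinebounds}. Item (2) bounds the number of broken lines contributing to $\Sigma_N$ by $O(|N|^k)$. For the individual monomials, the sharp form of the estimate established in the proof of Lemma~\ref{brokenlinebounds}(3) gives
\[
|a_{\tilde\gamma}|\;\le\; C_1\,\lambda^{aN}\,\prod_{\dim\rho=1}|z^{[D_\rho]}|^{\,c\lambda^{N/(k+1)}}
\]
for suitable positive constants $a,c,C_1$, provided every $|z^p|$, $p\in P$, is sufficiently small. Within the analytic patch $V_{\sigma,\sigma,I}\subset U_{\sigma,\sigma,I}$, the torus coordinates are bounded both above and below by Theorem~\ref{cuspfamily}(2) together with Lemma~\ref{disjointestimate}, so $|z^{\tilde\varphi_\sigma(m_{\tilde\gamma})}|\le C_2^{\lambda^{|N|}}$ for some $C_2>0$ depending only on the compact subset. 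The key point is then that the \emph{doubly exponential decay} $|z^{[D_\rho]}|^{c\lambda^{N/(k+1)}}$ overwhelms the doubly exponential but base-$C_2$ growth of the character once we shrink $S_I'$ and $V_{\sigma,\sigma,I}$ enough that $\prod_\rho|z^{[D_\rho]}|<1$ with room to spare; more precisely, the logarithm of the product $|a_{\tilde\gamma}|\cdot|z^{\tilde\varphi_\sigma(m_{\tilde\gamma})}|$ is dominated by $-c'\lambda^{N/(k+1)}$ for some $c'>0$ as $|N|\to\infty$. Multiplying by the polynomial factor $|N|^k$ and summing over $N\in\ZZ$ yields a convergent majorant, which establishes absolute uniform convergence of $\sum_N\Sigma_N$ on compact subsets of $V_{\sigma,\sigma,I}$.

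The main obstacle is precisely this fight between the exponential growth of $\|m_{\tilde\gamma}\|=\|T^N(\tilde q)-\sum T^{s_i}\alpha_i\|$ (and hence of the Laurent monomial $z^{\tilde\varphi_\sigma(m_{\tilde\gamma})}$ on any bounded torus neighbourhood) and the exponential decay of $a_{\tilde\gamma}$ coming from the many scattering rays $\tilde\gamma$ is forced to cross before returning to $\tilde Q$. The bound on the number of bends supplied by Lemma~\ref{brokenlinebounds}(1) is crucial here, since it ensures some gap $s_j-s_{j+1}\ge N/(k+1)$ between consecutive bends and hence a genuinely super-polynomial lower bound on some crossing index $d=|u\wedge m|$, which is what produces the decisive factor $\lambda^{N/(k+1)}$ in the exponent and finally dominates the $\lambda^{|N|}$ growth of the character.
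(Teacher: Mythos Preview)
Your overall architecture---pass to the universal cover, stratify by the winding number $N$, and combine the count $O(|N|^k)$ with decay of $|a_{\tilde\gamma}|$---matches the paper. But the convergence step contains a genuine gap.

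Your estimate $|z^{\tilde\varphi_\sigma(m_{\tilde\gamma})}|\le C_2^{\lambda^{|N|}}$ is correct (since $\|m_{\tilde\gamma}\|=O(\lambda^{|N|})$), but it is too weak to close the argument. The sharp form of Lemma~\ref{brokenlinebounds}(3) gives $\log|a_{\tilde\gamma}|\le aN\log\lambda - c'\lambda^{N/(k+1)}$, whereas your monomial bound gives $\log|z^{\tilde\varphi_\sigma(m_{\tilde\gamma})}|\le (\log C_2)\,\lambda^{|N|}$. For $k\ge 1$ the growth rate $\lambda^{|N|}$ strictly dominates $\lambda^{N/(k+1)}$, so the sum of these logarithms tends to $+\infty$, not $-\infty$. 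Your assertion that ``the doubly exponential decay overwhelms the doubly exponential but base-$C_2$ growth'' is therefore false in general, and no amount of shrinking $S_I'$ (which only decreases the base $x=\prod_\rho|z^{[D_\rho]}|$, not the exponent $c\lambda^{N/(k+1)}$) repairs this.

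The paper's fix is to show the monomial is actually \emph{uniformly bounded}, not merely $\le C_2^{\lambda^{|N|}}$. This is the content of Lemma~\ref{endofbrokenline}(1): writing $m_{\tilde\gamma}=\mu_1 w_1+\mu_2 w_2$ in the eigenbasis of $T$, one has $|\mu_1|$ bounded and $\mu_2$ bounded below. Expressing $w_1,w_2$ in the cone basis $u_1,u_2$ one gets $|z^{\tilde\varphi_\sigma(m_{\tilde\gamma})}|=A^{\mu_1}B^{\mu_2}$ with $A=|X_2|^{\beta_1}|X_1/X_2|^{\gamma_1}$ and $B=|X_1|^{\beta_2}|X_2/X_1|^{\gamma_2}$. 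On $V_{\sigma,\sigma}$ one has $|X_i/X_j|<R$, so after shrinking to $|X_i|<\delta$ with $\delta<\min(R^{-\gamma_1/\beta_1},R^{-\gamma_2/\beta_2})$ both $A,B<1$; since $\mu_1,\mu_2$ are bounded \emph{below}, $A^{\mu_1}B^{\mu_2}$ is bounded above on compacta $\{\delta'<|X_i|<\delta\}$. With the monomial bounded, the simple geometric estimate $|a_{\tilde\gamma}|=O(\epsilon^{|N|})$ together with the count $O(|N|^k)$ immediately gives convergence. You should invoke Lemma~\ref{endofbrokenline}(1) and this eigenbasis computation in place of your ``decay beats growth'' claim.
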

\begin{proof}
Recall that $V_{\sigma,\sigma,I}$ is an infinitesimal thickening of the reduced complex analytic space $V_{\sigma,\sigma,J}$.
Write
\[
V_{\sigma,\sigma} := \{(X_1,X_2)\,|\,|X_1|<R|X_2|,|X_2|<R|X_1|\} \subset
(\Gm)_{X_1,X_2}^2 \times S.
\]
Then $V_{\sigma,\sigma}$ is a reduced complex analytic space containing $V_{\sigma,\sigma,I}$ as a locally closed subspace.
We show that the sum $\Lift_Q(q)$ converges (uniformly on compact sets) to an analytic function on a neighbourhood of $V_{\sigma,\sigma,I}$ in $V_{\sigma,\sigma}$.

Let $\tQ \in \tB_0$ be a lift of $Q$ and $\tilde{\sigma}$ the lift of $\sigma$ containing $\tQ$.
Let $u_1,u_2$ be the primitive generators of $\tilde{\sigma}$ (a basis of $M$) such that the orientation of $u_1,u_2$ agrees with that of $w_1,w_2$.
Let $X_i=z^{\tilde{\varphi}_{\tilde{\sigma}}(u_i)}$, $i=1,2$, be the associated coordinate functions on $V_{\sigma,\sigma,I}$, so that
\[
V_{\sigma,\sigma,I} \subset \{ (X_1,X_2)\,|\, |X_1|<R|X_2|, |X_2|<R|X_1| \}
\subset (\Gm)_{X_1,X_2}^2 \times S'_I.
\]
For $m \in M$, writing $m=\alpha_1u_1+\alpha_2u_2$, we have $z^{\tilde{\varphi}_{\tilde{\sigma}}(m)}=X_1^{\alpha_1}X_2^{\alpha_2}$.

As already noted, broken lines $\gamma$ on $B_0$ for $q$ with endpoint $Q$ lift uniquely to broken lines on $\tB_0$ for $T^N(\tq)$ with endpoint $\tQ$, for some $N \in \bZ$, and the attached monomials are identified.
Write $\Mono(\gamma)=a_{\gamma}z^{\tilde{\varphi}_{\tilde{\sigma}}(m_{\gamma})}$ and $m_{\gamma}=\alpha_1u_1+\alpha_2u_2$.
Clearly $\Mono(\gamma)=a_{\gamma}X_1^{\alpha_1}X_2^{\alpha_2} \in \kk[P][X_1^{\pm 1},X_2^{\pm 1}]$ is an analytic function on $V_{\sigma,\sigma}$.
%a_{\gamma}X_1^{\alpha_1}X_2^{\alpha_2}$
Also write $m_{\gamma}=\mu_1w_1+\mu_2w_2$.
By Lemma~\ref{endofbrokenline}, (1), $\mu_1$ and $\mu_2$ are bounded below
(using the symmetric statement interchanging $w_1$ and $w_2$ if $T^N(\tilde q)
\in \langle \tilde Q,w_1\rangle_{\RR_{\ge 0}}$).
The points $u_1$ and $u_2$ are adjacent integral points on the boundary of the infinite convex polytope $\Xi$ with asymptotic directions $w_1,w_2$.
It follows that  $w_1=\beta_1 u_2 + \gamma_1 (u_1-u_2)$ and $w_2=\beta_2 u_1 + \gamma_2 (u_2-u_1)$, for some $\beta_1,\beta_2,\gamma_1,\gamma_2 > 0$.
Hence
$$|z^{\tilde{\varphi}_{\tilde{\sigma}}(m_{\gamma})}|=|X_1^{\alpha_1}X_2^{\alpha_2}|=(|X_2|^{\beta_1}|X_1/X_2|^{\gamma_1})^{\mu_1}(|X_1|^{\beta_2}|X_2/X_1|^{\gamma_2})^{\mu_2}.$$
Now $|X_1/X_2| < R$, $|X_2/X_1|<R$ on $V_{\sigma,\sigma}$.
Thus, as we have chosen $\delta$ in Lemma \ref{disjointestimate} so that
$0< \delta < \min (R^{-\gamma_1/\beta_1},R^{-\gamma_2/\beta_2})$, if
$\mu_1,\mu_2$ are both positive, $|z^{\tilde{\varphi}_{\tilde{\sigma}}(m_{\gamma})}|$ is bounded for $|X_1|,|X_2|<\delta$. On the other hand,
suppose $\mu_1$, say, is negative. Then if $|X_2|>\delta'>0$, we have
\[
(|X_2|^{\beta_1}|X_1/X_2|^{\gamma_1})^{\mu_1} < (\delta')^{\beta_1\mu_1}
|X_2/X_1|^{-\mu_1\gamma_1}< (\delta')^{\beta_1\mu_1} R^{-\mu_1\gamma_1}.
\]
Since $\beta_1$ and $\gamma_1$ are fixed and $\mu_1$ is bounded below,
the above quantity is bounded. Similarly, if $\mu_2$ is negative,
$(|X_1|^{\beta_2}|X_2/X_1|^{\gamma_2})^{\mu_2}$ is bounded provided
$|X_1|>\delta'>0$. Thus in any
event, $|z^{\tilde{\varphi}_{\tilde{\sigma}}(m_{\gamma})}|$ is bounded
for $0<\delta'<|X_1|, |X_2|<\delta$.
(We will only obtain uniform convergence of the series $\Lift_Q(q)$ on compact
subsets of $V_{\sigma,\sigma}$.)
By Lemma~\ref{brokenlinebounds}, (3),
if $|z^p|<\epsilon<1$ for all $p \in P$ we have
$|a_{\gamma}|=O(\epsilon^{|N|})$. By Lemma~\ref{brokenlinebounds}, (2),
the number of broken lines for $T^N(\tq)$ is $O(|N|^k)$.
Combining, we deduce that $\Lift_q(Q) = \sum \Mono(\gamma)$ is convergent on the open analytic subset $V'_{\sigma,\sigma}$ of $V_{\sigma,\sigma}$ defined by
$|X_1|,|X_2| < \delta$ and $|z^p|< 1$ for all $p \in P$, for some $\delta > 0$ (independent of $I$ and $q$). After replacing $X_J$ by an analytic neighbourhood of the vertex $s(0)\in X_J$, we may assume that $V_{\sigma,\sigma,I} \subset V'_{\sigma,\sigma}$.
\end{proof}

\begin{proposition}\label{lift2}
Let $Q \in B_0 \setminus \Supp(\foD)$ be a point contained in the interior
of a maximal cone $\sigma$ of $\Sigma$ and let $\rho$ be an edge of $\sigma$.
Consider the formal sum $\Lift_Q(q)=\sum \Mono(\gamma)$. For $Q$ sufficiently close to $\rho$ each term of the sum is an analytic function on $V_{\rho,I}$ and the sum defines an analytic function on $V_{\rho,I}$.
\end{proposition}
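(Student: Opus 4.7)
The proof will follow the same template as Proposition~\ref{lift1}, with two key additions: I must ensure that each monomial $\Mono(\gamma)$ extends from the torus locus $V_{\sigma,\sigma,I}$ to a regular (hence analytic) function on all of $V_{\rho,I}$, and I must upgrade the uniform-convergence estimate so that it holds on this larger set. Regularity is handled by Lemma~\ref{finitenesslemma}, (2). Since $\foD$ is a finite scattering diagram (it is the reduction modulo $I$ of $\foD^{\can}$), the support $\Supp_I(\foD)$ has finitely many rays, and so for $Q \in \sigma$ chosen close enough to $\rho$ (and off $\Supp_I(\foD)$), the point $Q$ lies in a connected component of $B_0 \setminus \Supp_I(\foD)$ whose closure contains $\rho$. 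That lemma then forces every $\Mono(\gamma)$ occurring in $\Lift_Q(q)$ to lie in $\kk[P_{\varphi_\rho}]$, and elements of this ring are by construction regular on $U_{\rho,I}$, hence analytic on $V_{\rho,I} \subset U_{\rho,I}$.

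For uniform convergence, I will pass to the universal cover $\tilde B_0$ exactly as in Proposition~\ref{lift1}, lifting each broken line for $q$ with endpoint $Q$ to one for $T^N(\tq)$ with endpoint $\tQ$ for some $N \in \ZZ$. Using Lemma~\ref{localmodelsingmanifold}, I identify the coordinates $X, X_-, X_+$ on $U_{\rho,I}$ with $z^{\varphi_\rho(m_\rho)}$, $z^{\varphi_\rho(m_{\rho_-})}$ and $f_\rho \cdot z^{\varphi_\rho(m_{\rho_+})}$ (on the $\sigma_-$-patch; symmetrically on the other). Writing $\Mono(\gamma) = a_\gamma z^{\varphi_\rho(m_\gamma)}$ with $m_\gamma \in \Lambda_\rho$ decomposed along the basis $(m_{\rho_-}, m_\rho)$, the monomial becomes an expression in $X_-, X$ (or $X_+, X$) times a coefficient in $\kk[P]$, and the conditions $|X_\pm| < R|X|$, $|X| < \delta$ on $V_{\rho,I}$ translate into a uniform bound on $|z^{\varphi_\rho(m_\gamma)}|$. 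Decomposing $m_\gamma = \mu_1 w_1 + \mu_2 w_2$ in the eigenbasis of $T$, Lemma~\ref{endofbrokenline}, (1) bounds $\mu_1, \mu_2$ from below, so the same estimates as in Proposition~\ref{lift1} produce a uniform bound in $\gamma$.

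Combining this with Lemma~\ref{brokenlinebounds}, (2) and (3), which give $|a_\gamma| = O(\epsilon^{|N|})$ for $|z^p| < \epsilon < 1$ and $O(|N|^k)$ broken lines for $T^N(\tq)$, yields a geometric bound $\sum_N |N|^k \epsilon^{|N|}$ for the total tail of $\Lift_Q(q)$. This converges uniformly on compact subsets of an analytic open set of the form $\{|X_\pm| < R|X|,\ |X| < \delta,\ |z^p| < \epsilon \text{ for } p \in \fom\}$, with $\delta, \epsilon$ chosen independently of $I$ and $q$. After shrinking $X_J$ and $S'_J$ (again independently of $I$ and $q$) we may assume $V_{\rho,I}$ is contained in this open set, completing the proof.

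The main obstacle, as in Proposition~\ref{lift1}, will be the bookkeeping needed to control the exponents $m_\gamma \in \Lambda_\rho$ across the non-triviality of the identification of $X_-, X, X_+$ with toric monomials twisted by $f_\rho$, and to verify that the geometric decay survives the $\lambda^{O(N)}$ factors coming from the bend count. Both issues are of the same nature as those handled in Proposition~\ref{lift1}, so no new estimates are needed beyond adapting the existing ones to the enlarged coordinate ring of $U_{\rho,I}$.
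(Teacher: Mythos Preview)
Your overall strategy—regularity of individual monomials, then uniform convergence—is correct, but the execution has a real gap in the convergence step, and your treatment of regularity is too casual.

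\textbf{On regularity.} You invoke Lemma~\ref{finitenesslemma}(2) to place $\Mono(\gamma)\in\kk[P_{\varphi_\rho}]$ and then assert that elements of this ring are ``by construction regular on $U_{\rho,I}$.'' But $R_{\rho,I}$ is not $\kk[P_{\varphi_\rho}]/I\kk[P_{\varphi_\rho}]$: it is the fibre product \eqref{fibreprod}, where one of the maps is twisted by the automorphism attached to $f_\rho$. A single monomial $z^p$ does not in general define an element of $R_{\rho,I}$; you must express it in the coordinates $X,X_-,X_+$ of Lemma~\ref{localmodelsingmanifold}, and doing so requires knowing the sign of the $u_+$- (or $u_-$-) coefficient of $r(p)$. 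This is exactly what Lemma~\ref{endofbrokenline}(2) provides, and is why the paper treats the two half-spaces $\langle u,w_2\rangle_{\RR_{\ge0}}$ and $\langle u,w_1\rangle_{\RR_{\ge0}}$ separately—handling the latter via the identity $\Lift^-_Q(q)=\theta(\Lift^-_{Q'}(q))$ and a symmetry argument, rather than directly.

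\textbf{On convergence.} This is the serious gap. Your assertion that the inequalities $|X_\pm|<R|X|$ and $|X|<\delta$ ``translate into a uniform bound on $|z^{\varphi_\rho(m_\gamma)}|$'' is false. Writing $m_\gamma=\mu_1 w_1+\mu_2 w_2$, the monomial contains a factor $|X_+|^{-\beta_1\mu_1}$; since $X_+$ vanishes on the locus $(X_+=0)\subset V_{\rho,I}$, this blows up whenever $\mu_1>0$, and Lemma~\ref{endofbrokenline}(1) only bounds $\mu_1$, it does not force $\mu_1\le 0$. The paper's remedy is to use the defining equation $X_-X_+=z^{[D_\rho]}X^{-D_\rho^2}f_\rho$ of $V_\rho$ to express $|X_+|^{-1}$ in terms of $|X_-|,|X|,|f_\rho|^{-1}$ and $|z^{[D_\rho]}|^{-1}$. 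This yields only a bound on $|z^{\tilde\varphi_{\tilde\sigma}(m_\gamma)}|\cdot|z^{[D_\rho]}|^{c}$ for a constant $c=\beta_1\sup\{\mu_1\}$, not on the monomial itself; the stray factor $|z^{[D_\rho]}|^{-c}$ is then absorbed into the estimate of Lemma~\ref{brokenlinebounds}(3), which gives decay $O(\prod_\rho|z^{[D_\rho]}|^{|N|})$ rather than merely $O(\epsilon^{|N|})$. Your proposal omits both the use of the hypersurface equation and this absorption, so ``the same estimates as in Proposition~\ref{lift1}'' do not suffice.
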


\begin{proof}
Write $\rho=\rho_i$, without loss of generality assume
$\sigma=\sigma_{i,i+1}$, so that $\sigma_{i-1,i} \in \Sigma$
is the other maximal cone containing $\rho_i$.
Let $\tQ,\tilde{\rho_i},\tilde{\sigma}_{i,i+1},\tilde{\sigma}_{i-1,i}$
be compatible lifts to $\tB_0$.
Let $u_i,u_{i-1},u_{i+1}$ be the primitive generators of $\tilde{\rho_i}$
and the remaining edges of $\tilde{\sigma}_{i-1,i}$ and
$\tilde{\sigma}_{i,i+1}$,
and write $X_i,X_{i-1},X_{i+1}$ for the corresponding coordinates on
$V_{\rho,I}$.
So
\begin{align*}
V_{\rho,I} \subset \{(X_{i-1},X_i,X_{i+1})\,|\, |X_{i-1}|<R|X_i|, \,
|X_{i+1}|<R|X_i|\} \subset {} &
V(X_{i-1}X_{i+1}-z^{[D_{\rho}]}X_i^{-D_{\rho}^2}f_{\rho})\\
\subset {} & \bA^2_{X_{i-1},X_{i+1}} \times (\Gm)_{X_i} \times S'_I.
\end{align*}
Define
\begin{align*}
V_{\rho}= \{(X_{i-1},X_i,X_{i+1})\,|\,|X_{i-1}|<R|X_i|, \, |X_{i+1}|<R|X_i|\}
\subset {} & V(X_{i-1}X_{i+1}-z^{[D_{\rho}]}X_i^{-D_{\rho}^2}f_{\rho})\\
\subset {} & \bA^2_{X_{i-1},X_{i+1}} \times (\Gm)_{X_i} \times S.
\end{align*}
We assume that the orientation of $u_{i-1},u_{i+1}$ is the same as that of
$w_1,w_2$.

We first consider broken lines $\gamma$ lying in the cone generated by
$u_i$ and $w_2$. Write $\Mono(\gamma)=a_{\gamma}
z^{\tilde{\varphi}_{\tilde{\sigma}_{i,i+1}}(m_{\gamma})}$,
and $m_{\gamma}=\alpha u_i+\alpha_+u_{i+1}=\mu_1w_1+\mu_2w_2$.
By Lemma~\ref{endofbrokenline},(1), $|\mu_1|$ is bounded, and $\mu_2 > 0$ for all but finitely many $\gamma$. By Lemma~\ref{endofbrokenline},(2), $\alpha_+ \ge 0$, so $\Mono(\gamma)=a_{\gamma}X_i^{\alpha}X_{i+1}^{\alpha_+} \in
\kk[P][X_i^{\pm 1},X_{i-1},X_{i+1}]$
is analytic on $V_{\rho}$ for each $\gamma$.
In particular we may assume in what follows (discarding finitely many terms $\Mono(\gamma)$) that $\mu_2>0$.
Writing $w_1= -\beta_1 u_{i+1} + \gamma_1 u_i$ and $w_2=\beta_2 u_i +
\gamma_2(u_{i+1}-u_i)$,
we have $\beta_1,\beta_2,\gamma_1,\gamma_2>0$ and
\[
|z^{\tilde{\varphi}_{\tilde{\sigma}}(m_{\gamma})}|=
(|X_{i+1}|^{-\beta_1}|X_i|^{\gamma_1})^{\mu_1}(|X_i|^{\beta_2}
|X_{i+1}/X_i|^{\gamma_2})^{\mu_2}.
\]
Recall that $|X_{i+1}/X_i|, |X_{i-1}/X_i| <R$ on $V_{\rho}$.
Note then that for $0 < \delta < R^{-\gamma_2/\beta_2}$,
the second factor on the right is bounded for $|X_i|<\delta$ as we are taking
$\mu_2>0$. If
$\mu_1<0$, then we have
\[
(|X_{i+1}|^{-\beta_1}|X_i|^{\gamma_1})^{\mu_1}
< R^{-\beta_1\mu_1} |X_i|^{(-\beta_1+\gamma_1)\mu_1}
\]
which is bounded for $\delta'< |X_i| <\delta$ for any small $\delta'>0$.
Finally, if $\mu_1>0$, we use
the equation for $V_{\rho}$, which gives
\[
|X_{i+1}|^{-1}=|X_{i-1}| \cdot |X_i|^{D_{\rho}^2}
\cdot |f_{\rho}|^{-1}\cdot |z^{-[D_{\rho}]}|.
\]
The function $f_{\rho}$ on $V_{\rho}$ restricts to the constant function $1$ over $S_J$.
Hence we may impose the condition $|f_{\rho}|>\delta'$ for any small
$\delta'>0$. Note that
\[
|X_{i+1}|^{-\beta_1\mu_1}=(|X_{i-1}||X_i|^{D_{\rho}^2}
|f_{\rho}|^{-1}|z^{-[D_{\rho}]}|)^{\beta_1\mu_1}<R^{\beta_1\mu_1}
|X_i|^{\beta_1\mu_1(1+D_{\rho}^2)}|f_{\rho}|^{-\beta_1\mu_1}
|z^{[D_{\rho}]}|^{-\beta_1\mu_1}.
\]
Thus we see that in any event,
if $\delta'<|X_i|<\delta$, $|f_{\rho}|>\delta'$ and $|z^p|<\epsilon<1$ for
all $p\in P$, then
\[
|z^{\tilde{\varphi}_{\tilde{\sigma}}(m_{\gamma})}| \cdot |z^{[D_{\rho}]}|^c
\]
is bounded, where $c = \beta_1 \cdot \sup(\{\mu_1\},0)$ is a constant.
Now by Lemma~\ref{brokenlinebounds}, (3), again using $|z^p|<\epsilon<1$
for all $p \in P$, then
\[
|\Mono(\gamma)| = |a_{\gamma}z^{\tilde{\varphi}_{\tilde{\sigma}}(m_{\gamma})}|
=O(\epsilon^{|N|}).
\]
Recall that the number of broken lines for $T^N(\tilde{q})$ is $O(|N|^k)$ (Lemma~\ref{brokenlinebounds}, (2)).
We deduce that the sum $\sum \Mono(\gamma)$ over broken lines $\gamma$ lying in $\langle u_i,w_2 \rangle_{\bR_{\ge 0}}$ is uniformly convergent on compact sets for $|X_i|<\delta$, $f_{\rho} \neq 0$, and $|z^p|<1$ for all $p \in P$, where $\delta > 0$ is independent of $I$ and $q$.

Symmetrically, if we choose a basepoint  $Q'\in \sigma_{i-1,i}$ sufficiently
close to $\rho$, we can use the same argument for broken lines lying in
$\langle u_i,w_1 \rangle_{\bR_{\ge 0}}$ ending at $\tQ'$.
Such a broken line will have
$\Mono(\gamma)=a_{\gamma}X_{i-1}^{\alpha_-} X_i^{\alpha}$ with $\alpha_-\ge 0$.
The same argument as above shows that the sum
$\sum \Mono(\gamma)$ over all such
broken lines $\gamma$ is uniformly convergent on compact sets for
$|X_i|<\delta$, $f_{\rho}\not=0$, and $|z^p|<1$ for all $p\in P$,
where $\delta>0$ is again independent of $I$ and $q$. However, the statement
we are trying to prove involves the lift at $Q$, not $Q'$. For this purpose,
we will omit terms coming from broken lines with $\alpha_-=0$ from the
above sum, as such terms will also arise in the above analysis at $\tQ$.

To deal with this issue, note that given
such a broken line $\gamma$ ending at $\tQ'$ with $\alpha_->0$, we get a finite
number of broken lines
$\gamma_j$, $1\le j \le s$, ending at a point $\tQ''\in\tilde\sigma_{i,i+1}$
sufficiently close to $\trho_i$ as follows. Extend $\gamma$
until it reaches $\trho_i$ (which can be done since $\alpha_->0$).
Then applying the automorphism associated to
crossing $\trho_i$ to $\Mono(\gamma)$ gives $a_{\gamma}X_{i-1}^{\alpha_-}
X_i^{\alpha}f_{\rho}^{\alpha_-}$, which we write as $\sum_{j=1}^s
a_{\gamma_j}X_{i-1}^{\alpha_{-}}X_i^{\alpha_j}$. The $j^{th}$ monomial
in the sum gives a new broken line $\gamma_j$ bending at $\trho_i$ (unless
$\alpha_j=\alpha$, in which case we just extend the final line segment
of $\gamma$), with new attached monomial $a_{\gamma_j}X_{i-1}^{\alpha_-}
X_i^{\alpha_j}=a_{\gamma_j}z^{\tvarphi_{\tsigma_{i,i+1}}(m_{\gamma_j})}$.
As long as $\tQ''$ lies in the cone generated by $\rho_i$ and
$-m_{\gamma_j}$,
by extending or shortening the last line segment of
$\gamma_j$ and applying a homothety, one can obtain a broken line $\gamma_j$
ending at $\tilde Q''$. Thus for $\tQ''$ sufficiently close to $\trho_i$,
we obtain broken lines $\gamma_1,\ldots,\gamma_s$ ending at $\tQ''$.
However, the choice of $\tQ''$ may depend on $\gamma$.
To see there is a choice of $\tQ''$ which works for all $\gamma$,
note that for all but a finite number of $\gamma$, $m_{\gamma}$ lies
in the half-space $\RR_{\ge 0}\cdot w_1 + \RR \cdot w_2$ by
Lemma \ref{endofbrokenline}, (1). Further, since $\gamma_j$ bends
at $\trho_i$, $m_{\gamma}\in \ZZ \cdot u_i +\ZZ_{<0} \cdot u_{i+1}$. Thus
$m_{\gamma}=\beta u_i +\beta_+ u_{i+1}$ for $\beta>0$, $\beta_+ < 0$,
and $m_{\gamma_j}=(\beta - l) u_i + \beta_+ u_{i+1}$ for some $l>0$
with a bound only depending on $f_{\rho_i}\mod I$.
>From this it follows that there cannot be a sequence of
$\gamma$ and $\gamma_j$ constructed from $\gamma$ as above
so that the cones generated by $u_i$ and $-m_{\gamma_j}$
get smaller and smaller.

Thus we see that taking $\tQ$ sufficiently
close to $\trho_i$, the broken lines ending at $\tQ'$ contained in
$\langle w_1,\tQ'\rangle_{\RR_{\ge 0}}$ with $\alpha_->0$
give in the above fashion broken lines ending at $\tQ$ contained in
$\langle w_1, \tQ\rangle_{\RR_{\ge 0}}$, and every such broken line
ending at $\tQ$ clearly arises in this way. Furthermore, there are no
broken lines contained in $\langle w_1, \tQ\rangle_{\RR_{\ge 0}}$
with $\alpha_-=0$, and thus all broken lines ending at $\tQ$ have
been accounted for.

Now consider again a broken line $\gamma$ contained in $\langle w_1,
\tQ'\rangle_{\RR_{\ge 0}}$ with $\alpha_->0$.
By construction, $\sum_{j=1}^s
\Mono(\gamma_j)=\Mono(\gamma) f_{\rho}^{\alpha_-}$. We wish to understand
the contribution of $\sum_{j=1}^s \Mono(\gamma_j)$ to $\Lift_Q(q)$, and
to do so, we
write $\Mono(\gamma_j)$ in terms of $X_i, X_{i+1}$ using the relation
$X_{i-1}X_{i+1}=z^{[D_{\rho}]}X_i^{-D_{\rho}^2}$ in
$\kk[P_{\tilde\varphi_{\tilde\rho}}]$ (see Proposition \ref{ringfirstversion}).
So we can write
\[
\sum_j\Mono(\gamma_j)=
a_{\gamma}
z^{\alpha_-[D_{\rho}]}X_{i+1}^{-\alpha_-}X_i^{-\alpha_-D_{\rho}^2+\alpha}
f_{\rho}^{\alpha_-}.
\]
This defines a (possibly rational) function on $V_{\rho}$.
On the other hand, $\Mono(\gamma)=a_{\gamma}X_{i-1}^{\alpha_-}X_i^{\alpha}$
defines a holomorphic function on $V_{\rho}$, and using the equation
\begin{equation}
\label{basicrelagain}
X_{i-1}X_{i+1}=z^{[D_{\rho}]}X_i^{-D_{\rho}^2}f_{\rho}
\end{equation}
which is satisfied
on $V_{\rho}$, we have
\[
\Mono(\gamma)=a_{\gamma}
z^{\alpha_-[D_{\rho}]}X_{i+1}^{-\alpha_-}X_i^{-\alpha_-D_{\rho}^2+\alpha}
f_{\rho}^{\alpha_-}
\]
as a function on $V_{\rho}$. Thus we see that $\Mono(\gamma)$ and
$\sum_j\Mono(\gamma_j)$ coincide as functions on $V_{\rho}$, in the above
interpretation. (Essentially we are just using the fact that
the relation \eqref{basicrelagain} encodes the automorphism associated
to crossing $\rho$).
Thus the fact that
$\sum_{\gamma}\Mono(\gamma)$ defines an analytic function on $V_{\rho,I}$
for broken lines $\gamma$ ending at $\tQ'$ contained in $\langle u_i,w_1
\rangle_{\RR_{\ge 0}}$ implies that the sum $\sum_{\gamma}\Mono(\gamma)$
over all broken lines $\gamma$ ending at $\tilde Q$ and contained in
$\langle \tQ, w_1\rangle_{\RR_{\ge0}}$ is analytic. This implies
$\Lift_Q(q)$ is analytic on $V_{\rho,I}$.
\end{proof}

\begin{lemma} \label{endofbrokenline}
Let $\tQ \in \tB_0 \setminus \Supp(\tilde{\foD})$ be a point contained in the interior of a maximal cone $\tilde{\sigma} \in \tilde{\Sigma}$.
Consider broken lines $\gamma$ on $\tB_0$ for $T^N(\tq)$ with endpoint $\tQ$, for all $N \in \bZ$ such that $T^N(\tq) \in \langle \tQ,w_2
\rangle_{\bR_{\ge 0}}$.
Write $\Mono(\gamma)=a_{\gamma}z^{\tilde{\varphi}_{\tilde{\sigma}}(m_{\gamma})}$, and $m_{\gamma}=\mu_1w_1+\mu_2w_2$.
\begin{enumerate}
\item $|\mu_1|$ is bounded, and $\mu_2$ is positive for all but finitely many $\gamma$.
In particular, $\mu_1,\mu_2$ are bounded below.
\item Let $u_1,u_2$ be generators of $\tilde{\sigma}$ with the same orientation as $w_1,w_2$.
Then for $\tilde{Q}$ sufficiently close to $\tilde{\rho}:=\bR_{\ge 0}\cdot u_1$, $m_{\gamma}$ lies in the half space $\bR \cdot u_1+\bR_{\ge 0}\cdot u_2$ for each $\gamma$.
\end{enumerate}
\end{lemma}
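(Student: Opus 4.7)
The proof of both statements rests on the explicit representation
\[
m_\gamma = T^N(\tilde q) - \sum_{i=1}^l T^{s_i}(\alpha_i)
\]
established in the proof of Lemma~\ref{brokenlinebounds}(3), where $l \le k$, the $s_i$ satisfy $0 \le s_i \le N$, and the vectors $\alpha_i \in M$ are drawn from a fixed finite set determined by the scattering functions modulo $I$.

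For part (1), the plan is to expand this in the eigenbasis of $T$. Writing $\tilde q = c_1 w_1 + c_2 w_2$ with $c_1, c_2 > 0$ (since $\tilde q \in \Int(C)$) and $\alpha_i = a_i w_1 + b_i w_2$, the eigenvector relations $Tw_1 = \lambda^{-1}w_1$, $Tw_2 = \lambda w_2$ give
\[
\mu_1 = \lambda^{-N}c_1 - \sum_{i=1}^l \lambda^{-s_i}a_i, \qquad
\mu_2 = \lambda^N c_2 - \sum_{i=1}^l \lambda^{s_i}b_i.
\]
Since $s_i \ge 0$, each summand in the expression for $\mu_1$ is bounded in absolute value by $|a_i|$; combined with $l \le k$ and the finiteness of the set of $\alpha_i$, this yields a uniform bound on $|\mu_1|$ independent of $N$ and $\gamma$. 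For the assertion that $\mu_2 > 0$ outside a finite set of $\gamma$: an inequality $\mu_2 \le 0$ forces $\lambda^N c_2 \le l \cdot \lambda^{s_{\max}}\max_i|b_i|$, so $N - s_{\max}$ is bounded by a constant depending only on $c_2$, $l$, and $\max |b_i|$. Combined with the bound on $|\mu_1|$ and the geometric constraint that the last bending point $P = \tilde Q + |t_0|\,m_\gamma$ must lie on an honest scattering ray of $\tilde\foD$ in $C$, I will show that these constraints confine $P$ to a bounded region of $M_\bR$; only finitely many lattice points and scattering rays pass through such a region, and iterating back through the (bounded number of) bends shows only finitely many $\gamma$ can have $\mu_2 \le 0$.

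For part (2), the argument is local near $\tilde\rho$. Since the last segment of $\gamma$ crosses no scattering ray of $\tilde\foD$, and since $\tilde\rho \subset \partial\tilde\sigma$ is itself a scattering ray (being a ray of $\tilde\Sigma$), this segment cannot cross $\tilde\rho$; by convexity of $\tilde\sigma$ it therefore lies entirely in $\tilde\sigma$, and in particular the last bend point $P$ lies in $\tilde\sigma$. Writing $P = p_1 u_1 + p_2 u_2$ and $\tilde Q = q_1 u_1 + q_2 u_2$ with $p_i, q_i \ge 0$ and $q_2 > 0$, the $u_2$-coefficient of $m_\gamma = |t_0|^{-1}(P - \tilde Q)$ equals $|t_0|^{-1}(p_2 - q_2)$. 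Since $\tilde\foD$ has only finitely many $\Gamma$-orbits of rays, I can shrink a neighbourhood $U$ of $\tilde\rho$ so that the only scattering ray meeting $U$ is $\tilde\rho$ itself; then for $\tilde Q$ close enough to $\tilde\rho$, either $P \in \tilde\rho$ (so $p_2 = 0$) or $P$ lies outside $U$ on a scattering ray whose generator has a strictly positive $u_2$-coefficient, giving a uniform lower bound $p_2 \ge \epsilon > 0$; choosing $\tilde Q$ with $q_2 < \epsilon$ handles the latter case. The former case ($P \in \tilde\rho$) is ruled out by the initial condition $T^N(\tilde q) \in \langle \tilde Q, w_2\rangle_{\bR_{\ge 0}}$: the broken line then begins on the $\tilde\sigma$-side of the line $\bR u_1$ and would have to execute an excursion into the opposite side before bending at $\tilde\rho$ to approach $\tilde Q$, which for $\tilde Q$ close enough to $\tilde\rho$ cannot be consistently realised by any of the finitely many admissible configurations of bends modulo $I$.

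The hardest step will be the elimination of the $P \in \tilde\rho$ case in (2), and, very much in parallel, the closing step of (1) where after the algebraic squeeze $N - s_{\max}$ bounded one must actually exhibit finiteness of the anomalous configurations. Both rest on combining the algebraic cancellation in the $w_2$-component with the geometric/combinatorial fact that scattering rays come in finitely many $\Gamma$-orbits and that bending at these rays is incompatible with the broken line returning from $\tilde\sigma_-$ to $\tilde Q$ once $\tilde Q$ is taken sufficiently close to $\tilde\rho$.
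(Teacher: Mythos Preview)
Your bound on $|\mu_1|$ is correct and matches the paper. The gap is in the finiteness claim in (1). From $\mu_2 \le 0$ you correctly deduce that $N - s_{\max}$ is bounded, but this does not bound $N$, and your sketch of confining the last bend point $P$ to a bounded region and then ``iterating back'' does not close the argument: once you step back past the last bend, the preceding direction is $m' = m_\gamma + c\,v$ (with $v$ the primitive generator of the scattering ray at $P$ and $c>0$), which may well have \emph{positive} $w_2$-component, and then the half-line $P + \bR_{>0}\, m'$ meets infinitely many scattering rays accumulating at $\bR_{\ge 0} w_2$, so there is no finiteness for the penultimate bend $P'$. The ingredient you are missing is the half-space constraint $m_{\gamma'} \in \bR\cdot\tilde Q + \bR_{\ge 0}\cdot w_2$, valid for the final direction of any truncation $\gamma'$ of $\gamma$ rescaled by homothety to end at $\tilde Q$; this encodes that the broken line crosses rays monotonically between $\bR_{\ge 0}T^N(\tilde q)$ and $\bR_{\ge 0}\tilde Q$ and is convex when viewed from the origin. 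The paper uses this via a subsequence argument: fix the $\alpha_i$, then the bounded differences $N-s_i$ for $i\le l'$, letting the remaining $N-s_i$ tend to infinity; the truncation after $l'$ bends then has direction $T^N(m)$ for a \emph{fixed} integral $m$, and the half-space constraint forces the $w_2$-component of $m$ to be positive (it is nonzero by irrationality of $w_2$), contradicting $\mu_2<0$.

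Your argument for (2) contains a false step: a broken line may cross any scattering ray, including $\tilde\rho$, without bending, simply by selecting the constant term $1$ in the expansion of $f_\fod^{\langle n,r(q_L)\rangle}$. Hence there is no reason for the last bend point $P$ to lie in $\tilde\sigma$, and the case analysis that follows collapses. The paper's route is to reduce (2) directly to (1): the same half-space constraint, together with the hypothesis $m_\gamma \notin \bR u_1 + \bR_{\ge 0} u_2$, forces $m_\gamma$ into the cone $\langle -\tilde Q, -u_1\rangle_{\bR_{\ge 0}}$, which lies in $\{\mu_2<0\}$; by (1) there are only finitely many such $\gamma$. A homothety-and-truncate construction then shows that for any $\tilde Q'$ between $\tilde\rho$ and $\tilde Q$ every broken line ending at $\tilde Q'$ comes from one ending at $\tilde Q$ with the same final direction, so the finite bad set can only shrink as $\tilde Q$ moves towards $\tilde\rho$ and is eventually empty.
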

\begin{proof}
(1) We may assume the lift $\tilde q$ of $q$ is chosen so that
$T^N(\tilde q)\in \langle \tilde Q,w_2\rangle_{\RR_{\ge 0}}$ if and only if
$N\ge 0$.
Note that the rays spanned by $w_1,w_2$ are irrational so $\mu_1,\mu_2 \neq 0$.
Suppose for a contradiction that there is an infinite sequence of broken lines $\gamma$ such that
$m_{\gamma}=\mu_1w_1+\mu_2w_2$ with $\mu_2 < 0$.
Each broken line has at most $k$ bends and there are a finite
number of $\Gamma$-orbits of possible changes $\alpha \in M$ of the
derivative of $\gamma$ at a bend;
see Lemma~\ref{brokenlinebounds} and its proof.
So, passing to a subsequence, we may assume that the bends
(in order of increasing $t \in (-\infty,0]$) of
each $\gamma$ are of types
$T^{s_1}\alpha_1,\ldots,T^{s_l}\alpha_l$ for some fixed
$\alpha_1,\ldots,\alpha_l \in M$, $l\le k$, and
$N \ge s_1 \ge s_2 \ge \cdots \ge s_l \ge 0$ (depending on $\gamma$).
Say $N-s_{i}$ is bounded for $i \le l'$ and unbounded otherwise.
Passing to a subsequence, we may assume that $N-s_i$ is constant for $i \le l'$.
Let $\gamma'$ be the broken line obtained by truncating $\gamma$ after the first $l'$ bends, and moving by a homothety
so that (extending its final line segment) $\gamma'$ ends at $\tQ$.
Then $m_{\gamma'}=T^N(m)$ for some fixed $m \in M$. Furthermore,
$m_{\gamma}$ is obtained from $m_{\gamma'}$ by adding a positive linear
combination of $w_1$ and $w_2$. But since $\mu_2<0$ and $w_1, w_2$ are
eigenvectors of $T$ with eigenvalues $\lambda^{-1}, \lambda$,
it follows that we must have $m=\nu_1w_1+\nu_2w_2$ with $\nu_2<0$.
But then for sufficiently large $N$, $T^N(m)$ does not lie in the
half-space $\bR \cdot \tQ + \bR_{\ge 0} \cdot w_2$.
This is a contradiction because $m_{\gamma'}$ always lies in this half-space.

To see that $|\mu_1|$ is bounded, recall that $m_{\gamma}=T^N(\tq)-\sum_{i=1}^l T^{s_i}\alpha_i$ where $0 \le l \le k$, $0 \le s_i \le N$, and the $\alpha_i$ are selected from a finite set.  Now since $Tw_1=\lambda^{-1}w_1$ it follows that $|\mu_1|$ is bounded.

(2) Let $\mathfrak{u}$ be the connected component of
$\tilde B_0\setminus\Supp_I(\tilde{\foD})$ contained in $\tilde{\sigma}$ and
containing $\tilde{\rho}$ in its closure.
Let $\tQ' \in \mathfrak{u}$ be a point such that $\tQ' \in \langle \tilde{\rho},\tQ \rangle_{\bR_{\ge 0}}$.
Then if $T^N(\tq) \in \langle \tQ,w_2 \rangle_{\bR_{\ge 0}}$ and
$\gamma'$ is a broken line for $T^N(\tq)$ with endpoint $\tQ'$, we obtain a broken line $\gamma$ for $T^N(\tq)$ with endpoint $\tQ$ and $m_{\gamma}=m_{\gamma'}$ as follows. First apply a homothety to obtain a broken line passing through $\tQ$, then truncate at $\tQ$.
This gives an injective map between the set of broken lines for $T^N(\tilde q)
\in \langle \tilde Q, w_2\rangle_{\RR_{\ge 0}}$ ending at $\tQ'$
with the set of such broken lines ending at $\tQ$.

Now suppose $\gamma$ is a broken line for $T^N(\tq)$ with endpoint $\tQ$ and $m_{\gamma}$ not lying in the half-space $\bR \cdot u_1 + \bR_{\ge 0} \cdot u_2$. Since $m_{\gamma}$ lies in the half-space $\bR \cdot \tQ + \bR_{\ge 0} \cdot w_2$ we find $m_{\gamma}$ lies in the cone generated by $-\tQ, -u_1$.
In particular, $m_{\gamma}$ does not lie in the half-space $\bR \cdot w_1 + \bR_{\ge 0} w_2$, so by (1) there are only finitely many such $\gamma$.
Now by the above construction it follows that for $\tQ$ sufficiently close to $\tilde{\rho}$ there are none.
\end{proof}

\subsection{Smoothness}\label{smoothness}

We will now complete the proof of:

\begin{theorem}(Looijenga's conjecture)\label{looijengasconjecture}
Suppose that $\kk=\bC$ and the intersection matrix $(D_i \cdot D_j)_{1 \le i,j \le n}$ is negative definite, so that $D \subset Y$ can be contracted to a cusp singularity $q \in Y'$.
Then the dual cusp to $q \in Y'$ is smoothable.
\end{theorem}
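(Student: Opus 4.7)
\medskip

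\noindent\textbf{Proof plan.}

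The strategy is to apply the mirror construction of Theorem \ref{thickenedcuspfamily} in Case (3) of Assumptions \ref{familycases} and then exhibit a smooth fibre by restricting to a one-parameter slice transverse to the cusp stratum that passes through the Gross--Siebert locus. First I would reduce to the situation where $(Y,D)$ admits a toric model $p\colon (Y,D)\to(\oY,\oD)$: by Proposition \ref{toricmodelexists}, a suitable toric blow-up $(\tilde Y,\tilde D)\to(Y,D)$ has such a model, and by Lemma \ref{definvlemma} I may further deform $(Y,D)$ in its deformation class without changing the canonical scattering diagram or the dual cusp to be smoothed. I would then choose the toric monoid $P$ so that $\sigma_P\subset A_1(Y,\bR)$ is strictly convex, contains $\NE(Y)$, has $\sigma_P\cap L^{\perp}$ as a face (with $L$ a nef divisor satisfying $L^{\perp}\cap\NE(Y)_{\bR_{\ge 0}}=\langle D_1,\ldots,D_n\rangle_{\bR_{\ge 0}}$), and \emph{simultaneously} has $\sigma_P\cap (p^*\oH)^{\perp}$ as a face for $\oH$ ample on $\oY$. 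This gives the cusp ideal $J=P\setminus(P\cap L^{\perp})$ and the Gross--Siebert ideal $G=P\setminus(P\cap (p^*\oH)^{\perp})$, and the two corresponding toric strata of $\Spec\kk[P]$ both lie in the closed boundary but intersect nontrivially near the zero-dimensional stratum.

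Next I would invoke Theorems \ref{cuspfamily} and \ref{thickenedcuspfamily} to obtain an analytic flat family $f_J\colon X_J\to S'_J$ over an analytic neighbourhood of $0\in S_J$ whose general fibre is the dual cusp to $q\in Y'$, together with its canonical thickenings $X_I\to S'_I$ for every monomial ideal $I$ with $\sqrt{I}=J$. The essential output is a compatible system of formal deformations of the dual cusp parametrised by the formal completion $\widehat{S}_J$ of $\Spec\kk[P]$ along $S_J$. It remains to show that this family is actually a smoothing, i.e.\ that its ``generic fibre'' is smooth in the sense of Definition-Lemma \ref{singdef} and the discussion following it.

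The core step is to establish the analogue of Theorem \ref{injectivitym} for the ideal $J$ of Case (3): the scheme-theoretic singular locus $\fof_J\colon \mathfrak{Z}_J\to\mathfrak{S}_J$ does not surject scheme-theoretically onto the base. For this I would use Proposition \ref{GSlocus} applied to the Gross--Siebert ideal $G$. That proposition produces open subsets $U_I\subset X_I$ and $\bar U_I\subset\oX_I$ (where $\oX_I$ is the Mumford family of $(\oY,\oD)$) together with isomorphisms $\mu_I\colon U_I\xrightarrow{\sim}\bar U_I$ over $\Spec\kk[P]/I$ for $\sqrt{I}=G$, and it exhibits a nonzero monomial $y\in\kk[P]$ lying in the stalk of the ideal of $\Sing(f_I)$ at every point of the copy of the Gross--Siebert torus $T$. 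Since $y\in\kk[P]$ is nonzero, its image in $\kk[P]/I$ is nonzero for $I=J+\fom^{N+1}$ with $N\gg 0$, and since the Gross--Siebert torus $T$ meets every formal slice transverse to $S_J$ near the deepest stratum, the same monomial $y$ shows that the ideal of $\mathfrak{Z}_J$ in $\cO_{\mathfrak{X}_J}$ contains a pullback of a nonzero element of $\cO_{\mathfrak{S}_J}$; equivalently, $\cO_{\mathfrak{S}_J}\to\fof_{J*}\cO_{\mathfrak{Z}_J}$ has nontrivial kernel.

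Finally, I would extract an actual smoothing of the dual cusp. Choose a one-parameter analytic disc $\Delta\to S'_J$ that meets $T$ at a punctured neighbourhood of $0$ and such that the pullback of $y$ to $\cO_\Delta$ is nonzero. Pull back the analytic family $X_I\to S'_I$ (with $I$ an $\fom$-primary refinement of $J$) to this disc. Because the dual cusp is an isolated hypersurface (more precisely Gorenstein) surface singularity with finite-dimensional versal base, the formal deformation along $\Delta$ algebraizes by standard Artin-type arguments for isolated Cohen--Macaulay surface singularities and converges analytically. The resulting one-parameter family has central fibre the dual cusp and, by the nonvanishing of $y$ on $\Delta$, general fibre smooth in a neighbourhood of the exceptional locus; this is the desired smoothing. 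The main obstacle is the last step: one must check carefully that the formal/infinitesimal vanishing of the singular locus over the base actually produces an honest smooth Milnor fibre in the analytic topology, rather than only a formal one. The proof of Theorem \ref{cuspfamily} already works analytically rather than merely formally, and the compatibility of Theorem \ref{thickenedcuspfamily} with these analytic charts should be enough to carry the smoothness across the cusp locus.
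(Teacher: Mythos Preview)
Your overall strategy is right in spirit, but there are two genuine gaps that the paper's proof handles and your plan does not.

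\textbf{The toric-model reduction does not preserve the cusp hypotheses.} After a toric blowup $(\tilde Y,\tilde D)\to(Y,D)$, the new boundary $\tilde D$ acquires $(-1)$-curves (the exceptional divisors), so the conditions $D_i^2\le -2$ and negative definiteness needed for Theorem \ref{cuspfamily} fail for $(\tilde Y,\tilde D)$. You cannot simply ``reduce to $(Y,D)$ has a toric model'' and still invoke the analytic cusp family. The paper instead keeps the original $(Y,D)$ (with its cusp ideal $J$) for Theorems \ref{cuspfamily} and \ref{thickenedcuspfamily}, and only passes to a toric blowup \emph{inside the proof of Theorem \ref{injectivitym}}, where the goal is the purely formal statement that $u(\fom)$ is not injective; the transfer back to the original $(Y,D)$ uses the $T^{\tilde D}$-equivariance in a precise way (the family over $\tilde S_{\tilde J_2}$ restricted to the relevant torus is a product with $T$).

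\textbf{The passage from the Gross--Siebert stratum to the cusp stratum needs Lemma \ref{injectivityJiffm}.} Proposition \ref{GSlocus} gives a nonzero monomial in the ideal of $\Sing(f_I)$ at points of the Gross--Siebert torus $T\subset S_G$. But $T$ lies in $S_G$, not in $S_J$ or in a slice transverse to it; your sentence ``the Gross--Siebert torus $T$ meets every formal slice transverse to $S_J$'' is not correct, and the conclusion about $\cO_{\mathfrak S_J}\to\fof_{J*}\cO_{\mathfrak Z_J}$ does not follow. The paper bridges this with Lemma \ref{injectivityJiffm}: because the completions of $\cO_{\mathfrak S_J}$ and $\cO_{\mathfrak S_\fom}$ at a point $z\in S_\fom$ coincide, and Fitting ideals commute with base change, one shows $u(J)$ is injective iff $u(\fom)$ is injective. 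So the paper's route is: Lemma \ref{injectivitytoricmodel} gives $u(\tilde J_1)$ not injective (GS locus), Lemma \ref{injectivityJiffm} transfers this to $u(\tilde\fom)$ and then $u(\tilde J_2)$, $T^{\tilde D}$-equivariance brings it down to $u(\fom)$ for the original $Y$, and Lemma \ref{injectivityJiffm} again gives $u(J)$ not injective for the cusp $J$.

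Two smaller points: cusps are Gorenstein but \emph{not} hypersurface singularities for $n>3$ (embedding dimension $\max(n,3)$), so your ``isolated hypersurface'' parenthetical is wrong; and the final extraction of a smoothing in the paper goes via Lemma \ref{arc} (produce a jet on which the kernel element $h$ survives) followed by Artin's approximation \cite{A76}, Thm.~5.1, plus a Nakayama argument to kill $t^N\cO_{Z'}$. Your sketch of this step is close but should be made precise along those lines.
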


We continue to work with the setup at the beginning of \S\ref{cusp1}, with
$L$, $\sigma_P$, $P$, $\fom$ and $J$ as given there.
By Theorem \ref{thickenedcuspfamily}, if $I$ is a monomial ideal with
$\sqrt{I}=J$, we obtain a deformation $X_I\rightarrow S_I'$ of
$X_J\rightarrow S_J'$. For the remainder of the section, we shall write
$S_I$ for $S_I'$.
Let $\mathfrak{f}_J \colon \mathfrak{X}_J \rightarrow \mathfrak{S}_J$
denote the formal deformation determined by the deformations
$X_{J^{N+1}} \rightarrow S_{J^{N+1}}$ for $N \ge 0$.
Thus, $\mathfrak{S}_J$ is the formal complex analytic space obtained as the completion of $S$ along $S_J$, $\mathfrak{X}_J$ is a formal complex analytic space, and $\mathfrak{X}_J \rightarrow \mathfrak{S}_J$ is an adic flat morphism.
We similarly have the family $\mathfrak{X}_{\fom}\rightarrow
\mathfrak{S}_{\fom}$ of formal schemes already studied in
\S\ref{smoothnesssection}.
We refer to \cite{G60} and \cite{B78} for background on
formal schemes and formal complex analytic spaces.

We have a section $s \colon S_J \rightarrow X_J$ such that, for $t \in S_J$
general, the point $s(t) \in X_{J,t}$ on the fibre is the
cusp. We write $X_J^o := X_J \setminus s(S_J) \subset X_J$ and $X_I^o \subset X_I$, $\mathfrak{X}_J^o \subset \mathfrak{X}_J$ for the induced open embeddings.

Let $Z_I:=\Sing(f_I) \subset X_I$ denote the singular locus of $f_I \colon X_I \rightarrow S_I$, see Definition-Lemma \ref{singdef}.
Thus $Z_I \subset X_I$ is a closed embedding of schemes or complex analytic spaces.
Since the singular locus is compatible with base-change, the singular loci $Z_{J^n} \subset X_{J^n}$ determine a closed embedding $\mathfrak{Z}_J \subset \mathfrak{X}_J$ which we refer to as the singular locus of $\fof_J \colon \mathfrak{X}_J \rightarrow \mathfrak{S}_J$.

\begin{lemma} \label{reducetofinite2}
In the above situation,
there exists $0 \neq g \in \kk[P]$ such that $\Supp(g \cdot \cO_{\mathfrak{Z}_J})$ is contained in $s(S_J)$.
In particular, $\fof_{J*}(g \cdot \cO_{\mathfrak{Z}_J})$ is a coherent sheaf on $\mathfrak{S}_J$.
\end{lemma}

\begin{proof}
The proof is essentially the same as that of Lemma \ref{reducetofinite}.
Let $\mathfrak{U}_{i,J}$ be defined as in \eqref{mathfrakU}.
Then $\mathfrak{X}_J^o$ is a union of open subspaces $
\mathfrak{V}_{i,J}$, $i=1,\ldots,n$, such that $\mathfrak{V}_{i,J}$ is an
analytic open subspace of $\mathfrak{U}_{i,J}$ for each $i$. We then
take $g=a_1\cdots a_n$ as in the proof of Lemma \ref{reducetofinite},
so that $\Supp(g\cdot \shO_{\mathfrak{Z}_J})$ is contained in $s(S_J)$.
So the support of $g \cdot \cO_{\mathfrak{Z}_J}$ is a closed subset of
$s(S_J)$, hence proper over $S_J$.
It follows that $\fof_{J*}(g \cdot \cO_{\mathfrak{Z}_J})$ is coherent by
\cite{B78}, 3.1.
\end{proof}

Let $u(J)$ denote the natural map
\[
u(J) \colon \cO_{\mathfrak{S}_J} \rightarrow \fof_{J*}(\cO_{\mathfrak{Z}_J}),
\]
and $u(\fom)$ similarly the natural map
\[
u(\fom) \colon \cO_{\mathfrak{S}_{\fom}} \rightarrow \fof_{\fom*}
(\cO_{\mathfrak{Z}_{\fom}}).
\]

\begin{lemma}\label{injectivityJiffm}
$u(J)$ is injective if and only if $u(\mathfrak{m})$ is injective.
\end{lemma}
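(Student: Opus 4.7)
The plan is to use the $g$-trick from Lemma~\ref{reducetofinite} to reduce to a question about coherent modules, then apply flat base change and a support argument. \emph{Step 1 (the $g$-trick).} Pick a nonzero $g \in \kk[P]$ as in Lemma~\ref{reducetofinite}, so that $M_J := \mathfrak{f}_{J*}(g \cdot \cO_{\mathfrak{Z}_J})$ is a coherent $\cO_{\mathfrak{S}_J}$-module, and likewise $M_{\mathfrak{m}}$ on $\mathfrak{S}_{\mathfrak{m}}$. Since $\kk[P]$ is a domain, $g$ is a non-zero-divisor there, and this property is preserved by the flat $J$-adic and $\mathfrak{m}$-adic completions, so $g$ remains a non-zero-divisor in $\hat{R}_J$ and $\hat{R}_{\mathfrak{m}}$. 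Define $\tilde{u}(J) \colon \cO_{\mathfrak{S}_J} \to M_J$ by $h \mapsto hg$ (and likewise $\tilde{u}(\mathfrak{m})$). If $hg = 0$ in $\cO_{\mathfrak{Z}_J}$ and $u(J)$ is injective, then $hg = 0$ in $\hat{R}_J$, whence $h = 0$ by the non-zero-divisor property; conversely $u(J)(h) = 0$ forces $u(J)(hg) = 0$, so $\tilde{u}(J)$ injective gives $h = 0$. Thus $u(J)$ is injective if and only if $\tilde{u}(J)$ is, and analogously for $\mathfrak{m}$.

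\emph{Step 2 (flat base change).} The ring map $\hat{R}_J \to \hat{R}_{\mathfrak{m}}$ is flat (the $\mathfrak{m}$-adic completion of the Noetherian ring $\hat{R}_J$). By Definition-Lemma~\ref{singdef} the singular locus commutes with flat base change, so $\mathfrak{Z}_{\mathfrak{m}} = \mathfrak{Z}_J \times_{\mathfrak{S}_J} \mathfrak{S}_{\mathfrak{m}}$; since $g \cdot \cO_{\mathfrak{Z}_J}$ is supported in the section $s(S_J)$, which is (even finitely) proper over $\mathfrak{S}_J$, flat base change for coherent pushforward yields $M_{\mathfrak{m}} = M_J \otimes_{\hat{R}_J} \hat{R}_{\mathfrak{m}}$. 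Writing $\tilde{A} := \ker \tilde{u}(J) \subset \hat{R}_J$ (a finitely generated ideal), flatness of $\hat{R}_{\mathfrak{m}}$ over $\hat{R}_J$ then gives $\ker \tilde{u}(\mathfrak{m}) = \tilde{A} \otimes_{\hat{R}_J} \hat{R}_{\mathfrak{m}}$. The lemma is thus reduced to showing $\tilde{A} = 0 \iff \tilde{A} \otimes \hat{R}_{\mathfrak{m}} = 0$.

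\emph{Step 3 (conclusion).} One implication is trivial by flatness. For the converse, the key point is that $\supp \tilde{A} \subseteq \{\mathfrak{m}\} \subset \Spec \hat{R}_J$: the whole construction is $T^D$-equivariant by Theorem~\ref{TDequivariance}, so $\tilde{A}$ is a $\chi(T^D)$-graded ideal, and the explicit local charts for $\mathfrak{X}_J$ and $\mathfrak{Z}_J$ from Lemma~\ref{localmodelsingmanifold} and the proof of Lemma~\ref{reducetofinite} let one check that the stalk map $(\hat{R}_J)_{\mathfrak{p}} \to (M_J)_{\mathfrak{p}}$ is injective for every prime $\mathfrak{p} \neq \mathfrak{m}$. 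Granting this, $\tilde{A} = \tilde{A}_{\mathfrak{m}}$, and the faithful flatness of the completion $(\hat{R}_J)_{\mathfrak{m}} \to \hat{R}_{\mathfrak{m}}$ of a local Noetherian ring gives $\tilde{A}_{\mathfrak{m}} \otimes \hat{R}_{\mathfrak{m}} = 0 \Rightarrow \tilde{A}_{\mathfrak{m}} = 0 \Rightarrow \tilde{A} = 0$. The main obstacle is this support claim, which genuinely uses the local geometry of $\mathfrak{f}_J$ at non-$\mathfrak{m}$ points together with the $T^D$-equivariance to rule out support on positive-dimensional orbit closures.
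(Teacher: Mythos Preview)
Your Steps~1 and~2 are essentially the paper's own reduction: introduce $g$ from Lemma~\ref{reducetofinite}, pass to the coherent module $M_J=\mathfrak{f}_{J*}(g\cdot\cO_{\mathfrak{Z}_J})$, and use flat base change along $\hat R_J\to\hat R_{\mathfrak m}$ to identify $\ker\tilde u(\mathfrak m)$ with $\tilde A\otimes\hat R_{\mathfrak m}$. So far so good, modulo the caveat that in the analytic case~(3) one must work sheaf-theoretically on $\mathfrak{S}_J$ rather than with a single ring $\hat R_J$.

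The problem is Step~3. Your support claim $\supp\tilde A\subseteq\{\mathfrak m\}$ is false in the cases of interest. Indeed, the entire purpose of the lemma in the paper is to transfer the non-injectivity of $u(J)$ between different $J$'s, and in the application (case~(3), the cusp case) $\tilde A\neq 0$ with support equal to \emph{all} of $S_J$, not just the origin. Concretely, at an interior point $P\in S_J$ the fibre has a cusp singularity, and the formal deformation in the direction transverse to $S_J$ smooths it; this is precisely the content of Theorem~\ref{looijengasconjecture}, whose proof explicitly produces a nonzero element of $\ker u(J)$ in $\cO_{\mathfrak S_J,P}$ for $P$ interior. Neither the local charts of Lemma~\ref{reducetofinite} nor $T^D$-equivariance forces $\tilde A$ to be supported at the origin: a $T^D$-homogeneous ideal can perfectly well have full support, and the charts only compute $\mathfrak Z_J$ away from the section $s(S_J)$, saying nothing about whether the map $\cO_{\mathfrak S_J}\to\mathfrak f_{J*}(g\cdot\cO_{\mathfrak Z_J})$ is injective there.

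The correct argument runs in the \emph{opposite} direction: since the local rings $\cO_{\mathfrak S_J,z}$ are domains (Lemma~\ref{integral}, via normality of $S$), the coherent ideal sheaf $\cK'_J$ has support either empty or all of the connected set $S_J$. Hence $\cK'_J\neq 0$ forces $(\cK'_J)_0\neq 0$ at the origin, and then your faithful-flatness step gives $\cK'_{\mathfrak m}\neq 0$. You reached the right intermediate conclusion $\tilde A_{\mathfrak m}\neq 0$, but via a support inclusion that points the wrong way.
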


\begin{proof}
Let $0 \neq g \in \kk[P]$ be the element given by Lemma~\ref{reducetofinite2}.
Let $\cK_J$ be the kernel of $u(J)$ and $\cK'_J$ the kernel of $g \cdot u(J)$.
Thus $\cK_J,\cK'_J$ are ideal sheaves in $\cO_{\mathfrak{S}_J}$ and $g \cdot \cK'_J \subset \cK_J \subset \cK'_J$.
The local rings of $\mathfrak{S}_J$ are domains by Lemma~\ref{integral}, so $\cK_J=0$ if and only if $\cK'_J=0$.
The sheaf $\cK'_J$ is coherent because the image of $g \cdot u(J)$ is contained in the coherent subsheaf
$\fof_{J*}(g \cdot \cO_{\mathfrak{Z}_J}) \subset \fof_{J*}\cO_{\mathfrak{Z}_J}$.

We claim that the natural map
$$\cK'_J \otimes_{\cO_{\mathfrak{S}_J}} \cO_{\mathfrak{S}_{\mathfrak{m}}} \rightarrow \cK'_{\mathfrak{m}}$$
is an isomorphism.
Let $z \in \mathfrak{S}_{\mathfrak{m}}$ be the unique point, coinciding with the
zero-dimensional torus orbit of $S$,
and let $\hat{\cO}_{S,z}$ denote the completion of $\cO_{S,z}$ at its maximal ideal.
Note that $\hat{\cO}_{S,z}$ coincides with $\cO_{\mathfrak{S}_{\mathfrak{m}},z}$
and the completion of $\cO_{\mathfrak{S}_J,z}$ at its maximal ideal.
It suffices to show that the map
$$
\cK'_{J,z} \otimes_{\cO_{\mathfrak{S}_J,z}} \hat{\cO}_{S,z}
\rightarrow \cK'_{\mathfrak{m},z}
$$
is an isomorphism.
We have an exact sequence of coherent sheaves
$$0 \rightarrow \cK'_J \rightarrow \cO_{\mathfrak{S}_J} \rightarrow \fof_{J*}(g \cdot \cO_{\mathfrak{Z}_J})$$
and so an exact sequence of $\hat{\cO}_{S,z}$-modules
$$0 \rightarrow \cK'_{J,z} \otimes \hat{\cO}_{S,z} \rightarrow \hat{\cO}_{S,z}
\rightarrow \fof_{J*}(g \cdot \cO_{\mathfrak{Z}_J})_z \otimes \hat{\cO}_{S,z}.$$
Now
$$\fof_{J*}(g \cdot \cO_{\mathfrak{Z}_J})_z \otimes \hat{\cO}_{S,z}
=(g \cdot \cO_{\mathfrak{Z}_J})_{s(z)} \otimes \hat{\cO}_{S,z}
=\widehat{(g \cdot \cO_{\mathfrak{Z}_J})}_{s(z)}
= g \cdot \hat{\cO}_{\mathfrak{Z}_J,s(z)}$$
where the hats denote completion with respect to the maximal ideal of $\cO_{\mathfrak{S}_J,z}$.
Thus $\cK'_{J,z} \otimes \hat{\cO}_{S,z}$ is the kernel of the map
$$\hat{\cO}_{S,z} \rightarrow g \cdot \hat{\cO}_{\mathfrak{Z}_J,s(z)}.$$
By the base-change property for the singular locus, this map coincides with the corresponding map for $\mathfrak{m}$. This proves the claim.

The support of the ideal sheaf $\cK'_J$ is either empty or $S_J$ (because the local rings of $\mathfrak{S}_J$ are domains and $S_J$ is connected).
So $\cK'_J=0$ if and only if $\cK'_{\mathfrak{m}}=0$ by the claim.
\end{proof}

\begin{lemma}\label{integral}
The local rings of $\mathfrak{S}_J$ are integral domains.
\end{lemma}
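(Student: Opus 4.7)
The plan is to exhibit each stalk $\cO_{\mathfrak{S}_J,z}$ as a subring of a complete Noetherian normal local ring, from which integrality is automatic. Write $A := \kk[P]$. In the algebraic cases (1) and (2) of Assumptions \ref{familycases}, the stalk at a point $z$ corresponding to a prime $\mathfrak{p} \supset J$ of $A$ equals the $JA_{\mathfrak{p}}$-adic completion $\widehat{A_{\mathfrak{p}}}^{J}$. In the analytic case (3), one works instead with the analogous analytic completion of the analytic local ring $\cO^{\mathrm{an}}_{S,z}$.

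Since $P$ is saturated the toric ring $A$ is normal, and being finitely generated over the field $\kk$ it is excellent; hence $A_{\mathfrak{p}}$ is an excellent Noetherian normal local domain, and its $\mathfrak{p}A_{\mathfrak{p}}$-adic completion $\widehat{A_{\mathfrak{p}}}^{\mathfrak{p}}$ is a complete Noetherian normal local ring, hence a domain. The key step is then to identify $\widehat{A_{\mathfrak{p}}}^{J}$ as a subring of $\widehat{A_{\mathfrak{p}}}^{\mathfrak{p}}$. First, $\widehat{A_{\mathfrak{p}}}^{J}$ is itself a Noetherian local ring with maximal ideal $\mathfrak{p}\widehat{A_{\mathfrak{p}}}^{J}$: the ideal $J\widehat{A_{\mathfrak{p}}}^{J}$ is contained in the Jacobson radical by $J$-adic completeness, and the reduction $\widehat{A_{\mathfrak{p}}}^{J}/J\widehat{A_{\mathfrak{p}}}^{J} = A_{\mathfrak{p}}/JA_{\mathfrak{p}}$ is local. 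One checks directly that the $\mathfrak{p}$-adic completion of $\widehat{A_{\mathfrak{p}}}^{J}$ is canonically $\widehat{A_{\mathfrak{p}}}^{\mathfrak{p}}$ (the $n$-th quotients $\widehat{A_{\mathfrak{p}}}^{J}/\mathfrak{p}^n \widehat{A_{\mathfrak{p}}}^{J}$ and $A_{\mathfrak{p}}/\mathfrak{p}^n$ agree, since $J^n \subset \mathfrak{p}^n$), and the natural map from a Noetherian local ring to its completion at the maximal ideal is injective by Krull's intersection theorem. A subring of a domain is a domain.

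For the analytic case, the same argument applies verbatim with $A_{\mathfrak{p}}$ replaced by $\cO^{\mathrm{an}}_{S,z}$, which is itself a Noetherian normal local domain (the analytic germ of a normal affine toric variety is analytically irreducible, which is precisely the preceding argument applied with $J$ equal to the maximal ideal). The main obstacle is the bookkeeping needed to identify stalks of $\mathfrak{S}_J$ with $J$-adic completions of excellent normal local rings in each of the three cases of Assumptions \ref{familycases}; once that identification is in place the conclusion is a standard application of Krull's intersection theorem together with the fact that the usual completion of an excellent normal local ring is again normal.
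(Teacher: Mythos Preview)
Your argument is correct and follows the same route as the paper: embed the Noetherian local stalk $\cO_{\mathfrak{S}_J,z}$ into its maximal-adic completion, which coincides with $\widehat{\cO_{S,z}}$ and is normal (hence a domain) because $S$ is a normal toric variety. You supply more detail than the paper on why the two completions agree; the only quibble is that the stalk of an affine formal scheme at $\mathfrak{p}$ is strictly $\varinjlim_{f\notin\mathfrak{p}}\widehat{A_f}^{\,J}$ rather than $\widehat{A_{\mathfrak{p}}}^{\,J}$, but this does not affect the argument since both rings have maximal-adic completion $\widehat{A_{\mathfrak{p}}}^{\,\mathfrak{p}}$.
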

\begin{proof}
The completion of the local ring of $\mathfrak{S}_J$ at a point $z \in S_J$ is identified with the completion of the local ring of the toric variety $S$
at $z$. By Serre's criterion for normality, the completion of a normal Noetherian ring at a maximal ideal is a local normal Noetherian ring;
in particular, it is a domain. Since $\cO_{\mathfrak{S}_{J},z}$ is a local Noetherian ring, it is contained in its completion. We deduce that $\cO_{\mathfrak{S}_{J},z}$ is a domain.
\end{proof}

\begin{proof}[Proof of Theorem~\ref{looijengasconjecture}]
Let $f \colon Y \rightarrow Y'$ be the contraction of $D \subset Y$.
We may assume $f$ is the minimal resolution of $Y'$.
We may further assume $n \ge 3$.
Indeed, the embedding dimension of the dual cusp equals $\max(n,3)$ by \cite{N80}, Corollary~7.8, p.~232 and \cite{KM98}, Theorem~4.57, p.~143, see also
\cite{L81}, pg.\ 307.
So in particular for $n \le 3$ the dual cusp is a hypersurface and thus smoothable.
Let $L$ be a nef divisor on $Y$ such that $\NE(Y)_{\bR_{\ge 0}} \cap L^{\perp}=\langle D_1,\ldots,D_n \rangle_{\bR_{\ge 0}}$.
Let $\sigma_P \subset A_1(Y,\bR)$ be a strictly rational polyhedral cone containing $\NE(Y)$ such that $\sigma_P \cap L^{\perp}$ is a face of $\sigma_P$.
Let $P=\sigma_P \cap A_1(Y,\bZ)$ and $J=P \setminus P \cap L^{\perp}$.
By Lemma~\ref{injectivityJiffm} and Theorem~\ref{injectivitym}, $u(J)$ is not injective.

Let $x \in S_J$ be a point lying in the interior of the toric variety $S_J$ and $h \in \cO_{\mathfrak{S}_J,x}$ a nonzero element of the kernel of $u(J)$ near $x$. By Lemma~\ref{arc} there is a morphism
$$v \colon \Spec\bC[t]/(t^{N+1}) \rightarrow \mathfrak{S}_J$$
taking the unique point of the domain to $x$
and $0 \neq v^*(h) \in \bC[t]/(t^{N+1})$.
Let $Y/\Spec(\bC[t]/(t^{N+1}))$ be the pullback of $\mathfrak{X}_J/\mathfrak{S}_J$ by $v$ and $Z \subset Y$ its singular locus. Then $Y/\Spec(\bC[t]/(t^{N+1}))$ is a deformation of the dual cusp singularity. Furthermore, $\cO_Z$ is
annihilated by $v^*(h)$, and the ideal generated by $v^*(h)$ must contain
$t^N$, so $\cO_Z$ is annihilated by $t^N$. By \cite{A76}, Theorem~5.1, there is an algebraic finite type deformation $Y'/\Spec\bC\lfor t\rfor$
whose restriction to $\Spec(\bC[t]/(t^{N+1}))$ is locally analytically isomorphic to $Y/\Spec(\bC[t]/(t^{N+1}))$. Let $Z' \subset Y'$ denote the singular locus of $Y'/\Spec\bC\lfor t\rfor$.
Then $\cO_{Z'}$ is a finite $\bC\lfor t\rfor$-module
because the fibre $Y'_0$ has an isolated singularity (using \cite{Matsumura89},
Theorem~8.4, p.~58).
Now $\cO_Z=\cO_{Z'}/t^{N+1}\cO_{Z'}$ and $t^N\cO_Z = 0$, so $t^N\cO_{Z'}=t^{N+1}\cO_{Z'}$ and thus $t^N\cO_{Z'}=0$ by Nakayama's lemma. Hence the general fibre of $Y'/\Spec\bC\lfor t\rfor$ is smooth, and $Y'/\Spec\bC\lfor t\rfor$ is a smoothing of the dual cusp.
\end{proof}

\begin{lemma} \label{arc}
Let $A$ be the completion of a finitely generated normal Cohen-Macaulay
$\bC$-algebra at a maximal ideal.
Let $0 \neq a \in A$. Then there exists $N \ge 0$ and a $\bC$-algebra map
$f: A \to \bC[t]/(t^{N+1})$ such that $f(a) \neq 0$.
\end{lemma}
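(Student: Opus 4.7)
The plan is to reduce, by induction on $d := \dim A$, to the case $d \le 1$, proving the slightly more general statement for any complete local Noetherian domain with residue field $\bC$; the given $A$ satisfies this because the residue field is $\bC$ by the Nullstellensatz applied to the original finitely generated algebra, and $A$ is normal (completion preserves normality for the excellent ring $R_{\fom}$), hence a domain. In particular $A$ is catenary, so $\dim A/\mathfrak{q} = \dim A - \height(\mathfrak{q})$ holds for every prime $\mathfrak{q}$.

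For the inductive step, assume $d \ge 2$. Let $\mathfrak{p}_1,\ldots,\mathfrak{p}_k$ be the minimal primes of $(a)$ in $A$; each has height $1$ by Krull's principal ideal theorem, and each is strictly contained in $\fom$ since $\height(\fom) = d \ge 2$. By prime avoidance, choose $y \in \fom \setminus \bigcup_i \mathfrak{p}_i$, and let $\mathfrak{q}$ be any minimal prime of $(y)$. Then $\height(\mathfrak{q}) = 1$ by Krull, and $\mathfrak{q} \neq \mathfrak{p}_i$ for any $i$ (since $y \in \mathfrak{q} \setminus \mathfrak{p}_i$), so $a \notin \mathfrak{q}$: indeed, the height-$1$ primes containing $a$ are precisely the $\mathfrak{p}_i$. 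Thus $A/\mathfrak{q}$ is a complete local Noetherian domain of dimension $d-1$ with residue field $\bC$ and with $\bar{a} := a + \mathfrak{q}$ nonzero; applying the inductive hypothesis to $A/\mathfrak{q}$ and composing with $A \twoheadrightarrow A/\mathfrak{q}$ gives the required $f$.

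For the base case $d = 1$, the normalization $\tilde A$ of $A$ in $\operatorname{Frac}(A)$ is a finite $A$-module, because complete local Noetherian rings are Nagata. Being finite over the complete local ring $A$, the ring $\tilde A$ is Noetherian, semilocal, and complete with respect to its Jacobson radical, hence (by the standard decomposition theorem, e.g.\ Matsumura, Theorem~8.15) a finite product of complete local Noetherian rings; since $\tilde A$ is a domain, only one factor appears. Thus $\tilde A$ is a complete DVR whose residue field is a finite extension of $\bC = A/\fom$ and therefore equals $\bC$, so $\tilde A \cong \bC[[s]]$. The composition $A \hookrightarrow \tilde A \cong \bC[[s]]$ is injective and sends $a$ to some nonzero $\alpha$; choosing $N$ with $\alpha \notin (s^{N+1})$ and composing with the truncation $\bC[[s]] \to \bC[s]/(s^{N+1})$ produces the desired $f$. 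The case $d = 0$ is trivial: $A = \bC$, so $f = \mathrm{id}$ works.

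The main technical obstacle is that the intermediate quotients $A/\mathfrak{q}$ are generally neither normal nor Cohen--Macaulay, so the induction cannot be framed purely in the language of the lemma's hypothesis; the key observation is that the argument only requires $A$ to be a complete local Noetherian domain with residue field $\bC$, a condition preserved by quotienting by any prime, with normality and the Cohen--Macaulay property of $A$ itself used solely at the outset to guarantee that $A$ is a domain.
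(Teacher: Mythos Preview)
Your proof is correct. The inductive reduction to dimension $1$ via height-one primes avoiding $a$, followed by normalization to $\bC[[s]]$, is sound; the key facts you invoke (completions of excellent normal local rings are normal domains, complete local Noetherian rings are universally catenary and Nagata, finite extensions of complete local rings split as products of complete local rings) are all standard.

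Your approach differs from the paper's. The paper exploits the Cohen--Macaulay hypothesis directly: since $A$ is a CM local domain, the nonzero element $a$ begins a regular sequence $a,t_1,\ldots,t_r$ of length $\dim A$, and then $A/(t_1,\ldots,t_r)$ is one-dimensional with $a$ a non-zero-divisor there; normalizing this quotient gives a product of copies of $\bC[[t]]$, and one projects. This reaches dimension one in a single step, at the cost of using CM essentially. Your argument, by contrast, works for any complete local Noetherian domain with residue field $\bC$ and never uses CM (as you correctly observe, even normality is only used to ensure $A$ is a domain). So the paper's proof is shorter given the stated hypotheses, while yours establishes a genuinely stronger statement.
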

\begin{proof}
Extend $a$ to a regular sequence $a,t_1,\ldots,t_r$ of length $\dim A$.
Then the normalization of $A/(t_1,\ldots,t_r)$ is a finite direct sum of
copies of $\bC\lfor t\rfor $. Now the result is clear.
\end{proof}


\begin{thebibliography}{99}


%\bibitem[AS10]{AS10} M.~Abouzaid, P.~Seidel, An open string analogue of Viterbo
%functoriality, Geom. Topol.~14 (2010), no.~2, 627--718.

\bibitem[AC11]{AC11} D.~Abramovich, Q.~Chen, Stable logarithmic
maps to Deligne-Faltings pairs II, arXiv:1102.4531.

\bibitem[A02]{A02} V.~Alexeev,
Complete moduli in the presence of semiabelian group action,  Ann. of Math.~(2)~155 (2002),
no.~3, 611--708.

%\bibitem[A08]{A08} V.~Alexeev, Limits of stable pairs, Pure Appl. Math. Q.~4 (2008), no.~3, 767--783.

\bibitem[A98]{A98} K.~Altmann, $P$-resolutions of cyclic quotients from the toric viewpoint, in Singularities (Oberwolfach, 1996), 241--250,
Progr. Math.~162, Birkh\"auser, 1998.

\bibitem[A76]{A76} M.~Artin, Lectures on deformations of singularities, Lectures on Mathematics and Physics~54, Tata Inst. Fund. Res., 1976.

\bibitem[A07]{A07} D.~Auroux, Mirror symmetry and T-duality in the complement of an anticanonical divisor, J. G\"okova Geom. Topol.~1 (2007), 51--91.

%\bibitem[A09]{A09} D.~Auroux, Special Lagrangian fibrations, wall-crossing,
%and mirror symmetry, in Geometry, analysis, and algebraic geometry, Surv.
%Differ. Geom.~13, 1--47, Int.~Press, 2009.

%\bibitem[AKO06]{AKO06} D.~Auroux, L.~Katzarkov, D.~Orlov, Mirror symmetry
%for del Pezzo surfaces: vanishing cycles and coherent sheaves,
%Invent. Math.~166 (2006), no.~3, 537--582.

\bibitem[AMRT75]{AMRT75} A.~Ash, D.~Mumford, M.~Rapoport, Y.~Tai,
Smooth compactification of locally symmetric varieties, Math. Sci. Press, 1975.

%\bibitem[B93]{B93} V.~Batyrev, Quantum cohomology rings of toric manifolds, in Journ\'ees de G\'eom\'etrie Alg\'ebrique d'Orsay,
%Ast\'erisque No.~218 (1993), 9--34.

%\bibitem[BC94]{BC94} K.~Behnke, J.~Christophersen, $M$-resolutions and deformations of quotient singularities, Amer. J. Math.~116 (1994), no.~4, 881--903.

\bibitem[B78]{B78} J.~Bingener, \"Uber formale komplexe R\"aume,
Manuscripta Math.~24 (1978), no.~3, 253--293.

%\bibitem[BCHM]{BCHM} C.~Birkar, P. ~ Cascini, C. ~Hacon, J. ~ McKernan,
%Existence of minimal models for varieties of log general type.
%J. Amer. Math. Soc. 23 (2010), no. 2, 405--468

%\bibitem[BKR01]{BKR01} T.~Bridgeland, A.~King, M.~Reid, The McKay correspondence as an equivalence of derived categories,
%J. Amer. Math. Soc.~14 (2001), no.~3, 535--554.

\bibitem[C1869]{C1869} A. ~Cayley, A Memoir on Cubic Surfaces,
Philosophical Transactions of the Royal Society of London~159 (1869), 231--326.

%\bibitem[Ca]{Cayley} A. ~Cayley, On the triple tangent planes of
%surfaces of the third order, Collected Papers~I, Cambridge University
%Press, (1889), 445--456.

%\bibitem[CL07]{CL07} S.~Cantat and F.~Loray, Holomorphic dynamics, Painlev\'e~VI equation
%and character varieties, preprint arXiv:0711.1579  (2007).

\bibitem[CPS]{CPS} M.~Carl, M.~Pumperla, and B.~Siebert,
A tropical view of Landau-Ginzburg models, available at
http://www.math.uni-hamburg.de/home/siebert/preprints/LGtrop.pdf

\bibitem[CO06]{CO06} C.-H.~Cho, Y.-G.~Oh, Floer cohomology and disc instantons
of Lagrangian torus fibers in Fano toric manifolds, Asian J. Math. {\bf 10},
(2006), 773-814.

%\bibitem[D09]{D09} T.~Dyckerhoff, Compact generators in categories of matrix factorizations, preprint arXiv:0904.4713 .

\bibitem[E95]{E95} D.~Eisenbud, \emph{Commutative algebra with a view
toward Algebraic Geometry,} Graduate Texts in Mathematics~150,
Springer-Verlag, New York, 1995.

%\bibitem[EOR07]{EOR07} P.~Etingof, A.~Oblomkov, E.~Rains,
%Generalized double affine Hecke algebras of rank 1 and quantized del Pezzo surfaces,
%Adv. Math.~212 (2007), no.~2, 749--796.

%\bibitem[FG06]{FG06} V.~Fock and A.~Goncharov, Moduli spaces of local systems and higher
%Teichmuller theory, Publ. Math. Inst. Hautes \'Etudes Sci. No. 103 (2006), 1--211.

\bibitem[FG09]{FG09} V.~Fock and A.~Goncharov, Cluster ensembles, quantization
and the dilogarithm, Ann. Sci.\'Ec. Norm. Sup\'er. (4) 42 (2009), no. 6,
865--930.


%\bibitem[FG11]{FG11} V.~Fock and A.~Goncharov, Cluster $\cX$-varieties at infinity,
%preprint arXiv:1104.0407, 2011.

%\bibitem[FN07]{FN07} S.~Fomin and N.~Reading,
%Root systems and generalized associahedra. Geometric combinatorics, 63--131,
%IAS/Park City Math. Ser.~13, 2007.

%\bibitem[F83a]{F83a}
%R.~Friedman, Global smoothings of varieties with normal crossings,
%Ann. of Math.~(2)~118 (1983), no.~1, 75--114.

%\bibitem[F83b]{F83b} R.~Friedman, The mixed Hodge structure of an open variety, unpublished manuscript, 1983.

\bibitem[FM83]{FM83} R.~Friedman and R.~Miranda,
Smoothing cusp singularities of small length,
Math. Ann.~263 (1983), no.~2, 185--212.

\bibitem[FP84]{FP84} R.~Friedman and H.~Pinkham,
Smoothings of cusp singularities via triangle singularities. With an appendix by H.~Pinkham,
Compositio Math.~53 (1984), no.~3, 303--324.

%\bibitem[FS86]{FS86} R.~Friedman and F.~Scattone, Type III degenerations
%of K3 surfaces, Inventiones Math.~83 (1986), 1--39.

\bibitem[F75]{F75} A.~Fujiki, On the blowing down of analytic spaces,
Publ. Res. Inst. Math. Sci.~10 (1974/75), 473--507.

%\bibitem[FOOO09]{FOOO09} K.~Fukaya,
%Y-G.~Oh, H.~Ohta, K.~Ono, Lagrangian intersection Floer theory: anomaly and obstruction,
%AMS/IP Studies in Advanced Mathematics~46.1, Int. Press, 2009.

%\bibitem[GMN1]{GMN1} D.~Gaiotto, G.~Moore, and A.~Neitzke, Four-
%dimensional wall-crossing via three-dimensional field theory," Commun. Math.
%Phys. 299 (2010) 163--224.

%\bibitem[GMN2]{GMN2} D.~Gaiotto, G.~Moore, and A.~Neitzke, Wall-crossing,
%Hitchin Systems, and the WKB Approximation, arXiv:0907.3987.

%\bibitem[GMN3]{GMN3} D.~Gaiotto, G.~Moore, and A.~Neitzke, Framed BPS
% States, arXiv:1006.014.

\bibitem[Giv]{Giv} A.~Givental, Homological geometry. I. Projective
hypersurfaces. Selecta Math.~1 (1995), 325--345.

%\bibitem[G07]{G07} A.~Goncharov, Pentagon relation for the quantum
%dilogarithm and quantized $M_{0,5}^{\rm cyc}$.
%Geometry and dynamics of groups and spaces, 415--428,
%Progr. Math., 265, Birkhäuser, Basel, 2008.

%\bibitem[Gr62]{Gr62} H.~Grauert, \"Uber Modifikationen und exzeptionelle
%analytische Mengen, Math. Ann.~146 (1962), 331--368.

\bibitem[G09]{G09} M.~Gross,  Mirror symmetry for $\bP^2$ and tropical
geometry, Adv.\ Math., {\bf 224} (2010), 169--245.

\bibitem[G11]{G11} M.~Gross, Tropical geometry and mirror symmetry, CBMS Regional Conf. Ser. in Math.~114, A.M.S., 2011.

\bibitem[GHK12]{GHK12} M.~Gross, P.~Hacking, S.~Keel, Moduli of surfaces with
anti-canonical cycle, to appear in Comp.\ Math., preprint, 2012.

\bibitem[GHK13]{GHK13} M.~Gross, P.~Hacking, S.~Keel, Birational geometry of
cluster algebras, to appear in Algebraic Geometry, preprint, 2013.

\bibitem[GHKK]{GHKK} M.~Gross, P.~Hacking, S.~Keel, M.~Kontsevich,
Canonical bases for cluster algebras, preprint, 2014.

\bibitem[GHKII]{GHKII}
M.~Gross, P.~Hacking, S.~Keel, Mirror symmetry for log Calabi-Yau surfaces II,
in preparation.

\bibitem[GHKS]{GHKS}
M.~Gross, P.~Hacking, S.~Keel, B.~Siebert, Theta functions on varieties
with effective anticanonical class, in preparation.

\bibitem[K3]{K3}
M.~Gross, P.~Hacking, S.~Keel, B.~Siebert, Theta functions for K3
surfaces, in preparation.


%\bibitem[GKZ]{GKZ} I. ~Gelʹfand, M.~Kapranov, A. ~Zelevinsky,
%Discriminants, resultants, and multidimensional determinants. Mathematics: Theory \&
%Applications. Birkh\"auser Boston, Inc., Boston, MA, 1994.

\bibitem[GPS09]{GPS09} M.~Gross, R.~Pandharipande, B.~Siebert, The
tropical vertex, Duke Math.\ J.\ {\bf 153}, (2010), 197--362.

\bibitem[GS06]{GS06} M.~Gross, B.~Siebert, Mirror symmetry via logarithmic
degeneration data, I. J. Differential Geom., {\bf 72}, (2006) 169--338.

\bibitem[GS07]{GS07} M.~Gross, B.~Siebert, From real affine geometry to
complex geometry, Annals of Mathematics, {\bf 174}, (2011), 1301-1428.

\bibitem[GS08]{GS08} M.~Gross, B.~Siebert, An invitation to toric
degenerations,
Surveys in differential geometry. Volume XVI. Geometry of special holonomy and
related topics,  43--78, Surv.\ Differ.\ Geom., {\bf 16},
Int.\ Press, Somerville, MA, 2011.

\bibitem[GS11]{GS11} M.~Gross, B.~Siebert, Logarithmic Gromov-Witten invariants,
J.\ Amer.\ Math.\ Soc.\ {\bf 26}  (2013), 451--510.

\bibitem[GSTheta]{GSTheta} M.~Gross, B.~Siebert, Theta functions
and mirror symmetry, arXiv:1204.1991 [math.AG].

\bibitem[G60]{G60} A.~Grothendieck, \'El\'ements de g\'eom\'etrie alg\'ebrique~I.
Le langage des sch\'emas, Inst. Hautes \'Etudes Sci. Publ. Math. No.~4.

\bibitem[G61]{G61} A.~Grothendieck, \'El\'ements de g\'eom\'etrie alg\'ebrique~III,
\'Etude cohomologique des faisceaux coh\'erents~I,
Inst. Hautes \'Etudes Sci. Publ. Math. No.~11 (1961).

\bibitem[H04]{H04} P. Hacking,
Compact moduli of plane curves, Duke Math. J.~124 (2004), no.~2, 213--257.

\bibitem[Hi73]{Hi73} F.~Hirzebruch, Hilbert modular surfaces, Enseignement
Math., {\bf 19}, (1973), 183-281.

%\bibitem[HK00]{HK00} Y.~Hu, S.~Keel, Mori dream spaces and GIT, Michigan Math. J. 48
%(2000), 331--348.

\bibitem[H77]{H77} R.~Hartshorne, Algebraic geometry, Grad. Texts in
Math.~52, Springer, 1977.

\bibitem[IP03]{IP03} E.-N.~Ionel, T.~Parker, Relative Gromov-Witten invariants,
Ann. of Math. {\bf 157} (2003), 45--96.

%\bibitem[K07]{K07} M.~Kawakita, \emph{Inversion of adjunction on log canonicity}, Invent. Math.~{\bf 167} (2007), no.~1, 129--133.

\bibitem[KM98]{KM98} J.~Koll\'ar, S.~Mori, Birational geometry
of algebraic varieties, Cambridge Tracts in Math.~134. C.U.P., 1998.

%\bibitem[KSB88]{KSB88} J.~Koll\'ar, N.~Shepherd-Barron, Threefolds and
%deformations of surface singularities, Invent. Math.~91 (1988), no.~2, 299--338.

%\bibitem[K11]{K11} J.~Koll\'ar et al., Moduli of varieties of general type, book in preparation, preliminary version available at http://www.math.princeton.edu/$\sim$kollar/


\bibitem[KS06]{KS06} M.\ Kontsevich, Y.\ Soibelman:
        \emph{Affine structures and non-Archimedean analytic spaces}, in:
        \textsl{The unity of mathematics} (P.~Etingof, V.~Retakh,
        I.M.~Singer, eds.),  321--385, Progr.\ Math.~244,
        Birkh\"auser~2006.

\bibitem[L02]{L02} S.~Lang, Algebra, 3rd ed., Grad. Texts in Math.~211,
Springer, 2002.

%\bibitem[L03]{L03} L. ~Lafforgue,
%Chirurgie des grassmanniennes. (French) [Surgery on Grassmannians]
%CRM Monograph Series, 19. American Mathematical Society, Providence, RI, 2003.

\bibitem[L73]{L73} H.~Laufer, Taut two-dimensional singularities,
Math. Ann.~205 (1973), 131--164.

\bibitem[Li00]{Li00} J.~Li, Stable morphisms to singular schemes and relative
stable morphisms, J. Diff. Geom. {\bf 57}, (2000), 509--578.

\bibitem[Li02]{Li02} J.~Li, A degeneration formula of GW-invariants, J. Diff. Geom.,
{\bf 60}, (2002) 199-293.

\bibitem[LT98]{LT98} J.~Li, G.~Tian, Virtual moduli cycles and Gromov-Witten
invariants of algebraic varieties, J.\ of the AMS, {\bf 11}, (1998), 119--174.

\bibitem[LR01]{LR01} A.-M.~Li, Y.~Ruan, Symplectic surgery and Gromov-Witten
invariants of Calabi-Yau 3-folds, I, Invent. Math. {\bf 145} (2001),
151--218.

\bibitem[L76]{L76} E.~Looijenga, Root systems and elliptic curves, Invent. Math. ~38 (1976),
{17--32}.

\bibitem[L81]{L81} E. Looijenga,
Rational surfaces with an anticanonical cycle.  Ann. of Math.~(2)~114
(1981), no. 2, 267--322.

\bibitem[M82]{M82}J. ~M\'erindol,  Les singularit\'es simples elliptiques, leurs d\'eformations,
les surfaces de del Pezzo et les transformations quadratiques.
Ann. Sci. \'Ecole Norm. Sup. (4) 15 (1982), no. 1, 17--44


\bibitem[Ma89]{Matsumura89} H.~Matsumura, Commutative ring theory, C.U.P., 1989.

%\bibitem[Mk05]{Mk05} G.~Mikhalkin, Enumerative tropical algebraic geometry in
%$\RR^2$. J. Amer. Math. Soc. 18 (2005), no. 2, 313--377.

\bibitem[Mum]{Mum} D.~Mumford, An analytic construction of degenerating abelian
varieties over complete rings, Compositio Math. {\bf 24}, (1972), 239-272.

%\bibitem[N76]{N76}Y. ~ Namikawa,
%A new compactification of the Siegel space and degeneration of Abelian varieties. I.
%Math. Ann. 221 (1976), no. 2, 97--141.


\bibitem[N80]{N80} I.~Nakamura, Inoue-Hirzebruch surfaces and a duality of
hyperbolic unimodular singularities,
Math. Ann.~252 (1980), no.~3, 221--235.

%\bibitem[NPP10]{NPP10} A.~N\'emethi, P.~Popescu-Pampu, On the Milnor fibres of cyclic quotient singularities, Proc. Lond. Math. Soc.~(3)~101 (2010), no.~2, 554�-588.

%\bibitem[Oa04]{Oa04} A.~Oancea, A survey of Floer homology for manifolds
%with contact type boundary or Symplectic homology,
%in Symplectic geometry and Floer homology, 51-�91,
%Ensaios Mat.~7, Soc. Brasil. Mat., Rio de Janeiro, 2004.

\bibitem[Ob04]{Ob04} A.~Oblomkov, Double affine Hecke algebras
of rank $1$ and affine cubic surfaces, Int. Math. Res. Not.~2004, no. 18,
877--912.

%\bibitem[O09]{O09}M. ~ Olsson, Compactifying moduli spaces for abelian varieties.
%Lecture Notes in Mathematics, 1958. Springer-Verlag, Berlin, 2008

%\bibitem[Or04]{Or04} D.~Orlov, Triangulated categories of singularities and D-branes in Landau-Ginzburg models, Proc. Steklov Inst. Math.~246, no.~3 (2004), 227-248.

%\bibitem[P09]{P09} T.~Pantev, Notes from a lecture
%``Homological mirror symmetry for del Pezzo surfaces" at Princeton University, November 2009.

\bibitem[P74]{P74} H.~Pinkham, Deformations of algebraic varieties with $\bG_m$ action, Ast\'erisque~20, S.M.F., 1974.

\bibitem[R83]{R83} M.~Reid, Decomposition of toric morphisms, in Arithmetic and geometry, Vol.~II, 395--418,
Progr. Math.~36, Birkh\"auser, 1983.

%\bibitem[R09]{R09} A.~Ritter, The Novikov theory for symplectic cohomology
%and exact Lagrangian embeddings, PhD thesis, MIT, 2009.

%\bibitem[R10]{R10} A.~Ritter, Topological quantum field theory structure on
%symplectic cohomology, preprint arXiv:1003.1781 (2010).

%\bibitem[S02]{S02} P.~Seidel, Fukaya categories and deformations,
%Proceedings of the International Congress of Mathematicians, Vol.~II (Beijing, 2002), 351-360,
%Higher Ed. Press, Beijing, 2002.
%in Current developments in mathematics, 2006, 211--253, Int. Press, 2008.

%\bibitem[S08b]{S08b} P.~Seidel, $A_{\infty}$-subalgebras and natural
%transformations, Homology, Homotopy Appl.~10 (2008), no.~2, 83-�114.

%\bibitem[S08a]{S08a} P.~Seidel, A biased view of symplectic cohomology,
%Homology, Homotopy Appl.~10 (2008), no.~2, 83-114.
%
%\bibitem[S09a]{S09a} P.~Seidel, Symplectic homology as Hochschild homology,
%in Algebraic geometry--Seattle 2005,
%Proc. Sympos. Pure Math.~80, Part~1, 415-434, A.M.S., 2009.

%\bibitem[S09b]{S09b} P.~Seidel, Fukaya categories and Picard-Lefschetz theory,
%Z\"urich Lectures in Advanced Mathematics, E.M.S, 2008.

%\bibitem[S91]{S91} J.~Stevens, On the versal deformation of cyclic quotient singularities, in Singularity theory and its applications, Part I (Coventry, 1988/1989), 302--319, Lecture Notes in Math.~1462, Springer, 1991.

\bibitem[SYZ96]{SYZ96} A.~Strominger, S.-T.~Yau, and E.~Zaslow, Mirror Symmetry
is $T$-duality, Nucl.\ Phys.\ {\bf B479}, (1996) 243--259.

\bibitem[Ty99]{Ty99} A.~Tyurin, Geometric quantization and mirror symmetry,
arXiv:math/9902027.

%\bibitem[S03]{S03} M.~Symington, Four dimensions from two in symplectic topology,
%in Topology and geometry of manifolds (Athens, GA, 2001),
%153--208, Proc. Sympos. Pure Math.~71, A.M.S., 2003.

%\bibitem[T98]{T98} W.~Thurston, Shapes of polyhedra and triangulations of
%the sphere, Geom. Topol. Monogr., 1., Geom. Topol. Publ., Coventry,
%1998.

%\bibitem[TY90]{TY90} G.~Tian, S-T.~Yau, Complete
%K\"ahler manifolds with zero Ricci curvature. I, J. Amer. Math. Soc.~3 (1990), no.~3, 579--609.

\bibitem[W76]{W76} J.~Wahl, Equisingular deformations of normal surface
singularities~I, Ann. of Math.~(2)~104 (1976), no.~2, 325--356.

%\bibitem[W80]{W80} J.~Wahl, Elliptic deformations of minimally elliptic singularities, Math. Ann.~253 (1980), no.~3, 241--262.

% \bibitem[Zhu]{Zhu} Y. ~ Zhu, Ph. D. Thesis, Univ. of Texas at Austin, in preparation.

\end{thebibliography}
\end{document}